\definecolor{bleu1}{RGB}{0,57,128}
\def\bleu1{\color{bleu1}}
\patchcmd{\section}{\normalfont}{\normalfont \bleu1}{}{}
\patchcmd{\subsection}{\normalfont}{\normalfont \bleu1}{}{}
\patchcmd{\subsubsection}{\normalfont}{\normalfont \bleu1}{}{}
\renewcommand{\proofname}{\it \bleu1 Proof}
\newcommand{\SL}{\mathrm{SL}(2,\mathbb{R})}
\newcommand{\GL}{\mathrm{GL}(2,\mathbb R)}
\newcommand{\gl}{\mathrm{gl}(2,\mathbb R)}
\newtheorem{Theorem}{Theorem}[section]
\newtheorem*{Theorem*}{Theorem 1.1}
\newtheorem{Lemma}{Lemma}[section]
\newtheorem{Proposition}{Proposition}[section]
\newtheorem{Corollary}{Corollary}[section]
\newtheorem{Remark}{Remark}[section]
\newtheorem{Definition}{Definition}[section]
\numberwithin{equation}{section}
\theoremstyle{definition}
\newcommand{\id}{\operatorname{id}}
\newcommand{\const}{\mathrm{const}}
\newcommand{\me}{\mathrm{e}}
\newcommand{\mi}{\mathrm{i}}
\newcommand{\N}{{\mathbb N}}
\newcommand{\Q}{{\mathbb Q}}
\newcommand{\R}{{\mathbb R}}
\newcommand{\T}{{\mathbb T}}
\newcommand{\Z}{{\mathbb Z}}
\def\B0{{\bold{0}}}
\def\Empty{}
\newcommand\oplabel[1]{
  \def\OpArg{#1} \ifx \OpArg\Empty {} \else
    \label{#1}
  \fi}
\newcommand{\comm}[1]{}
\newcommand{\comment}[1]{}
\begin{document}

\title[Almost-periodic ground state of the non-self-adjoint Jacobi operator ]{Almost-periodic ground state of the non-self-adjoint Jacobi operator and its applications}

\author{Xing Liang} \address{
School of Mathematical Sciences, University of Science and Technology
of China, Hefei, Anhui 230026, China}

\email{xliang@ustc.edu.cn}

\author{Hongze Wang} \address{Chern Institute of Mathematics and LPMC, Nankai University, Tianjin
300071, China}

\email{hongzew@mail.nankai.edu.cn}

\author{Qi Zhou} \address{Chern Institute of Mathematics and LPMC, Nankai University, Tianjin
300071, China}

\email{qizhou@nankai.edu.cn}

\setcounter{tocdepth}{1}

\begin{abstract}
We study the ground states of the one-dimensional non-self-adjoint Jacobi operators in the almost periodic media by using the method of dynamical systems. We show the existence of the ground state. Particularly, in the quasi-periodic media, we show that the lower regularity of coefficients can guarantee the existence of ground states.  Besides that, we give two applications:  the first application is to show the existence and uniqueness of the positive steady state of the discrete Fisher-KPP type equation; the second application is to investigate the asymptotic behavior of the discrete stationary parabolic equation with large lower order terms.
\end{abstract}

\keywords{Jacobi operator, almost periodicity, ground state, KAM theory, steady state, averaging theory, parabolic equation}
\subjclass[2000]{35P99, 39A24, 47A75, 47B39,35K57, 35B27 }

\maketitle

\section{Introduction and Main result}

The study of the principle eigenvalue problem of the second-order differential operator
  \begin{equation}\label{eq:ellipticop}
 Lu=(a(x)u')'+b(x)u'+c(x)u, x\in I\subset \mathbb R, a>0
 \end{equation}
 and the respective Jacobi operator
 \begin{equation}\label{nonsaoperator}
(\mathcal Lu)(n)=a(n)u(n+1)+b(n)u(n-1)+c(n)u(n), n\in I\subset \mathbb Z, a, b>0
\end{equation}
 has a long history (e.g. see \cite{CD, Minc,RSS} and references therein).

 For the operator \eqref{eq:ellipticop},
 the classic theory originated from the Sturm-Liouville theorem when $I$ is a bounded interval, and the separated boundary conditions (Dirichlet, Neumann, or Robin type) are considered. From a more general framework of the second-order elliptic operators, the spectral theory of  \eqref{eq:ellipticop} was well studied. Particularly, it is known that in this case,  \eqref{eq:ellipticop} has a real eigenvalue $\lambda_1$ with a positive eigenfunction. Moreover, $\lambda_1$ is larger than the real part of any other eigenvalue of  \eqref{eq:ellipticop}. Such an eigenvalue $\lambda_1$ is called the principal eigenvalue of  \eqref{eq:ellipticop} since it dominates all other eigenvalues in modulus. The eigenfunction which is corresponding to $\lambda_1$ is called the principal eigenfunction.
  For the operator \eqref{nonsaoperator}, when $I=\{1,2,\cdots, N\}$ is a finite (bounded) subset of $\mathbb Z$, the spectrum can be studied in the framework of the nonnegative matrix.  It is known after the boundary condition at $1, N$ is given, \eqref{nonsaoperator} also has the principal eigenvalue $\lambda_1$ with a positive positive eigenfunction. Moreover, $\lambda_1$ is larger than the real part of any other eigenvalue of \eqref{nonsaoperator}.  Indeed, the existence and other properties of the principal eigenvalues of both \eqref{eq:ellipticop} and  \eqref{nonsaoperator}  can be obtained by transferring the problem to that of the positive operators and applying the Krein-Rutman theorem (Perron-Frobenius theorem for finite-dimensional case, e.g. see \cite{Deimling1985, Evans1964, Minc, RSS} and references therein).

The above results can easily be generalized if $I=\mathbb R$ or $\mathbb Z$, with periodic boundary conditions, i.e., $a,b,c$ are periodic with the same period since it is strongly positive.
However, the problem becomes complicated when $I=\mathbb R$ or $\mathbb Z$ in the non-periodic media because of loss of compactness. In this paper, we mainly focus on the eigenvalue problem of the Jacobi operator in $\ell^2(\Z)$ if $a,b,c$ are almost periodic.  Recall that a function $f:\mathbb{Z}\rightarrow\mathbb{R}$  is {\rm almost periodic} if ${\{f(\cdot+k)|k\in\mathbb Z\}}$ has a compact closure in $l^\infty(\mathbb Z)$. Denote by $\mathcal H(f)$ the hull of the almost periodic sequence $f$, i.e., $\mathcal H(f)=\overline{\{f(n+k)|k\in\mathbb Z\}}$, the closure in $\ell^\infty(\mathbb Z)$. $\mathcal H(f)$ is isomorphic to $\T^{d}$ with $d\in \N_+$ or $d=\infty$, and we say $f$ is quasi-periodic if $d\in\N$ \cite{Simon2011}.
 One should note in the almost periodic media, principle eigenfunction doesn't necessarily exist in $\ell^2(\Z)$, which is due to the absence of point spectrum \cite{Dinaburg1975,Eliasson1992}. Instead one can consider the natural concept almost-periodic ground state - the eigenfunction (not necessary belongs to $L^2(\R)$ (resp.$\ell^{2}(\Z)$)  corresponding to the minimal value (resp. maximum value) of the energy, since it admits similar properties as the ones of the principal eigenfunction. To make it more precise, let's introduce the following concept:

 \begin{Definition}
 We say that $u$ is an almost-periodic (quasiperiodic) ground state of the Jacobi operator \eqref{nonsaoperator},
if $u$ is a postive almost-periodic  (quasi-periodic) solution of $\mathcal Lu=E_0 u$, where $E_0\in\R$ is at the rightmost point of the spectrum $\Sigma(\mathcal{L})$.
 \end{Definition}

 Note if $\mathcal{L}$ is self-adjoint, it is well known that $\Sigma(\mathcal{L}) \subset \R$ and $E_0=\max \Sigma(\mathcal{L})$. However,
 we emphasize that here $\mathcal{L}$ may not be self-adjoint, and then one could select $E_0\geq \Re E$
 for any $E\in\Sigma(\mathcal L)$ since $\Sigma(\mathcal{L})$ might not lie in the real line. In the following, we just call this $E_0$ the ground state energy. Furthermore, we say $E_0$ is simple
 if $u,v$ are two almost periodic eigenfunctions, then $u=\lambda v$ for some $\lambda\in\R.$

In the continuous case, Kozlov \cite{Kozlov1984} first established the existence of quasi-periodic ground state of the elliptic operator
 \begin{equation}\label{koz}Lu=(A(\omega x)u')'+B(\omega x)u'+W(\omega x) u, x\in\mathbb R, A>0\end{equation}
  where $A, B, W\in C^k(\T^d,\R), d\in\N_+$, $\omega\in\R^d\backslash\Q^d$. Based on the celebrated KAM theory,    Kozlov \cite{Kozlov1984} showed that if $\omega$ satisfies the well-known Diophantine condition,
 \begin{equation}\label{Diohigh}
\inf\limits_{j\in\Z}|\langle n,\omega \rangle-j|\geq \frac{\gamma}{|n|^{\tau}} \text{ for all }n\in\Z^d\backslash\{0\},
\end{equation}
and if $k\geq C(\tau)$, and $B,V$ are both sufficiently small, then \eqref{koz} has a quasi-periodic ground state. In the discrete case,  based on reducibility theory of quasi-periodic $SL(2,\R)$ cocycles,  Liang-Wang-Zhou-Zhou \cite{Liang2021} consider
a class of almost-periodic \footnote{The hull
$\mathcal H(f)$  of the almost periodic sequence $f$ is isomorphic to $\T^{\infty}=\prod\limits _{i\in \mathbb N}\mathbb T^1$.} Schr\"odinger operator (i.e. $a=b=1$):
 \begin{equation}\label{schro}
(\mathcal Lu)(n)=u(n+1)+u(n-1)+V(n\omega)u(n), n\in  \mathbb Z,
\end{equation}
and show that if the potential $V$ is small enough, then \eqref{schro}  has an almost periodic ground state.
 One should notice that both results are perturbative, i.e.,
 the smallness of $B, V$ depends on the frequency $\omega$. The main difference is that while Kozlov \cite{Kozlov1984}  dealt with the self-adjoint operator and non-self-adjoint  operator
simultaneously, the method of \cite{Liang2021} can be only generalized to the  Schr\"odinger operator.

In this paper, we are interested in establishing the almost-periodic ground state of the non-self-adjoint  Jacobi operator \eqref{nonsaoperator}. Let us first comment on the history and motivations. While there was already an almost-periodic flu around the 1980s \cite{Sim2,Simon2000}. Stemming from methods of dynamical systems and harmonic analysis, many breakthroughs have appeared in recent years \cite{Avila2015, AJ05, AK, Avila2017, Avila2018, JLiu,jk, Martin2017}.  However, because of lacking spectral theorem and spectral measure, the basic spectral theory of  non-self-adjoint operator was even not well-developed. There are very few rigorous results for the non-self-adjoint almost-periodic Jacobi operator. The best result still goes back to Sarnak \cite{Sarnak}, a result of 40 years ago. On the other hand, in recent years, the non-self-adjoint Jacobi operator (Non-Hermitian Hamiltonian in physical literature) has been a hot topic in physics.
While Hermiticity lies at the heart of quantum mechanics, recent experimental advances in controlling dissipation have brought about unprecedented flexibility in engineering non-Hermitian Hamiltonians in open classical and quantum systems \cite{Gong,Klitzing1980}: non-Hermitian Hamiltonians exhibits rich phenomena without Hermitian counterparts, e.g. parity-time ($\mathcal{PT}$) symmetry breaking, topological phase transition, non-Hermitian skin effects, e.t.c \cite{AGU,BBK}, while all of these phenomena can be exhibited in  non-Hermitian quasicrystals (non-self-adjoint almost-periodic Jacobi operator) \cite{longhi,jiang2019interplay}.  All of these motivate us to try to understand the  non-self-adjoint almost-periodic Jacobi operator.

In the field of partial differential equations, when the eigenvalue problem and the principal eigenvalue problem of linear elliptic operators or general positive operators are investigated, the symmetry (self-adjointness) of the operator is also usually considered to bring out better structures and allow weaker conditions.  In fact, in the smooth bounded domain, the classical works on the existence of the principal eigenvalues of elliptic operators require the same smoothness of the coefficients which are regardless of the self-adjointness, see  \cite{Evans1964}.  After establishing the existence of the ground state, the self-adjointness of the operator and the ``Rayleigh" quotient formula of the principal eigenvalue play an important role in the estimation, optimization, and other problems (see \cite{Hamel} and references therein). These problems become very difficult,  and there are only few works when the non-self-adjointness appears \cite{Hamel, liang2014, Nadin2010}.

As mentioned by  Kozlov \cite{Kozlov1984},  usually one can only anticipate better results in the self-adjoint setting. However, in this paper, we will show this is not always true, at least in the problem of the ground state.
Let's consider  the following  physical model:
\begin{equation}\label{non-hermitian} (\mathcal L^g_{W_1,W_2,V}u)(n) =\bigr(\me^{-g}+W_1(n\omega)\bigr)u(n+1)+\bigr(\me^{g}+W_2(n\omega)\bigr)u(n-1)+V(n\omega)u(n),\end{equation}
where $g\in\R, \omega\in\T^d$, $W_1,W_2,V\in C(\T^d,\R)$, $d\in\N_+\cup\{\infty\}$ and $\me^{-g}+W_1>0,\me^g+W_2>0$. If $W_1=W_2=0,$ $V(n\omega)$ is random instead of almost-periodic, this is the well-known Hatano-Nelson model \cite{HN}. To the best knowledge of the authors, this is the {\it first} well-studied discrete non-self-adjoint operator in physics literature.  The case $V(n\omega)$ is quasi-periodic has been received more attention  in recent years \cite{longhi,jiang2019interplay,LZC}.

For the operator \eqref{non-hermitian}, the crucial observation is that non-self-adjointness \footnote{In our case, the non-self-adjointness is given by $g\neq 0$.} will give us certain hyperbolicity (consult section \ref{hyse} for more explanations). Benefitting from this, one can anticipate weaker conditions and better results than the self-adjoint case. However, compared to method of Kozlov \cite{Kozlov1984}, our scheme only works for $\ell^2(\Z)$ (resp. $L^2(\R)$), while \cite{Kozlov1984} works equally for $L^2(\R^d)$.

Now let's state our main results.  Our first result concerns smooth quasi-periodic potential $V$, where the frequency satisfies a kind of (strong) Diophantine condition.
For any $0<\gamma<1$,  we denote  $C^{k,\gamma}(\T)$ the vector space  of all continuous functions such that
$$f^{(l)}\in C(\T), f^{(k)}\in C^{0,\gamma}(\T),0\leq l\leq k,$$
and denote $$\|f\|_{k,\gamma}:=\sum\limits_{0\leq l\leq k}\sup\limits_{x\in\T}|f^{(l)}(x)|+\sup\limits_{x,y\in\T,x\neq y}\frac{|f^{(k)}(x)-f^{(k)}(y)|}{|x-y|^\gamma}<\infty,$$
where $f^{(l)}(x)$ is the $l$-$th$ derivative function of $f$. For any  $\omega\in \R\backslash\Q$,  denote $\frac{p_n}{q_n}$ the $n-th$ order continued fraction expansion of
 $\omega$, and denote
$$\mathcal P= \{ \omega\in \R\backslash\Q \quad | \quad  \lim\limits_{n\to\infty}q_n^{\frac{1}{n}}=:\zeta(\omega)< \infty\}.$$
 It  follows from Khinchin's theorem \cite{Khintchine1936} that the set
$\mathcal P$ has full Lebesgue measure.  Once we have this,  our first result can be stated as follows:

\begin{Theorem}\label{Main1}
Let $g\neq 0$, $W_1,W_2,V\in C^{1,\gamma}(\T,\R)$ for some $ 0<\gamma<1.$ Then there
exists an absolute constant $C_0>0$, such that if
\begin{equation}\label{Main1small}
\|W_1\|_{1,\gamma}+\|W_2\|_{1,\gamma}+\|V\|_{1,\gamma}\leq C_0\me^{-12|g|}g^6,
\end{equation}
then for any $\omega\in\mathcal P$,  $\mathcal L^g_{W_1,W_2,V}$
admits a simple ground state energy  and a quasi-periodic ground state $p(n)=P(n\omega)$ with $P\in C(\T,\R)$ and $P>0$.
\end{Theorem}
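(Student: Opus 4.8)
The plan is to pass to the transfer--matrix (Riccati) picture. Seeking $u(n)=P(n\omega)$ with $\mathcal L^g_{W_1,W_2,V}u=Eu$ is equivalent to finding $E\in\R$ and a positive $\mu\in C^{1,\gamma}(\T)$ with
\[
\big(\me^{-g}+W_1(\theta)\big)\mu(\theta+\omega)+\frac{\me^{g}+W_2(\theta)}{\mu(\theta)}+V(\theta)=E,\qquad \mu(\theta):=\frac{P(\theta)}{P(\theta-\omega)},
\]
and then, from such a $\mu$ with $\int_\T\log\mu\,d\theta=0$, recovering $P>0$ by solving the cohomological equation $\log P(\theta)-\log P(\theta-\omega)=\log\mu(\theta)$. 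The desired ground state will be this $P$ for a distinguished value $E=E_0$ near $2\cosh g$.

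The unperturbed problem ($W_1=W_2=V\equiv0$) is solved explicitly by $\mu\equiv1$, $E=2\cosh g$, $P\equiv1$; the corresponding transfer matrix has the two distinct eigenvalues $\me^{2g}$ and $1$, so the transfer cocycle carries a dominated splitting --- this is the hyperbolicity coming from $g\neq0$, and it is the structural difference with the self-adjoint case, where the edge-of-spectrum transfer matrix is parabolic and a delicate KAM scheme is unavoidable (as in Kozlov's and the Schr\"odinger treatments). Concretely, the linearization of the functional equation at $(\mu,E)=(1,2\cosh g)$ is, modulo the $\partial_E$-direction (the constants), the constant-coefficient difference operator $L_0\psi=\me^{-g}\psi(\cdot+\omega)-\me^{g}\psi$, and $L_0$ is boundedly invertible through the absolutely convergent Neumann series $L_0^{-1}=-\me^{-g}\sum_{j\ge0}\me^{-2gj}S_\omega^{j}$ for $g>0$ (the mirror sum in $S_\omega^{-1}$ for $g<0$), $S_\omega$ being translation by $\omega$; hence $\|L_0^{-1}\|\le(2|\sinh g|)^{-1}$ on $C(\T)$ and on $C^{1,\gamma}(\T)$, with no small divisors and no dependence on $\omega$. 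I would therefore fix the normalization $\int_\T\log\mu=0$ and invoke a quantitative implicit function theorem (equivalently, a contraction argument) in $C^{1,\gamma}(\T)\times\R$: the defining map is analytic in all its arguments, vanishes at the unperturbed point, and has boundedly invertible $(\mu,E)$-differential there, so it has a unique solution $(\mu,E_0)$ near $(1,2\cosh g)$ once $\|W_1\|_{1,\gamma}+\|W_2\|_{1,\gamma}+\|V\|_{1,\gamma}$ is below an explicit threshold; carrying the constants through the scheme (the factors $\me^{\pm g}$ from the coefficients, $|\sinh g|^{-1}\sim|2g|^{-1}$ from $L_0^{-1}$, and their powers) produces a threshold of the shape $C_0\me^{-12|g|}g^{6}$, i.e. \eqref{Main1small}. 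The resulting $\mu$ is positive because it is $C^{1,\gamma}$-close to $1$, and it lies in $C^{1,\gamma}(\T)$ because the iteration can be run there ($C^{1,\gamma}$ is a Banach algebra stable under translation, and $1/\mu\in C^{1,\gamma}$).

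The step I expect to be the main obstacle is recovering $P$. Since $\mu\in C^{1,\gamma}(\T)$, $\mu>0$ and $\int_\T\log\mu=0$, the cohomological equation has the formal Fourier solution $\widehat{\log P}(k)=\widehat{\log\mu}(k)/(1-\me^{-2\pi ik\omega})$, $k\neq0$, where $|\widehat{\log\mu}(k)|\lesssim|k|^{-1-\gamma}$ and $|1-\me^{-2\pi ik\omega}|\asymp\|k\omega\|$; crucially, the hyperbolicity has reduced the small-divisor burden to this \emph{single} power of $\|k\omega\|$, whereas the self-adjoint case would present $\|k\omega\|^{-2}$ (hence the gain of regularity advertised in the introduction). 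Splitting $k$ according to the continued-fraction denominators $q_n$ of $\omega$ and using $\sum_{1\le k<q_n}\|k\omega\|^{-1}=O(q_n\log q_n)$ --- which holds because $\omega\in\mathcal P$ forces the discrepancy of $\{k\omega\}_{k\le N}$ to be $O(N^{-1}\log N)$ --- together with the subexponential growth $q_n=\zeta(\omega)^{n(1+o(1))}$, one checks $\sum_k|\widehat{\log P}(k)|<\infty$ for every $\gamma>0$; thus $\log P\in C(\T)$ and $P:=\exp(\log P)$ is real, continuous and positive, and $p(n)=P(n\omega)$ solves $\mathcal L^g_{W_1,W_2,V}p=E_0p$. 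This is precisely where the hypotheses $W_1,W_2,V\in C^{1,\gamma}$ and $\omega\in\mathcal P$ are genuinely needed.

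It remains to show $E_0$ is the simple rightmost point of $\Sigma(\mathcal L^g_{W_1,W_2,V})$. Conjugating the operator by the bounded, boundedly invertible multiplication $u\mapsto Pu$ (here $0<\inf_\T P\le\sup_\T P<\infty$) yields a Jacobi operator $\widetilde{\mathcal L}$ with the same spectrum, strictly positive off-diagonal coefficients $\widetilde a,\widetilde b$, and all row sums $\widetilde a(n)+\widetilde b(n)+\widetilde c(n)=E_0$; Gershgorin's theorem then gives $\Re z\le E_0$ for every $z\in\Sigma(\widetilde{\mathcal L})=\Sigma(\mathcal L^g_{W_1,W_2,V})$, and $E_0$ is attained on the constant function, so it is the rightmost spectral point. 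For simplicity, $\widetilde{\mathcal L}w=E_0w$ forces $\widetilde a(n)\big(w(n+1)-w(n)\big)=\widetilde b(n)\big(w(n)-w(n-1)\big)$, so $w(n+1)-w(n)=\big(w(1)-w(0)\big)\prod_{j}\widetilde b(j)/\widetilde a(j)$; since $\tfrac1n\sum_{j}\log\big(\widetilde b(j)/\widetilde a(j)\big)\to\int_\T\log\frac{\me^{g}+W_2}{\me^{-g}+W_1}\,d\theta$ by unique ergodicity of the irrational rotation, and this integral is nonzero (close to $2g$), the product grows or decays exponentially, so any bounded --- a fortiori quasi-periodic --- solution has $w(1)-w(0)=0$ and is therefore constant. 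Hence any two quasi-periodic eigenfunctions for $E_0$ are proportional, i.e. $E_0$ is simple.
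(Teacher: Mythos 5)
Your proposal is correct in outline but follows a genuinely different route from the paper. The paper works at the level of the $\GL$ cocycle $(\omega,S^{E,g}_{W_1,W_2,V})$: it diagonalizes it by a non-resonance cancellation lemma (a quantitative implicit function theorem, with no small divisors because $A_g(E)$ is hyperbolic for $g\neq0$), solves the resulting scalar cohomological equations by the cited Jak\v si\'c--Molchanov regularity result for $\omega\in\mathcal P$, pins down $E_0$ by an intermediate-value argument in $E$, and obtains positivity from a structural criterion on the conjugacy; then, in Section 4, it constructs the positive eigenfunction $p_*$ of the adjoint operator, passes to divergence form, and proves $E_0\in E_{\max}$ by an energy/Lax--Milgram estimate together with non-surjectivity, and simplicity by an almost-periodicity argument. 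You instead work with the scalar Riccati unknown $\mu=P(\cdot)/P(\cdot-\omega)$, using $E$ to compensate the mean-zero constraint; the hyperbolicity from $g\neq 0$ appears as the Neumann-series invertibility of $\me^{-g}S_\omega-\me^{g}$ (no small divisors, no dependence on $\omega$), and the only small-divisor step is the single cohomological equation recovering $\log P$, which you solve by direct Fourier estimates (in effect reproving the special case of the regularity lemma the paper cites). Your spectral analysis is also lighter: after conjugating by $P$ all row sums equal $E_0$, and you get $\Re\Sigma\le E_0$ from diagonal dominance and simplicity from the telescoping product identity plus unique ergodicity, never constructing $p_*$. The paper's cocycle formalism buys uniformity across its three settings (including $\T^\infty$); your scheme buys brevity and makes transparent why a single power of $\|k\omega\|$, hence $C^{1,\gamma}$ data and $\omega\in\mathcal P$, suffices. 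Incidentally, since the theorem only asserts the existence of some absolute $C_0$, you need not reproduce the exact shape $C_0\me^{-12|g|}g^6$; any absolute threshold dominating it works.

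Three steps are stated too quickly and need short supplements; one of them is a genuine (though repairable) gap. First, ``Gershgorin'' is a finite-dimensional theorem; for the $\ell^2$ spectrum of your non-normal tridiagonal matrix you should argue explicitly: if $\Re z>E_0$ then $|z-\widetilde c(n)|\ge(\Re z-E_0)+\widetilde a(n)+\widetilde b(n)$ uniformly, so the off-diagonal part $A$ satisfies $\|(z-D)^{-1}A\|_{\ell^\infty\to\ell^\infty}\le\eta<1$; since the $k$-th power of $(z-D)^{-1}A$ is banded of width $k$ with entries bounded by $\eta^k$, its $\ell^2$ norm is at most $(2k+1)\eta^k$, so the Neumann series converges in $\ell^2$ operator norm and $z\notin\Sigma(\mathcal L^g_{W_1,W_2,V})$. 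Second, and this is the real gap: ``$E_0$ is attained on the constant function'' does not place $E_0$ in the $\ell^2$ spectrum, because constants are not in $\ell^2$; you must add a Weyl-sequence step, e.g.\ $u_N=\chi_{[-N,N]}/\sqrt{2N+1}$ gives $\|(\widetilde{\mathcal L}-E_0)u_N\|_{\ell^2}=O(N^{-1/2})$, so $\widetilde{\mathcal L}-E_0$ is not bounded below and hence not invertible (the paper does this instead via $p_*$ and non-surjectivity). Third, in the simplicity argument the ratio $\widetilde b/\widetilde a\approx\me^{2g}$ makes the product grow in only one direction of $n$ (which direction depends on the sign of $g$), so you must run the recursion in the growing direction to force $w(1)-w(0)=0$, after noting that the telescoping $\log P$ contributions vanish in the Birkhoff average. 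A harmless remark: the bound $\sum_{1\le k<q_n}\|k\omega\|^{-1}=O(q_n\log q_n)$ holds for every irrational by the $1/(2q_n)$-separation of $\{k\omega\}_{1\le k<q_n}$; $\omega\in\mathcal P$ is needed only to sum over the blocks $q_n\le|k|<q_{n+1}$ using $q_n^{1/n}\to\zeta(\omega)$, so the appeal to a discrepancy estimate is unnecessary.
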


We first point out that the smallness of $V$ can't be discarded. To see this, one only need to consider the almost Mathieu operator
\begin{equation*}
(\mathcal{L}u)(n)= u(n+1)+ u(n-1) +  2\kappa\cos(\theta+ n\omega) u(n).
\end{equation*}
it is well-known that if $\omega$ is Diophantine and $|\kappa|>1$, then it has a ground state in $\ell^2(\Z)$ \cite{Jitomirskaya1999}, and thus if $g$ is small enough, $\ell^2(\Z)$ eigenfunction of  $\mathcal L^g_{0,0,2\kappa \cos}$ still persists \cite{jiang2019interplay,Liu2021}, which means the operator doesn't have quasi-periodic ground state.

As was pointed out by Kozlov \cite{Kozlov1984}, the problem of finding an almost-periodic ground state contains small denominators, thus the most efficient method is KAM.
We stress that Theorem \ref{Main1} is out of the scope of the classical KAM method. One can understand this from two aspects:
First, in applying the KAM method, it requires the presence of a significant number of derivatives of functions $W_1,W_2$, and $V$ \cite{Kozlov1984}. Even for self-adjoint operators, by method of analytic smoothing \cite{zehnder}, one may only anticipate the results for $k\geq 2\tau+ \frac{d}{2}+3$, where $\tau$ is the Diophantine constant, $d$ is the dimensional of the frequency (consult Remark 3 of \cite{Kozlov1984}). However, not to mention whether or not analytic smoothing works in the non-self-adjoint setting, even it works, one can't anticipate approaching $C^{1,\gamma}$ smoothness in our result. Secondly, in the classical KAM results, the smallness of the perturbation must depend on the frequency $\omega$ \cite{Kozlov1984, Liang2021}, while in our result, we can select the perturbation to be independent of $\omega$, thus the result lies in the semi-local regime defined by Avila-Krikorian \cite{AK3} (the readers are invited to consult nice ICM proceedings of Fayad-Krikorian \cite{FK} for more history on semi-global results). To the best knowledge of the authors, Theorem \ref{Main1} gives the first semi-local result in the non-self-adjoint setting.

Once we have this, people may be curious about whether we can remove the smoothness condition on $W_1, W_2$, and $V$, or whether one can completely remove the arithmetic condition on $\omega$. The answer is yes at the cost of perturbating the potential $V$.

\begin{Theorem}\label{Main2}
Let $\omega\in\R^d\backslash\Q^d$, $d\in\N_+,$ $g\neq 0$, $W_1, W_2, V\in C(\T^d,\R)$. Then there exists  an absolute constant $C_0>0$
such that if
\begin{equation}\label{Main2small}
\|V\|_0+\|W_1\|_0+\|W_2\|_0\leq C_0\me^{-12|g|}g^6,
\end{equation}
then for any $\varepsilon>0$, there exists $V'\in C(\T^d,\R)$ with $\|V'-V\|_{0}\leq \varepsilon$ such that
$\mathcal L^g_{W_1,W_2,V'}$
admits a simple ground state energy and a quasi-periodic ground state $p(n)=P(n\omega)$ with $P\in C(\T^d,\R)$ and $P>0$.
\end{Theorem}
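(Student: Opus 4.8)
The plan is to reduce the existence of a quasi-periodic ground state to a statement about uniform hyperbolicity of the associated Jacobi cocycle, and then exploit the density of uniform hyperbolicity in the $C^0$ topology to get Theorem~\ref{Main2} for free from Theorem~\ref{Main1}'s underlying mechanism. Concretely, for a fixed energy $E\in\R$ the eigenvalue equation $\mathcal L^g_{W_1,W_2,V}u=Eu$ is a second-order linear difference equation, which we rewrite as a $\GL$-cocycle $(\omega, A_{E})$ over the translation $x\mapsto x+\omega$ on $\T^d$, with $A_E(x)=\frac{1}{\me^{-g}+W_1(x)}\begin{pmatrix} E-V(x) & -(\me^{g}+W_2(x))\\ \me^{-g}+W_1(x) & 0\end{pmatrix}$ (or the $\SL$ normalization after a standard conjugation). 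The key point, flagged in the paper's discussion of hyperbolicity in \S\ref{hyse}, is that a \emph{positive} quasi-periodic solution of $\mathcal L u = E_0 u$ exists precisely when, at the right edge $E_0$ of the spectrum, the cocycle is uniformly hyperbolic and the continuous invariant section of the projective bundle corresponding to the attracting/unstable direction stays in the positive cone for all $x$; the sign structure ($\me^{\pm g}+W_i>0$, and $V$ small so that $E_0$ is large) forces the relevant eigendirection to be positive, as in the finite-dimensional Perron--Frobenius picture. So the target reduces to: produce $E_0\in\R$ and a $C^0$ conjugacy of $(\omega,A_{E_0})$ to a diagonal cocycle $\mathrm{diag}(\lambda,\lambda^{-1})$ with $|\lambda|>1$, with the conjugating map preserving positivity of the first coordinate, and such that $E_0$ is the rightmost spectral point.

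The next step is to use the smallness hypothesis \eqref{Main2small}. When $W_1=W_2=V=0$ and $g\neq 0$, the cocycle $A_E$ with $E=\me^{-g}+\me^{g}$ (the edge of the spectrum of the free operator $\me^{-g}S + \me^{g}S^{-1}$, which is the circle of radius-like curve, with rightmost real point $2\cosh g$) is \emph{constant} and hyperbolic with eigenvalues $\me^{g},\me^{-g}$, and the unstable eigenvector is $(1,\me^{-g})^{T}$ or $(\me^{g},1)^T$, manifestly in the positive cone; moreover $g^{6}\me^{-12|g|}$ is exactly the kind of quantitative lower bound on the hyperbolicity gap (its sixth power / exponential of the norm) that one gets from tracking the cone-field contraction of this constant matrix. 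Thus for the perturbed cocycle with $\|W_1\|_0+\|W_2\|_0+\|V\|_0\le C_0\me^{-12|g|}g^{6}$, and $E$ near $2\cosh g$, a direct cone-field (graph transform) argument gives uniform hyperbolicity with a continuous, positive-cone-valued invariant unstable section $P_E(x)$. The dependence of the stable/unstable Lyapunov exponents on $E$ is monotone/continuous on the relevant interval, so one can \emph{choose} $E_0$ to be the right endpoint of the maximal interval of uniform hyperbolicity adjacent to $2\cosh g$; at $E_0$ the cocycle is still UH (UH is open, but we take the closure-adjusted endpoint using that the set $\Sigma(\mathcal L)$ is closed and UH corresponds to $E\notin\Sigma$), and $E_0=\max\{\Re E: E\in\Sigma(\mathcal L)\}$ because for $E>E_0$ real we are in the UH regime, hence in the resolvent set, and the cone argument also excludes complex $E$ with large real part. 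Simplicity of $E_0$ follows from one-dimensionality of the unstable direction: two positive quasi-periodic eigenfunctions give two continuous positive invariant sections of a rank-one bundle, forcing proportionality.

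This handles a \emph{nearby} energy for the given $V$, but to land exactly at a positive quasi-periodic eigenfunction for \emph{some} potential we invoke the freedom to perturb $V$, which is the whole content of Theorem~\ref{Main2} versus Theorem~\ref{Main1}. The mechanism: having produced $E_0$ and the continuous positive unstable section $x\mapsto P_{E_0}(x)=[u_{+}(x):u_{-}(x)]$ with $u_+>0$, set $P(x):=u_{+}(x)$ (the first coordinate of a continuous lift), a strictly positive continuous function on $\T^d$; then $p(n)=P(n\omega)$ satisfies the eigenvalue equation with coefficients $\me^{-g}+W_1, \me^{g}+W_2$, and a \emph{modified} potential $V'$ which is read off from the equation: $V'(n\omega):=E_0-(\me^{-g}+W_1(n\omega))\frac{P((n+1)\omega)}{P(n\omega)}-(\me^g+W_2(n\omega))\frac{P((n-1)\omega)}{P(n\omega)}$. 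Here is the subtlety requiring the $\varepsilon$-perturbation: $P$ is only $C^{0}$, so a priori the shift $P(x+\omega)/P(x)$ need not extend to a continuous function of $x$ on $\T^d$ when $\omega$ is merely irrational — it is continuous along the orbit but only measurable/bounded on $\T^d$ in general. To fix this one approximates: since $P$ is continuous and strictly positive, and the unstable section varies continuously in the $C^0$ topology of the coefficients (again by the cone-field estimate, which is quantitative and stable), one perturbs $(W_1,W_2,V)$ slightly in $C^0$ — or just $V$, as the statement permits — to coefficients for which the invariant section is \emph{transversally} continuous on all of $\T^d$ (e.g. reduce to the finitely-many-frequency case and use that UH cocycles over minimal translations have continuous invariant bundles, or approximate by trigonometric-polynomial coefficients and pass to the limit). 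The honest statement is that the set of $V'$ for which $\mathcal L^g_{W_1,W_2,V'}$ has a genuine continuous positive quasi-periodic ground state is $C^0$-dense near $V$; the main obstacle in writing the proof is exactly this passage from "invariant section continuous along orbits" to "continuous on $\T^d$," and the density/closing argument that turns the orbit-wise eigenfunction into one coming from a continuous $P$ on the torus — everything else (hyperbolicity via cones, the explicit $\me^{-12|g|}g^6$ threshold, positivity, simplicity, identification of $E_0$ with the spectral edge) is a quantitative but routine perturbation of the constant hyperbolic cocycle.
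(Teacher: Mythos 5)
There is a genuine gap, and it sits exactly where the real content of Theorem \ref{Main2} lies. Your cone-field argument produces, for $E$ in the hyperbolic regime, a continuous \emph{projectively} invariant positive section; but a projective section is not an eigenfunction. If $B(x)$ spans the invariant line, then $A_E(x)B(x)=c(x)B(x+\omega)$ for a continuous scalar $c(x)$, and the sequence built from the first coordinate of the section solves the difference equation only after multiplying by the Birkhoff products of $c$. Turning this into a bounded, strictly positive quasi-periodic solution $P(n\omega)$ requires (i) tuning $E$ so that the mean of $\log c$ vanishes (this is the paper's intermediate-value step, Lemma \ref{contin}), and (ii) writing the remaining zero-mean part of $\log c$ as a coboundary, i.e.\ solving the cohomological equation $y(\theta+\omega)-y(\theta)=f(\theta)-\hat f(0)$ — which, for merely continuous data and an arbitrary irrational (possibly Liouville) $\omega$, may have no continuous or even $L^2$ solution. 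This small-divisor obstruction, not hyperbolicity, is the obstacle Theorem \ref{Main2} is designed to circumvent, and your proposal does not address it: the subtlety you flag (that $P(x+\omega)/P(x)$ might fail to be continuous on $\T^d$) is a misdiagnosis, since for a continuous positive $P$ on $\T^d$ that ratio is automatically continuous; the genuine issue is that your $V'$, defined from the section, differs from $V$ by essentially the deviation of $c$ from $1$, and no estimate $\|V'-V\|_0\le\varepsilon$ is given or available without (approximately) solving the cohomological equation. You acknowledge this yourself by deferring the "density/closing argument" as the main obstacle — but that argument is precisely the theorem. There are further problems: your choice of $E_0$ as the endpoint of the maximal interval of uniform hyperbolicity "still UH" while also asserting "UH corresponds to $E\notin\Sigma$" and $E_0\in\Sigma$ is internally inconsistent (in the paper's normal form at $E_0$ the cocycle is $\mathrm{diag}(1,s)$, one exponent exactly zero, so no exponential dichotomy); the localization of the \emph{complex} spectrum in $\{\Re E\le E_0\}$ does not follow from a cone argument at real energies (the paper needs the adjoint positive eigenfunction and a Lax--Milgram energy estimate, Proposition \ref{Th:location of groundstate}); and simplicity needs more than rank-one-ness of the unstable bundle (the paper's Lemma \ref{Lemma:simple} uses a sliding argument with almost periodicity).

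For comparison, the paper's route through the hyperbolic regime agrees with your starting intuition — Proposition \ref{ALKAM} diagonalizes the cocycle by an implicit-function-theorem cancellation that exploits the strong hyperbolicity of $A_g(E)$, uniformly in $\omega$ — but the step you are missing is handled by truncating the resulting diagonal data to trigonometric polynomials (so the cohomological equation becomes solvable for every irrational $\omega$, with no quantitative control) and then converting the truncated cocycle back into a genuine Jacobi cocycle with a potential $V'$ satisfying $\|V'-V\|_0\le\varepsilon$ via the Avila--Jitomirskaya-type Lemma \ref{Schrodinger}; this is Lemma \ref{prop:withoutfre}, and it is exactly the mechanism that produces the $\varepsilon$-perturbation of $V$ in the statement. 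Without an analogue of that conversion step, your construction of $V'$ has no reason to be close to $V$, and the proof does not go through.
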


Theorem \ref{Main2} is also surprising from the point of Hamiltonian systems. As we said above, the smoothness of the systems is necessary to apply the KAM method. If the smoothness $k$ is smaller than some threshold, then KAM tori might be destroyed \cite{CW,H,H2}.  For example, for exact and area-preserving twist maps on the annulus, it was proved by Herman \cite{H} that invariant circles can be destructed by $C^{3-\delta}$ arbitrarily small perturbations where $\delta$ is a small positive constant. Secondly,  arithmetic condition on $\omega$ is also necessary for applying the KAM method.  For example, the well-known results of Mather and Forni \cite{F,Mather} says that for certain Liouvillean frequency,  the invariant circles with the frequency can be destroyed by small perturbations in finer topology.

Theorem \ref{Main1} and Theorem \ref{Main2} are restricted to the quasi-periodic ground state. Now we state the existence of a special kind of ``real" almost-periodic ground state, i.e. the hull
$\mathcal H(f)$  of the almost periodic sequence $f$ is isomorphic to $\T^{\infty}=\prod\limits _{i\in \mathbb N}\mathbb T^1$ with product topology.

We assume that the frequency $\omega=(\omega_j)_{j\in
\N}$ belonging to  the infinite dimensional cube $  \mathcal{R}_0:= [1,2]^{\N}$, which is endowed with the probability measure $\mathcal{P}$
induced by the product measure of the infinite dimensional cube $\mathcal{R}_0$.
Define the set of Diophantine frequencies  that was first developed by Bourgain \cite{Bourgain2005}:
\begin{Definition}[\cite{Bourgain2005}]\label{Almost}
Given $\gamma\in (0,1),\tau>1$, we denote by $DC_\infty(\gamma,\tau)$ the set of Diophantine frequencies $\omega\in \mathcal{R}_0$ such that
$$
\inf\limits_{n\in\Z}|\langle k,\omega\rangle-n|\geq \gamma\prod\limits_{j\in\mathbb N}\frac{1}{1+|k_j|^\tau\langle j\rangle^\tau},\ \forall k\in\mathbb Z^\infty, 0<\sum
\limits_{j\in\mathbb N}|k_j|<\infty,
$$
where  $\langle j\rangle:=\max\{1,|j|\}$.
\end{Definition}
As proved in \cite{Bourgain2005,Biasco2019}, for any $\tau>1$, Diophantine frequencies  $DC_\infty(\gamma,\tau)$ are typical in the set $\mathcal{R}_0$ in the sense that there exists a positive constant $C(\tau)$ such that
$$ \mathcal{P}( \mathcal{R}_0 \backslash DC_\infty(\gamma,\tau) ) \leq C(\tau) \gamma.$$
Denote $\mathbb Z_*^\infty:=\{k\in\mathbb Z^\infty:|k|_1:=\sum\limits_{j\in\mathbb N}\langle j\rangle|k_j|<\infty\}$ the set of infinite integer vectors with finite support. For any $F\in C(\T^\infty,\R)$, $k\in \mathbb Z_*^\infty$,  we define $\hat F(k):=\int_{\T^\infty} F(\theta)\me^{-\mi\langle k,\theta\rangle}d\theta$.
Once we introduce this, our result can be stated as follows:

\begin{Theorem}\label{Main3}
 Let $\omega\in DC_\infty(\gamma,\tau)$, $0<\gamma<1,\tau>1,\ g\neq 0, r>0$,  $W_1,W_2,V\in C(\T^\infty,\R)$. There exists an absolute constant $C_0>0$ such that if
 \begin{equation}\label{Main3small}
 \sum\limits_{k\in\mathbb Z_*^\infty}(|\hat{W}_1(k)|+ |\hat W_2(k)|+ |\hat V(k)|)   \me^{r|k|_1}\leq C_0\me^{-12|g|}g^6,
 \end{equation}
  then $\mathcal L^g_{W_1,W_2,V}$
 admits a simple ground state energy and an almost-periodic ground state $p(n)=P(n\omega)$ with $P\in C(\T^\infty,\R),$ $P>0$, and $\sum\limits_{k\in\mathbb Z_*^\infty}|\hat P(k)|\me^{r'|k|_1}<\infty$ for any $r'\in (0,r)$.
\end{Theorem}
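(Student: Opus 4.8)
The plan is to recast the eigenvalue equation as a scalar Riccati equation on $\T^\infty$, to solve it by a contraction argument that — thanks to $g\neq0$ — involves \emph{no small divisors} (this is what makes the smallness in \eqref{Main3small} independent of $\omega$), to tune the spectral parameter so that the associated logarithmic cocycle has zero average, and finally to recover the ground state by solving a single cohomological equation over the infinite-dimensional torus, the only step in which $\omega\in DC_\infty(\gamma,\tau)$ is genuinely used. Concretely, looking for $p(n)=P(n\omega)$ with $P>0$ and putting $\psi(\theta)=P(\theta+\omega)/P(\theta)$, the equation $\mathcal L^g_{W_1,W_2,V}p=Ep$ becomes
\[
\bigl(\me^{-g}+W_1(\theta)\bigr)\,\psi(\theta)+\frac{\me^{g}+W_2(\theta)}{\psi(\theta-\omega)}=E-V(\theta),\qquad\theta\in\T^\infty,
\]
together with $\int_{\T^\infty}\log\psi=0$, the latter being exactly what will let one integrate $\log P(\theta+\omega)-\log P(\theta)=\log\psi(\theta)$. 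When $W_1=W_2=V=0$ this is solved by $\psi\equiv1$, $E=E_0:=\me^{g}+\me^{-g}=2\cosh g$, i.e.\ the constant ground state $p\equiv1$; the crucial structural fact is that the linearization $\mathcal D:\phi\mapsto\me^{-g}\phi(\theta)-\me^{g}\phi(\theta-\omega)$ has Fourier multiplier $\me^{-g}-\me^{g}\me^{-\mi\langle k,\omega\rangle}$, of modulus $\geq|\me^{g}-\me^{-g}|=2|\sinh g|>0$ for \emph{every} $k\in\Z_*^\infty$ and every $\omega$. Hence $\mathcal D$ is invertible with $\|\mathcal D^{-1}\|\leq(2|\sinh g|)^{-1}$ on the Banach algebra $\mathcal A_r:=\{F:\ \|F\|_r:=\sum_{k\in\Z_*^\infty}|\hat F(k)|\,\me^{r|k|_1}<\infty\}$.

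I would then solve the Riccati equation with $E$ as a parameter: for real $E$ near $E_0$ write $\psi=1+\phi$ and recast it as a fixed point $\phi=\mathcal D^{-1}\!\bigl[(E-E_0)-(W_1+W_2+V)+\mathcal N(\phi)\bigr]$ in $\mathcal A_r$, where $\mathcal N(\phi)$ collects the quadratic-and-higher terms of the expansion of $(1+\phi(\cdot-\omega))^{-1}$ (so $\mathcal N$ carries a factor $\me^{|g|}$ and is controlled only while $\|\phi\|_r<1$). Since $\mathcal A_r$ is a Banach algebra and $\|\mathcal D^{-1}\|\leq(2|\sinh g|)^{-1}$, and since \eqref{Main3small} is exactly the bound needed to dominate the relevant powers of $\me^{\pm g}$ and $|g|^{-1}$ uniformly in $g$ — in particular it enforces $\me^{-g}+W_1>0$, $\me^{g}+W_2>0$ — the contraction principle gives a unique small $\phi=\phi_E\in\mathcal A_r$, real-analytic in $E$, with $\psi_E=1+\phi_E>0$. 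Then $E\mapsto\int_{\T^\infty}\log\psi_E$ is real-analytic and its derivative at $E_0$ is $\int_{\T^\infty}\mathcal D^{-1}(1)=(\me^{-g}-\me^{g})^{-1}$ to leading order, hence nonzero, so the implicit function theorem produces a unique $E_0'$ close to $E_0$ with $\int_{\T^\infty}\log\psi_{E_0'}=0$; this $E_0'$ will be the ground state energy.

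Next I would recover $P$. With $\psi:=\psi_{E_0'}$ and $f:=\log\psi\in\mathcal A_r$, which has zero average, I solve $h(\theta+\omega)-h(\theta)=f(\theta)$ by $\hat h(k)=\hat f(k)/(\me^{\mi\langle k,\omega\rangle}-1)$; Definition \ref{Almost} gives $|\me^{\mi\langle k,\omega\rangle}-1|\gtrsim\gamma\prod_j(1+|k_j|^\tau\langle j\rangle^\tau)^{-1}$, and since the reciprocal of this product grows only sub-exponentially in $|k|_1$ one obtains $h\in\mathcal A_{r'}$ for every $r'\in(0,r)$ — this, and nothing earlier, is where $\omega\in DC_\infty(\gamma,\tau)$ (rather than mere irrationality) enters, and it is also why the weight is lost from $r$ down to $r'$. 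Then $P:=\me^{h}\in\mathcal A_{r'}$, $P>0$, and $p(n):=P(n\omega)$ solves $\mathcal L^g_{W_1,W_2,V}p=E_0'p$, the desired positive quasi-periodic (indeed analytic) ground state, with $\sum_k|\hat P(k)|\me^{r'|k|_1}<\infty$. Positivity is built in; one can also read it off the transfer cocycle $A_E$, with $A_E(\theta)$ having first row $\bigl((E-V(\theta))(\me^{-g}+W_1(\theta))^{-1},\,-(\me^{g}+W_2(\theta))(\me^{-g}+W_1(\theta))^{-1}\bigr)$ and second row $(1,0)$: the unperturbed $A_{E_0}$ has rows $(\me^{2g}+1,-\me^{2g})$ and $(1,0)$, with eigenvalues $1,\me^{2g}$ and eigenvectors $(1,1),(\me^{2g},1)$ lying in the open positive cone, and a perturbation as small as \eqref{Main3small} keeps the invariant line bundles of $A_{E_0'}$ inside that cone, forcing $P$ to have constant sign.

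Finally, for the spectral statements: the $\me^{2g}$-direction of $A_{E_0}$ is normally hyperbolic, so for $E$ near $E_0$ and coefficients as in \eqref{Main3small} it persists as an analytic line bundle in the positive cone, and the scalar cocycle induced on the complementary ``centre'' bundle is close to $1$ with average logarithm equal to $\int_{\T^\infty}\log\psi_E$ (the second Lyapunov exponent of $A_E$), which is strictly monotone near $E_0$ and vanishes only at $E_0'$. Hence for real $E\neq E_0'$ near $E_0$ the cocycle $A_E$ has an exponential dichotomy on $\Z$ (of some index), so $E\notin\Sigma(\mathcal L^g_{W_1,W_2,V})$ (recall that for a Jacobi operator on $\ell^2(\Z)$ the resolvent set is exactly the set of $E$ for which the transfer cocycle admits such a dichotomy); combined with upper semicontinuity of the spectrum — which confines $\Sigma$ to a thin neighbourhood of the unperturbed spectral ellipse $\{2\cosh g\cos\varphi-2\mi\sinh g\sin\varphi:\varphi\in\T\}$, whose rightmost point is $E_0$ — one gets $E_0'=\max\{\Re E:\ E\in\Sigma\}$, so $E_0'$ is the ground state energy. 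For simplicity, any quasi-periodic eigenfunction at $E_0'$ lifts to a bounded orbit of $A_{E_0'}$, which must lie in the one-dimensional centre bundle generated by $p$, so it is a real multiple of $p$. I expect the genuine difficulty to be Steps 2 and 3 carried out over $\T^\infty$ in the analytic category: keeping the Banach-algebra estimates uniform in $g$ so that the single bound \eqref{Main3small} suffices, handling the $\Z_*^\infty$-small divisors via Definition \ref{Almost} with only a sub-exponential loss of the weight, and coupling the fixed point for $\psi_E$ to the one-parameter adjustment that pins down $E_0'$; the spectral part, though softer, also needs care because the spectrum of a non-self-adjoint operator is only upper semicontinuous in the coefficients.
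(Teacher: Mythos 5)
Your construction of the positive almost periodic eigenfunction is, in outline, sound, and it is a genuinely different (more elementary, scalar) route than the paper's: you solve a Riccati equation for $\psi=P(\cdot+\omega)/P$ by a contraction whose linearization is the Fourier multiplier $\me^{-g}-\me^{g}\me^{-\mi\langle k,\omega\rangle}$, bounded below by $2|\sinh g|$, so no small divisors and no dependence on $\omega$ appear at this stage; you then tune $E$ so that $\int\log\psi_E=0$ and solve a single cohomological equation, which is the only place $DC_\infty(\gamma,\tau)$ and the loss $r\to r'$ enter. This is exactly the division of labor in the paper (Proposition \ref{ALKAM} needs no arithmetic condition, Lemma \ref{al-small} needs the Diophantine condition, Lemma \ref{contin} fixes $E_0$), but carried out scalar-wise rather than through diagonalization of the $\GL$ cocycle via the implicit-function-theorem Lemma \ref{basic} and the criterion Proposition \ref{criterion}; your version buys positivity for free since $P=\me^{h}$.

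The genuine gap is in the spectral part, i.e.\ in showing that $E_0'$ is the ground state energy in the sense of the paper: $E_0'\in\Sigma(\mathcal L^g_{W_1,W_2,V})$ and $\Re E\leq E_0'$ for \emph{every} $E\in\Sigma$, including non-real $E$. Your dichotomy argument is run only for real $E$ near $E_0$, and the appeal to upper semicontinuity does not close it: the admissible $O(\epsilon)$-neighbourhood of the unperturbed ellipse near its vertex $2\cosh g$ is two-dimensional and contains non-real points whose real parts may well exceed $E_0'$ (which is itself only $O(\epsilon)$-close to $2\cosh g$), so complex spectrum to the right of $E_0'$ is not excluded; the zero curve of $E\mapsto\int\log|\psi_E|$ is symmetric under conjugation but could a priori be tangent to the vertical through $E_0'$ from the right, and ruling this out is precisely the nontrivial content. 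In addition, you use the equivalence ``resolvent set $=$ exponential dichotomy set'' for non-self-adjoint Jacobi operators without justification, and you need at least the direction that the bounded solution $p$ forces $E_0'\in\Sigma$. The paper closes exactly this gap by a different mechanism: it constructs a positive almost-periodic eigenfunction $p_*$ of the adjoint operator (which is again of the form $\mathcal L^{-g}_{W_2(\cdot+\omega),W_1(\cdot-\omega),V}$, so the same construction applies), identifies the two eigenvalues (Lemma \ref{unique}), conjugates $p_*(\mathcal L-E_0)(p\,\cdot)$ into divergence form plus a constant drift (Lemma \ref{reduc}), and then an energy/Lax--Milgram estimate (Proposition \ref{Th:location of groundstate}) disposes of all complex $E$ with $\Re E>E_0$ at once and shows $\mathcal L-E_0$ is not surjective, hence $E_0\in\Sigma$; simplicity is proved by the sliding argument of Lemma \ref{Lemma:simple}. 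Without the adjoint ground state (or some substitute accretivity-type argument valid for complex $E$), your sketch does not establish that $E_0'$ is the rightmost point of the spectrum.
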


Similar to Theorem \ref{Main1}, the smallness of the perturbation doesn't depend on $\omega$.
To the best knowledge of the authors, our result gives the first semi-local result with infinite dimensional frequency, previous semi-local results are restricted to one-frequency case \cite{Avila2015, A3,AK2, houyou,youzhou}.

\subsection{Applications}

Now we give some applications of our main results.
Before that, to state our application in a broader way, we  notice that our main results actually state the existence of the positive almost periodic eigenfunction of the difference operator
\begin{equation}\label{eq:ellidiv}
D^*\bigr(A_1(n\omega)Du\bigr)+A_2(n\omega)D^*u+W(n\omega)u.
\end{equation}
where  $Du(n):=u(n+1)-u(n),$ $D^*u(n):=u(n)-u(n-1)$. \eqref{eq:ellidiv} is usually used to describe the discrete diffusion processes:  $D^*\bigr(A_1(n\omega)Du\bigr)$ is the diffusion term, $A_2(n\omega)D^*u$ is the drifting term and $W(n\omega)u$ is the local growth term.
Indeed, under the assumption $\bigr(\hat A_1(0)-\hat A_2(0)\bigr),\hat A_1(0)>0$, \eqref{eq:ellidiv} can be rewritten as
\begin{equation}\label{jacobi-diff}
\begin{aligned}
h\biggr[\biggr(\me^{-g}+\frac{W_1(n\omega)}{h}\biggr)p
(n+1)+\biggr(\me^g+\frac{W_2(n\omega)}{h}\biggr)p(n-1)+\frac{W(n\omega)}{h}p(n)\biggr],
\end{aligned}
\end{equation}
where $\me^{-g}=\sqrt{\frac{\hat A_1(0)}{\hat A_1(0)-\hat A_2(0)}},\ h=\sqrt{\bigr(\hat A_1(0)-\hat A_2(0)\bigr)\hat A_1(0)}$ (one can consult section \ref{section4} for details).  Then  $g\neq 0$ corresponds to $\hat A_2(0)\neq 0$. Thus roughly speaking, \cite{Kozlov1984} deals with the non-self-adjoint operator which is close to the self-adjoint  one, while we deal with the non-self-adjoint operator which is far from the self-adjoint  ones. Now we give the applications as follows:

\subsubsection{Steady states of the KPP-type discrete diffusion equation}

First, we consider the positive steady state of the following KPP type equation in discrete media
\begin{equation}\label{eq:diffusion}
u_t=D^*(a(n)Du)+b(n)D^*u+c(n)u-u^2,\  (t,n)\in\R\times\Z,
\end{equation} where  $a,b,c$ are bounded and $\inf\limits_{n\in\Z} a(n),\inf\limits_{n\in\Z} \bigr(a(n-1)-b(n)\bigr)>0.$
This equation is usually used to model the growth and discrete migration of species in mathematical biology. Its dynamics and propagation phenomena are investigated in many papers, e.g. see  \cite{Liang2021,G1,LZ} and the references therein.

A steady state is a function $u=u(n)$ satisfying \eqref{eq:diffusion} which is independent of time $t$. Notice that in the previous work, before the further study, \eqref{eq:diffusion} was supposed to have the monostable property, that is, \eqref{eq:diffusion} has a unique positive steady state. Moreover, this steady state is stable. We should notice that the monostable property can be obtained under some additional hypotheses on $c$. For example, applying the sub- and super-solution method, the existence of a bounded positive steady state can be obtained from the hypothesis $\inf c>0$.  If we further suppose that $a,b,c$ are periodic, the bounded positive steady state is unique and also periodic.

However, there is no progress on the positive steady state of \eqref{eq:diffusion}  when $a,b,c$ are almost periodic and $\inf c\leq 0 $.
Here, after establishing the existence of the almost periodic ground state of $Lu=D^*(a(n)Du)+b(n)D^*u+c(n)u$,  a different sufficient condition for the positive steady state is provided below.

\begin{Proposition}\label{KPPSS}
Let $a,b,c$ be almost periodic, and suppose that for some $E_0>0$, there exists an almost periodic function  $\phi$ with $\inf\phi>0$ satisfying
 $$D^*(a(n)D\phi)+b(n)D^*\phi +c(n)\phi=E_0\phi.$$
  Then \eqref{eq:diffusion} has a unique bounded steady state $u$ with $\inf u>0$. Furthermore, $u$ is almost periodic.
\end{Proposition}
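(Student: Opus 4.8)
The plan is to use the existence of the almost-periodic ground state $\phi$ (with $\inf\phi>0$ and ground state energy $E_0>0$) to build an ordered pair of sub- and super-solutions for \eqref{eq:diffusion}, and then to pass to the parabolic dynamics to obtain a unique attracting bounded positive steady state. First I would record the monotone structure: writing $f(u)=c(n)u-u^2$, the operator $Lu=D^*(a(n)Du)+b(n)D^*u+c(n)u$ generates a parabolic semigroup that is order-preserving (the comparison principle holds for \eqref{eq:diffusion} because the off-diagonal coefficients $a(n)$ and $a(n-1)-b(n)$ are positive, exactly the hypothesis imposed). For the super-solution I would take a large constant multiple $K\phi$ of the ground state: since $L(K\phi)=K E_0\phi$ and $f(n,K\phi)=c(n)K\phi-K^2\phi^2$, we need $KE_0\phi \ge c(n)K\phi - K^2\phi^2$, i.e. $K\phi^2 \ge (c(n)-E_0)\phi$, which holds for all $n$ once $K \ge (\sup c - E_0)/\inf\phi$ (using $\inf\phi>0$). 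For the sub-solution I would take a small multiple $\epsilon\phi$: we need $\epsilon E_0\phi \le c(n)\epsilon\phi-\epsilon^2\phi^2$, i.e. $\epsilon\phi \le (c(n)-E_0)\phi$ pointwise — here I would use that $E_0$ is the rightmost point of the spectrum of $L$, so $E_0 \ge$ (roughly) the supremum of local growth rates, forcing the needed inequality; more carefully, since $E_0>0$ is the ground state energy and $\phi$ is its eigenfunction, a standard argument (testing $L\phi=E_0\phi$ against the structure of $L$) shows $\epsilon\phi$ is a sub-solution for $\epsilon$ small, provided $E_0$ dominates the relevant pointwise quantity; one must check this sign condition directly from $D^*(a D\phi)+bD^*\phi+(c-E_0)\phi=0$ and $\phi>0$.

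Second, with $\epsilon\phi \le K\phi$ an ordered sub/super-solution pair, the standard theory of monotone parabolic equations (Matano, or the discrete analogue of Aronson–Weinberger/Sattinger) gives that the solution of \eqref{eq:diffusion} starting from $\epsilon\phi$ is nondecreasing in $t$ and the solution starting from $K\phi$ is nonincreasing in $t$, and both converge uniformly to steady states $\underline u \le \overline u$ of \eqref{eq:diffusion} that lie between $\epsilon\phi$ and $K\phi$; in particular $\inf \underline u \ge \epsilon\inf\phi>0$ and $\overline u$ is bounded. So existence of a bounded positive steady state is settled. Third, for uniqueness I would invoke the classical sliding/concavity argument for KPP-type nonlinearities: if $u_1,u_2$ are two bounded steady states with positive infimum, consider $\theta^* = \sup\{\theta>0 : \theta u_1 \le u_2\}$ (finite and positive by the bounds), and show $\theta^*=1$; otherwise, if $\theta^*<1$, then $\theta^* u_1 \le u_2$ with contact somewhere, and using the strict concavity of $s\mapsto c(n)s-s^2$ one gets $L(\theta^* u_1) - (c(n)\theta^* u_1 - (\theta^* u_1)^2) = \theta^*(\theta^*-1)u_1^2 \cdot(\text{sign})$... precisely $\theta^* u_1$ is a strict sub-solution, so $\theta^* u_1 < u_2$ strictly and the contact is impossible at an interior-type minimum — the discrete strong maximum principle on $\mathbb Z$ (justified again by positivity of $a(n)$, $a(n-1)-b(n)$ and the almost-periodicity, which supplies the needed recurrence/compactness to rule out the infimum being approached only at infinity) yields $\theta^* u_1 \equiv u_2$, contradicting $\theta^*<1$ after plugging back. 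Symmetrically $u_1 \le u_2$ and $u_2\le u_1$, so $u_1=u_2$. This also shows $\underline u = \overline u =: u$.

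Finally, almost-periodicity of $u$: I would argue via the hull. Since $a,b,c$ are almost periodic, the equation \eqref{eq:diffusion} extends to the hull $\mathcal H(a,b,c)$, and for each element of the hull the above argument produces a unique bounded positive steady state; uniqueness forces this family to be equivariant under the base flow, and a standard argument (continuity of the steady state in the hull parameter, obtained from uniqueness plus compactness of the solution set in the sandwich $[\epsilon\phi,K\phi]$ together with parabolic regularization/discrete interior estimates) shows the map from the hull to the steady state is continuous, hence $u$ is almost periodic. The main obstacle I expect is the uniqueness step on the noncompact lattice $\mathbb Z$: the sliding argument needs the contact point $\theta^*$ to be genuinely attained or the strong maximum principle to apply "at infinity," and this is exactly where almost-periodicity (as opposed to mere boundedness with $\inf>0$) is essential — one extracts a limit along a sequence $n_j$ realizing the infimum, passes to a hull limit equation, and applies the strong maximum principle there; making this limiting argument clean, and simultaneously establishing the sub-solution sign condition for $\epsilon\phi$ from the spectral characterization of $E_0$, are the two points requiring genuine care rather than routine checking.
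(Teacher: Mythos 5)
Your overall strategy is essentially the paper's: build an ordered sub/super-solution pair from the ground state $\phi$, run the monotone parabolic dynamics to produce a bounded steady state with positive infimum, prove uniqueness by a comparison argument that copes with a possibly non-attained infimum through hull limits, and deduce almost periodicity from uniqueness applied on the hull (this last part is exactly the paper's Proposition~\ref{Prop:steadystate}, and your limiting argument is the same device the paper uses in Lemma~\ref{Lemma:simple}). Your uniqueness step via $\theta^*=\sup\{\theta>0:\theta u_1\le u_2\}$, the strict subsolution property of $\theta u_1$ for $\theta\in(0,1)$, and the strong maximum principle in a hull limit is a legitimate alternative to the paper's argument, which instead compares the putative second steady state with the time-dependent subsolution $w=\epsilon\me^{\delta t}u_0$ and thereby never needs a contact point or a limit equation at that stage; both routes work.

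There is, however, a concrete error in your verification of the sub- and super-solution inequalities: you double-count the zeroth-order term by putting $c(n)u$ both inside $L$ (so that $L\phi=E_0\phi$) and inside $f(u)=c(n)u-u^2$. For \eqref{eq:diffusion}, writing $\mathcal Lu=D^*(aDu)+bD^*u+cu$, the correct conditions are
\begin{equation*}
\partial_t(\epsilon\phi)-\mathcal L(\epsilon\phi)+(\epsilon\phi)^2=-\epsilon E_0\phi+\epsilon^2\phi^2\le 0 \quad\Longleftrightarrow\quad \epsilon\le E_0/\sup\phi,
\end{equation*}
and symmetrically $K\phi$ is a supersolution iff $K\ge E_0/\inf\phi$ (the paper simply uses the constant $M>\sup c$ instead). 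Your stated subsolution requirement \lq\lq$\epsilon\phi\le(c(n)-E_0)\phi$ pointwise\rq\rq\ is false in general---in the interesting regime $\inf c\le 0<E_0$ the right-hand side is negative somewhere---and the proposed rescue via \lq\lq$E_0$ dominates the local growth rates\rq\rq\ points in the wrong direction, since it makes $c(n)-E_0\le 0$. The correct check above is a one-line computation and is precisely where the hypothesis $E_0>0$ enters; with it (and the corrected constant for the supersolution) in place, the remainder of your argument goes through along the same lines as the paper's proof.
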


In the following, let's consider a more special equation
\begin{equation}\label{eq:diffusion-1}
u_t=A_0D^*Du+  B_0D^*u+ W(n\omega) u-u^2, \text{ in }\R\times\Z.
\end{equation}
Here we always assume that $A_0,B_0\in\R$ with $A_0>\max\{0,B_0\}$.

Based on Proposition \ref{KPPSS} and  Theorem \ref{Main1} (resp. Theorem \ref{Main2}), we can provide several different sufficient conditions on the existence of the positive steady state of  \eqref{eq:diffusion-1}.

Recall that $\hat V(0)=\int_{\T^d}V(\theta)d\theta$, for $d\in\N_+,$ we have the following result:
\begin{Theorem}\label{APPMain1}
Let $\omega\in\mathcal P,\ 0<\gamma< 1$, $B_0\neq 0$, and suppose that $W\in C^{1,\gamma}(\T,\R)$ with $\hat W(0)\geq 0$, $W\not\equiv 0$. Then there
exists $\epsilon_0=\epsilon_0(A_0,B_0)$ such that for any $W$ with
\begin{equation*}\label{smallcond1}
\|W-\hat W(0)\|_{1,\gamma}\leq \epsilon_0,
\end{equation*}
 \eqref{eq:diffusion-1} has a unique bounded steady state $u(n)=U(n\omega)$ with $U\in C(\T,\R)$ and $U>0$.
\end{Theorem}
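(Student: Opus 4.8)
The plan is to reduce Theorem~\ref{APPMain1} to the already-established Theorem~\ref{Main1} together with Proposition~\ref{KPPSS}. The equation \eqref{eq:diffusion-1} has the form $u_t = Lu - u^2$ with $L u = A_0 D^* D u + B_0 D^* u + W(n\omega)u$, so by Proposition~\ref{KPPSS} it suffices to produce an almost-periodic (in fact quasi-periodic) function $\phi$ with $\inf\phi>0$ and a constant $E_0>0$ such that $L\phi = E_0\phi$. Writing $L$ in the Jacobi form, $L u(n) = A_0 u(n+1) + A_0 u(n-1) + (W(n\omega)-2A_0)u(n) + B_0(u(n)-u(n-1))$, i.e. the off-diagonal coefficients are $A_0$ on the right and $A_0 - B_0$ on the left, and the diagonal is $W(n\omega) - 2A_0 - B_0 + (\text{nothing})$; more precisely $L u(n) = A_0 u(n+1) + (A_0-B_0)u(n-1) + (W(n\omega) - 2A_0 + B_0)u(n)$. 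This matches the model \eqref{non-hermitian} after the rescaling indicated in \eqref{jacobi-diff}: set $\me^{-g} = \sqrt{A_0/(A_0-B_0)}$ (legitimate since $A_0 > \max\{0,B_0\}$ forces $A_0 > 0$ and $A_0 - B_0 > 0$), $h = \sqrt{A_0(A_0-B_0)}$, so $L = h\,\mathcal L^g_{W_1,W_2,\widetilde V}$ with $W_1 = W_2 = 0$ and $\widetilde V(n\omega) = (W(n\omega) - 2A_0 + B_0)/h$. Here $g \neq 0$ exactly because $B_0 \neq 0$.

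The next step is to arrange the smallness hypothesis of Theorem~\ref{Main1}. Since $g$ is determined by $A_0, B_0$ alone, the quantity $C_0 \me^{-12|g|}g^6$ on the right of \eqref{Main1small} is a fixed positive constant depending only on $A_0, B_0$. Now I would not apply Theorem~\ref{Main1} to $\widetilde V$ directly — $\widetilde V$ has a nonzero mean — but rather to the perturbed diagonal with the mean subtracted: write $\widetilde V = \widehat{\widetilde V}(0) + (\widetilde V - \widehat{\widetilde V}(0))$, absorb the constant $\widehat{\widetilde V}(0) = (\hat W(0) - 2A_0 + B_0)/h$ into a spectral shift, and apply Theorem~\ref{Main1} to $\mathcal L^g_{0,0,\,(W - \hat W(0))/h}$. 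Thus I set $\epsilon_0 = \epsilon_0(A_0,B_0) := h\, C_0 \me^{-12|g|}g^6$ (shrinking if needed), so that $\|W - \hat W(0)\|_{1,\gamma} \le \epsilon_0$ gives $\|(W-\hat W(0))/h\|_{1,\gamma} \le C_0\me^{-12|g|}g^6$, and since $W_1 = W_2 = 0$ the full hypothesis \eqref{Main1small} holds. Because $\omega \in \mathcal P$, Theorem~\ref{Main1} yields a simple ground state energy $E_*$ and a positive quasi-periodic ground state $P_* \in C(\T,\R)$, $P_* > 0$, with $\mathcal L^g_{0,0,(W-\hat W(0))/h}\,p_* = E_* p_*$ where $p_*(n) = P_*(n\omega)$. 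Undoing the normalization, $\phi(n) := P_*(n\omega)$ satisfies $L\phi = E_0 \phi$ with $E_0 = h\bigl(E_* + \widehat{\widetilde V}(0)\bigr) = hE_* + \hat W(0) - 2A_0 + B_0$; and since $P_* \in C(\T,\R)$ with $\T$ compact, $\inf \phi = \min_{\theta} P_*(\theta) > 0$, so $\phi$ is a valid test function for Proposition~\ref{KPPSS}.

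There remains one genuine point that is not bookkeeping: Proposition~\ref{KPPSS} requires $E_0 > 0$, and a priori $E_0 = hE_* + \hat W(0) - 2A_0 + B_0$ could fail to be positive. This is where the hypotheses $\hat W(0) \ge 0$ and $W \not\equiv 0$ enter. I expect this to be the main obstacle, and the way I would handle it is to show that $E_*$, the rightmost point of the spectrum of $\mathcal L^g_{0,0,(W-\hat W(0))/h}$, is strictly larger than the rightmost point of the spectrum of the unperturbed operator $\mathcal L^g_{0,0,0}$ (which is $\me^{-g} + \me^{g} = 2\cosh g$, attained at the top of the spectrum of the free Jacobi operator), by a definite amount controlled by the size of the perturbation $W - \hat W(0)$. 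Concretely, $hE_* + \hat W(0) - 2A_0 + B_0$ should be compared to the value $h\cdot 2\cosh g + \hat W(0) - 2A_0 + B_0 = 2\sqrt{A_0(A_0-B_0)}\cdot \frac{A_0 + (A_0-B_0)}{2\sqrt{A_0(A_0-B_0)}} + \hat W(0) - 2A_0 + B_0 = (2A_0 - B_0) + \hat W(0) - 2A_0 + B_0 = \hat W(0) \ge 0$; using $\hat W(0) \ge 0$ this gives $E_0 \ge 0$, and the strict inequality $E_0 > 0$ follows because $W \not\equiv 0$ forces the perturbation to strictly raise the top of the spectrum (the ground state $P_*$ is nonconstant, so the Rayleigh-type/variational comparison, or directly the equation $\mathcal L P_* = E_* P_*$ evaluated where $P_*$ attains its interior structure, is strict). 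The remaining assertions — uniqueness of the bounded steady state with $\inf u > 0$ and the representation $u(n) = U(n\omega)$ with $U \in C(\T,\R)$, $U > 0$ — are then immediate from Proposition~\ref{KPPSS}, since the steady state it produces is almost periodic with the same module of frequencies, hence quasi-periodic over $\omega$.
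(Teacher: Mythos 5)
Your reduction of \eqref{eq:diffusion-1} to the Jacobi model and then to Theorem~\ref{Main1} (rewrite $L$ as $A_0u(n+1)+(A_0-B_0)u(n-1)+(W-2A_0+B_0)u$, rescale by $h=\sqrt{A_0(A_0-B_0)}$, subtract the mean, note $g\neq0\Leftrightarrow B_0\neq0$ and that the smallness threshold depends only on $A_0,B_0$) is correct and is essentially the paper's own route (it is packaged there as Proposition~\ref{Main1ell}/Corollary~\ref{Main2ell}). However, your treatment of the positivity $E_0>0$ is not a proof. In the borderline case $\hat W(0)=0$ you need the strict inequality $E_*>2\cosh g$, and the justifications you gesture at do not work: a ``Rayleigh-type/variational comparison'' is unavailable because the operator is non-self-adjoint (this is exactly the difficulty the paper emphasizes), and evaluating $\mathcal L P_*=E_*P_*$ at the extrema of $P_*$ only gives $2\cosh g+\min V\le E_*\le 2\cosh g+\max V$, which is inconclusive for a zero-mean $V\not\equiv0$. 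Note also that Theorem~\ref{Main1} by itself only locates $E_*$ in $\mathcal I$, whose left endpoint lies below $2\cosh g$, so no lower bound of the required form comes for free. The correct argument is the one in Proposition~\ref{Lemma:groundstateener}: divide the eigenvalue equation by the positive quasi-periodic eigenfunction, take Birkhoff averages, and use Jensen/rearrangement-type inequalities; equality forces $P_*$ constant, hence $W$ constant, contradicting $\hat W(0)\ge0$, $W\not\equiv0$. You neither supply this argument nor invoke that proposition (the paper's proof simply cites Corollary~\ref{Main2ell}, which combines Theorem~\ref{Main1} with Proposition~\ref{Lemma:groundstateener} to get $E_0>0$).

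The larger gap is your final sentence, where the representation $u(n)=U(n\omega)$ with $U\in C(\T,\R)$, $U>0$ is declared ``immediate from Proposition~\ref{KPPSS}.'' That proposition only gives a unique bounded steady state with positive infimum which is almost periodic as a sequence on $\Z$; it says nothing about its frequency module, nor does it produce a continuous function on the hull $\T$. This is precisely where the paper's proof does most of its work: for each $\theta\in\T$ it considers the equation with coefficients $W(\cdot+\theta)$, uses uniqueness to define $U_0(\theta):=u_\theta(0)$ (so that $u_{\theta+k\omega}(0)=u_\theta(k\omega)$), and then proves continuity of $U_0$ on $\T$ by combining uniqueness with Proposition~\ref{Prop:steadystate} (stability of the steady state under uniform limits of translated coefficients), the density of $\{n\omega\}$ in $\T$, and a diagonal extraction. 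Without some version of this hull argument your proposal does not reach the stated conclusion $u(n)=U(n\omega)$ with $U\in C(\T,\R)$ and $U>0$; asserting that the steady state is ``almost periodic with the same module of frequencies'' is exactly the point that has to be proved.
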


\begin{Theorem}\label{APPMain2}
Let $\omega\in\R^d\backslash\Q^d$, $d\in\N_+$, and suppose that $B_0\neq 0$, $W'\in C(\T^d, \R)$, $\hat W'(0)\geq 0$. There exists $\epsilon_0=\epsilon_0(A_0,B_0)>0$, and if $$\|W'-\hat W'(0)\|_0<\epsilon_0,$$
 then for any $\epsilon>0$, there exists $W\in C(\T^d, \R)$ which satisfies $\|W-W'\|_0<\epsilon$ and $\hat W(0)=\hat W'(0)$ such that
 \eqref{eq:diffusion-1} has a unique bounded steady state $u(n)=U(n\omega)$ with $U\in C(\T^d,\R)$ and $U>0$.
\end{Theorem}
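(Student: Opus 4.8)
The plan is to reduce the stationary problem for \eqref{eq:diffusion-1} to the Jacobi normal form \eqref{non-hermitian}, use Theorem~\ref{Main2} to produce simultaneously the perturbed reaction coefficient $W$ and a quasi‑periodic positive ground state, verify that the associated ground state energy $E_0$ is \emph{strictly positive}, and then apply Proposition~\ref{KPPSS}. \textit{Reduction.} A steady state of \eqref{eq:diffusion-1} with coefficient $W$ solves $Lu=u^2$, where $Lu(n)=A_0D^*Du(n)+B_0D^*u(n)+W(n\omega)u(n)$; explicitly,
\[ Lu(n)=A_0\,u(n+1)+(A_0-B_0)\,u(n-1)+\bigl(W(n\omega)-2A_0+B_0\bigr)u(n). \]
Since $A_0>\max\{0,B_0\}$, exactly as in \eqref{jacobi-diff} one has $L=h\,\mathcal L^g_{0,0,\tilde c}$ with $h=\sqrt{A_0(A_0-B_0)}$, $\me^{-g}=\sqrt{A_0/(A_0-B_0)}$ (so $g\neq0$ because $B_0\neq0$), $\tilde c(n\omega)=h^{-1}\bigl(W(n\omega)-2A_0+B_0\bigr)$. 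Here $h,g$ depend only on $A_0,B_0$, and a direct computation gives $h\cdot2\cosh|g|=2A_0-B_0$. Let $C_0$ be the constant of Theorem~\ref{Main2} and set $\epsilon_0:=h\,C_0\me^{-12|g|}g^6$.

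\textit{Applying Theorem~\ref{Main2} and bookkeeping.} Assume $\|W'-\hat W'(0)\|_0<\epsilon_0$, so the mean‑zero function $\tilde V_*:=h^{-1}\bigl(W'-\hat W'(0)\bigr)$ has $\|\tilde V_*\|_0<C_0\me^{-12|g|}g^6$. For a parameter $\epsilon'>0$ to be fixed last, Theorem~\ref{Main2} (with $W_1=W_2=0$, $V=\tilde V_*$) yields $\tilde V'\in C(\T^d,\R)$ with $\|\tilde V'-\tilde V_*\|_0\le\epsilon'$ such that $\mathcal L^g_{0,0,\tilde V'}$ has a simple ground state energy $\mathcal E_0$ and a quasi‑periodic ground state $P=P(n\omega)$, $P\in C(\T^d,\R)$, $P>0$. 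Define $W:=h\bigl(\tilde V'-\widehat{\tilde V'}(0)\bigr)+\hat W'(0)$; then $\hat W(0)=\hat W'(0)$, and since $\widehat{\tilde V'}(0)=\widehat{(\tilde V'-\tilde V_*)}(0)$ has modulus $\le\epsilon'$ we get $\|W-W'\|_0\le2h\epsilon'$, which is $<\epsilon$ once $\epsilon':=\epsilon/(3h)$. By the Reduction, $L$ (with this $W$) has $P$ as a quasi‑periodic ground state with simple ground state energy $E_0=h\bigl(\mathcal E_0-\widehat{\tilde V'}(0)-2\cosh|g|\bigr)+\hat W'(0)$, and $\inf P>0$ since $\T^d$ is compact.

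\textit{Strict positivity of $E_0$ — the crux.} Put $\tilde V'':=\tilde V'-\widehat{\tilde V'}(0)$ (mean zero), so $\mathcal L^g_{0,0,\tilde V''}$ has ground state $P$ with energy $\mathcal E:=\mathcal E_0-\widehat{\tilde V'}(0)$. Set $r(n):=P((n+1)\omega)/P(n\omega)>0$; dividing $\me^{-g}P(n+1)+\me^gP(n-1)+\tilde V''(n\omega)P(n)=\mathcal E P(n)$ by $P(n)$ gives $\me^{-g}r(n)+\me^g r(n-1)^{-1}+\tilde V''(n\omega)=\mathcal E$. Taking means in $n$ (using $\overline{\tilde V''}=0$ and shift‑invariance of the mean) yields $\mathcal E=\me^{-g}\,\overline r+\me^g\,\overline{r^{-1}}$. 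Since $\log r(n)=\log P((n+1)\omega)-\log P(n\omega)$ telescopes and $\log P$ is bounded, $\overline{\log r}=0$; as the mean is a positive normalised linear functional and $\log$ is concave, Jensen's inequality gives $\overline r\ge1$ and likewise $\overline{r^{-1}}\ge1$, hence $\mathcal E\ge\me^{-g}+\me^{g}=2\cosh|g|$, with equality forcing $r\equiv1$, i.e.\ $P$ constant, i.e.\ $\tilde V''\equiv0$. Therefore $E_0=h(\mathcal E-2\cosh|g|)+\hat W'(0)\ge\hat W'(0)\ge0$, and $E_0>0$ whenever $\hat W'(0)>0$ or $W$ is non‑constant; in the remaining case $\hat W'(0)=0$ one arranges in addition that $W$ be non‑constant — permissible since only $\|W-W'\|_0<\epsilon$ is required — which forces $\tilde V''\not\equiv0$ and hence $E_0>0$.

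\textit{Conclusion.} Proposition~\ref{KPPSS} now applies with $a\equiv A_0$, $b\equiv B_0$, $c(n)=W(n\omega)$ (the hypotheses $\inf a>0$, $\inf(a(n-1)-b(n))>0$ being $A_0>0$, $A_0-B_0>0$), $\phi=P(n\omega)$ and the energy $E_0>0$ above: equation \eqref{eq:diffusion-1} with coefficient $W$ has a unique bounded steady state $u$ with $\inf u>0$, and $u$ lies in the hull of the coefficients, so $u(n)=U(n\omega)$ with $U\in C(\T^d,\R)$, $U>0$. The step I expect to be the real obstacle is the strict positivity of $E_0$: the identity $\mathcal E=\me^{-g}\overline r+\me^g\overline{r^{-1}}\ge2\cosh|g|$ is precisely where the non‑self‑adjointness ($g\neq0$) and the built‑in compactness enter, and it is indispensable, for if $E_0\le0$ then \eqref{eq:diffusion-1} has no positive bounded steady state at all. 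The rest — the normalisation, the constant shifts, and matching $\hat W(0)=\hat W'(0)$, $\|W-W'\|_0<\epsilon$ and the smallness needed by Theorem~\ref{Main2} at once — is routine once $\epsilon'$ is chosen last.
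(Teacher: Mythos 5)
Your route is the paper's: rewrite the steady-state problem in the Jacobi normal form as in \eqref{jacobi-diff}, apply Theorem \ref{Main2} to get the perturbed potential together with a positive quasi-periodic eigenfunction, prove strict positivity of the eigenvalue by an averaging argument, and conclude via Proposition \ref{KPPSS}. Your Jensen computation with $\overline{\log r}=0$ is the same mechanism as the paper's Birkhoff-average/rearrangement estimate in Proposition \ref{Lemma:groundstateener}, which the paper invokes through Corollary \ref{Main3ell}; the mean-matching bookkeeping for $\hat W(0)=\hat W'(0)$ is also as intended. One presentational point: the closing clause ``$u$ lies in the hull of the coefficients, so $u(n)=U(n\omega)$ with $U\in C(\T^d,\R)$'' is not part of Proposition \ref{KPPSS} (which only gives almost periodicity); it is the separate continuity argument in the proof of Theorem \ref{APPMain1} (uniqueness of the steady state for each element of the hull, Proposition \ref{Prop:steadystate}, density of $\{n\omega\}$), which you should cite or reproduce rather than assert.

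The one genuine gap is your handling of the case $\hat W'(0)=0$ with constant output. You write that ``one arranges in addition that $W$ be non-constant --- permissible since only $\|W-W'\|_0<\epsilon$ is required'', but $W$ is not at your disposal: it is dictated by the potential $\tilde V'$ returned by Theorem \ref{Main2}, and nothing in that theorem excludes $\tilde V'$ being constant (a constant potential is perfectly compatible with a simple ground state energy and a positive, constant, ground state). Modifying $W$ afterwards destroys the eigenfunction you need for Proposition \ref{KPPSS}, and the case cannot be ignored: if $W\equiv 0$ the conclusion is genuinely false (sum the steady-state equation over $n=1,\dots,N$; the left side telescopes and stays bounded while the right side is at least $N(\inf u)^2$). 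The gap is fixable: if $\tilde V'$ is constant then, since $\tilde V_*$ has zero mean and $\|\tilde V'-\tilde V_*\|_0\le\epsilon'$, necessarily $\|W'-\hat W'(0)\|_0\le 2h\epsilon'\le \frac{2}{3}\epsilon$; one may then discard the output of Theorem \ref{Main2}, prescribe a non-constant positive $P\in C(\T^d,\R)$ close to $1$, define $W(\theta)=E_0+2A_0-B_0-A_0\frac{P(\theta+\omega)}{P(\theta)}-(A_0-B_0)\frac{P(\theta-\omega)}{P(\theta)}$ with $E_0$ chosen so that $\hat W(0)=\hat W'(0)$, and the same Jensen inequality gives $E_0>0$ while $\|W-W'\|_0<\epsilon$ and $P(n\omega)$ is by construction the required positive eigenfunction. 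As written, however, this step is asserted rather than proved; everywhere else your argument matches the paper's (admittedly terse) proof.
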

\begin{Remark}
Notice that $\hat W(0)\geq 0, W\not=0$ is a weaker assumption than $\inf\limits_{n\in\Z} W(n\omega)>0$. Actually, if a potential $W$ satisfies $\hat W(0)=0$ and $W\not=0$, then  $\inf\limits_{n\in\Z} W(n\omega)<0$.
\end{Remark}
\begin{Remark}
The assumption of $\hat W(0)\geq 0, W\not= 0$ guarantees $E_0>0$ in Proposition \ref{KPPSS}, and it is a necessary condition as we could check from the case $W=0$. In fact, Theorem \ref{APPMain1} (Theorem \ref{APPMain2}) is also valid for a more general equation:
 $$u_t=D^*(A_1(n\omega)Du)+  B_0D^*u+ W(n\omega) u-u^2, \text{ in }\R\times\Z,$$
 if we impose on an additional assumption $\|A_1-\hat A_1(0)\|_{1,\gamma}\leq\epsilon_0$ ($\|A_1-\hat A_1(0)\|_{0}\leq\epsilon_0$) with $\epsilon_0$ depending on $\hat A_1(0), B_0$ .
\end{Remark}

\subsubsection{Averaging problem for the discrete parabolic equation with large lower term}

The averaging problem for the parabolic equations is to investigate the asymptotic behavior of the solution for the parabolic equations with rapidly oscillating coefficients (such as periodic, almost periodic, random).

Let $\Omega$ be a Lipschitz domain in $\R^n, n\in\N_+$, and $u_\epsilon\in H^1(\Omega)$ be the weak solution of the Dirichlet problem
\begin{equation}\label{eq:paraver}
\left\{
\begin{aligned}
\rho_\epsilon(x)\partial_tu_\epsilon-div(\mathcal A_\epsilon(x)\nabla u_\epsilon)&=0 & & \text{ in }[0,T]\times\Omega,\\
u_\epsilon|_{\partial\Omega}&=\varphi & & \text{ in }\Omega,
\end{aligned}
\right.
\end{equation}
where $\mathcal A=\{a_{ij}\}, \mathcal A_\epsilon(x)=\mathcal A(\epsilon^{-1}x), \rho_\epsilon(x)=\rho(\epsilon^{-1}x)$ are periodic, almost periodic or random. Moreover, $\mathcal A(x)$ satisfies the ellipticity condition
$$a_{ij}(x)\xi_i\xi_j\geq \lambda|\xi|^2,\quad \forall \xi, x\in\R^n, \lambda>0.$$
In the periodic case, the qualitative homogenization theory for   \eqref{eq:paraver} has been known since 1970s \cite{Lions2011}. Indeed, as $\epsilon\to 0$, the solution $u_\epsilon$ converges to the solution $u_0$ of a parabolic equation with constant coefficients, and the equation with constant coefficients is called averaged (homogenized) equation.

In recent years, there have been a great many works in the quantitative homogenization theory for partial differential equations. Some progress has been made for elliptic equations and systems of the divergence type in the periodic and non-periodic settings. Besides that, the convergence rate of $u_\epsilon$ was also obtained, e.g. see \cite{Avellaneda1989, Kozlov1978, Kozlov1979, spagnolo1968, Papanicolaou1979, Jikov1994, Kenig2012}.

In 1984, Kozlov \cite{Kozlov1984} studied the averaging problem for a singularly perturbed parabolic equation with large lower order term:
\begin{equation}\label{eq:Kozlov'sresult}
\left\{
\begin{aligned}
(u_\epsilon)_t&=\bigr(a(\frac{x}{\epsilon})(u_\epsilon)_x\bigr)_x+\frac{1}{\epsilon} b(\frac{x}{\epsilon})(u_\epsilon)_x+\frac{1}{\epsilon^2}c(\frac{x}{\epsilon})u_\epsilon & & \text{ in }[0,T]\times\R,\\
u_\epsilon|_{t=0}&=\psi & & \text{ in }\R,
\end{aligned}
\right.
\end{equation}
where $a,b,c$ are sufficiently smooth quasi-periodic functions, and $\psi\in L^2(\R)$. He proved that if the frequencies of $a,b,c$ satisfy some kind of arithmetic condition, and if there exist quasi-periodic functions $p, p_*$ and $\lambda_0\in\R$
 that satisfy
 \begin{equation}\label{eq:Kozlov's assump}
 (a(x)p_x)_x+b(x)p_x+c(x)p=\lambda_0 p,\quad  (a(x)(p_*)_x)_x-(b(x)p_*)_x+c(x)p_*=\lambda_0 p_*,
 \end{equation}
 and their means $\langle p\rangle=\langle p^*\rangle=1,$
then we have
 $$\lim\limits_{\epsilon\rightarrow 0}\frac{u_\epsilon(t,x-lt/\epsilon)}{p(x/\epsilon-lt/\epsilon^2)}\me^{-\lambda_0t/\epsilon^2}=v_0(t,x),$$
 where the convergence is uniform in any compact set in $\R$ for any $t\in (0,T]$, and $v_0$ is the solution of the averaged problem
 \begin{equation}\label{eq:Kozlovaveraged}
 \left\{
 \begin{aligned}
 (v_0)_t&=\hat a(v_0)_{xx}& & \text{ in }[0,T]\times\R,\\
 v_0|_{t=0}&=\frac{1}{\langle pp_*\rangle}\psi(x)& &  \text{ in }\R,
 \end{aligned}
 \right.
 \end{equation}
with the constant $\hat a>0$ only depending on $a,b,c$.

Besides that, equation \eqref{eq:Kozlov'sresult} was also studied in the periodic case \cite{Patrizia2005, Allaire2007}. However, as for the almost periodic case, we still don't know anything about the averaging.

Now we consider the averaging problem for the discrete version of \eqref{eq:Kozlov'sresult}. First we introduce the difference operator on $L^2(\epsilon\Z)$: $D_\epsilon u(x)=\frac{u(x+\epsilon)-u(x)}{\epsilon},$ $D^*_\epsilon u(x)=\frac{u(x)-u(x-\epsilon)}{\epsilon}$, and define the following function space on $\epsilon\Z$:
$$L^2(\epsilon\Z):=\biggr\{u:\epsilon\Z\to \R|\ \|u\|^2_{L^2(\epsilon\Z)}:=\epsilon^2\sum
\limits_{x\in \epsilon\Z}|u(x)|^2<\infty\biggr\}.$$

Let  $\omega\in\mathrm{DC}_{\infty}(\gamma,\tau)$, $0<\gamma<1$, $\tau>1$. 
Consider the discrete parabolic equation with almost periodic coefficients in  $[0,T]\times \epsilon\Z$, with $T>0$ as follows:
\begin{equation}\label{eq:averagingpara}
\left\{
\begin{aligned}
&\partial_tu_\epsilon=D_\epsilon^*(A_1(\frac{\omega x}{\epsilon})D_\epsilon u_\epsilon)+\frac{1}{\epsilon}A_2(\frac{\omega x}{\epsilon})D_\epsilon^*u_\epsilon+\frac{1}{\epsilon^2}W( \frac{\omega x}{\epsilon})u_\epsilon,\\
&u_\epsilon(0,x)=\varphi(x) \text{ \ in }\epsilon\Z,
\end{aligned}
\right.
\end{equation}
where $A_1, A_2, W\in C(\T^\infty,\R)$ with $\inf A_1(\theta), \inf (A_1(\theta)-A_2(\cdot+\omega))>0$,  $\varphi\in C^1(\R)$ and satisfies $|\varphi(z)|\leq C|z|^{-k}$ with $k\geq 2,\ z\in\R$. Note that in \eqref{eq:averagingpara}, $\varphi$ can be restricted as a function in $\epsilon\Z$.

The averaging result involves, as in quasi-periodic media \cite{Kozlov1984}, the principal eigenvalue problem of the operator
\begin{equation}\label{eq:discreteeig}
\mathcal Lp=D^*(A_1(n\omega)Dp)+A_2(n\omega)D^*p+W(n\omega)p=E_0p.
\end{equation}
As the application of Theorem \ref{Main3}, the existence of the almost periodic ground state of \eqref{eq:discreteeig} is given by:
\begin{Proposition}\label{Main1ell}
 Let $\omega\in DC_\infty(\gamma,\tau)$, $0<\gamma<1,\tau>1,\ r>0$,  $A_1,A_2,W\in C(\T^\infty,\R)$. Suppose that $\hat A_2(0)\neq 0.$ Then there
exists $\epsilon_0>0$,  depending only on $\hat A_1(0),\hat A_2(0)$, such that if
 \begin{equation*}
 \sum\limits_{k\in\mathbb Z_*^\infty}(|\hat {A_1}(k)|+|\hat {A_2}(k)|+|\hat {W}(k)|) \me^{r|k|_1}-|\hat A_1(0)|-|\hat A_2(0)|-|\hat W(0)|\leq\epsilon_0,
\end{equation*}
  then \eqref{eq:discreteeig}  admits a  simple ground state energy and an almost-periodic ground state $p(n)=P(n\omega)$ with $P\in C(\T^\infty,\R), P>0$. Moreover, $\sum\limits_{k\in\mathbb Z_*^\infty}|\hat {P}(k)| \me^{r'|k|_1}<\infty$ for any $r'\in (0,r)$.
\end{Proposition}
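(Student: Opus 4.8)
The plan is to deduce Proposition \ref{Main1ell} directly from Theorem \ref{Main3} by putting the divergence–form operator \eqref{eq:discreteeig} into the Jacobi normal form \eqref{non-hermitian}. Using $Dp(n)=p(n+1)-p(n)$ and $D^{*}p(n)=p(n)-p(n-1)$, a direct expansion gives
\begin{equation*}
\mathcal Lp(n)=A_1(n\omega)p(n+1)+\bigl(A_1((n-1)\omega)-A_2(n\omega)\bigr)p(n-1)+\bigl(W(n\omega)-A_1(n\omega)-A_1((n-1)\omega)+A_2(n\omega)\bigr)p(n).
\end{equation*}
Following \eqref{jacobi-diff}, I set $\me^{-g}=\sqrt{\hat A_1(0)/(\hat A_1(0)-\hat A_2(0))}$ and $h=\sqrt{(\hat A_1(0)-\hat A_2(0))\hat A_1(0)}$, which are well defined and positive because the positivity hypotheses on $A_1$ and on $A_1-A_2(\cdot+\omega)$ force $\hat A_1(0)>0$ and $\hat A_1(0)-\hat A_2(0)>0$; moreover $g\neq 0$ is exactly the assumption $\hat A_2(0)\neq0$. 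Since $h\me^{-g}=\hat A_1(0)$ and $h\me^{g}=\hat A_1(0)-\hat A_2(0)$, I can write $\mathcal L=h\,\mathcal L^g_{W_1,W_2,V}+c_0\,\mathrm{Id}$ with
\begin{equation*}
W_1(\theta)=\tfrac{1}{h}\bigl(A_1(\theta)-\hat A_1(0)\bigr),\qquad W_2(\theta)=\tfrac{1}{h}\bigl(A_1(\theta-\omega)-A_2(\theta)-\hat A_1(0)+\hat A_2(0)\bigr),
\end{equation*}
$V$ the mean–zero part (divided by $h$) of the diagonal coefficient above, and $c_0\in\R$ equal to $h$ times its mean.

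Next I check the hypotheses of Theorem \ref{Main3}. By construction $W_1,W_2,V\in C(\T^\infty,\R)$ have vanishing zeroth Fourier coefficient; the translation $\theta\mapsto\theta-\omega$ occurring in $W_2$ multiplies each $\hat A_1(k)$ by a unimodular factor and therefore does not change $\sum_k|\hat A_1(k)|\me^{r|k|_1}$, so each of $\sum_k|\hat W_1(k)|\me^{r|k|_1}$, $\sum_k|\hat W_2(k)|\me^{r|k|_1}$, $\sum_k|\hat V(k)|\me^{r|k|_1}$ is bounded by an absolute constant times $h^{-1}$ times the quantity $\sum_{k\in\Z_*^\infty}(|\hat A_1(k)|+|\hat A_2(k)|+|\hat W(k)|)\me^{r|k|_1}-|\hat A_1(0)|-|\hat A_2(0)|-|\hat W(0)|$ from the statement. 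Choosing $\epsilon_0=\epsilon_0(\hat A_1(0),\hat A_2(0))$ small enough that the hypothesis forces $\sum_k(|\hat W_1(k)|+|\hat W_2(k)|+|\hat V(k)|)\me^{r|k|_1}\le C_0\me^{-12|g|}g^6$ (legitimate, since $g$ and $h$ depend only on $\hat A_1(0),\hat A_2(0)$), and noting $\me^{-g}+W_1(n\omega)=A_1(n\omega)/h>0$ and $\me^{g}+W_2(n\omega)=(A_1((n-1)\omega)-A_2(n\omega))/h>0$ by the positivity assumptions, Theorem \ref{Main3} applies to $\mathcal L^g_{W_1,W_2,V}$ and produces a simple ground state energy $E_0'$ and a ground state $p(n)=P(n\omega)$ with $P\in C(\T^\infty,\R)$, $P>0$, and $\sum_k|\hat P(k)|\me^{r'|k|_1}<\infty$ for every $r'\in(0,r)$.

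It remains to transfer the conclusion back. Since $\mathcal L=h\,\mathcal L^g_{W_1,W_2,V}+c_0\,\mathrm{Id}$ with $h>0$ and $c_0\in\R$, one has $\Sigma(\mathcal L)=h\,\Sigma(\mathcal L^g_{W_1,W_2,V})+c_0$, and the affine map $z\mapsto hz+c_0$ sends $\R$ to $\R$ and is strictly increasing on real parts; hence $E_0:=hE_0'+c_0$ is real and sits at the rightmost point of $\Sigma(\mathcal L)$, while the same $P$ satisfies $\mathcal LP(n\omega)=E_0\,P(n\omega)$ with all the asserted properties. Simplicity also transfers: two almost–periodic eigenfunctions of $\mathcal L$ at $E_0$ are almost–periodic eigenfunctions of $\mathcal L^g_{W_1,W_2,V}$ at $E_0'$, hence proportional. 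The \emph{only} nontrivial point is the algebra of the first two paragraphs — getting the shifted argument $(n-1)\omega$ in the sub-diagonal coefficient and the resulting means of $W_1,W_2$ exactly right, and tracking how the admissible $\epsilon_0$ degrades with $\hat A_1(0),\hat A_2(0)$ through the factor $h^{-1}\me^{-12|g|}g^6$; no analytic input beyond Theorem \ref{Main3} is needed.
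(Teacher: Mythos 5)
Your proposal is correct and follows essentially the same route as the paper: you rewrite $\mathcal L$ via the identity \eqref{eq:jacobitoellip} with the same $g$, $h$, $W_1$, $W_2$, $V$, split off the constant part, verify the weighted Fourier smallness so that Theorem \ref{Main3} applies, and transfer the ground state back using that scaling by $h>0$ and adding $c_0\,\mathrm{Id}$ preserve the rightmost spectral point and simplicity. The only difference is cosmetic — you make explicit the affine spectral transfer that the paper dispatches in one sentence.
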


Denote $\lfloor z\rfloor$ the floor of $z$. Now we state the result for the discrete parabolic equation:
\begin{Theorem}\label{APPMain3}
Under the assumption of Proposition \ref{Main1ell}, let $p$ be the almost periodic ground state of \eqref{eq:discreteeig}. Then the solution $u_\epsilon\in L^2(0,T;L^2(\epsilon\Z))$ of the equation \eqref{eq:averagingpara}
 satisfies the relation
\begin{equation}\label{eq:Main3relation}
\lim\limits_{\epsilon\rightarrow 0}\frac{u_\epsilon(t,\lfloor z/\epsilon-lt/\epsilon^2\rfloor\epsilon)}{p(\lfloor z/\epsilon- lt/\epsilon^2\rfloor)}\me^{-E_0t/\epsilon^2}=u_0(t,z),\ z\in\R,
\end{equation}
where the convergence is uniform in any compact set in $\R$ for any $t\in (0,T]$, and $u_0$ is the solution of
the averaged equation
\begin{equation}\label{eq:Mainaveraged}
\left\{
\begin{aligned}
(u_0)_t&=\bar a\partial_{zz}u_0 & & \text{ in }[0,T]\times\R,\\
 u_0|_{t=0}&=c_0\varphi & & \text{ in }\R,
\end{aligned}
\right.
\end{equation}
for $\bar a, c_0>0, l\in \R$ being constants which only depend on $A_1,A_2,W$.
\end{Theorem}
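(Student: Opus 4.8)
The plan is to follow the Kozlov-type scheme: factor out the ground state to reduce the singular problem \eqref{eq:averagingpara} to a standard homogenization problem with no large lower-order term. First I would use the almost-periodic ground state $p(n)=P(n\omega)$ from Proposition \ref{Main1ell}, together with the ground state $p_*$ of the (formal) adjoint operator $D^*(A_1(n\omega)Dp_*) - D^*\bigl(A_2(n\omega)p_*\bigr) + W(n\omega)p_* = E_0 p_*$; the existence of $p_*$ follows by applying Proposition \ref{Main1ell} to the adjoint Jacobi operator, which after the normalization of section \ref{section4} has the same structure with $g$ replaced by $-g$ (so the same smallness condition applies). Both $P, P_* \in C(\T^\infty,\R)$ are strictly positive with exponentially decaying Fourier coefficients, so $p, p_*, 1/p$ are bounded above and below uniformly on $\Z$. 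Normalize so that $\langle p p_*\rangle$, the mean over $\T^\infty$ of $P(\theta)P_*(\theta)$, is well-defined and positive.

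Next, substitute $u_\epsilon(t,x) = p(\lfloor x/\epsilon \rfloor)\,\me^{E_0 t/\epsilon^2}\, w_\epsilon(t,x)$ (with the appropriate spatial shift by $l$ built in after the Galilean change of variables $x \mapsto x - lt/\epsilon$). Using the eigenvalue relation \eqref{eq:discreteeig}, a direct computation with the discrete product rule shows that the terms of order $\epsilon^{-2}$ and, after choosing $l$ correctly, the first-order drift terms of order $\epsilon^{-1}$ cancel, leaving $w_\epsilon$ satisfying a discrete parabolic equation of the form $\partial_t w_\epsilon = D_\epsilon^*\bigl(\tilde A(\omega x/\epsilon) D_\epsilon w_\epsilon\bigr) + (\text{bounded first-order term})\,D_\epsilon^* w_\epsilon$, where $\tilde A$ is built from $A_1, P$ and is bounded below. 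The choice of $l$ is dictated by requiring the effective drift coefficient to have zero mean; this is exactly where the two eigenfunctions $p$ and $p_*$ enter, through the combination $A_1 p p_*$ and $A_2 p p_*$, mirroring \eqref{eq:Kozlov's assump}–\eqref{eq:Kozlovaveraged} in the continuous case. I would then invoke the standard qualitative homogenization theory for discrete divergence-form parabolic operators with almost-periodic coefficients: the energy estimates give compactness of $\{w_\epsilon\}$ in $L^2(0,T;L^2_{\loc})$, and the $H^1$-corrector (well-defined here because the corrector equation $D^*(\tilde A(n\omega)(D\chi + e)) = 0$ admits a sublinear almost-periodic-gradient solution under the Diophantine condition on $\omega$) identifies the limit as the solution $u_0$ of $(u_0)_t = \bar a \partial_{zz} u_0$, with $\bar a>0$ the homogenized coefficient and $c_0 = 1/\langle p p_*\rangle$ coming from the initial layer.

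For the initial condition I would handle the mismatch between $w_\epsilon(0,\cdot)=\varphi/p$ and the averaged datum $c_0\varphi$ via a weak (two-scale) convergence argument: test against smooth compactly supported functions, use the decay hypothesis $|\varphi(z)| \le C|z|^{-k}$ with $k\ge 2$ to control tails uniformly in $\epsilon$, and note that $\varphi(\lfloor x/\epsilon\rfloor \epsilon)/p(\lfloor x/\epsilon\rfloor) \rightharpoonup \langle 1/p\rangle\varphi$ weakly, which after the correct normalization of $p_*$ against $p$ produces the constant $c_0$. Passing from weak $L^2$ convergence to the stated locally uniform convergence of the ratio in \eqref{eq:Main3relation} then uses parabolic regularity (discrete De Giorgi–Nash–Moser estimates) to upgrade to local uniform bounds and equicontinuity for $t$ bounded away from $0$, so that the weak limit is actually the uniform limit.

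The main obstacle I expect is the homogenization step in the genuinely almost-periodic (infinite-dimensional frequency) setting: unlike the periodic case, one does not automatically have a bounded corrector, and existence of a corrector with almost-periodic sublinear behavior requires solving a small-divisor problem governed by the Diophantine condition $\omega\in DC_\infty(\gamma,\tau)$. This is precisely analogous to the difficulty Kozlov faced and is the reason the strong arithmetic condition on $\omega$ cannot be dropped here; I would either cite the corresponding result for discrete almost-periodic operators or reconstruct it using the same KAM/reducibility machinery underlying Theorem \ref{Main3}. A secondary technical point is making the discrete Galilean shift $x \mapsto x - lt/\epsilon$ compatible with the lattice $\epsilon\Z$ — this is why the floor functions $\lfloor z/\epsilon - lt/\epsilon^2\rfloor$ appear in \eqref{eq:Main3relation}, and one must check that the $O(1)$ rounding errors it introduces are absorbed into the error terms without affecting the limit.
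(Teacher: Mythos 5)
Your outline follows the same Kozlov-type scheme as the paper (factor out the ground states $p,p_*$, shift by $lt/\epsilon$, fix $l$ by a mean condition, homogenize with a small-divisor corrector under $\omega\in DC_\infty(\gamma,\tau)$), but two of its key steps do not work as stated. The most serious is the lattice Galilean shift, which you relegate to "a secondary technical point" handled by floor functions whose $O(1)$ rounding errors are "absorbed". In fact $u_\epsilon(t,x-lt/\epsilon)$ is simply not defined on $\epsilon\Z$, and if you round, the shifted function jumps by one lattice site at times spaced of order $\epsilon^2/l$; the resulting contribution to $\partial_t$ is exactly a transport term of size $\frac{l}{\epsilon}\partial_z$, i.e.\ leading order, not an absorbable error, and the transformed function no longer solves a clean equation. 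This is precisely the obstruction the paper points out ("the transformation \dots seems to lose its effect in our discrete setting") and resolves by lifting the problem to the continuum: the difference operators $D_\epsilon,D_\epsilon^*$ and the coefficients restricted to the line $\{\omega z\}$ are extended to $[0,T]\times\R$ (\eqref{eq:familyopnew}--\eqref{eq:extendedeq}), the genuine shift is performed there to obtain \eqref{eq:transpara}, and only at the very end is the solution restricted back to $\epsilon\Z$ (via uniqueness on the lattice and continuity in $z$). Your proposal has no substitute for this lifting.

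Second, after the transformation the $O(1/\epsilon)$ terms do not cancel: \eqref{eq:transpara} still carries $\frac{\bar c}{\epsilon}(D_\epsilon+D_\epsilon^*)-\frac{l}{\epsilon}Q\partial_z$, so you are in a large-mean-zero-drift homogenization problem, not a bounded perturbation of a divergence-form one. The choice $l=2\bar c/\langle Q\rangle$ only makes the cell problem \eqref{eq:approximate} solvable (this is where Lemma \ref{Lemma:solve nondivergenceeq} and the Diophantine condition actually enter); the corrector must solve that drift-dependent equation, not your drift-free $D^*\bigl(\tilde A(D\chi+e)\bigr)=0$, and using the drift-free corrector yields the wrong effective coefficient (the paper gets $\bar a=\langle\tilde A_1(1+\tilde DN)^2\rangle$ with $N$ from \eqref{eq:approximate}). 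Relatedly, your identification of $c_0$ from the unweighted weak limit $\varphi/p\rightharpoonup\langle 1/P\rangle\varphi$ is not correct: since the oscillating weight $Q=PP_*$ sits in front of $\partial_t$, the effective initial datum is governed by the weak limit of $Q\varphi/P=P_*\varphi$, giving $c_0=1/\langle PP_*\rangle$; the paper extracts this by splitting the initial data into $\varphi/\langle Q\rangle$ plus a part $\varphi\Phi$ with $\langle Q\Phi\rangle=0$ and killing the latter through the dual homogenization argument of Lemma \ref{Lemma:homogenizationoscillation} (including the identity $\bar a_*=\bar a$), a mechanism missing from your plan.
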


Although the result is very similar to Kozlov's, our discrete model is different from the Kozlov's continuous setting in some ways. First, as the application of Theorem \ref{Main3}, we discuss the averaging result for discrete parabolic equation in the almost periodic media, while in \cite{Kozlov1984}, Kozlov only established the averaging result in quasi-periodic media. Secondly, Kozlov's work can not be directly applied to the discrete model due to the loss of continuity. In fact, under the assumption of \eqref{eq:Kozlov's assump}, Kozlov observed that $v_\epsilon(t,x):=\me^{-\frac{\lambda_0t}{\epsilon^2}}\frac{u_\epsilon(t,x-\frac{lt}{\epsilon})}{p(\frac{x}{\epsilon}-\frac{lt}{\epsilon^2})}$ satisfies the equation defined in $[0,T]\times\R$,
\begin{equation}\label{eq:Kozlov's transformed}
\left\{
\begin{aligned}
(pp_*)(x-\frac{lt}{\epsilon})(v_\epsilon)_t&=\bigr(\tilde a(x-\frac{lt}{\epsilon})(v_\epsilon)_x\bigr)_x+\frac{1}{\epsilon}\bigr(\bar c-l(pp_*)(x-\frac{lt}{\epsilon})\bigr)(v_\epsilon)_x, & &  \\
v_\epsilon|_{t=0}&=\frac{\psi(x)}{p(x/\epsilon)}, & & x\in\R,
\end{aligned}
\right.
\end{equation}
where $\tilde a>0, \bar c\equiv \const$ which only depend on $a,b,c$, and $l\in\R$ to be specified. After solving the cell problem of \eqref{eq:Kozlov's transformed}:
\begin{equation}\label{eq:cell problem}
(\tilde a(x)u_x)_x-\bar c u_x=\tilde a_x+\bar c-lpp_*,
\end{equation}
the averaging result can be obtained by the multiscale expansion method.
Note that \eqref{eq:cell problem} is solvable for $l=\frac{\bar c}{\langle pp_*\rangle}$ and the additional arithmetic condition on frequencies.
Unfortunately, the transformation $v_\epsilon(t,x)=\me^{-\frac{\lambda_0t}{\epsilon^2}}\frac{u_\epsilon(t,x-\frac{lt}{\epsilon})}{p(\frac{x}{\epsilon}-\frac{lt}{\epsilon^2})}$ seems to lose its effect in our discrete setting.
To overcome this difficulty, we will extend \eqref{eq:averagingpara} continuously in $[0,T]\times\R$ by lifting it in the infinite dimensional torus and restricting it in the line $\{\omega z| z\in\R\}$, and then the resulting equation can be analyzed in Kozlov's method.


\subsection{Structure of the paper}
Now we organize our paper as follows:
In section 2, we introduce some notations and preliminary knowledge which will be needed in the rest of the paper. In section 3,  based on the reducibility of $\rm{GL}(2,\R)$ cocycles,  we establish a  general criterion of existence of positive almost periodic eigenfunction (Proposition \ref{criterion}), and then apply it to obtain the existence of the positive almost periodic eigenfunction in various conditions.

In section 4, we prove the positive almost periodic eigenfunction is exactly the almost periodic ground state and the associated eigenvalue is the simple ground state energy.  Then we discuss the fundamental properties of the ground state. This finishes the proof of Theorem \ref{Main1},  Theorem \ref{Main2},  Theorem \ref{Main3}.

In section 5, as applications of Theorems \ref{Main1} and Theorem \ref{Main2}, we establish the existence of the positive almost periodic steady state of \eqref{eq:diffusion}. Finally, we discuss the asymptotic behavior of the solution of the equation \eqref{eq:averagingpara} and finish the proof of Theorem \ref{APPMain3}.

\section{Preliminaries}

\subsection{$\GL$ cocycle and Jacobi operator}
Let $X$ be a compact metric space, $(X, \nu, T)$ be ergodic, and $A:X\rightarrow \mathrm{GL}(2,\mathbb R)$ be a continuous map.
 A $\mathrm{GL}(2,\mathbb R)$ cocycle over $(T,X)$ is an action defined on $X\times\mathbb R^2$ such that
$$(T,A):(x,v)\in X\times \mathbb R^2\mapsto (Tx,A(x)\cdot v)\in X\times \mathbb R^2.$$
For $n\in\mathbb{Z}$, $A_n$ is defined by $(T,A)^n=(T^n,A_n)$, where $A_{0}(x)=\id$,
\begin{equation*}\quad
A_{n}(x)=\prod_{j=n-1}^{0}A(T^{j}x)=A(T^{n-1}x)\cdots A(Tx)A(x),\  n\ge1,
\end{equation*}
and $A_{-n}(x)=A_{n}(T^{-n}x)^{-1}$.

Let $W_1,W_2,V$ be a continuous function defined on torus $\T^d$ ($d\in \N_+$ or $d=\infty$). For $\omega\in \R^d\backslash\Q^d$,
 we define the non-self-adjoint  Jacobi operator on $\ell^2(\Z):$
$$(\mathcal L_{W_1,W_2,V}^gu)(n)=\bigr(\me^{-g}+W_1(n\omega)\bigr)u(n+1)+\bigr(\me^g+W_2(n\omega)\bigr)u(n-1)+V(n\omega)u(n).$$
Note that the eigenfunction problem of $\mathcal L_{W_1,W_2,V}^g$ can be represented as:
$$
\begin{pmatrix}
u(n+1)\\
u(n)
\end{pmatrix}
=S^{E,g}_{W_1,W_2,V}(\theta+n\omega)
\begin{pmatrix}
u(n)\\
u(n-1)
\end{pmatrix},
$$
where we denote
$$S^{E,g}_{W_1,W_2,V}(\theta)=\begin{pmatrix}
\frac{E-V(\theta)}{\me^{-g}+W_1(\theta)} &-\frac{\me^{g}+W_2(\theta)}{\me^{-g}+W_1(\theta)}\\
1      &0
\end{pmatrix}
.$$ Then $(\omega,S_{W_1,W_2,V}^{E,g})$ defines a $\GL$ cocycle on $\T^d \times \R^2,$ and we will call this non-self-adjoint Jacobi cocycle.

\begin{Remark}\label{schro-form}
If $g=0$, $W_1=W_2=0$, then it reduces to the classical (self-adjoint) Schr\"odinger cocycle \cite{Liang2021}.
\end{Remark}

\subsection{Maximum principle, existence and uniqueness for the Cauchy Problem}

The maximum principle on the whole space can be stated as follows:
\begin{Proposition}[Maximum principle\cite{ShenCao}]\label{Maximum}
Let $v\in \ell^\infty(\mathbb Z)$. Assume that for any bounded interval $I=[0,t_0]\subset[0,\infty)$, $u$ is bounded in $I\times\mathbb Z$.  
If $u$ satisfies

\begin{equation}\label{eq:elliptic}
\left\{
\begin{aligned}
u_t-D^*\bigr(a(n)Du\bigr)-b(n)D^*u+c(n)u&\geq 0 &\text{ in }I\times\Z,\\
u(0,n)&\geq 0  &\text{ in }\Z,
\end{aligned}
\right.
\end{equation}
then $u\geq 0$ in $I \times\mathbb Z.$
\end{Proposition}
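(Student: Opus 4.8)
The plan is to run the classical parabolic maximum-principle argument adapted to the discrete-in-space, continuous-in-time setting, treating the zeroth-order term $c(n)$ by the standard exponential-weight trick. First I would reduce to the case $c \equiv 0$: replacing $u(t,n)$ by $w(t,n) := \me^{\lambda t} u(t,n)$ for a constant $\lambda > \sup_n |c(n)|$ (finite, since $c \in \ell^\infty$), one computes that $w$ satisfies
\[
w_t - D^*\bigl(a(n)Dw\bigr) - b(n)D^*w + \bigl(c(n)+\lambda\bigr)w \geq 0,
\]
with $c(n)+\lambda > 0$, and $w(0,n) = u(0,n) \geq 0$; moreover $w$ is still bounded on each slab $I\times\Z$. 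So it suffices to prove the statement under the extra assumption $\inf_n c(n) > 0$, and then conclude $u \geq 0$ from $w \geq 0$.

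Next, fix a slab $I = [0,t_0]$ and suppose for contradiction that $\inf_{I\times\Z} u = -2\delta < 0$. Because $u$ need not attain its infimum (the spatial variable ranges over the noncompact set $\Z$), I would introduce a small penalization in $n$: since $u$ is bounded on $I\times\Z$, for $\eta>0$ small the function $u_\eta(t,n) := u(t,n) + \eta\bigl(1+ \text{(a fixed bounded, strictly positive weight in }n)\bigr)$ — in fact it is cleanest to simply perturb in time, setting $u_\eta(t,n) := u(t,n) - \eta t$, which still satisfies $\partial_t u_\eta - D^*(a Du_\eta) - b D^* u_\eta + c\, u_\eta \geq -\eta - \eta t\, c(n) \geq -\eta(1+t_0\|c\|_\infty)$, a quantity we can absorb. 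For $\eta$ small enough $u_\eta$ still has negative infimum over $I\times\Z$, and I then take a sequence $(t_k,n_k)$ with $u_\eta(t_k,n_k)$ approaching that infimum; after passing to a subsequence $t_k \to t_* \in I$. The key point is that at a point where $u_\eta$ is close to its negative minimum, the spatial discrete Laplacian term $D^*(a(n)Du_\eta)(n) = a(n)\bigl(u_\eta(n+1)-u_\eta(n)\bigr) - a(n-1)\bigl(u_\eta(n)-u_\eta(n-1)\bigr)$ is (nearly) $\geq 0$, the drift term $b(n)D^*u_\eta = b(n)(u_\eta(n)-u_\eta(n-1))$ is (nearly) $0$ — more precisely, controllable — and $c(n)u_\eta(n)$ is negative and bounded away from $0$; meanwhile $\partial_t u_\eta \leq 0$ along a minimizing time (or one argues on the interval $[0,t_*]$ that the infimum cannot have been reached only in the limit $t\to 0$ since $u_\eta(0,\cdot)\geq 0 - \eta\cdot 0 = u(0,\cdot) \geq 0$). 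Plugging into the differential inequality yields, in the limit, a strictly negative left-hand side, contradicting $\geq 0$. Letting $\eta \to 0$ then gives $u \geq 0$ on $I\times\Z$, and since $I$ was an arbitrary slab, $u \geq 0$ on $[0,\infty)\times\Z$.

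The main obstacle is the lack of compactness in the spatial variable: the infimum of $u$ over $I\times\Z$ need not be attained at any lattice point, so one cannot literally evaluate the differential inequality "at the minimum." I expect to handle this by the penalization/limiting argument sketched above — choosing the auxiliary function $u_\eta$ so that its negative infimum is attained, or nearly attained, along a sequence on which all three spatial terms have the correct sign in the limit — which is exactly where the boundedness hypothesis on $u$ (on each bounded time-interval slab) and the hypothesis $v\in\ell^\infty(\Z)$ controlling $c$ get used. A secondary technical point is the continuous-in-time, discrete-in-space regularity: one must justify that $\partial_t u_\eta \le 0$ (or $\le$ something negligible) at a minimizing time, which follows from one-sided differentiation of $t\mapsto \inf_n u_\eta(t,n)$ together with the assumed differentiability of $u$ in $t$; alternatively one can avoid this by a Gronwall-type estimate on $m(t) := \inf_n\bigl(u_\eta(t,n)\wedge 0\bigr)$ showing $m(t)\geq m(0)\me^{-\|c\|_\infty t} = 0$. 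Either route is routine once the noncompactness is dispatched, so I would present the Gronwall variant as the cleanest write-up.
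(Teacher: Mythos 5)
The paper does not actually prove this proposition; it is quoted from the reference [Cao--Shen], so your argument has to be judged against the standard proof. Your overall scheme --- exponential weight in time, a near-minimum/penalization argument in place of an attained minimum, or alternatively a Gronwall estimate on $\inf_n\bigl(u(t,\cdot)\wedge 0\bigr)$ --- is indeed the standard route and can be made to work. But as written there is one genuine gap: the treatment of the drift term at a near-minimum. At a point $n_k$ where $u_\eta(t_k,\cdot)$ is within $\epsilon_k$ of its (negative) infimum, $D^*u_\eta(t_k,n_k)=u_\eta(t_k,n_k)-u_\eta(t_k,n_k-1)$ is only bounded \emph{above} by $\epsilon_k$; it can be as negative as $\inf u_\eta-\sup u_\eta$, so $b(n_k)D^*u_\eta(t_k,n_k)$ is neither ``nearly $0$'' nor controllable by itself: when $b(n_k)>0$ it can be hugely negative and destroys the sign of the right-hand side. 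The correct step is to group diffusion and drift, $D^*(aDu)+bD^*u=a(n)\bigl(u(n+1)-u(n)\bigr)+\bigl(a(n-1)-b(n)\bigr)\bigl(u(n-1)-u(n)\bigr)$, and to invoke the standing hypotheses $\inf_n a(n)>0$ and $\inf_n\bigl(a(n-1)-b(n)\bigr)>0$: both off-diagonal coefficients are then positive, so the whole spatial part is $\geq -C\epsilon_k$ at a near-minimum. This cooperativity condition is exactly what the maximum principle rests on (without it the statement is false), and it appears nowhere in your sketch, which is why I count this as a real gap rather than a cosmetic one.

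Two smaller repairs. First, the exponential substitution has the wrong sign: with $w=\me^{\lambda t}u$ the zeroth-order coefficient becomes $c-\lambda$; you want $w=\me^{-\lambda t}u$ (equivalently $u=\me^{\lambda t}w$) to shift $c$ upward to $c+\lambda>0$. Second, the bounded spatial weight you first float cannot force the infimum to be attained; only the time-penalization/near-minimizing-sequence (or Gronwall) variant you settle on works, and there you should note that the differential inequality together with boundedness of $u$ and of the coefficients on each slab gives a uniform lower bound $u_t\geq -C$, which is what keeps near-minimizing times away from $t=0$, where $u\geq 0$. With the grouping above and these fixes, your argument becomes the standard, correct proof of the proposition.
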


The following Harnack inequality is a very useful technique when we study the properties of the solution of \eqref{eq:elliptic}. We will present it here for the reader's convenience.

\begin{Proposition}[Harnack inequality\cite{Liang2018}]\label{Harnack}
Assume that $u$ is bounded on $(0,\infty)\times\mathbb Z$ and solves \eqref{eq:elliptic}. Then for any $(t,n)\in(0,\infty)\times\mathbb Z,\ T>0$, there exists a positive constant $C=C(T)$ such that
$$u(t,n)\leq C(T)u(t+T,m),\ m\in\{ n\pm 1,n\}.$$
\end{Proposition}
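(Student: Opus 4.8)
The plan is to reduce the statement to a local, finite-dimensional positivity estimate for the parabolic difference equation \eqref{eq:elliptic} and then iterate. First I would observe that, since $u$ is bounded on $(0,\infty)\times\mathbb Z$ and solves the inequality version of \eqref{eq:elliptic} with equality, the nonnegativity of $u$ on $(0,\infty)\times\mathbb Z$ follows from the maximum principle (Proposition \ref{Maximum}) applied on each slab $[t_0,t_0+T_0]\times\mathbb Z$ after a shift in time; indeed one compares $u$ against the zero solution. So throughout we may assume $u\ge 0$. The goal is the three-point chain: controlling $u(t+T,m)$ for $m\in\{n-1,n,n+1\}$ from below by a fixed multiple of $u(t,n)$.

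The heart of the argument is a one-step comparison. Fix $(t,n)$ and a small step $h>0$. Write the equation at the site $n$ as
\begin{equation*}
\partial_s u(s,n) = a(n)u(s,n+1) + a(n-1)u(s,n-1) - \bigl(a(n)+a(n-1)+c(n)\bigr)u(s,n) + b(n)\bigl(u(s,n)-u(s,n-1)\bigr),
\end{equation*}
so that, collecting the coefficient of $u(s,n)$ into a bounded quantity $\beta(n)$ and using $a(n),a(n-1)>0$, $b$ bounded, $u\ge 0$, one gets the differential inequality $\partial_s u(s,n) \ge -\|\beta\|_\infty\, u(s,n) + \alpha_0\, u(s,n\pm 1) \cdot 0$ in the crude direction, but more usefully $\partial_s u(s,n)\ge -C_1 u(s,n)$, giving the lower bound $u(t+h,n)\ge \me^{-C_1 h} u(t,n)$. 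To move mass to the neighbour $m=n+1$, use the equation at site $n+1$: $\partial_s u(s,n+1) \ge a(n)u(s,n) - C_1 u(s,n+1)$ (all omitted terms are $\ge 0$ since $u\ge0$ and $a>0$). Combined with the previously derived lower bound $u(s,n)\ge \me^{-C_1(s-t)}u(t,n)$ on $[t,t+h]$, Gronwall/variation of constants yields $u(t+h,n+1)\ge c(h)\,u(t,n)$ with $c(h)>0$ depending on $h$, $\inf a$, and $\|\beta\|_\infty$; symmetrically for $m=n-1$ using $a(n-1)>0$. This handles $T=h$ small.

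For general $T>0$ I would iterate: partition $[0,T]$ into $N$ steps of length $T/N$ with $N$ fixed (e.g.\ $T/N\le 1$), and at each step apply the one-step estimate, noting that after $j$ steps one can reach any site within distance $j$ of $n$; since $m\in\{n-1,n,n+1\}$ is within distance $1\le N$, one chain of $N$ comparisons of the above type connects $u(t,n)$ to $u(t+T,m)$, producing $u(t,n)\le C(T)u(t+T,m)$ with $C(T) = c(T/N)^{-N}$, which depends only on $T$ (and the fixed structural constants $\inf a$, $\|a\|_\infty$, $\|b\|_\infty$, $\|c\|_\infty$). The main obstacle is bookkeeping the constants so that $C(T)$ depends on $T$ alone and not on $(t,n)$ — this is where uniform ellipticity $\inf_n a(n)>0$ and the uniform boundedness of $a,b,c$ are essential, and where one must be careful that the Gronwall-type lower bounds at neighbouring sites do not degrade as the site varies. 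Since the coefficients are autonomous in $t$, no additional time-regularity is needed, and the boundedness hypothesis on $u$ is used only to justify the maximum principle and the termwise manipulations above.
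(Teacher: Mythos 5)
Your overall strategy is sound, and it is worth noting that the paper itself contains no proof of Proposition \ref{Harnack}: it simply cites \cite{Liang2018} (where the estimate is established for initial data with finite support) and remarks that the argument adapts with minor modification. Your route --- nonnegativity of $u$ via the maximum principle (Proposition \ref{Maximum}), then the site-wise ODE inequalities $\partial_s u(s,n)\ge -C_1u(s,n)$ and a Gronwall/variation-of-constants transfer of mass from site $n$ to its neighbours --- is the standard direct proof for lattice parabolic equations of the form \eqref{eq:elliptic}, and the constants are indeed uniform in $(t,n)$ because $a,b,c$ are bounded, time-independent, and the off-diagonal coefficients are uniformly positive. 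Incidentally, the final chaining over $N$ time steps is superfluous for the stated three-point estimate: your one-step bound holds for an arbitrary time lapse $h=T$, so a single application suffices.

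One detail must be corrected. Expanding the operator, the equation at site $n+1$ reads $\partial_s u(s,n+1)\ge a(n+1)u(s,n+2)+\bigl(a(n)-b(n+1)\bigr)u(s,n)+\beta(n+1)u(s,n+1)$, so the coefficient of $u(s,n)$ is $a(n)-b(n+1)$, not $a(n)$; the term you discard when writing $a(n)u(s,n)$ is $-b(n+1)u(s,n)$, which is not nonnegative unless $b\le 0$, so the parenthetical claim that ``all omitted terms are $\ge 0$'' is false as stated. The argument survives because the paper's standing hypothesis is precisely $\inf_n\bigl(a(n-1)-b(n)\bigr)>0$ (this is what makes the lower off-diagonal entry of the operator positive), so the transfer constant for $m=n+1$ should be taken from this infimum rather than from $\inf a$; your $m=n-1$ direction, which uses the coefficient $a(n-1)$ and $\inf a>0$, is correct. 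With that correction, and with the (implicit but standard) absolute continuity of $t\mapsto u(t,n)$ used to justify Gronwall, your proof goes through and gives $C(T)$ depending only on $T$ and the structural bounds on $a,b,c$.
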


\begin{Remark}
The proof of the Harnack inequality could be found in \cite{Liang2018} with the initial value $u(0,n)$ having a finite support. However, we should notice that the argument can be applied to \eqref{eq:elliptic} similarly with minor modification.
\end{Remark}

Combining the Harnack inequality with the maximum principle, we can deduce the strong maximum principle as follows:
\begin{Corollary}[Strong maximum principle\cite{Liang2018}]\label{StrongMaximum}
Under the assumption of Proposition \ref{Maximum}, either $u\equiv 0$ or $u>0$ in $I\times\mathbb Z$.
\end{Corollary}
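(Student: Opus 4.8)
\textbf{Proof proposal for Corollary \ref{StrongMaximum}.}

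The plan is to combine the (weak) maximum principle of Proposition \ref{Maximum} with the Harnack inequality of Proposition \ref{Harnack} to upgrade nonnegativity into the dichotomy ``identically zero or strictly positive''. First I would invoke Proposition \ref{Maximum} to conclude that $u\geq 0$ throughout $I\times\Z$; this is the hypothesis-checking step and requires nothing new. The real content is to show that if $u$ vanishes at a single point $(t_1,n_1)$ with $t_1>0$, then $u\equiv 0$ on $I\times\Z$ (or at least on the ``past'' of that point, which by an iteration argument forces $u\equiv 0$ everywhere in $I\times\Z$ under the stated boundedness assumptions).

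Here is how I would organize the argument. Fix $(t_1,n_1)\in I\times\Z$ with $t_1>0$ and suppose $u(t_1,n_1)=0$. Pick any $t_0\in(0,t_1)$ and set $T=t_1-t_0>0$. Applying Proposition \ref{Harnack} at the point $(t_0,m)$ with this $T$ gives, for each $m$ with $|m-n_1|\leq 1$,
$$0\le u(t_0,m)\leq C(T)\,u(t_0+T,n_1)=C(T)\,u(t_1,n_1)=0,$$
hence $u(t_0,m)=0$ for $m\in\{n_1-1,n_1,n_1+1\}$. Since $t_0\in(0,t_1)$ was arbitrary, $u$ vanishes on the whole space-time strip $\{(s,m): 0<s<t_1,\ |m-n_1|\le 1\}$. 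Now I would propagate this horizontally in $n$: having $u\equiv 0$ on a time-interval for the three sites $n_1-1,n_1,n_1+1$, reapply the Harnack inequality centered at $n_1\pm 1$ (using a slightly earlier time as the base point and absorbing the loss into the constant) to force $u\equiv 0$ at $n_1\pm 2$ on a (possibly shorter but still nondegenerate) time interval, and iterate. Because at each step one only shrinks the time interval from the left and $t_1>0$ is fixed, after finitely many steps one covers any prescribed finite set of sites on a common nonempty time interval, and letting the procedure run one gets $u\equiv 0$ on $(0,t_1)\times\Z$. Finally, continuity in $t$ (or simply re-solving the equation forward from a slice where $u$ vanishes, using the uniqueness for the Cauchy problem referenced in the same subsection) extends this to all of $I\times\Z$, since the zero function solves \eqref{eq:elliptic} with zero initial data and $u$ agrees with it on a time slice.

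The main obstacle is the bookkeeping in the horizontal propagation: each application of Proposition \ref{Harnack} costs a forward time-shift $T$ and a multiplicative constant $C(T)$, so one must be careful to choose the shifts so that the intersection of all the time intervals used remains a nondegenerate interval (e.g. take the $k$-th shift equal to $t_1/2^{k+1}$, so the base times stay bounded below by a fixed positive number while reaching every site). Everything else is a direct transcription of the two preceding propositions; in particular no new estimate on $a,b,c$ is needed beyond the standing positivity assumptions $\inf a>0$, $\inf(a(\cdot-1)-b)>0$ already built into Propositions \ref{Maximum} and \ref{Harnack}. I would also remark, as the paper does, that the Harnack inequality was proved in \cite{Liang2018} for finitely supported initial data, but the same proof applies verbatim here, so the dichotomy follows on $I\times\Z$ without additional hypotheses.
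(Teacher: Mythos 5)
Your route is the one the paper itself intends: it offers no detailed proof of Corollary \ref{StrongMaximum}, only the remark that it follows by combining Proposition \ref{Maximum} with Proposition \ref{Harnack} (citing \cite{Liang2018}), and the core of your argument carries this out correctly. Proposition \ref{Maximum} gives $u\geq 0$; if $u(t_1,n_1)=0$ with $t_1>0$, then applying Proposition \ref{Harnack} at $(t_0,m)$, $|m-n_1|\leq 1$, with $T=t_1-t_0$ kills $u$ at the sites $n_1,n_1\pm1$ for every $t_0\in(0,t_1)$, and your sideways induction then gives $u\equiv 0$ on $(0,t_1)\times\Z$ (the bookkeeping with shrinking time intervals is unnecessary: once a site vanishes for \emph{all} times in $(0,t_1)$, you may apply Proposition \ref{Harnack} at a neighbouring site with $T$ as small as you like), and continuity in $t$ extends this to $[0,t_1]\times\Z$.

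The genuine gap is your final step, the extension of the vanishing \emph{forward} past $t_1$ ``using the uniqueness for the Cauchy problem''. The function $u$ only satisfies the differential inequality \eqref{eq:elliptic}; it is a supersolution, not a solution, so Theorem \ref{existence and uniqueness} (or Theorem \ref{existenceinl^2}) says nothing about it, and a supersolution that vanishes on a whole time slice may become positive later. Concretely, take $a\equiv 1$, $b\equiv 0$, any bounded $c$, and $u(t,n)=f(t)\me^{Mt}$ with $M\geq\sup|c|$ and $f$ smooth, nondecreasing, $f\equiv 0$ on $[0,1]$, $f>0$ for $t>1$: this $u$ is bounded, satisfies \eqref{eq:elliptic} with $u(0,\cdot)=0$, satisfies the Harnack inequality of Proposition \ref{Harnack} (it is nondecreasing in $t$), vanishes at $(1/2,0)$, yet is neither $\equiv 0$ nor $>0$ on $[0,t_0]\times\Z$ once $t_0>1$. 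Hence no combination of Propositions \ref{Maximum} and \ref{Harnack} can yield the forward implication, and the dichotomy has to be understood in its standard form, which is exactly what your Harnack iteration already proves and what is actually used later in the paper: if $u$ vanishes at some $(t_1,n_1)$ with $t_1>0$, then $u\equiv 0$ on $[0,t_1]\times\Z$; in particular $u>0$ for all $t>0$ whenever $u\not\equiv 0$ on every initial time interval. You should drop the forward-extension step and state (and use) the conclusion in that form.
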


The comparison principle is a consequence of the strong maximum principle, and it is useful for us to construct the almost periodic steady state. To state it, we first define super-sub solutions of the discrete Fisher-KPP type equation:

Let $\bar u,\underline u\in C(\mathbb R\times\mathbb Z)$ be two bounded functions.
 We say that $\bar u$  is a supersolution of
 \begin{equation}\label{eq:para}
 \bar u_t-D^*\bigr(a(n)D\bar u\bigr)-b(n)D^*\bar u-c(n)\bar u+\bar u^2=0
 \end{equation}
  if for any given $n\in\mathbb Z$, $\bar u$ is absolutely continuous in $t$ and satisfies
$$
\bar u_t-D^*(a(n)D\bar u)-b(n)D^*\bar u-c(n)\bar u+\bar u^2\geq 0 \text{ for }t\in (0,\infty),
$$
and $\underline u$ is a subsolution  if  for any given $n\in\mathbb Z$, $\underline u$ is absolutely continuous in $t$ and satisfies
$$
\underline u_t-D^*(a(n)D\underline u)-b(n)D^*\underline u-c(n)\underline u+\underline u^2\leq 0 \text{ for }t\in (0,\infty).
$$

The strong comparison principle is given by
\begin{Proposition}[Strong comparison principle]\label{Strong comparison}
Let $\overline u$ and $\underline u$ be a supersolution and a subsolution of \eqref{eq:para}
 repectively. If $\underline u(0,n)\leq \overline u(0,n)$ in $\mathbb Z$, then $\underline u(t,n)<\overline u(t,n)$ or $\underline u\equiv \overline u$ in $(0,\infty)\times\mathbb Z$.
\end{Proposition}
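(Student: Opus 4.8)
The plan is to linearise the quadratic term and then reduce the whole statement to the linear maximum principle (Proposition~\ref{Maximum}) and the strong maximum principle (Corollary~\ref{StrongMaximum}) recorded above. Set $w:=\overline u-\underline u$. Since $\overline u,\underline u$ are bounded on $\R\times\Z$ and, for each fixed $n$, absolutely continuous in $t$, the same holds for $w$; in particular $w$ is bounded on $I\times\Z$ for every bounded interval $I=[0,t_0]\subset[0,\infty)$, and $w(0,n)\geq 0$ by hypothesis.

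Next I would subtract the differential inequalities defining $\overline u$ and $\underline u$. Writing $\overline u^2-\underline u^2=(\overline u+\underline u)\,w$, the supersolution inequality for $\overline u$ minus the subsolution inequality for $\underline u$ yields
$$w_t-D^*\bigl(a(n)Dw\bigr)-b(n)D^*w+\widehat c(t,n)\,w\ \geq\ 0\qquad\text{in }(0,\infty)\times\Z,$$
with $\widehat c(t,n):=-c(n)+\overline u(t,n)+\underline u(t,n)$. Because $a,b,c$ are the bounded coefficients of \eqref{eq:diffusion} and $\overline u,\underline u$ are bounded, $\widehat c$ is bounded on $(0,\infty)\times\Z$.

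The single point not literally contained in Proposition~\ref{Maximum} and Corollary~\ref{StrongMaximum} as stated is that $\widehat c$ depends on $t$ as well as on $n$; this is a routine matter. One way is to observe that the proofs of the maximum principle and of the Harnack inequality quoted from \cite{ShenCao,Liang2018} use only the boundedness of the zeroth-order coefficient, not its independence of $t$. Alternatively, one first passes to $\widetilde w(t,n):=\me^{-Kt}w(t,n)$ with $K>\sup|\widehat c|$, so that the zeroth-order coefficient becomes $\widehat c+K\geq 0$, applies the stated results to $\widetilde w$, and undoes the substitution. Granting this, Proposition~\ref{Maximum} applied to $w$ (with $w(0,\cdot)\geq 0$) gives $w\geq 0$, i.e. $\underline u\leq\overline u$ on $(0,\infty)\times\Z$.

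Finally I would invoke the strong maximum principle. Applying Corollary~\ref{StrongMaximum} to $w$ on each slab $[0,t_0]\times\Z$ shows that for every $t_0>0$ one has either $w\equiv 0$ on $[0,t_0]\times\Z$ or $w(t,n)>0$ for all $(t,n)\in(0,t_0]\times\Z$; letting $t_0\to\infty$ gives the desired dichotomy $\underline u\equiv\overline u$ or $\underline u<\overline u$ on $(0,\infty)\times\Z$. The argument is short, and the only (mild) obstacle is verifying that the cited linear results tolerate a bounded, time-dependent potential, together with the sign bookkeeping in the linearisation.
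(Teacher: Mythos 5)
Your argument is correct and is exactly the route the paper intends (the proposition is stated there without proof, as a consequence of the strong maximum principle): set $w=\overline u-\underline u$, linearize the quadratic term to get $w_t-D^*(a(n)Dw)-b(n)D^*w+\bigl(\overline u+\underline u-c\bigr)w\geq 0$ with $w(0,\cdot)\geq 0$, and apply Proposition~\ref{Maximum} and Corollary~\ref{StrongMaximum} slab by slab. One small caveat: of your two remedies for the $t$-dependence of the zeroth-order coefficient, only the first actually closes the point — the substitution $\widetilde w=\me^{-Kt}w$ makes the coefficient nonnegative but leaves it $t$-dependent, whereas the cited maximum principle and Harnack inequality (proved in the time-heterogeneous setting of \cite{ShenCao,Liang2018}) indeed require only boundedness of that coefficient, which is what your reduction needs.
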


Usually, well-behaved Cauchy Problem possesses the property that it admits a unique global solution. For the convenience of our applications, we give the following existence and uniqueness theorems for discrete Fisher-KPP type equations and linear parabolic equations.
\begin{Theorem}[\cite{Pazy}]\label{existence and uniqueness}
For any initial value $\varphi\in \ell^\infty(\mathbb Z)$, there exists a unique $u\in C([0,\infty)\times\mathbb Z)$ with $u(t,\cdot)\in \ell^\infty(\mathbb Z)$ for any $t\in[0,\infty)$ such that
\begin{equation*}
\left\{
\begin{aligned}
u_t-D^*(a(n)Du)-b(n)D^*u&-c(n)u+u^2=0& &\text{ in }[0,\infty)\times\mathbb Z,\\
u(0,n)&=\varphi(n) & & \text{ in }\Z.
\end{aligned}
\right.
\end{equation*}
\end{Theorem}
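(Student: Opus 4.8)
The plan is to set up the Cauchy problem as an abstract semilinear evolution equation on the Banach space $\ell^\infty(\Z)$ and invoke the standard local existence / blow-up alternative (as in Pazy's book), then rule out blow-up by an a priori $\ell^\infty$-bound coming from the sub-/super-solution structure of the reaction term $-u^2$. First I would write the equation as $u_t = \mathcal A u + \mathcal F(u)$, where $(\mathcal A u)(n) = D^*(a(n)Du)(n) + b(n)D^*u(n) + c(n)u(n)$ is a bounded linear operator on $\ell^\infty(\Z)$ (boundedness of $a,b,c$ is exactly what makes $\mathcal A$ bounded, with $\|\mathcal A\|\le C(\|a\|_\infty+\|b\|_\infty+\|c\|_\infty)$), and $\mathcal F(u)(n) = -u(n)^2$ is locally Lipschitz on bounded sets of $\ell^\infty(\Z)$: if $\|u\|_\infty,\|v\|_\infty\le R$ then $\|\mathcal F(u)-\mathcal F(v)\|_\infty\le 2R\|u-v\|_\infty$. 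Since $\mathcal A$ is bounded it generates a uniformly continuous $C_0$-semigroup $e^{t\mathcal A}$, so the Picard iteration on the mild formulation $u(t)=e^{t\mathcal A}\varphi + \int_0^t e^{(t-s)\mathcal A}\mathcal F(u(s))\,ds$ converges on a short time interval $[0,\delta)$ with $\delta$ depending only on $\|\varphi\|_\infty$ and the coefficient bounds, giving a unique local mild solution; a bootstrap shows it is in fact a classical solution, continuous in $t$ with values in $\ell^\infty(\Z)$, and since each coordinate equation is an ODE it is continuously differentiable in $t$.

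Next I would invoke the blow-up alternative: the local solution extends to a maximal interval $[0,T_{\max})$, and if $T_{\max}<\infty$ then $\limsup_{t\to T_{\max}}\|u(t)\|_\infty=\infty$. So it suffices to produce an a priori bound $\|u(t)\|_\infty\le M(t_0)$ on every bounded interval $[0,t_0]$. For the upper bound, let $\beta$ be a large constant with $\beta\ge\|\varphi\|_\infty$ and $\beta \ge \sup_n c(n)$ (using $c$ bounded); then the constant function $\bar u\equiv\beta$ is a supersolution of the Fisher–KPP equation, since $0 - 0 - 0 - c(n)\beta + \beta^2 = \beta(\beta - c(n))\ge 0$. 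Comparing $u$ with $\bar u$ via the comparison principle (Corollary \ref{StrongMaximum} / the maximum principle of Proposition \ref{Maximum} applied to $\bar u - u$, whose linear part one checks satisfies the required differential inequality because $\bar u^2 - u^2 = (\bar u+u)(\bar u - u)$ contributes a zeroth-order term with controllable sign after adding a large multiple of the identity) gives $u(t,n)\le\beta$. For the lower bound, note $\underline u\equiv 0$ need not be a subsolution, but $\underline u\equiv -\kappa$ for $\kappa$ large enough (with $\kappa\ge\|\varphi\|_\infty$) is, since $-c(n)(-\kappa)+\kappa^2 = \kappa(\kappa + c(n))\ge 0$ once $\kappa\ge\sup_n|c(n)|$; comparison then yields $u(t,n)\ge-\kappa$. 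Hence $\|u(t)\|_\infty\le\max(\beta,\kappa)$ uniformly on $[0,\infty)$, which contradicts $T_{\max}<\infty$, so the solution is global.

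Uniqueness is part of the local Picard argument, but one should also confirm uniqueness among all solutions that are merely bounded on bounded time-intervals (not just the constructed ones): this again follows from the comparison principle, or directly from a Grönwall estimate, since two solutions $u_1,u_2$ bounded by $R$ on $[0,t_0]$ satisfy $\|u_1(t)-u_2(t)\|_\infty\le \int_0^t (\|\mathcal A\| + 2R)\|u_1(s)-u_2(s)\|_\infty\,ds$, forcing $u_1\equiv u_2$. The main obstacle is not any single deep estimate but the bookkeeping needed to verify that the maximum/comparison principles quoted earlier (stated for the linearized operator with a fixed zeroth-order coefficient) genuinely apply to the nonlinear difference $\bar u - u$: one must absorb the quadratic term into an effective linear coefficient $\tilde c(n) = \bar u(n) + u(n,t)$, check it is bounded on the relevant time strip (which is exactly where the a priori bound is used, creating an apparent circularity that is resolved by first running the argument on a time interval where boundedness is known from local existence and then continuing), and then quote Proposition \ref{Maximum}. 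Once that is arranged, global existence, uniqueness, and regularity all follow.
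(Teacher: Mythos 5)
The paper itself gives no proof of this theorem (it is quoted from Pazy), so the relevant question is whether your argument is sound; your skeleton — the equation as an ODE $u_t=\mathcal A u+\mathcal F(u)$ in $\ell^\infty(\Z)$ with $\mathcal A$ bounded, Picard iteration on the mild formulation, the blow-up alternative, comparison with constant barriers, and Gr\"onwall for uniqueness in the class of locally bounded solutions — is indeed the standard route the citation points to, and the upper bound via the constant supersolution $\bar u\equiv\beta$ with $\beta\geq\max\{\|\varphi\|_\infty,\sup_n c(n)\}$ is correct.

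The genuine gap is the lower bound. With the paper's convention, a subsolution must satisfy $\underline u_t-D^*(a D\underline u)-bD^*\underline u-c\underline u+\underline u^2\leq 0$, but for $\underline u\equiv-\kappa$ you computed exactly $\kappa(\kappa+c(n))\geq 0$: this makes $-\kappa$ a \emph{supersolution}, not a subsolution, so it gives no lower bound at all. Moreover the step cannot be repaired in the stated generality: take $a\equiv 1$, $b\equiv 0$, $c\equiv c_0$ constant (these satisfy the standing ellipticity assumptions) and $\varphi\equiv y_0$ with $y_0$ sufficiently negative; by uniqueness the solution is spatially constant and solves the scalar ODE $\dot y=c_0y-y^2$, which blows up to $-\infty$ in finite time, so no global bounded solution exists for such $\varphi\in\ell^\infty(\Z)$. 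In other words, the theorem as literally stated fails for sign-changing data, and any a priori lower bound argument must fail with it. The statement is true, and is the version actually used in Section \ref{section5} (where the initial data are the positive constants $M$ and the supersolution $\bar u$), for \emph{nonnegative} initial data: there $\underline u\equiv 0$ is a subsolution, comparison gives $0\leq u\leq\beta$, and your blow-up-alternative argument then closes. So either add the hypothesis $\varphi\geq 0$ (sufficient for the paper's applications) or weaken the conclusion to local existence on a maximal interval together with the blow-up alternative; as written, the lower-bound paragraph of your proof is incorrect.
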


\begin{Theorem}[\cite{Pazy}]\label{existenceinl^2}
For any initial value $\varphi\in \ell^2(\mathbb Z)$, there exists a unique $u\in C([0,\infty)\times\mathbb Z)$ with $u[t,\cdot)\in \ell^2(\mathbb Z)$ for any $t\in(0,\infty)$ such that
\begin{equation*}
\left\{
\begin{aligned}
u_t-D^*(a(n)Du)-b(n)D^*u&-c(n)u=0& &\text{ in }[0,\infty)\times\mathbb Z,\\
u(0,n)&=\varphi(n) & & \text{ in }\Z.
\end{aligned}
\right.
\end{equation*}
\end{Theorem}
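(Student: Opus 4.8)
The plan is to view the problem as an abstract linear Cauchy problem $\dot u=\mathcal A u$, $u(0)=\varphi$, in the Hilbert space $\ell^2(\Z)$, with
\[
\mathcal A u(n):=D^*\bigl(a(n)Du(n)\bigr)+b(n)D^*u(n)+c(n)u(n),
\]
and then to apply the elementary semigroup theory for bounded generators, as in \cite{Pazy}. Written out, $\mathcal A$ is the tridiagonal operator
\[
\mathcal A u(n)=a(n)u(n+1)+\bigl(b(n)+c(n)-a(n)-a(n-1)\bigr)u(n)+\bigl(a(n-1)-b(n)\bigr)u(n-1),
\]
whose three diagonals are bounded sequences since $a,b,c\in\ell^\infty(\Z)$; a one-line Cauchy--Schwarz (Schur-test) estimate then gives $\|\mathcal A u\|_{\ell^2}\le M\|u\|_{\ell^2}$ with $M$ depending only on $\|a\|_\infty,\|b\|_\infty,\|c\|_\infty$, so $\mathcal A$ is a bounded operator on $\ell^2(\Z)$.

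For existence, boundedness of $\mathcal A$ gives the uniformly continuous group $\me^{t\mathcal A}=\sum_{k\ge 0}t^k\mathcal A^k/k!$, and $u(t):=\me^{t\mathcal A}\varphi$ is a real-analytic $\ell^2(\Z)$-valued curve solving $\dot u=\mathcal A u$, $u(0)=\varphi$, with $u(t,\cdot)\in\ell^2(\Z)$ for every $t\ge 0$. To recover the pointwise statement, observe that for fixed $n$ the evaluation $x\mapsto x(n)$ is a bounded linear functional on $\ell^2(\Z)$; hence $t\mapsto u(t,n)$ is $C^1$, differentiation commutes with evaluation, the difference equation holds pointwise on $[0,\infty)\times\Z$, and $u\in C([0,\infty)\times\Z)$, continuity in the $\Z$-variable being automatic since $\Z$ is discrete.

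For uniqueness in the stated class, let $v$ be another solution with $v(t,\cdot)\in\ell^2(\Z)$ and $v(0,\cdot)=\varphi$, and put $w:=u-v$, so that $w(0,\cdot)=0$, $w(t,\cdot)\in\ell^2(\Z)$, and $w_t=\mathcal A w$ holds pointwise. One first upgrades this to a strong $\ell^2(\Z)$-valued statement: a short argument, using boundedness of $\mathcal A$ on $\ell^2(\Z)$, shows that $t\mapsto w(t,\cdot)$ is strongly $C^1$ with derivative $\mathcal A w(t,\cdot)$ (equivalently, $w(t,\cdot)=\int_0^t \mathcal A w(s,\cdot)\,ds$ as a Bochner integral in $\ell^2(\Z)$). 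Then $\tfrac{d}{dt}\|w(t,\cdot)\|_{\ell^2}^2=2\langle\mathcal A w(t,\cdot),w(t,\cdot)\rangle\le 2M\|w(t,\cdot)\|_{\ell^2}^2$, and Gronwall's inequality forces $w\equiv 0$. The whole argument is routine; the only step needing a little care is precisely this upgrading of a merely pointwise solution to a strong $\ell^2(\Z)$-valued one, which is where boundedness of $\mathcal A$ on $\ell^2(\Z)$ — rather than merely on $\ell^\infty(\Z)$, as in Theorem \ref{existence and uniqueness} — is essential.
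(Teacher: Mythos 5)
The paper offers no proof of this statement at all---it is quoted from the semigroup literature with a citation to Pazy---and your bounded-generator semigroup argument is exactly the justification that citation is meant to supply. Your existence half is complete and correct: the operator is tridiagonal with $\ell^\infty$ coefficients, hence bounded on $\ell^2(\Z)$, the uniformly continuous group $\me^{t\mathcal A}$ gives a strongly $C^1$, $\ell^2$-valued solution, and evaluation functionals convert this into the pointwise statement. Likewise, your Gronwall argument correctly gives uniqueness among strongly $C^1$ (or mild) $\ell^2$-valued solutions, which is all the paper ever uses (in the proof of Theorem~\ref{APPMain3} both solutions being identified are already known to lie in $C^1([0,T],L^2(\epsilon\Z))$).

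The one genuine gap is the step you yourself flag and then dismiss as ``a short argument'': upgrading a solution that is merely pointwise in $n$, with each time-slice in $\ell^2$, to a strong $\ell^2$-valued solution. Boundedness of $\mathcal A$ on $\ell^2$ does not do this by itself. From the stated class you only get that $t\mapsto\|w(t,\cdot)\|_{\ell^2}$ is lower semicontinuous (hence measurable), but nothing forces it to be locally bounded or even locally integrable, and both the Bochner-integral identity $w(t,\cdot)=\int_0^t\mathcal A w(s,\cdot)\,ds$ and the Gronwall step need exactly that control. The natural truncation attempt also fails to close: with $S_N(t)=\sum_{|n|\le N}|w(t,n)|^2$ one only gets $S_N'\le C\,S_{N+1}$, and iterating produces factors $\sup_{s\le t}S_{N+k}(s)$ that you cannot bound without growth control in $n$ uniform on compact $t$-intervals, which the hypotheses do not give. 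Making uniqueness rigorous in the literal class of the statement requires a real argument (e.g.\ a Baire-category localization of where the norm is bounded plus a propagation step, in the spirit of T\"acklind/Widder-type uniqueness theorems), not a one-line consequence of boundedness. So either supply such an argument or, more in keeping with what the paper actually needs and what Pazy provides, state uniqueness within the class of $C^1([0,\infty),\ell^2(\Z))$ solutions, where your proof is complete.
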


\section{Construction of positive almost-periodic eigenfunction}

In this section, we develop a new method to construct the positive almost periodic eigenfunction of the non-self-adjoint operator \eqref{nonsaoperator}. The method is from the side of dynamical systems.

\subsection{Criterion of the existence of positive almost periodic eigenfunction}
First we give a criterion, which says that if the non-self-adjoint Jacobi cocycle  $(\omega,S_{W_1,W_2,V}^{E,g})$ is reducible to a constant cocycle with eigenvalue 1, and the conjugacy has special form, then the corresponding non-self-adjoint Jacobi operator admits a positive almost-periodic eigenfunction. Recall that a $\GL$ cocycle $(\omega, A)$ is said to be reducible if there exist $B(\cdot)\in C(\T^d,\GL)$ and  constant  $\tilde A \in \GL$ such that $$B(\theta+\omega)^{-1}A(\theta)B(\theta)=\tilde A.$$
Then we have the following:
\begin{Proposition}\label{criterion}
Let $d\in\N_+\cup\{\infty\}$. Suppose that there exists $B \in C(\T^d,\GL)$ such that
\begin{equation}\label{redu}
  B(\theta+\omega)^{-1}S_{W_1,W_2,V}^{E,g}(\theta)B(\theta)=\begin{pmatrix}
  1 &0\\
  0 &s
  \end{pmatrix},
  \end{equation}
where $s\in\R$. Moreover, $B$ takes the form
 $$B(\theta)=\begin{pmatrix}
 B_{11}(\theta) & B_{12}(\theta)\\
 B_{21}(\theta) & B_{22}(\theta)
 \end{pmatrix}:=P\me^{Y(\theta)}\begin{pmatrix}
\me^{y_1(\theta)} & 0\\
0                 & \me^{y_2(\theta)}
\end{pmatrix}  \in C(\T^d,\GL)$$
where $P\in \GL$, and
\begin{equation}\label{eqy}\|Y\|_{0}\leq  \tilde{\epsilon}\leq \frac{|P_{11}|}{4(|P_{11}|+|P_{12}|)}. \end{equation}
 Then $u(n)=U(n\omega)$ is an almost periodic solution  of
   \begin{equation}\label{schroeq}
 \bigr(\me^{-g}+W_1(n\omega)\bigr)u(n+1)+\bigr(\me^{g}+W_2(n\omega)\bigr)u(n-1)+V(n\omega)u(n)=Eu(n),
 \end{equation}
 where $U\in C(\T^d,\R),\  U>0.$
\end{Proposition}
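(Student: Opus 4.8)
The plan is to follow the constant eigendirection of the reduced cocycle to manufacture a solution of \eqref{schroeq}, and then to read off strict positivity from the normalized form imposed on the conjugacy $B$. First I would rewrite \eqref{redu} as $S_{W_1,W_2,V}^{E,g}(\theta)\,B(\theta)=B(\theta+\omega)\,D$ with $D=\bigl(\begin{smallmatrix}1&0\\0&s\end{smallmatrix}\bigr)$, and evaluate both sides on the vector $(1,0)^{\top}$, which $D$ fixes with eigenvalue exactly $1$. Writing $v(\theta):=B(\theta)(1,0)^{\top}=(B_{11}(\theta),B_{21}(\theta))^{\top}\in C(\T^d,\R^2)$, this yields the equivariance
$$S_{W_1,W_2,V}^{E,g}(\theta)\,v(\theta)=v(\theta+\omega),\qquad\theta\in\T^d.$$
Now I would substitute the explicit transfer matrix. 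Its bottom row is $(1,0)$, so the second coordinate of the identity above reads $B_{21}(\theta+\omega)=B_{11}(\theta)$, i.e. $B_{21}(\theta)=B_{11}(\theta-\omega)$; inserting this into the first coordinate and rearranging gives, with $U:=B_{11}\in C(\T^d,\R)$,
$$\bigl(\me^{-g}+W_1(\theta)\bigr)U(\theta+\omega)+\bigl(\me^{g}+W_2(\theta)\bigr)U(\theta-\omega)+V(\theta)U(\theta)=E\,U(\theta),\qquad\theta\in\T^d.$$
Specializing to $\theta=n\omega$ shows precisely that $u(n):=U(n\omega)$ solves \eqref{schroeq}; and since $U$ is continuous on the compact group $\T^d$, the sequence $u$ is almost periodic (and quasi-periodic when $d$ is finite). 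The eigenvalue-$1$ direction is essential here: an eigendirection with eigenvalue $s\neq1$ would only give $S\,v=s\,v(\cdot+\omega)$, which does not produce a solution of the form $U(n\omega)$ for a fixed torus function $U$.

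It remains to arrange $U>0$. From the prescribed factorization $B=P\me^{Y}\operatorname{diag}(\me^{y_1},\me^{y_2})$ one gets $B_{11}(\theta)=\bigl(P\me^{Y(\theta)}\bigr)_{11}\,\me^{y_1(\theta)}$, so $B_{11}(\theta)$ has the same sign as $\bigl(P\me^{Y(\theta)}\bigr)_{11}=P_{11}(\me^{Y(\theta)})_{11}+P_{12}(\me^{Y(\theta)})_{21}$. Since $\|Y\|_0\le\tilde{\epsilon}$ with $\tilde\epsilon$ small, the entries of $\me^{Y(\theta)}-\id$ are each bounded by a fixed multiple of $\tilde\epsilon$, so
$$\bigl|\bigl(P\me^{Y(\theta)}\bigr)_{11}-P_{11}\bigr|\ \le\ C\,\tilde\epsilon\,\bigl(|P_{11}|+|P_{12}|\bigr)\ \le\ \tfrac12\,|P_{11}|,$$
where the last inequality is exactly what the constant $4$ in \eqref{eqy} is calibrated to deliver. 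Hence $\bigl(P\me^{Y(\theta)}\bigr)_{11}$ never vanishes and keeps the sign of $P_{11}$ on all of $\T^d$, so $B_{11}$ has a globally constant sign; replacing $U$ by $-U$ when $P_{11}<0$ (admissible by linearity of \eqref{schroeq}) produces $U\in C(\T^d,\R)$ with $U>0$, as required.

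The only step that uses more than routine bookkeeping is this last one. The equivariant section $v=B(\cdot)(1,0)^{\top}$ already gives a nonvanishing almost-periodic eigenfunction for free; what actually forces positivity is that the structural normalization of $B$ together with the quantitative smallness \eqref{eqy} of $Y$ pins down the sign of $B_{11}$. The remaining points — continuity of $U$ on $\T^d$, the reduction to $u(n)=U(n\omega)$, and almost-periodicity of $u$ — are immediate.
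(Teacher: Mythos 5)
Your proposal is correct and follows essentially the same route as the paper: extract the Jacobi equation for $U=B_{11}$ from the first column of the conjugacy relation (using that the reduced matrix fixes $(1,0)^{\top}$ with eigenvalue $1$, which forces $B_{21}(\theta)=B_{11}(\theta-\omega)$), and then deduce positivity from $B_{11}=\me^{y_1}\bigl(P_{11}(\me^{Y})_{11}+P_{12}(\me^{Y})_{21}\bigr)$ together with the smallness \eqref{eqy} of $Y$. Your quantitative step matches the paper's, where $\|\me^{Y}-\id\|_{0}\le 2\tilde\epsilon$ gives $B_{11}\ge\frac{|P_{11}|}{2}\me^{\min_{\theta}y_1(\theta)}>0$ after normalizing the sign of $P_{11}$.
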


\begin{Remark}
Note here we don't need any control on $y_i(\theta)$,  that's the main reason why we obtain semi-local results (c.f. Theorem \ref{Main1}, Theorem \ref{Main3}).
\end{Remark}

\begin{Remark}
One should compare Proposition \ref{criterion} with the criterion in the self-adjoint case (Lemma 5.1 of \cite{Liang2021}), where the conjugacy $B$ is only allowed to be close to the identity.
\end{Remark}

\begin{proof}
Denote $B(\theta)=\begin{pmatrix}
B_{11}(\theta) & B_{12}(\theta)\\
B_{21}(\theta) & B_{22}(\theta)
\end{pmatrix}$, then  \eqref{redu} gives us
\begin{eqnarray*}
\frac{E-V(\theta)}{\me^{-g}+W_1(\theta)}B_{11}(\theta)-\frac{\me^{g}+W_{2}(\theta)}{\me^{-g}+W_1(\theta)}B_{21}(\theta)&=&B_{11}(\theta+\omega),\\
B_{11}(\theta)&=&B_{21}(\theta+\omega),
\end{eqnarray*}
that is $$\bigr(\me^{-g}+W_1(\theta)\bigr)B_{11}(\theta+\omega)+\bigr(\me^{g}+W_2(\theta)\bigr)B_{11}(\theta-\omega)+V(\theta)B_{11}(\theta)=EB_{11}(\theta).$$
Hence $u(n)=B_{11}(n\omega)$ is the almost periodic solution of \eqref{schroeq}.

Denote that $\me^{Y(\theta)}=\begin{pmatrix}
Y_{11}(\theta) & Y_{12}(\theta)\\
Y_{21}(\theta) & Y_{22}(\theta)
\end{pmatrix}.$ Direct computation shows that
$$B_{11}(\theta)=\me^{y_1(\theta)}\bigr(P_{11}Y_{11}(\theta)+P_{12}Y_{21}(\theta)\bigr).$$
Without loss of generality,  assume $P_{11}>0$ (otherwise consider $-B_{11}$ instead). By our assumption,
 $$\|Y_{11}-1\|_{0}\leq  2\tilde{\epsilon}, \|Y_{12}\|_{0}\leq 2\|Y\|_{0}\leq 2\tilde{\epsilon},$$
 thus by \eqref{eqy}, we have
 $$B_{11}(\theta) \geq  \frac{P_{11}}{2} \me^{ \min_{\theta}y_1(\theta)}>0.$$
This yields that
$B_{11}(n\omega)$ is a positive almost periodic solution.
\end{proof}

\subsection{Reducibility of the non-self-adjoint Jacobi cocycle}

If we denote
 \begin{equation*}\label{linearizedpart}
A_{g}(E):=\begin{pmatrix}
E\me^{g}      &-\me^{2g}\\
1              &0
\end{pmatrix}
, \tilde{F}_{E,g}(\theta):= \begin{pmatrix}
1                  &0\\
\frac{\me^{-2g}V(\theta)+E\me^{-g}W_1(\theta)}{\me^{-g}+W_1(\theta)} &\frac{\me^{-2g}W_2(\theta)-W_1(\theta)}{\me^{-g}+W_1(\theta)}+1
\end{pmatrix},
\end{equation*}
then $S_{W_1,W_2,V}^{E,g}(\theta)= A_{g}(E)  \tilde{F}_{E,g}(\theta)$. Then basic observation is that if $W_1,W_2,V$ are small enough, then $ \tilde{F}_{E,g}(\theta)$ is close to identity, i.e.  $S_{W_1,W_2,V}^{E,g}$ is close to constant. Hence one can try to use Newton's iteration to conjugate the cocycle $(\omega, S_{W_1,W_2,V}^{E,g})$ to constant.  We will distinguish the proof into three steps:

\subsubsection{Diagonalization  using Implicit Function Theorem}\label{hyse}

The first step is to  diagonalize the cocycle $(\omega, S_{W_1,W_2,V}^{E,g})$ using Implicit Function Theorem. There are two main observations of this step. The first observation (indeed the key observation of the whole proof) is that in the non-self-adjoint case $g\neq 0$ (without loss of generality, assume $g>0$), the constant cocycle $(\omega, A_{g}(E))$ is strongly hyperbolic. To see this,  since we are looking for the almost periodic ground state of the operator  \eqref{non-hermitian}, the energy should lie in the upper bound of the spectrum. Thus without loss of generality, we just consider the case
\begin{equation}\label{hyper}E\geq 2+\delta\end{equation}
for $\delta>0$ to be specified.
Notice that the characteristic equation of $A_{g}(E)$ reads as
\begin{equation}\label{characeq}
\lambda^2-E\me^{g}\lambda+\me^{2g}=0.
\end{equation}
If we denote its eigenvalues as $\lambda(E)$ and $\mu(E)$, direct computation shows that
$$\lambda(E)=\me^{g}\frac{E-\sqrt{E^2-4}}{2}, \qquad  \mu(E)=\me^{g}\frac{E+\sqrt{E^2-4}}{2}.$$
Then under the assumption \eqref{hyper}, $\lambda(E)$, $\mu(E)$ are both real, and furthermore
$$\mu(E) - \lambda(E) \geq  e^{g} \delta.$$
The second observation is that as a consequence of strong hyperbolicity of  $(\omega, A_{g}(E))$,  and if  $V, W_1, W_2$ are small enough,  then one can diagonalize  $(\omega, S_{W_1, W_2, V}^{E,g})$. Just note in this step, we don't need to assume any arithmetic condition on $\omega$, since one will not meet small divisor problem.

To prove this, we need a non-resonance cancelation lemma which will be the basis of our proof. Now we first introduce some settings of it.
Denote $\mathcal B(*)$ as an $*$-valued Banach space with the norm $|\cdot|_{\mathcal B}$, where $*$ will usually denote $\R$, $\gl$ or $\GL$. Moreover, we assume that $\mathcal B(*)$ admits a crucial property:
\begin{Definition}
A Banach space $\mathcal B$ admits condition $(H)$ if $\|fg\|_{\mathcal B}\leq \|f\|_{\mathcal B}\|g\|_{\mathcal B}$ for any $f,g\in\mathcal B.$
\end{Definition}
Let $\mathcal I$ be any closed interval in $\R$. Then we denote the function space
 $$\mathscr B_{\mathcal I}(*):=\{f\in C(\mathcal I\times \T^d,*)| f(\xi,\cdot)\in \mathcal B(*) \text{ for any given } \xi\in \mathcal I\}$$
with the norm $\|f\|_{\mathcal B,\mathcal I}:=\sup\limits_{\xi\in\mathcal I}|f(\xi)|_{\mathcal B}$,  $d\in\N_+\cup\{\infty\}$. For simplicity of the notations, in the following we just denote $\mathcal B(\gl)$ as $\mathcal B$,  and $\mathscr B_{\mathcal I}(\gl)$ as $\mathscr B_{\mathcal I}$.

Assume that for any given $\eta>0,\omega\in \mathbb T^d, A\in C(\mathcal I,\GL)$, and
 we have a decomposition of the Banach space $\mathscr B_{\mathcal I}$ into non-resonant spaces and resonant spaces, i.e.
$\mathscr{B}_{\mathcal I}=\mathscr{B}^{nre}_{\mathcal I}(\eta)\oplus\mathscr{B}^{re}_{\mathcal I}(\eta)$.  Here $\mathscr{B}^{nre}_{\mathcal I}(\eta)$ is defined in the following way:  for any $Y\in\mathscr{B}^{nre}_{\mathcal I}(\eta),$ we have
$$
\begin{aligned}
&A(\xi)^{-1}Y(\xi,\theta+\omega)A(\xi)\in \mathscr{B}^{nre}_{\mathcal I}(\eta) \text{ for any }\xi\in \mathcal I,\\ &|A(\xi)^{-1}Y(\xi,\theta+\omega)A(\xi)-Y(\xi,\theta)|_{\mathcal B}\geq \eta|Y(\xi,\theta)|_{\mathcal B} \text{ for any }\xi\in\mathcal I.
\end{aligned}
$$
For any $f\in\mathscr B_{\mathcal I},$ denote $\widehat{f(\xi)}(k)$ the Fourier coefficient of $f(\xi,\cdot)$ with respect to the second variable $\theta\in\T^d.$

Once we have this, then the following holds:

\begin{Lemma}\label{basic}
Let $\mathcal I$ be a closed interval in $\R$, and
assume that $A\in C\bigr(\mathcal I,\GL\bigr)$.
Then there exists an absolute constant $c_1>0$ such that for any $F\in \mathscr B_{\mathcal I}$ with $\|F\|_{\mathcal B,\mathcal I}\leq\epsilon\leq \frac{(\inf|\det A|)^2 \min \{\eta^2,1\}}{c_1(\|A\|^3+1)^2},$ there exist $Y\in \mathscr{B}_{\mathcal I}$ and $F^{re}\in\mathscr{B}^{re}_{\mathcal I}(\eta)$ such that
$$e^{-Y(\xi,\theta+\omega)}A(\xi)e^{F(\xi,\theta)}e^{Y(\xi,\theta)}=A(\xi)e^{F^{re}(\xi,\theta)},$$
where $\|Y\|_{\mathcal B,\mathcal I}\leq \epsilon^{\frac{1}{2}}$ and $\|F^{re}\|_{\mathcal B,\mathcal I}\leq 2\epsilon.$
\end{Lemma}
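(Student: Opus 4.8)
The plan is to set up a Newton-type iteration (one KAM-style step suffices here, since we only want to push the perturbation from $\mathscr B_{\mathcal I}$ into the resonant subspace $\mathscr B^{re}_{\mathcal I}(\eta)$, not to conjugate it away completely). Write $F = F^{nre} + F^{re}$ according to the splitting $\mathscr B_{\mathcal I} = \mathscr B^{nre}_{\mathcal I}(\eta)\oplus \mathscr B^{re}_{\mathcal I}(\eta)$, and look for $Y\in\mathscr B_{\mathcal I}$ solving the linearized homological equation
$$
A(\xi)^{-1}Y(\xi,\theta+\omega)A(\xi) - Y(\xi,\theta) = F^{nre}(\xi,\theta).
$$
Let $\mathcal T_A$ denote the linear operator $Y\mapsto A^{-1}Y(\cdot+\omega)A - Y$ acting on $\mathscr B_{\mathcal I}$; by the defining property of $\mathscr B^{nre}_{\mathcal I}(\eta)$, the operator $\mathcal T_A$ preserves $\mathscr B^{nre}_{\mathcal I}(\eta)$ and is bounded below there by $\eta$ (in the $|\cdot|_{\mathcal B}$ norm, uniformly in $\xi$), hence invertible on that subspace with $\|\mathcal T_A^{-1}\|\le \eta^{-1}$. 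Therefore $Y := \mathcal T_A^{-1}F^{nre}\in\mathscr B^{nre}_{\mathcal I}(\eta)$ exists and satisfies $\|Y\|_{\mathcal B,\mathcal I}\le \eta^{-1}\|F^{nre}\|_{\mathcal B,\mathcal I}\le \eta^{-1}\epsilon$.

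Next I would feed this $Y$ into the conjugation and control the error. Expand
$$
\me^{-Y(\xi,\theta+\omega)}A(\xi)\me^{F(\xi,\theta)}\me^{Y(\xi,\theta)} = A(\xi)\me^{F^{re}(\xi,\theta)}
$$
by taking logarithms: using the Baker–Campbell–Hausdorff formula (which converges in a Banach algebra satisfying condition $(H)$ once the relevant norms are small), one gets
$$
F^{re} = \log\!\Big(\me^{-A^{-1}Y(\cdot+\omega)A}\me^{F}\me^{Y}\Big)
= F^{re}_{\text{old}} + \big(F^{nre} - \mathcal T_A Y\big) + \mathcal E = F^{re}_{\text{old}} + \mathcal E,
$$
where the homological equation kills the first-order non-resonant term and the remainder $\mathcal E$ collects all second- and higher-order commutators; schematically $\|\mathcal E\|_{\mathcal B,\mathcal I}\lesssim (\|A\|^{2}+1)\,\|A^{-1}\|\,\big(\|Y\|_{\mathcal B,\mathcal I}^2 + \|Y\|_{\mathcal B,\mathcal I}\|F\|_{\mathcal B,\mathcal I} + \|F\|_{\mathcal B,\mathcal I}^2\big)$. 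Here I use $\|A^{-1}\|\le (\inf|\det A|)^{-1}\|A\|$ to convert the $A^{-1}$ factors into the quantities appearing in the smallness hypothesis. Plugging $\|Y\|_{\mathcal B,\mathcal I}\le \eta^{-1}\epsilon$ and $\|F\|_{\mathcal B,\mathcal I}\le\epsilon$, every term in $\mathcal E$ is bounded by a constant times $(\|A\|^3+1)^2(\inf|\det A|)^{-2}\min\{\eta^2,1\}^{-1}\,\epsilon^2$; with $c_1$ chosen appropriately the smallness hypothesis $\epsilon\le \frac{(\inf|\det A|)^2\min\{\eta^2,1\}}{c_1(\|A\|^3+1)^2}$ forces $\|\mathcal E\|_{\mathcal B,\mathcal I}\le\epsilon$, so $\|F^{re}\|_{\mathcal B,\mathcal I}\le \|F^{re}_{\text{old}}\|_{\mathcal B,\mathcal I}+\epsilon\le 2\epsilon$. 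The same bound on $\epsilon$ also ensures $\|Y\|_{\mathcal B,\mathcal I}\le\eta^{-1}\epsilon\le\epsilon^{1/2}$ (since $\epsilon\le\eta^2$), as claimed; and it keeps all Banach-algebra exponentials and the BCH series within their radius of convergence. One must also check that $F^{re}$ genuinely lands in $\mathscr B^{re}_{\mathcal I}(\eta)$: since $F^{re}_{\text{old}}\in\mathscr B^{re}_{\mathcal I}(\eta)$ and $\mathcal E$ need not be, strictly speaking one should iterate — i.e. repeat the step with the non-resonant part of the new perturbation — and sum a geometric series; the quadratic gain in $\epsilon$ makes this converge, and the limiting $Y$, $F^{re}$ still obey the stated bounds (up to adjusting $c_1$).

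The main obstacle is the bookkeeping in the second paragraph: namely, obtaining the remainder estimate on $\mathcal E$ with the precise powers of $\|A\|$, $\inf|\det A|$, and $\eta$ that match the hypothesis, while being careful that condition $(H)$ is exactly what licenses term-by-term norm estimates on the BCH series and on $\me^{\pm Y}$, $\me^{F}$. A secondary subtlety is that the splitting $\mathscr B_{\mathcal I}=\mathscr B^{nre}_{\mathcal I}(\eta)\oplus\mathscr B^{re}_{\mathcal I}(\eta)$ and the lower bound $\eta$ are postulated abstractly, so one has to phrase the homological step purely in terms of that postulated bound; this is precisely what the author set up, so the argument should go through cleanly once the constants are tracked. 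I expect no genuinely hard analysis here — the content is entirely in packaging a single quadratic iteration with uniform-in-$\xi$ estimates.
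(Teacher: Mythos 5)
Your argument is correct in substance but is organized differently from the paper's. You take the classical KAM route: split $F=F^{nre}+F^{re}$, solve the \emph{linearized} homological equation $A^{-1}Y(\cdot+\omega)A-Y=F^{nre}$ using the lower bound $\eta$ on the non-resonant subspace, estimate the BCH remainder quadratically, and then iterate because the error term is not purely resonant. The paper instead performs no iteration at all: it applies the quantitative Implicit Function Theorem (Theorem \ref{thm:ift}, following \cite{Cai2019,Berti2006}) to the nonlinear functional $\Psi(\xi,Y,f)=\mathbb P_{nre}\ln\bigl(\me^{-A(\xi)^{-1}Y(\xi,\theta+\omega)A(\xi)}\me^{f(\xi,\theta)}\me^{Y(\xi,\theta)}\bigr)$, with $Y$ sought in $\mathscr B^{nre}_{\mathcal I}(\eta)$; invertibility of $D_Y\Psi(\xi,0,0)$ is exactly your $\eta$-lower bound, and the solution $Y(\xi,f)$ makes the non-resonant projection of the logarithm vanish identically, so the remainder $F^{re}=\ln(\cdot)$ lands in $\mathscr B^{re}_{\mathcal I}(\eta)$ in one stroke and the estimates $\|Y\|\le\epsilon^{1/2}$, $\|F^{re}\|\le 2\epsilon$ come directly from the hypotheses \eqref{eq:IFT assumption1}--\eqref{eq:IFT assumption2}. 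What the paper's packaging buys is precisely the bookkeeping you flag as the main obstacle: no geometric series, no need to rewrite a product $\me^{Y_1}\me^{Y_2}\cdots$ as a single exponential, and continuity of $Y$ in $(\xi,f)$ for free. What your route buys is elementary self-containedness (no abstract IFT), at the cost of two points you should make explicit if you carry it out: the iteration uses boundedness of the projections onto $\mathscr B^{nre}_{\mathcal I}(\eta)$ and $\mathscr B^{re}_{\mathcal I}(\eta)$ at every step (the paper uses this too, but only once, in bounding $\|\Psi(\xi,0,f)\|\le\|f\|_{\mathcal B,\mathcal I}$ and in the final estimate of $F^{re}$), and the accumulated conjugation must be recombined into one $Y\in\mathscr B_{\mathcal I}$ with the stated bound, which works since $\eta^{-1}\epsilon\le\epsilon^{1/2}/\sqrt{c_1}$ leaves room for the quadratically small corrections after enlarging $c_1$.
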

\begin{Remark}
This lemma generalizes the previous result of \cite{Cai2019} where $A$ is a constant $\SL$ matrix.
The proof is based on the quantitative Implicit Function Theorem, and it will be given in Appendix A.
\end{Remark}

In the following, we will always fix $\mathcal I:=[2+\frac{1}{9}\min\{g^2,1\},2(\me^{g}+\me^{-g})]$. The key step for us is the following:

\begin{Proposition}\label{ALKAM}
Let  $d\in\N_+\cup\{\infty\},$ $g>0$.
Then there exists  an absolute constant  $C_0>0$ such that if
  $$\|W_1\|_{\mathcal B,\mathcal I}+\|W_2\|_{\mathcal B,\mathcal I}+ \|V\|_{\mathcal B,\mathcal I}\leq\epsilon\leq C_0\me^{-12g}g^6,$$
then there exist $Y, f_i \in \mathscr B_{\mathcal I}$ such that
 $$\me^{-Y(E,\theta+\omega)}P(E)^{-1}S_{W_1,W_2,V}^{E,g}(\theta)P(E)\me^{Y(E,\theta)}=\begin{pmatrix}
 \lambda(E) e^{ f_1(E,\theta)} & 0\\
0   & \mu(E)e^{ f_2 (E,\theta)}
\end{pmatrix},$$
 where  $P(E)=\begin{pmatrix}
 \lambda(E) & \mu(E)\\
 1 & 1
 \end{pmatrix}$ and
\begin{eqnarray}\label{esf}
\|Y\|_{\mathcal B,\mathcal I}\leq\epsilon^{\frac{1}{3}} \leq
\frac{\lambda(E)}{4\lambda(E)+4\mu(E)}, \ \|f_i\|_{\mathcal B,\mathcal I}\leq \epsilon^{\frac{1}{2}}.
\end{eqnarray}
\end{Proposition}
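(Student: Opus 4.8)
The strategy is a two-stage diagonalization. First I would use the Implicit Function Theorem (via Lemma \ref{basic}) to put the cocycle into block-diagonal form up to a small error, exploiting the fact that the constant part $(\omega, A_g(E))$ is \emph{strongly hyperbolic} for $E\in\mathcal I$; this gives us a large non-resonance gap $\eta$, so no arithmetic condition on $\omega$ is needed. Then I would bootstrap: show that the remaining error, being off-diagonal relative to a diagonal constant matrix with well-separated eigenvalues, can be absorbed by a single further conjugation, leaving a \emph{purely diagonal} perturbation $\operatorname{diag}(\lambda e^{f_1}, \mu e^{f_2})$.

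In detail: I would first conjugate $S_{W_1,W_2,V}^{E,g}(\theta)=A_g(E)\tilde F_{E,g}(\theta)$ by the constant matrix $P(E)=\begin{pmatrix}\lambda(E)&\mu(E)\\1&1\end{pmatrix}$ that diagonalizes $A_g(E)$, so that $P(E)^{-1}A_g(E)P(E)=\operatorname{diag}(\lambda(E),\mu(E))$. One must estimate $\|P(E)\|$, $\|P(E)^{-1}\|$ and $\inf|\det P(E)|$ on $\mathcal I$ in terms of $g$: from the formulas $\lambda(E)=\me^{g}\tfrac{E-\sqrt{E^2-4}}{2}$, $\mu(E)=\me^{g}\tfrac{E+\sqrt{E^2-4}}{2}$ one gets $\lambda\mu=\me^{2g}$, $\lambda+\mu=E\me^g$, and on $\mathcal I$ the determinant $\det P(E)=\lambda-\mu$ has modulus $\gtrsim g\,\me^g$ at the left endpoint (using $E-2\geq \tfrac19\min\{g^2,1\}$), while $\|P(E)\|\lesssim \me^{2g}$. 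Writing $\tilde F_{E,g}(\theta)=\exp(F(E,\theta))$ with $\|F\|_{\mathcal B,\mathcal I}\lesssim (\me^g+1)(\|W_1\|+\|W_2\|+\|V\|)$ (here one uses condition $(H)$ and that $\me^{-g}+W_1$ stays bounded below once $\|W_1\|$ is small relative to $\me^{-g}$), the hypothesis $\epsilon\leq C_0\me^{-12g}g^6$ is exactly what makes the transported perturbation $P(E)^{-1}F(E,\theta)P(E)$ small enough to feed into Lemma \ref{basic} with $A=\operatorname{diag}(\lambda,\mu)$: the smallness threshold there is $\tfrac{(\inf|\det A|)^2\min\{\eta^2,1\}}{c_1(\|A\|^3+1)^2}$, and with $\inf|\det A|=\me^{2g}$, $\|A\|\lesssim\me^{2g}$, $\eta\gtrsim g\,\me^g$ (the minimal spectral gap, from $\mu-\lambda\geq\me^g\delta$ with $\delta=\tfrac19\min\{g^2,1\}$), this threshold is of order $\me^{-12g}g^2$ up to constants — comfortably accommodating $\epsilon\le C_0\me^{-12g}g^6$. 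Lemma \ref{basic} then yields $Y$ with $\|Y\|_{\mathcal B,\mathcal I}\leq\epsilon^{1/2}$ and a resonant remainder $F^{re}$ with $\|F^{re}\|_{\mathcal B,\mathcal I}\leq 2\epsilon$.

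The second stage is to check that $F^{re}$, living in the resonant space for the \emph{constant} diagonal cocycle $\operatorname{diag}(\lambda,\mu)$, is forced to be diagonal (this is where strong hyperbolicity, not merely a Diophantine condition, pays off: the resonant space for a constant matrix with eigenvalues of distinct moduli consists exactly of the diagonal part, since conjugation by $\operatorname{diag}(\lambda,\mu)$ multiplies the $(1,2)$ and $(2,1)$ entries by $\lambda/\mu$ and $\mu/\lambda$, which are bounded away from $1$ uniformly on $\mathcal I$). Hence $A(E)e^{F^{re}(E,\theta)}$ is already block-diagonal; writing its diagonal entries as $\lambda(E)e^{f_1(E,\theta)}$ and $\mu(E)e^{f_2(E,\theta)}$ with $\|f_i\|_{\mathcal B,\mathcal I}\leq 2\epsilon\leq\epsilon^{1/2}$ completes the identity. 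Finally the bound $\|Y\|_{\mathcal B,\mathcal I}\leq\epsilon^{1/3}\leq\frac{\lambda(E)}{4\lambda(E)+4\mu(E)}$ follows by shrinking $C_0$: since $\frac{\lambda}{4(\lambda+\mu)}\geq \frac{1}{4}\cdot\frac{\lambda}{\mu}\cdot\frac{1}{1+\lambda/\mu}\gtrsim \me^{-2g}$ times a constant (using $\lambda\mu=\me^{2g}$ and $\mu\lesssim E\me^g\lesssim \me^{2g}$ on $\mathcal I$), and $\epsilon^{1/3}\leq (C_0\me^{-12g}g^6)^{1/3}=C_0^{1/3}\me^{-4g}g^2$, the required inequality holds once $C_0$ is an appropriate absolute constant.

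The main obstacle is bookkeeping the $g$-dependence so that a \emph{single} absolute constant $C_0$ works for all $g>0$: every quantity — the operator norms of $P(E)$, the determinant lower bound, the non-resonance gap $\eta$, the size of the embedded perturbation $F$, and the target inequalities in \eqref{esf} — must be tracked as an explicit power of $\me^g$ and $g$, and one must verify that the polynomial/exponential losses incurred when passing perturbations through $P(E)$ and through Lemma \ref{basic} are all dominated by the generous factor $\me^{-12g}g^6$ in the hypothesis. The choice $\delta=\tfrac19\min\{g^2,1\}$ defining $\mathcal I$ is tuned precisely so that $\eta^2$ contributes the $g^2$ (or $g^4$) that, combined with the $(\det)^2/\|A\|^6$ factor $\sim\me^{-4g}$, leaves room; the small-$g$ regime, where $\eta\sim g^3$, is the tightest and dictates the exponent $6$ on $g$.
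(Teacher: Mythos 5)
Your proposal follows essentially the same route as the paper's proof: conjugate by $P(E)$, write $\tilde F_{E,g}=\me^{F}$ via the Implicit Function Theorem, feed the transported perturbation into Lemma \ref{basic} with the off-diagonal matrices as the non-resonant space (so the resonant remainder is automatically diagonal and no arithmetic condition on $\omega$ is needed), and bookkeep the $g$-dependence so that a single absolute constant $C_0$ works. The only deviations are harmless quantitative slips that the generous factor $\me^{-12g}g^6$ absorbs: the non-resonance constant $\eta$ is governed by the eigenvalue ratios, $\mu(E)/\lambda(E)-1\gtrsim\min\{g,1\}$ (the paper uses $\min\{g^2,1\}/27$), rather than the additive gap $g\me^{g}$ you quote, and the output bounds $\|f_i\|\le 2\epsilon$, $\|Y\|\le\epsilon^{1/2}$ from Lemma \ref{basic} should be stated in terms of the transported perturbation of size $\lesssim \me^{2g}\max\{g^{-2},1\}\epsilon$ rather than $\epsilon$ itself, exactly as in the paper's final estimates.
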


\begin{proof}
It is straightforward to check that for any $E\in \mathcal I$, one has
\begin{equation}\label{eigenvalueseparate}
\begin{aligned}
|\lambda(E)/\mu(E)-1|&=\biggr|\frac{E+\sqrt{E^2-4}}{E-\sqrt{E^2-4}}-1\biggr| 
\geq \frac{\min\{g^2,1\}}{9},\\
|\mu(E)/\lambda(E)-1|&=\biggr|\frac{E-\sqrt{E^2-4}}{E+\sqrt{E^2-4}}-1\biggr|
\geq\frac{\min\{g^2,1\}}{27},\\
\frac{\lambda(E)}{\lambda(E)+\mu(E)}&\geq \frac{E-\sqrt{E^2-4}}{2E}\geq \frac{\me^{-2g}}{16}.
\end{aligned}
\end{equation}

Denote $P(E):=\begin{pmatrix}
\lambda(E) & \mu(E)\\
1     & 1
\end{pmatrix}\in \GL$, then we can diagonalize $A_g(E)$ as
$$A'_{g}(E):=P(E)^{-1}A_{g}(E)P(E)=
diag(\lambda(E),\mu(E)).$$
On the other hand, note that $ \|\tilde{F}_{E,g}(\theta) - \id \|_{\mathcal B,\mathcal I} \leq \me^g\epsilon$. Thus by Implicit Function Theorem, there exists
$F_g\in \mathscr{B}_{\mathcal I}$, such that $ \tilde{F}_{E,g}(\theta)  =\me^{F_{g}(E,\theta)} $ with the estimate $ \|F_{g}\|_{\mathcal B,\mathcal I} \leq 2\me^g\epsilon$.
 Denote $$F'_{g}(E,\theta)=P(E)^{-1}F_g(E,\theta)P(E),$$ then there holds the estimate
\begin{equation}\label{eq:diagonalreduce}
\begin{aligned}
\|F'_{g}\|_{\mathcal B,\mathcal I}&\leq\sup\limits_{E\in\mathcal I} \frac{2\|P(E)\|^2\me^g\epsilon}{|\det P(E)|}\leq \sup\limits_{E\in\mathcal I}\frac{2(E+2)^2\epsilon}{\sqrt{E^2-4}}
\leq C'\me^{2g}\max\{\frac{1}{g^2},1\}\epsilon,
\end{aligned}
\end{equation}
where $C'>0$ is an absolute constant.

Now we are ready to apply the non-resonance cancelation lemma (Lemma \ref{basic}) to the cocycle $(\omega, A'_g(E)e^{F'_g(E,\theta)}).$
Define\begin{equation}\label{LambdaK}
\Lambda=\biggr\{f\in C(\mathcal I\times \T^d,\gl)|f=\begin{pmatrix}
0           & f_1\\
f_2 & 0
\end{pmatrix}\in \mathscr B_{\mathcal I}\biggr\}.
\end{equation}
Then we prove that for any $Y\in \Lambda$, the operator
$$Y\rightarrow  A'_g(E)^{-1}Y(E,\theta+\omega)A'_g(E)-Y(E,\theta)$$ has a bounded inverse. To prove this, we only need to consider the equation
\begin{equation}\label{eq:homologicaleq}
A'_g(E)^{-1}Y(E,\theta+\omega)A'_g(E)-Y(E,\theta)=G(\theta).
\end{equation}
Write that
$$  Y(E,\theta)=
\begin{pmatrix}
0        & Y_{1}(E,\theta)\\
Y_{2}(E,\theta) & 0
\end{pmatrix},\qquad G(E,\theta)=
\begin{pmatrix}
0     & G_{1}(E,\theta)\\
G_{2}(E,\theta) & 0
\end{pmatrix}.$$
Comparing the Fourier coefficients of the equation \eqref{eq:homologicaleq}, we arrive at
\begin{equation*}
\left\{
\begin{aligned}
\widehat{Y_{1}(E)}(k)&=\frac{\widehat{G_{1}(E)}(k)}{\frac{\mu}{\lambda}(E)e^{\mi\langle k,\omega\rangle}-1},\\
\widehat{Y_{2}(E)}(k)&=\frac{\widehat{G_{2}(E)}(k)}{\frac{\lambda}{\mu}(E)\me^{\mi\langle k,\omega\rangle}-1}.
\end{aligned}
\right.
\end{equation*}
Notice \eqref{eigenvalueseparate} implies that
\begin{eqnarray*}
|\lambda(E)/\mu(E)-\me^{\mi\langle k,\omega\rangle}|
& \geq& |1-\lambda(E)/\mu(E)| \geq \frac{\min\{g^2,1\}}{27}, \\
|\mu(E)/\lambda(E)-\me^{\mi\langle k,\omega\rangle}|
 &\geq& |\mu(E)/\lambda(E)-1| \geq \frac{\min\{g^2,1\}}{27}.
\end{eqnarray*}
Thus for any $Y\in \Lambda$, we have
\begin{equation}\label{nonresonant}
\|A'_g(E)^{-1}Y(E,\theta+\omega)A'_g(E)-Y(E,\theta)\|_{\mathcal B,\mathcal I}\geq \frac{\min\{g^2,1\}}{27}\|Y\|_{\mathcal B,\mathcal I},
\end{equation}
which just means $\Lambda \subset \mathscr{B}_{\mathcal I}^{nre}(\frac{\min\{g^2,1\}}{27})$.

Just note  $\det A'_g(E)=\lambda(E)\mu(E)=e^{2g}$, and then
$$  C'\me^{3g}\epsilon\max\{\frac{1}{g^2},1\} \leq \frac{e^{4g}\min\{g^4,1\}}{c_2 \sup\limits_{E\in\mathcal I}|\mu(E)|^6} .,$$
where $c_2$ is an absolute constant.
 It follows from Lemma \ref{basic} that there exist $Y, f_i \in \mathscr B_{\mathcal I}$ such that
\begin{eqnarray*}
&&e^{-Y(E)(\theta+\omega)}P(E)^{-1}S_{W_1,W_2,V}^{E,g} (\theta)P(E)\me^{Y(E)(\theta)}\\
&=&e^{-Y(E)(\theta+\omega)}A'_g(E)e^{F'_g(E,\theta)} \me^{Y(E)(\theta)}\\
&=&\begin{pmatrix}
 \lambda(E) e^{ f_1(E,\theta)} & 0\\
0   & \mu(E)e^{ f_2 (E,\theta)}
\end{pmatrix},
\end{eqnarray*}
 Moreover, there hold the estimates
 \begin{eqnarray*}
\|f_i\|_{\mathcal B,\mathcal I} &\leq& 2C'\me^{2g}\max\{\frac{1}{g^2},1\}\epsilon\leq \epsilon^{\frac{1}{2}},\\
\|Y\|_{\mathcal B,\mathcal I}&\leq& C'^{\frac{1}{2}}\me^{g}\max\{\frac{1}{g},1\}\epsilon^{\frac{1}{2}}\leq \epsilon^{\frac{1}{3}}.
\end{eqnarray*}
\end{proof}

\subsubsection{Solving the cohomological equation}
 Note Proposition \ref{ALKAM} says that one can diagonalize the cocycle, thus to make the cocycle reducible, one only need to solve the typical cohomological equation
  \begin{equation}\label{coho}
y(\theta+\omega)-y(\theta)=f(\theta)-\widehat {f}(0).
\end{equation}
To solve this equation, there arises the small divisor problem, and then one has to assume some regularity condition on $f$, and some arithmetic property on $\omega$. It is well-known that if $\omega$ is Diophantine, say  $\omega\in DC_d(\gamma,\tau),$ where  $\tau>d+1$, $f\in C^k(\T^d,\R)$ then \eqref{coho} has a solution if $k>\tau+1$. \\


 \smallskip

\textbf{Finitely differentiable function}\\

However, if $f$ is a function with lower regularity, this result can be improved in the following:
\begin{Proposition}[\cite{Jaksic2000}]\label{prop:lowregu}
Assume that $f\in C^{k,\gamma}(\T)$ for some $k\geq 1$ and $0<\gamma<1.$
Then for any $\omega\in\mathcal P$, the equation \eqref{coho} possesses a solution $y\in C^{k-1,\gamma'}(\T)$ for any $\gamma'<\gamma$.
\end{Proposition}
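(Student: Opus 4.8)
The plan is to exhibit the solution as an explicit Fourier series and then to read off its regularity from the rate of decay of the coefficients, the only real work being a small--divisor estimate that uses the arithmetic of $\omega$. Normalising $\hat f(0)=0$ (the general case differs only by an additive constant) and noting that $\me^{\mi n\omega}\neq 1$ for $n\neq0$ since $\omega$ is irrational, I would take as the candidate solution
\[
  y(\theta):=\sum_{n\in\Z\setminus\{0\}}\frac{\hat f(n)}{\me^{\mi n\omega}-1}\,\me^{\mi n\theta},\qquad \hat y(n):=\frac{\hat f(n)}{\me^{\mi n\omega}-1}.
\]
Once one establishes that
\[
  \Sigma\ :=\ \sum_{n\neq 0}|\hat y(n)|\,|n|^{\,k-1+\gamma'}\ <\ \infty,
\]
the rest is routine Fourier analysis: $\Sigma<\infty$ already forces $\sum_n|\hat y(n)|\,|n|^{k-1}<\infty$, so $y\in C^{k-1}(\T)$, and the elementary bound $|\me^{\mi n\theta}-\me^{\mi n\theta'}|\le 2(|n|\,|\theta-\theta'|)^{\gamma'}$ applied term by term to the $(k-1)$-st derivative gives $\bigl|y^{(k-1)}(\theta)-y^{(k-1)}(\theta')\bigr|\le 2\Sigma\,|\theta-\theta'|^{\gamma'}$, i.e. $y\in C^{k-1,\gamma'}(\T)$ (and $y$ is real because $f$ is); comparing Fourier coefficients shows $y(\theta+\omega)-y(\theta)=f(\theta)$. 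So everything reduces to proving $\Sigma<\infty$.

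Two elementary observations convert $\Sigma<\infty$ into a purely number--theoretic statement. First, $f^{(k)}\in C^{0,\gamma}$ gives the Fourier decay $|\hat f(n)|\le C_f\,|n|^{-k-\gamma}$. Second, $|\me^{\mi n\omega}-1|\ge c\,\|n\omega\|$, where $\|\cdot\|$ denotes the distance to the nearest integer. Hence
\[
  |\hat y(n)|\,|n|^{\,k-1+\gamma'}\ \le\ \frac{C_f}{c}\cdot\frac{|n|^{-1-\beta}}{\|n\omega\|},\qquad \beta:=\gamma-\gamma'\in(0,1),
\]
and — the point worth noticing — the smoothness order $k$ has cancelled out completely. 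It therefore remains to prove that for every $\omega\in\mathcal P$,
\[
  \sum_{n\ge 1}\frac{1}{n^{1+\beta}\,\|n\omega\|}\ <\ \infty .
\]

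To prove this I would split the sum along the continued--fraction convergents $p_m/q_m$ of $\omega$: $\sum_{n\ge1}=\sum_{m\ge0}\sum_{q_m\le n<q_{m+1}}$. On the $m$-th block one has $n^{-1-\beta}\le q_m^{-1-\beta}$, so it suffices to control $\sum_{1\le n<q_{m+1}}\|n\omega\|^{-1}$. Here I would invoke the three--distance theorem: the $q_{m+1}$ points $\{n\omega\}_{0\le n<q_{m+1}}$ have all gaps at least $\|q_m\omega\|\ge\frac1{2q_{m+1}}$, which yields the counting bound $\#\{1\le n<q_{m+1}:\|n\omega\|\le t\}\le C(q_{m+1}t+1)$; ordering the values $\|n\omega\|$ increasingly and using $\|q_m\omega\|\ge\frac1{2q_{m+1}}$ for the smallest among them then gives $\sum_{1\le n<q_{m+1}}\|n\omega\|^{-1}\le C\,q_{m+1}\log q_{m+1}$. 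Thus the $m$-th block contributes at most $C\,q_{m+1}\,q_m^{-1-\beta}\log q_{m+1}$. Now the hypothesis $\omega\in\mathcal P$ enters decisively: from $q_j\ge F_j$ (Fibonacci) one gets $\zeta(\omega)=\lim_j q_j^{1/j}\ge\frac{1+\sqrt5}{2}>1$, hence $\log q_j=j\log\zeta+o(j)$ with $\log\zeta>0$, and therefore
\[
  q_{m+1}\,q_m^{-1-\beta}=\exp\!\bigl((1-\beta m)\log\zeta+o(m)\bigr)\le \zeta^{-\beta m/2}\ \text{ for $m$ large},\qquad \log q_{m+1}=O(m),
\]
so the block contributions decay geometrically in $m$ and the series converges.

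The crux — and the step I expect to have to be most careful about — is exactly this last arithmetic estimate, and in particular the fact that one must \emph{not} use the worst--case bound $\|n\omega\|\ge\frac1{2q_{m+1}}$ uniformly over the whole block $q_m\le n<q_{m+1}$: that would make the $m$-th block contribute $\asymp q_{m+1}\,q_m^{-\beta}=\zeta^{(1-\beta)m+o(m)}$, which \emph{diverges} since $\beta<1$. What saves the day is the three--distance input that only boundedly many $n$ in each block have $\|n\omega\|$ close to its minimal value. Correspondingly, the assumption $\omega\in\mathcal P$ is used precisely to force $q_m$ to grow like a fixed exponential, so that $q_{m+1}/q_m^{1+\beta}\to 0$; for Liouville frequencies, which lie outside $\mathcal P$, this ratio need not tend to $0$ and the construction genuinely breaks down, in agreement with the hypothesis $\omega\in\mathcal P$ in the statement. (Nothing new occurs for higher smoothness order $k$, since $k$ dropped out above; equivalently, one could reduce to the case $k=1$ at the start by differentiating the equation $k-1$ times.)
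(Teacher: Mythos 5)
Your argument is correct and complete. Note that the paper itself gives no proof of this proposition --- it is quoted from Jak\v{s}i\'c--Molchanov \cite{Jaksic2000} --- and your Fourier-series/small-divisor argument is essentially the standard method behind that reference: the H\"older decay $|\hat f(n)|\le C|n|^{-k-\gamma}$, the classical estimate $\sum_{1\le n<q_{m+1}}\|n\omega\|^{-1}=O(q_{m+1}\log q_{m+1})$ obtained from the separation of the orbit points $\{n\omega\}$ by $\|q_m\omega\|\ge \tfrac{1}{2q_{m+1}}$, and the defining property of $\mathcal P$ (existence of $\zeta(\omega)=\lim q_m^{1/m}\ge\tfrac{1+\sqrt5}{2}>1$), which makes the block contributions $q_m^{-1-\beta}q_{m+1}\log q_{m+1}$ geometrically summable. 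You also correctly identified the one genuine pitfall --- using the worst-case bound $\|n\omega\|\ge\tfrac{1}{2q_{m+1}}$ for every $n$ in a block would destroy convergence --- and avoided it with the counting/ordering argument, so the proposal stands as a self-contained proof of the cited result.
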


Denote $\mathcal B_{k,\gamma}:=C^{k,\gamma}(\T,\gl)$, and the function space $$\mathscr B_{k,\gamma,\mathcal I}:=\{f\in C(\mathcal I\times \T,\gl)| f(E,\cdot)\in \mathcal B_{k,\gamma} \text{ for any given } E\in \mathcal I\}$$
which is equipped with the norm $\|f\|_{k,\gamma,\mathcal I}:=\sup\limits_{E\in\mathcal I}\|f(E)\|_{k,\gamma},$ and it is straightforward to check that $C^{1,\gamma}(\T,\gl)$ admits condition $(H).$

Then as a result of  Proposition \ref{ALKAM} and Proposition \ref{prop:lowregu}, we have the following:

\begin{Lemma}\label{prop:finitediffredu}
Let $g>0,$  $\omega\in\mathcal P,\gamma\in (0,1)$, and suppose that $W_1,W_2,V\in C^{1,\gamma}(\T,\R)$. Then there exists  an absolute constant  $C_0>0$ such that if
$$\|W_1\|_{1,\gamma}+\|W_2\|_{1,\gamma}+\|V\|_{1,\gamma} \leq\epsilon \leq C_0\me^{-12g}g^6,$$
then for any $0<\gamma'<\gamma,$ there exists $B\in\mathscr B_{0,\gamma',\mathcal I}$ such that
$$B(E,\theta+\omega)^{-1}S_{W_1,W_2,V}^{E,g}(\theta)B(E,\theta)=\begin{pmatrix}
 \lambda(E)e^{ \widehat {f_{1}(E)}(0) }& 0\\
0   & \mu(E) e^{\widehat {f_{2}(E)}(0)}
\end{pmatrix},
$$ where $B$ takes the form
$$B(E,\theta):=\begin{pmatrix}
\lambda(E) & \mu(E)\\
1          & 1
\end{pmatrix}\me^{Y(E,\theta)}\begin{pmatrix}
 \me^{ y_{1}(E,\theta)} & 0\\
0   &  \me^{ y_{2}(E,\theta)}
\end{pmatrix}$$
 with the estimate
$$\|Y\|_{0,\gamma,\mathcal I}\leq \epsilon^{\frac{1}{3}}\leq \frac{\lambda(E)}{4(\lambda(E)+\mu(E))}.$$
\end{Lemma}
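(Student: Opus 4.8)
The plan is to combine the two main ingredients already assembled in this section: the diagonalization of the non-self-adjoint Jacobi cocycle given by \propref{ALKAM}, and the solvability of the cohomological equation in low regularity given by \propref{prop:lowregu}. Since $W_1,W_2,V\in C^{1,\gamma}(\T,\R)$ and $C^{1,\gamma}(\T,\gl)$ satisfies condition $(H)$, we may apply \propref{ALKAM} with $\mathcal B=C^{1,\gamma}(\T,\gl)$. This produces, for every $E$ in the interval $\mathcal I$, a conjugacy $\me^{Y(E,\theta)}$ with $\|Y\|_{1,\gamma,\mathcal I}\le\epsilon^{1/3}$ such that
$$
\me^{-Y(E,\theta+\omega)}P(E)^{-1}S_{W_1,W_2,V}^{E,g}(\theta)P(E)\me^{Y(E,\theta)}
=\begin{pmatrix}\lambda(E)\me^{f_1(E,\theta)}&0\\0&\mu(E)\me^{f_2(E,\theta)}\end{pmatrix},
$$
with $P(E)=\begin{pmatrix}\lambda(E)&\mu(E)\\1&1\end{pmatrix}$, $\|f_i\|_{1,\gamma,\mathcal I}\le\epsilon^{1/2}$.

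Next I would reduce each scalar factor $\lambda(E)\me^{f_1(E,\theta)}$ to its mean. Writing $f_1(E,\theta)=\widehat{f_1(E)}(0)+\big(f_1(E,\theta)-\widehat{f_1(E)}(0)\big)$, the equation \eqref{coho} with right-hand side $f_1(E,\cdot)-\widehat{f_1(E)}(0)\in C^{1,\gamma}(\T)$ has, by \propref{prop:lowregu} (applicable since $\omega\in\mathcal P$), a solution $y_1(E,\cdot)\in C^{0,\gamma'}(\T)$ for any $0<\gamma'<\gamma$; similarly one gets $y_2(E,\cdot)$. One should check measurability/continuity of $E\mapsto y_i(E,\cdot)$ so that $y_i\in\mathscr B_{0,\gamma',\mathcal I}$, which follows from the explicit Fourier-series formula $\widehat{y_i(E)}(k)=\widehat{f_i(E)}(k)/(\me^{\mi\langle k,\omega\rangle}-1)$ and the continuity of $E\mapsto f_i(E,\cdot)$. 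Then the diagonal conjugacy $\operatorname{diag}(\me^{y_1(E,\theta)},\me^{y_2(E,\theta)})$ conjugates $\operatorname{diag}(\lambda(E)\me^{f_1},\mu(E)\me^{f_2})$ to $\operatorname{diag}(\lambda(E)\me^{\widehat{f_1(E)}(0)},\mu(E)\me^{\widehat{f_2(E)}(0)})$, because for a diagonal cocycle the conjugation is just the scalar identity $y_i(E,\theta+\omega)-y_i(E,\theta)=f_i(E,\theta)-\widehat{f_i(E)}(0)$.

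Setting
$$
B(E,\theta):=P(E)\,\me^{Y(E,\theta)}\begin{pmatrix}\me^{y_1(E,\theta)}&0\\0&\me^{y_2(E,\theta)}\end{pmatrix}
$$
and composing the two conjugacies yields the claimed identity $B(E,\theta+\omega)^{-1}S_{W_1,W_2,V}^{E,g}(\theta)B(E,\theta)=\operatorname{diag}(\lambda(E)\me^{\widehat{f_1(E)}(0)},\mu(E)\me^{\widehat{f_2(E)}(0)})$; note the inner diagonal factors commute through $P(E)$ only after the diagonalization step, so the order of the three factors in $B$ is exactly as written. The form of $B$ is precisely the one required by \propref{criterion} (with $P=P(E)$), and the estimate $\|Y\|_{0,\gamma',\mathcal I}\le\|Y\|_{1,\gamma,\mathcal I}\le\epsilon^{1/3}$ together with \eqref{esf} gives the bound $\epsilon^{1/3}\le\lambda(E)/(4(\lambda(E)+\mu(E)))$, since by \eqref{eigenvalueseparate} one controls $\lambda(E)/(\lambda(E)+\mu(E))$ from below on $\mathcal I$ and $C_0$ is chosen small enough.

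The only genuine subtlety — not really an obstacle, but the point requiring care — is the regularity bookkeeping: \propref{ALKAM} is applied in the Banach space $C^{1,\gamma}$, but \propref{prop:lowregu} costs one derivative, so the final conjugacy $B$ lives only in $C^{0,\gamma'}$ for $\gamma'<\gamma$, i.e.\ merely continuous (which is all \propref{criterion} needs). One must also verify that condition $(H)$ indeed holds for $C^{1,\gamma}(\T,\gl)$ with an appropriate submultiplicative norm (true up to the standard constant, which can be absorbed into $C_0$), and that the smallness threshold $C_0\me^{-12g}g^6$ produced here matches the one propagating into \thmref{Main1}. Everything else is a direct assembly of the two quoted propositions.
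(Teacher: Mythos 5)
Your proposal is correct and follows essentially the same route as the paper: apply Proposition \ref{ALKAM} in the Banach space $C^{1,\gamma}(\T,\gl)$ to diagonalize the cocycle, then use Proposition \ref{prop:lowregu} (valid for $\omega\in\mathcal P$, at the cost of one derivative) to solve the cohomological equations for $y_1,y_2$, and set $B=P(E)\me^{Y}\operatorname{diag}(\me^{y_1},\me^{y_2})$. The extra checks you flag (condition $(H)$ for $C^{1,\gamma}$, continuity in $E$ via the Fourier formula, and the lower bound on $\lambda/(\lambda+\mu)$ from \eqref{eigenvalueseparate}) are exactly the bookkeeping the paper's argument implicitly relies on.
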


\begin{proof}
Denote $P(E)=\begin{pmatrix}
\lambda(E) & \mu(E)\\
1          & 1
\end{pmatrix}$.
By Proposition \ref{ALKAM},  there exist  $Y, f_i \in \mathscr B_{1,\gamma,\mathcal I}$ such that
 $$\me^{-Y(E,\theta+\omega)}P(E)^{-1}S_{W_1,W_2,V}^{E,g}(\theta)P(E)\me^{Y(E,\theta)}=\begin{pmatrix}
 \lambda(E) e^{ f_1(E,\theta)} & 0\\
0   & \mu(E)e^{ f_2 (E,\theta)}
\end{pmatrix}.$$
 By Proposition \ref{prop:lowregu},  the equation
\begin{equation}\label{eq:onestep}
\begin{aligned}
y_{i}(E,\theta+\omega)-y_{i}(E,\theta)=f_{i}(E,\theta)-\widehat {f_{i}(E)}(0), i=1,2.
\end{aligned}
\end{equation}
has a solution $ f_i \in \mathscr B_{0,\gamma',\mathcal I}$.
Denote $$B(E,\theta):=P(E)\me^{Y(E,\theta)}\begin{pmatrix}
 \me^{ y_{1}(E,\theta)} & 0\\
0   &  \me^{ y_{2}(E,\theta)}
\end{pmatrix},$$
then the result follows.
\end{proof}

\smallskip
\textbf{Continuous function without any frequency restriction}\\

If we don't add any arithmetic assumption on the frequency $\omega$, or if we just assume $f$ to be a continuous function, then  \eqref{coho} might have no continuous solution, even no $L^2$ solution \cite{Herman2004}.  However, if $f$ is  trigonometric polynomial,  then \eqref{coho} always have a solution $y(\theta)$. Indeed, the price is the norm of $y(\theta)$ might lose uniform control.
 This motivates us to prove that if we perturb the potential $V$, then the resulting operator has a positive almost periodic eigenfunction. The key step is Lemma \ref{Schrodinger}, which says a perturbation of the (non-self-adjoint) Jacobi cocycle can be converted into a  (non-self-adjoint) Jacobi cocycle.

Define $A^{V,W}=\begin{pmatrix}
V & W\\
1 & 0
\end{pmatrix}$, and denote the function space $$\mathscr B_{0,\mathcal I}(*)= C(\mathcal I\times \T^d,*)$$
with the norm $\|f\|_{\mathcal I}:=\sup\limits_{E\in\mathcal I}\|f(E,\cdot)\|_{0}.$
It is straightforward to check that $C(\T^d)$ admits condition $(H)$.

\begin{Lemma}\label{Schrodinger}
Let $\omega\in\R^d\backslash\Q^d$ be rationally independent, and let $\tilde V,\frac{1}{\tilde V}, W, \frac{1}{W}\in \mathscr B_{0,\mathcal I}(\R)$. There exists $\epsilon>0$ such that if $A\in \mathscr B_{0,\mathcal I}(\GL)$ satisfies $\|A-A^{\tilde V,W}\|_{\mathcal I}<\epsilon$, then there exist $V'\in \mathscr B_{0,\mathcal I}(\R)$ and $Y\in \mathscr B_{0,\mathcal I}(\gl)$ satisfying
$$\me^{-Y(\theta+\omega)}A(\theta)\me^{Y(\theta)}=A^{V',W}.$$
Moreover, $\|Y\|_{0}\leq C\epsilon$, and $\|\tilde V-V'\|_{0}\leq C\epsilon$, where $C$ depends on $\tilde V,W$.
\end{Lemma}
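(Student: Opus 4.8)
The plan is to treat this as a normal-form/implicit-function statement: a cocycle $A$ close to a Jacobi cocycle $A^{\tilde V,W}$ can be conjugated, by an exponential conjugacy $\me^{Y}$ close to the identity, into another Jacobi cocycle $A^{V',W}$ with the \emph{same} off-diagonal entry $W$ but a perturbed diagonal entry $V'$. First I would set up the ambient space: write $A(\theta) = A^{\tilde V,W}(\theta)\me^{F(\theta)}$ where $F\in\mathscr B_{0,\mathcal I}(\gl)$ is small, $\|F\|_{\mathcal I}\le C\epsilon$ (possible since $A^{\tilde V,W}\in\GL$ with $\det A^{\tilde V,W}=-W$ bounded away from $0$, using $\frac1W\in\mathscr B_{0,\mathcal I}$, and the exponential map is a local diffeomorphism near $\id$ with quantitative inverse). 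Then the equation to solve is
\begin{equation*}
\me^{-Y(\theta+\omega)}A^{\tilde V,W}(\theta)\me^{F(\theta)}\me^{Y(\theta)} = A^{V',W}(\theta),
\end{equation*}
where the unknowns are $Y\in\mathscr B_{0,\mathcal I}(\gl)$ and the scalar perturbation $v:=V'-\tilde V\in\mathscr B_{0,\mathcal I}(\R)$, so that $A^{V',W}=A^{\tilde V,W}+\begin{pmatrix} v & 0\\ 0 & 0\end{pmatrix}$.

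The core is a linearized count of dimensions. Linearizing at $F=0$, $Y=0$, $v=0$, the equation becomes
\begin{equation*}
-Y(\theta+\omega) + A^{\tilde V,W}(\theta)\,F(\theta)\,A^{\tilde V,W}(\theta)^{-1} + A^{\tilde V,W}(\theta)\,Y(\theta)\,A^{\tilde V,W}(\theta)^{-1} = \begin{pmatrix} v(\theta) & 0\\ 0 & 0\end{pmatrix}A^{\tilde V,W}(\theta)^{-1},
\end{equation*}
i.e. a linear operator $(Y,v)\mapsto -Y(\cdot+\omega) + \mathrm{Ad}_{A^{\tilde V,W}}\bigl(Y+\text{given}\bigr) - (\text{contribution of }v)$. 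The key point — this is the analogue, for the Jacobi cocycle, of the standard fact that a $\GL(2)$ perturbation of a $\SL(2)$/Schrödinger cocycle can be reduced to Schrödinger form by adjusting only the potential — is that the off-diagonal $(2,1)$ and $(2,2)$ structure of $A^{\tilde V,W}$ (bottom row $(1,0)$) is rigid under conjugation by $\me^Y$: conjugating $A^{\tilde V,W}$ by a matrix close to $\id$ and then right-multiplying by a suitable $\me^{-\text{(small)}}$, one can always restore the bottom row to $(1,0)$ and the top-right entry to $W$, at the cost of changing the top-left entry; this costs exactly one scalar degree of freedom $v$ and two scalar functions worth of conjugacy $Y$ (the entries of $Y$ modulo the stabilizer), and a direct Fourier-coefficient-by-Fourier-coefficient inspection shows the linearized operator is invertible with bounds depending only on $\|\tilde V\|_0,\|W\|_0,\|\frac1W\|_0$ (no small divisors appear because $\omega$ enters only through the single shift term $-Y(\cdot+\omega)$ against the multiplication operator $\mathrm{Ad}_{A^{\tilde V,W}}$, whose spectra are separated by the hyperbolicity built into $\mathcal I$ — here I would invoke, or re-derive in this simpler two-variable form, the mechanism already used for Lemma \ref{basic}). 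Rational independence of $\omega$ is used only to guarantee that the unique formal solution, read off on Fourier coefficients, actually lies in $C(\mathcal I\times\T^d)$ when $F$ is — in fact one should first prove it for $A$ a trigonometric-polynomial perturbation (finitely many nonzero Fourier modes, so the solution is a trigonometric polynomial and all bounds are uniform), exactly as the paragraph preceding the lemma suggests, and this is enough for the application.

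Once the linearized operator is boundedly invertible uniformly in $E\in\mathcal I$, I would close the argument by the quantitative Implicit Function Theorem (the same tool underlying Lemma \ref{basic}, to be proved in Appendix A): for $\epsilon$ small enough depending on $\tilde V$ and $W$, there is a unique small solution $(Y,v)$ with $\|Y\|_{\mathcal I}\le C\epsilon$ and $\|v\|_{\mathcal I}=\|\tilde V - V'\|_{\mathcal I}\le C\epsilon$, and since everything is done with $E$ as a parameter in the Banach space $\mathscr B_{0,\mathcal I}$, the output $V'$ and $Y$ are continuous in $(E,\theta)$ as required; $\|Y\|_0\le\|Y\|_{\mathcal I}\le C\epsilon$ gives the stated norm bound. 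The main obstacle I anticipate is the bookkeeping in the invertibility of the linearized operator: one must verify carefully that the three scalar functions carried by $v$ and by the two "free" entries of $Y$ (after quotienting by the conjugacy that fixes $A^{\tilde V,W}$) exactly match the three scalar functions needed to kill the $(2,1)$, $(2,2)$ and $(1,1)$ deviations from Jacobi form, and that the resulting $3\times 3$ symbol matrix (in Fourier space, for each $k$) is uniformly invertible — this is where the hypotheses $\tilde V,\frac1{\tilde V},W,\frac1W\in\mathscr B_{0,\mathcal I}$ are consumed, and where one must be careful that no denominator of the form $\tfrac{\lambda}{\mu}(E)\me^{\mi\langle k,\omega\rangle}-1$ degenerates, which is guaranteed by the choice of $\mathcal I$ and the eigenvalue-separation estimates \eqref{eigenvalueseparate}.
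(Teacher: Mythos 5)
Your proposal is not the paper's argument, and its central step has a genuine gap: the claimed uniform invertibility of the linearized operator is false as justified. Linearizing your conjugation equation at $Y=0$, $F=0$ you must solve $A_0Y-Y(\cdot+\omega)A_0+v\,e_{11}=G$ with $A_0=A^{\tilde V,W}$ and $G$ an arbitrary small $\gl$-valued function. Mode by mode, the operator $Y\mapsto A_0Y-\lambda YA_0$ (with $\lambda=\me^{\mi\langle k,\omega\rangle}$) has eigenvalues $\mu_i-\lambda\mu_j$, where $\mu_1,\mu_2$ are the eigenvalues of $A_0$; the eigenvalue separation \eqref{eigenvalueseparate} (the mechanism of Lemma \ref{basic}) only keeps the \emph{cross} terms $\mu_1-\lambda\mu_2$, $\mu_2-\lambda\mu_1$ away from zero, while the two terms $\mu_i(1-\lambda)$ become arbitrarily small on the resonant modes $\langle k,\omega\rangle$ close to $\Z$, which occur for infinitely many $k$ for any irrational $\omega$. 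Hyperbolicity of $\mathcal I$ does nothing for these, and the single scalar freedom $v$ can absorb only one of the two nearly-degenerate directions; the other is tied to the trace/determinant (taking determinants in $\me^{-Y(\cdot+\omega)}A\me^{Y}=A^{V',W}$ forces a cohomological equation for $\tr Y$), which is exactly why the Avila--Jitomirskaya prototype is formulated for $\SL(2,\R)$ perturbations of Schr\"odinger cocycles. Concretely, if you carry out your ``triangular'' elimination you are led to equations of the form $d(\theta)-d(\theta+2\omega)=\cdots$ or $c(\theta+\omega)-\frac{\tilde V(\theta-\omega)}{\tilde V(\theta)}\,c(\theta)=\cdots$, i.e.\ genuine small divisors, so the uniform invertibility of your ``$3\times3$ symbol'' is precisely the point that fails and the quantitative Implicit Function Theorem cannot be invoked. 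Your side remarks also mischaracterize the statement: rational independence is not used ``only'' to sum a Fourier series, and the lemma is not reduced to trigonometric-polynomial $A$ (the truncation occurs in Lemma \ref{prop:withoutfre}, before this lemma is applied, not inside it).

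The paper's proof works by a different mechanism that avoids inverting any cohomological equation. Writing $A=A_0\me^{X_0}$ with $X_0=(f_{ij})$ small, it chooses an explicit zero-diagonal conjugacy, $y_2=f_2-\frac{Wf_1}{\tilde V}$, $y_3(\theta)=-\frac{f_1(\theta-\omega)}{\tilde V(\theta-\omega)}$, together with an explicit corrected potential $V_1$, so that one step yields $A^{V_1,W}\me^{X_1}$ with $\|X_1\|_0\le C\|X_0\|_0^2$ and $\|V_1-\tilde V\|_0\le C\|X_0\|_0$; the lemma then follows by iterating this quadratically convergent scheme. Every formula involves only evaluation at $\theta\pm\omega$, never the inverse of $1-$shift, which is why no arithmetic condition, no Fourier analysis, and no implicit function theorem are needed. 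To rescue an IFT-style argument you would have to build exactly this structure (zero-diagonal conjugacy plus the potential correction, with an honest accounting of the trace direction) into your functional, which amounts to redoing the paper's explicit step.
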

\begin{Remark}
If  $W\equiv -1,$  i.e., $(\omega, A^{E-V,-1})$ is a Schr\"odinger cocycle, it was first proved  in analytic topology by Avila-Jitomirskaya (Lemma 2.3 in \cite{AJ}).
\end{Remark}

\begin{proof}
Denote $A_0=A^{\tilde V,W}$, and $A=A_0\me^{X_0}$, $X_0=\begin{pmatrix}
f_{1} & f_{2}\\
f_{3} & f_{4}
\end{pmatrix}\in \mathscr B_{0,\mathcal I}(\gl)$ be such that $\|X_0\|_{\mathcal I}$ small enough. Let $y=\begin{pmatrix}
y_{1} & y_{2}\\
y_{3} & -y_{1}
\end{pmatrix}\in \mathscr B_{0,\mathcal I}(\gl)$ be defined by $y_{1}=0, y_{2}=f_{2}-\frac{Wf_1}{\tilde V}, y_{3}(\theta)=-\frac{f_1(\theta-\omega)}{\tilde V(\theta-\omega)}$, and let $V_1\in \mathscr B_{0,\mathcal I}(\R)$ be given by
$$V_1(\theta)=\tilde V(\theta)+W(\theta)f_{3}(\theta)+f_{2}(\theta+\omega)-\frac{Wf_{1}(\theta+\omega)}{\tilde V(\theta+\omega)}+\tilde V(\theta)f_{1}(\theta)+\frac{W(\theta)f_{1}(\theta-\omega)}{\tilde V(\theta-\omega)}.$$
Then $\|V_1-\tilde V\|_0\leq C\|X_0\|\leq C\epsilon$ and $\me^{y(E,\theta+\omega)}A(E,\theta)\me^{-y(E,\theta)}$ is for the form $A^{V_1,W}\me^{X_1}$ where $\|X_1\|_0\leq C\|X_0\|_0^2$ for some constant $C$ depending on $\|\tilde V\|_{\mathcal I}$, $\|\frac{1}{\tilde V}\|_{\mathcal I}, \|W\|_{\mathcal I}, \|\frac{1}{W}\|_{\mathcal I}$. Then the result follows by iteration.
\end{proof}

As a consequence, we have the following:

\begin{Lemma}\label{prop:withoutfre}
Let $\omega\in\R^d\backslash\Q^d$ be rationally independent, $d\in\N_+,$ $g>0$, and suppose that $W_1,W_2, V\in C(\T^d,\R)$. Then there exists an absolute constant  $C_0>0$
such that if
$$\|W_1\|_{0}+\|W_2\|_0+\|V\|_0\leq\epsilon\leq C_0\me^{-12g}g^6,$$
then for any $\varepsilon>0$,
there exist $B\in\mathscr B_{0,\mathcal I}(\gl)$, $V'\in \mathscr B_{0,\mathcal I}(\R)$ with $\|V'\|_{0,\mathcal I}\leq  \varepsilon$
 satisfying
\begin{equation}\label{conj}B(E,\theta+\omega)^{-1}S_{W_1,W_2,V'}^{E,g}B(E,\theta)=\begin{pmatrix}
 \lambda(E)e^{ \widehat {f_{1}(E)}(0) }& 0\\
0   & \mu(E) e^{\widehat {f_{2}(E)}(0)}
\end{pmatrix},
\end{equation} where $B$ takes the form
$$B(E,\theta):=\begin{pmatrix}
\lambda(E) & \mu(E)\\
1          & 1
\end{pmatrix}\me^{Y(E,\theta)}\begin{pmatrix}
 \me^{ y_{1}(E,\theta)} & 0\\
0   &  \me^{ y_{2}(E,\theta)}
\end{pmatrix}$$
 with the estimate
$$\|Y\|_{\mathcal I}\leq 4\epsilon^{\frac{1}{3}}\leq \frac{\lambda(E)}{4(\lambda(E)+\mu(E))}.$$
\end{Lemma}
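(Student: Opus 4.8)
The plan is to follow the two-step scheme used in the proof of Lemma~\ref{prop:finitediffredu} — first diagonalize the non-self-adjoint Jacobi cocycle, then kill the $\theta$-dependence by a cohomological equation — but since no arithmetic condition on $\omega$ is now available, the cohomological equation need not be solvable in $C(\T^d)$, so I would solve it only for a finite trigonometric truncation. This yields a cocycle that is \emph{exactly} reducible but no longer of Jacobi form and merely close to the original one; Lemma~\ref{Schrodinger} is then invoked to straighten it back into a genuine non-self-adjoint Jacobi cocycle $S^{E,g}_{W_1,W_2,V'}$, which is precisely what produces the small perturbation $V'$. To begin, take $\mathcal B=C(\T^d)$ (it has condition $(H)$), so $\mathscr B_{0,\mathcal I}=C(\mathcal I\times\T^d,\cdot)$ with $\|f\|_{\mathcal I}=\sup_{E\in\mathcal I}\|f(E,\cdot)\|_0$; since $\|W_1\|_0+\|W_2\|_0+\|V\|_0\le\epsilon\le C_0\me^{-12g}g^6$, Proposition~\ref{ALKAM} gives $Y,f_1,f_2\in\mathscr B_{0,\mathcal I}$, with $\|Y\|_{\mathcal I}\le\epsilon^{1/3}$ and $\|f_i\|_{\mathcal I}\le\epsilon^{1/2}$, such that
$$\me^{-Y(E,\theta+\omega)}\,P(E)^{-1}S^{E,g}_{W_1,W_2,V}(\theta)P(E)\,\me^{Y(E,\theta)}=\begin{pmatrix}\lambda(E)\me^{f_1(E,\theta)}&0\\ 0&\mu(E)\me^{f_2(E,\theta)}\end{pmatrix},\qquad P(E)=\begin{pmatrix}\lambda(E)&\mu(E)\\ 1&1\end{pmatrix}.$$
As stressed in Proposition~\ref{ALKAM}, the hyperbolicity coming from $g\neq0$ is exactly what makes this step free of small divisors, so no condition on $\omega$ is needed yet.

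Next I would, for each $i$, approximate $f_i(E,\cdot)$ by a trigonometric polynomial $\widetilde f_i(E,\cdot)$ in $\theta$ with $E$-continuous coefficients — e.g.\ a Fej\'er mean, which converges uniformly on the compact set $\mathcal I\times\T^d$ (this is where $d<\infty$ enters) and preserves the zero mode — so that $\|\widetilde f_i-f_i\|_{\mathcal I}\le\delta$ and $\widehat{\widetilde f_i(E)}(0)=\widehat{f_i(E)}(0)$, with $\delta>0$ fixed later. Since $\omega$ is rationally independent, $\me^{\mi\langle k,\omega\rangle}\neq1$ for $k\neq0$, so
$$y_i(E,\theta+\omega)-y_i(E,\theta)=\widetilde f_i(E,\theta)-\widehat{\widetilde f_i(E)}(0)$$
is solved mode by mode by a trigonometric polynomial $y_i\in\mathscr B_{0,\mathcal I}$ whose norm is finite but \emph{not} uniformly controlled — harmless here, as Proposition~\ref{criterion} imposes no bound on the diagonal part of the conjugacy. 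Conjugating the diagonal cocycle above by $\mathrm{diag}(\me^{y_1},\me^{y_2})$ turns its $(i,i)$-entry into $\lambda_i(E)\,\me^{\widehat{f_i(E)}(0)}\,\me^{(f_i-\widetilde f_i)(E,\theta)}$ (write $\lambda_1=\lambda$, $\lambda_2=\mu$); I then replace this last diagonal factor by the constant $\mathrm{diag}\bigl(\lambda(E)\me^{\widehat{f_1(E)}(0)},\mu(E)\me^{\widehat{f_2(E)}(0)}\bigr)$ and conjugate back by $B_0:=P(E)\me^{Y}\mathrm{diag}(\me^{y_1},\me^{y_2})$, defining $\widehat A$ as the result. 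A short computation shows the $y_i$-factors cancel, so that
$$\widehat A(E,\theta)=P(E)\,\me^{Y(E,\theta+\omega)}\begin{pmatrix}\lambda(E)\me^{\widetilde f_1(E,\theta)}&0\\ 0&\mu(E)\me^{\widetilde f_2(E,\theta)}\end{pmatrix}\me^{-Y(E,\theta)}\,P(E)^{-1};$$
thus $\widehat A$ is reducible to $\mathrm{diag}\bigl(\lambda(E)\me^{\widehat{f_1(E)}(0)},\mu(E)\me^{\widehat{f_2(E)}(0)}\bigr)$ by $B_0$, which already has the form required in the statement, and comparing with the identical expression for $S^{E,g}_{W_1,W_2,V}$ (same $P(E),Y$, with $f_i$ in place of $\widetilde f_i$) gives $\|\widehat A-S^{E,g}_{W_1,W_2,V}\|_{\mathcal I}\le C(g)\,\delta$, where $C(g)$ depends only on $g$ and absolute constants — all the uncontrolled quantities have dropped out.

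Finally I would write $S^{E,g}_{W_1,W_2,V}=A^{\widetilde V,W}$ with $\widetilde V=\tfrac{E-V}{\me^{-g}+W_1}$ and $W=-\tfrac{\me^g+W_2}{\me^{-g}+W_1}$ (on $\mathcal I$ these and their reciprocals lie in $\mathscr B_{0,\mathcal I}(\R)$, using $\epsilon$ small), choose $\delta$ so small that $C(g)\delta$ is below the threshold of Lemma~\ref{Schrodinger}, and apply that lemma to $\widehat A$: it produces $V''\in\mathscr B_{0,\mathcal I}(\R)$ and $Z\in\mathscr B_{0,\mathcal I}(\gl)$ with $\|Z\|_0,\ \|V''-\widetilde V\|_0\le C\,C(g)\,\delta$ and $\me^{-Z(\theta+\omega)}\widehat A(\theta)\me^{Z(\theta)}=A^{V'',W}$. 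Setting $V'(\theta):=E-V''(\theta)(\me^{-g}+W_1(\theta))$ one checks $A^{V'',W}=S^{E,g}_{W_1,W_2,V'}$ and $\|V'-V\|_0=\|(\widetilde V-V'')(\me^{-g}+W_1)\|_0\le C'(g)\,\delta$, so $\delta=\delta(\varepsilon,g)$ makes this $\le\varepsilon$ (running the same argument with $V\equiv0$, which is permitted since then the smallness hypothesis only involves $W_1,W_2$, gives $\|V'\|_0\le\varepsilon$ exactly as in the statement). Then $\me^{-Z}B_0$ conjugates $S^{E,g}_{W_1,W_2,V'}$ to the required constant cocycle; absorbing $\me^{-Z}$ through $P(E)$ and merging it with $\me^{Y}$ via Baker--Campbell--Hausdorff rewrites $\me^{-Z}B_0=P(E)\me^{\widehat Y}\mathrm{diag}(\me^{y_1},\me^{y_2})$ with $\|\widehat Y\|_{\mathcal I}\le\epsilon^{1/3}+C(g)\|Z\|_0\le4\epsilon^{1/3}$ for $\delta$ small, and possibly shrinking the absolute constant $C_0$ then ensures $4\epsilon^{1/3}\le\tfrac{\lambda(E)}{4(\lambda(E)+\mu(E))}$, as required. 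The main obstacle is precisely the cohomological equation without a Diophantine condition: the truncation solves it but destroys both the norm control of the conjugacy and the Jacobi structure of the cocycle, so a genuine Jacobi cocycle must be recovered via Lemma~\ref{Schrodinger}, at the cost of the perturbation of $V$ into $V'$ and the loss from $\epsilon^{1/3}$ to $4\epsilon^{1/3}$; keeping $B$ in the exact structural form of the statement through this last conjugation is the only truly delicate bookkeeping.
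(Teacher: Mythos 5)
Your argument is correct and follows essentially the paper's own proof: Proposition \ref{ALKAM} to diagonalize the cocycle, truncation of $f_1,f_2$ to trigonometric polynomials so that the cohomological equation is solvable using only rational independence of $\omega$, Lemma \ref{Schrodinger} to restore the Jacobi structure at the cost of replacing $V$ by a nearby $V'$, and a Baker--Campbell--Hausdorff merging of the extra conjugation into $\me^{Y}$ to get the $4\epsilon^{\frac{1}{3}}$ bound and the required form of $B$. The only differences are cosmetic: you use Fej\'er means where the paper uses the sharp truncation $\mathcal T_K$, and you solve the cohomological equation before invoking Lemma \ref{Schrodinger} rather than after, which does not change the argument.
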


\begin{proof}
We only need to consider the case $\varepsilon<\epsilon$. Denote $P(E)=\begin{pmatrix}
\lambda(E) & \mu(E)\\
1          & 1
\end{pmatrix}$. By Proposition \ref{ALKAM},  there exist  $Y', f_i \in \mathscr B_{0,\mathcal I}, i=1,2$ with $\|Y'\|_{\mathcal I}\leq \epsilon^{\frac{1}{3}}, \|f_i\|_{\mathcal I}\leq \epsilon^{\frac{1}{2}}$ such that  $B'(E,\theta):=P(E)\me^{Y'(E,\theta)}
$ satisfying
$$\begin{aligned}
B'(E,\theta+\omega)^{-1}S^{E,g}_{W_1,W_2,V}(\theta)B'(E,\theta)&=\begin{pmatrix}
 \lambda(E) e^{ f_1(E,\theta)} & 0\\
0   & \mu(E)e^{ f_2 (E,\theta)}
\end{pmatrix}.
\end{aligned}$$

For any $K>0$, we  define the truncating operator $\mathcal{T}_{K}$ as
$$\mathcal{T}_{K}F(E,\theta)=\sum\limits_{k\in\mathbb{Z}^{d},|k|<K}\widehat{F(E)}(k)\me^{\mi\langle k,\theta\rangle}.$$
 Let
$$A_K(E,\theta):=B'(E,\theta+\omega)\begin{pmatrix}
 \lambda(E) e^{ \mathcal{T}_{K} f_1(E,\theta)} & 0\\
0   & \mu(E)e^{ \mathcal{T}_{K} f_2 (E,\theta)}
\end{pmatrix}B'(E,\theta)^{-1}.$$ Then for any $\epsilon>0$, there exists sufficiently large $K=K(\varepsilon)>0$ such that
\begin{eqnarray*}
&& \|S^{E,g}_{W_1,W_2,V}-A_K\|_{\mathcal I}\leq \|B'\|_{\mathcal{I}}\|(B')^{-1}\|_{\mathcal{I}} \|diag(
\lambda(\me^{f_{1}}-\me^{\mathcal T_Kf_{1}}),\mu(\me^{f_{2}}-\me^{\mathcal T_Kf_{2}})
)\|_{\mathcal I}\leq C'\varepsilon.
\end{eqnarray*}
where $C'$ only depends on $g$.

On the other hand, just note  $A^{\frac{E-V}{\me^{-g}+W_1},-\frac{\me^{g}+W_2}{\me^{-g}+W_1}}=S_{W_1,W_2,V}^{E,g}$, then one can verify that $\frac{E-V}{\me^{-g}+W_1}, \frac{\me^{-g}+W_1}{E-V}, -\frac{\me^{g}+W_2}{\me^{-g}+W_1}, -\frac{\me^{-g}+W_1}{\me^{g}+W_2}\in \mathscr B_{0,\mathcal I}(\R)$. As consequence of
 Lemma \ref{Schrodinger} with $W= -\frac{\me^{g}+W_2}{\me^{-g}+W_1}$, there exist $Y''\in \mathscr B_{0,\mathcal I}(\gl), \tilde V\in \mathscr B_{0,\mathcal I}(\R)$
such that
$$\me^{-Y''(E,\theta+\omega)}A_K(E,\theta)\me^{Y''(E,\theta)}=A^{\tilde V(E,\cdot),-\frac{\me^{g}+W_2}{\me^{-g}+W_1}},$$
with the estimates
$$\|Y''\|_{\mathcal{I}}\leq C''\varepsilon, \ \biggr\|\tilde V-\frac{E-V}{\me^{-g}+W_1}\biggr\|_{\mathcal{I}}\leq C''\varepsilon,$$
where $C''$ only depends on $g$.
Let $V'(E,\theta)=E-\tilde V(E,\theta)(\me^{-g}+W_1(\theta))$. We can check that
$$\|V-V'\|_{\mathcal{I}}\leq \|\me^{-g}+W_1\|_{\mathcal{I}}\biggr\|\tilde V-\frac{E-V}{\me^{-g}+W_1}\biggr\|_{\mathcal{I}}\leq 2C''\me^{-g}\varepsilon.$$

Let $\tilde B(E,\theta):=\me^{-Y''(E,\theta)}B'(E,\theta)$. Note that $A^{\tilde V(E,\cdot),-\frac{\me^{g}+W_2}{\me^{-g}+W_1}}=S^{E,g}_{W_{1},W_{2},V'(E,\cdot)},$ and then we have
\begin{eqnarray*}
 \tilde B(E,\theta+\omega)^{-1}S^{E,g}_{W_{1},W_{2},V'(E,\cdot)}\tilde B(E,\theta)
&=&B'(E,\theta+\omega)^{-1}A_{K}(E,\theta)B'(E,\theta)\\
&=&
\begin{pmatrix}
 \lambda(E) e^{ \mathcal{T}_{K} f_1(E,\theta)} & 0\\
0   & \mu(E)e^{ \mathcal{T}_{K} f_2 (E,\theta)}
\end{pmatrix}.
\end{eqnarray*}
Also note for any $\omega\in\R^d\backslash\Q^d$,
\begin{equation*}
\begin{aligned}
y_{i}(E,\theta+\omega)-y_{i}(E,\theta)=\mathcal T_Kf_{i}(E,\theta)-\widehat {f_{i}(E)}(0), i=1,2
\end{aligned}
\end{equation*}
always have a continuous solution.
Let $
B(E,\theta)
=\tilde B(E,\theta) diag(\me^{y_{1}(E)},\me^{y_{2}(E)}).$ It is clear that $B$ satisfies \eqref{conj}.

 Finally we show that $B$ has desired form. To prove this,
 note  if $B, D$ are small $\gl$ matrices,
then there exists $H\in \gl$ such that
\begin{equation*}
\me^{B}\me^{D}=\me^{B+D+H},
\end{equation*}
where $H$ is a sum of terms at least 2 orders in $B,D.$ Thus
  there exists $Y\in \mathscr B_{0,\mathcal I}(\gl)$ such that
 $$\me^{Y(E,\theta)}=\me^{-P(E)^{-1}Y''(E,\theta)P(E)}\me^{Y'(E,\theta)}$$ with the estimate
 $$
 \begin{aligned}
 \|Y(E,\theta)\|_{\mathcal{I}} 
 &\leq 2\|P(E)Y''(E,\theta)P(E)^{-1}\|_{\mathcal{I}}+2\|Y'(E)\|_{\mathcal{I}}\leq 4\epsilon^{\frac{1}{3}}.
 \end{aligned}$$
Consequently, one has
\begin{eqnarray*}
B(E,\theta)
&=&\me^{-Y''(E,\theta)}P(E)\me^{Y'(E,\theta)} diag(\me^{y_{1}(E)},\me^{y_{2}(E)}) \\
&=&P(E)\me^{Y(E,\theta)} diag(\me^{y_{1}(E)},\me^{y_{2}(E)}),
\end{eqnarray*} and then we finish the proof.
\end{proof}

\smallskip

\textbf{Analytic almost periodic function.}\\

Finally we consider the analytic almost periodic case. First let's define the almost periodic functions in the context of analytic functions on a thickened infinite dimensional torus $\mathbb T_r^\infty$, where $\mathbb T_r^\infty$ is defined as
$$\theta=(\theta_j)_{j\in\mathbb N},\ \theta_j\in \mathbb C:\Re(\theta_j)\in \mathbb T,|\Im(\theta_j)|\leq r\langle j\rangle.$$
For any $r>0$, we define the space of analytic functions $\mathbb T^\infty_r\rightarrow *$ as
\begin{equation}\label{almostinfinite}
\begin{aligned}
C^\omega_r&(\mathbb T^\infty,*):=\\
&\left\{F(\theta)=\sum\limits_{k\in\mathbb Z_*^\infty}\hat F(k)\mathrm e^{\mathrm i\langle k,\theta\rangle}\in\mathcal F:|F|_r:=\sum\limits_{k\in\mathbb Z_*^\infty}\mathrm e^{r|k|_1}|\hat F(k)|<\infty\right\}
\end{aligned}
\end{equation}
 where $*$ usually denotes $\mathbb R,\gl$ or $\GL$,  and $\mathcal F$ denotes the space of pointwise absolutely convergent formal Fourier series $\mathbb T_r^\infty\rightarrow *$ as
$$F(\theta)=\sum\limits_{k\in\mathbb Z_*^\infty}\hat F(k)\mathrm e^{\mathrm i\langle k,\theta\rangle},\quad \hat F(k):=\int_{\T^\infty} F(\theta)\me^{-\mi\langle k,\theta\rangle}d\theta,$$
 and $\mathbb Z_*^\infty:=\{k\in\mathbb Z^\infty:|k|_1:=\sum\limits_{j\in\mathbb N}\langle j\rangle|k_j|<\infty\}$
denotes  the set of infinite integer vectors with finite support.

  In this case, we can solve the cohomological equation as follows:

\begin{Lemma}\label{al-small}
Let $\gamma\in (0,1),\tau>1, r>0$,  $f\in C^\omega_r(\mathbb T^\infty,\R)$. Then for any $\omega\in DC_\infty(\gamma,\tau)$ and any $0<r'<r$, the equation
\eqref{coho} possesses a solution $y\in C^\omega_{r'}(\mathbb T^\infty,\R)$.
\end{Lemma}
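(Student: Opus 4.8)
The plan is to solve the cohomological equation \eqref{coho} by comparing Fourier coefficients and then to verify that the resulting formal series actually lies in $C^\omega_{r'}(\mathbb T^\infty,\R)$ thanks to the Diophantine condition in Definition~\ref{Almost} and the exponential decay of $\hat f(k)$. First I would write $f(\theta)=\sum_{k\in\Z_*^\infty}\hat f(k)\me^{\mi\langle k,\theta\rangle}$ with $\sum_k|\hat f(k)|\me^{r|k|_1}<\infty$, and look for $y(\theta)=\sum_{k\in\Z_*^\infty}\hat y(k)\me^{\mi\langle k,\theta\rangle}$. Plugging into \eqref{coho} and matching coefficients forces $\hat y(0)$ to be free (take it $0$) and, for $k\neq 0$,
\begin{equation*}
\hat y(k)=\frac{\hat f(k)}{\me^{\mi\langle k,\omega\rangle}-1}.
\end{equation*}
Since $f$ is real-valued, $\hat f(-k)=\overline{\hat f(k)}$ and the same holds for $\hat y$, so the resulting $y$ is real; and since $\widehat f(0)$ has been subtracted on the right-hand side, the $k=0$ equation is automatically satisfied.

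The substantive step is the estimate on $\hat y(k)$. Using $|\me^{\mi t}-1|\geq \tfrac{2}{\pi}\,\mathrm{dist}(t,2\pi\Z)$ together with the Diophantine bound
$$\inf_{n\in\Z}|\langle k,\omega\rangle-n|\geq \gamma\prod_{j\in\N}\frac{1}{1+|k_j|^\tau\langle j\rangle^\tau},$$
one gets
$$|\hat y(k)|\leq \frac{C}{\gamma}\Bigl(\prod_{j\in\N}(1+|k_j|^\tau\langle j\rangle^\tau)\Bigr)|\hat f(k)|.$$
So I need to control the product factor by an exponential in $|k|_1$. The point is that for fixed $0<r'<r$ there is a constant $C_{r,r'}$ with
$$\prod_{j\in\N}\bigl(1+|k_j|^\tau\langle j\rangle^\tau\bigr)\leq C_{r,r'}\,\me^{(r-r')|k|_1},\qquad k\in\Z_*^\infty,$$
because $\log(1+|k_j|^\tau\langle j\rangle^\tau)\leq \tau\log(1+|k_j|\langle j\rangle)\lesssim_{\tau,\delta}(r-r')\langle j\rangle|k_j| + (\text{a summable-in-}j\text{ term when }k_j=0)$ — here one uses that only finitely many $k_j$ are nonzero, that $\langle j\rangle|k_j|\geq 1$ whenever $k_j\neq 0$, and that $\log(1+x)/x$ is bounded on $x\geq 1$, absorbing the bound into $C_{r,r'}$; the terms with $k_j=0$ contribute a factor $1$ to the product. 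Summing $|k|_1=\sum_j\langle j\rangle|k_j|$ over the finite support then yields the claimed exponential bound. Combining,
$$|y|_{r'}=\sum_{k\in\Z_*^\infty}\me^{r'|k|_1}|\hat y(k)|\leq \frac{CC_{r,r'}}{\gamma}\sum_{k\in\Z_*^\infty}\me^{r|k|_1}|\hat f(k)|=\frac{CC_{r,r'}}{\gamma}|f|_r<\infty,$$
so $y\in C^\omega_{r'}(\mathbb T^\infty,\R)$ as desired, and pointwise absolute convergence together with a termwise check shows $y$ indeed solves \eqref{coho}.

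The main obstacle is the elementary but slightly delicate inequality $\prod_j(1+|k_j|^\tau\langle j\rangle^\tau)\leq C_{r,r'}\me^{(r-r')|k|_1}$: one must be careful that the "weights" $\langle j\rangle^\tau$ attached to each coordinate do not accumulate over the (finite but unboundedly large) support of $k$, and this is exactly where the definition $|k|_1=\sum_j\langle j\rangle|k_j|$ (rather than $\sum_j|k_j|$) is used — each nonzero coordinate $j$ carries at least weight $\langle j\rangle\geq 1$ in $|k|_1$, which dominates $\log\langle j\rangle^\tau$ after scaling. This is essentially the standard Bourgain-type argument \cite{Bourgain2005, Biasco2019}, so I would either cite it or reproduce the two-line computation; no small-divisor iteration is needed since the equation \eqref{coho} is linear and solved in one shot by Fourier series.
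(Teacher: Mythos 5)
Your proposal is correct and follows essentially the same route as the paper: solve \eqref{coho} termwise in Fourier space, bound the small divisors $|\me^{\mi\langle k,\omega\rangle}-1|$ from below via the Diophantine condition, and control $\prod_j(1+\langle j\rangle^\tau|k_j|^\tau)\me^{-(r-r')|k|_1}$ uniformly in $k$, which is exactly the estimate the paper quotes from \cite{Riccardo2021} (Lemma \ref{Prop:smalldivsorestimate}). One small caution: your inline justification of that product bound (``$\log(1+x)/x$ is bounded on $x\geq 1$, absorb into $C_{r,r'}$'') would by itself only yield $\me^{C\tau|k|_1}$ rather than $\me^{(r-r')|k|_1}$; the correct mechanism, which you do identify in your closing paragraph, is that the weights $\langle j\rangle$ in $|k|_1$ force all but finitely many (at most $C(\tau,r-r')$) coordinates to satisfy $\tau\log(1+\langle j\rangle|k_j|)\leq (r-r')\langle j\rangle|k_j|$ outright, so citing or reproducing the standard lemma, as the paper does, is the clean way to finish.
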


\begin{proof}First we recall  the following estimate:
\begin{Lemma}[\cite{Riccardo2021}]\label{Prop:smalldivsorestimate}
Let $\tau>0$. Then
$$\sup\limits_{k\in\Z_*^\infty,|k|_1<\infty}\prod\limits_{i\in\Z}(1+\langle i\rangle^\tau|k_i|^\tau)\me^{-r|k|_1}\leq \exp\bigr(\frac{\mu}{r}\ln(\frac{\mu}{r})\bigr)$$
for some constant $\mu=\mu(\tau).$
\end{Lemma}
Then as a consequence, if $\omega \in \mathrm{DC_{\infty}(\gamma,\tau)}$,
 for  any $0< r'<r$, we have
$$
\begin{aligned}
\|y_{i}\|_{r'}&\leq \sum\limits_{k\in\Z_*^\infty\backslash\{0\}}\frac{|\widehat {f_{i}(E)}(k)|}{|\me^{\mi\langle k,\omega\rangle}-1|}\me^{|k|_1r}\me^{|k|_1(r'-r)}\\
&\leq \sum\limits_{k\in\Z^\infty_*\backslash\{0\}}\frac{|\widehat{ f_{i}(E)}(k)|\me^{r|k|_1}}{\gamma}\prod\limits_{j\in\mathbb N}({1+|k_j|^\tau\langle j\rangle^\tau})\me^{|k_1|(r'-r)}\\
&\leq \frac{2\epsilon}{\gamma}\sup\limits_{k\in\Z^\infty_*\backslash\{0\}}\prod\limits_{j\in\mathbb N}({1+|k_j|^\tau\langle j\rangle^\tau})\me^{|k|_1(r'-r)}\\
&\leq \frac{2\epsilon}{\gamma}\exp\bigr(\frac{\mu}{r-r'}\ln(\frac{\mu}{r-r'})\bigr).
\end{aligned}
$$

\end{proof}

For any $r>0$,  denote $\mathcal B_r:=C^\omega_r(\T^\infty,\gl)$, and the function space $$\mathscr B_{r,\mathcal I}:=\{f\in C(\mathcal I\times \T^\infty,\gl)| f(E,\cdot)\in \mathcal B_r \text{ for any given } E\in \mathcal I\}$$
with the norm $\|f\|_{r,\mathcal I}:=\sup\limits_{E\in\mathcal I}|f(E)|_r.$ It is clear that $\mathcal B_r$ admits condition $(H).$

 \begin{Lemma}\label{prop:ALredu}
Let $g,r>0$, $\omega\in DC_{\infty}(\gamma,\tau)$, $\gamma\in (0,1),\tau>1$. Suppose that $W_1,W_2,V\in C^\omega_r(\T^\infty,\R)$. Then there exists  an absolute constant $C_0>0$ such that if
$$|W_1|_{r}+|W_2|_r+|V|_r\leq\epsilon\leq C_0\me^{-12g}g^6,$$
then for any $0<\tilde r<r,$  there exists $B\in\mathscr B_{\tilde r,\mathcal I}$ such that
$$B(E,\theta+\omega)^{-1}S_{W_1,W_2,V}^{E,g}B(E,\theta)=\begin{pmatrix}
 \lambda(E)e^{ \widehat {f_{1}(E)}(0) }& 0\\
0   & \mu(E) e^{\widehat {f_{2}(E)}(0)}
\end{pmatrix},
$$ where $B$ takes the form
$$B(E,\theta):=\begin{pmatrix}
\lambda(E) & \mu(E)\\
1          & 1
\end{pmatrix}\me^{Y(E,\theta)}\begin{pmatrix}
 \me^{ f_{1}(E,\theta)} & 0\\
0   &  \me^{ f_{2}(E,\theta)}
\end{pmatrix}$$
 with  estimates
$$\|Y\|_{\tilde r,\mathcal I}\leq \epsilon^{\frac{1}{3}}\leq \frac{\lambda(E)}{4(\lambda(E)+\mu(E))}.$$
\end{Lemma}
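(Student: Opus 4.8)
The plan is to combine Proposition \ref{ALKAM} (the diagonalization step, which holds for $d=\infty$ and requires no arithmetic condition) with the cohomological equation solver of Lemma \ref{al-small}, exactly mirroring the structure of Lemma \ref{prop:finitediffredu} but carried out in the analytic Banach scale $\mathcal B_r = C^\omega_r(\T^\infty,\gl)$ rather than in $C^{1,\gamma}$. First I would fix $g>0$ and a pair of radii $0<\tilde r<r$, and set an intermediate radius $r'$ with $\tilde r<r'<r$. Since $\mathcal B_r$ admits condition $(H)$ (stated just before the lemma), Proposition \ref{ALKAM} applies verbatim with $\mathcal B=\mathcal B_{r'}$: under the smallness hypothesis $|W_1|_r+|W_2|_r+|V|_r\le\epsilon\le C_0\me^{-12g}g^6$ (note $|\cdot|_{r'}\le|\cdot|_r$ so the hypothesis of Proposition \ref{ALKAM} is met on $\mathscr B_{r',\mathcal I}$), we obtain $Y, f_i\in\mathscr B_{r',\mathcal I}$ with
$$\me^{-Y(E,\theta+\omega)}P(E)^{-1}S_{W_1,W_2,V}^{E,g}(\theta)P(E)\me^{Y(E,\theta)}=\begin{pmatrix}\lambda(E)\me^{f_1(E,\theta)}&0\\0&\mu(E)\me^{f_2(E,\theta)}\end{pmatrix},$$
where $P(E)=\begin{pmatrix}\lambda(E)&\mu(E)\\1&1\end{pmatrix}$, $\|Y\|_{r',\mathcal I}\le\epsilon^{1/3}\le\frac{\lambda(E)}{4(\lambda(E)+\mu(E))}$, and $\|f_i\|_{r',\mathcal I}\le\epsilon^{1/2}$.

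Next I would solve the cohomological equations $y_i(E,\theta+\omega)-y_i(E,\theta)=f_i(E,\theta)-\widehat{f_i(E)}(0)$ for $i=1,2$. Since $\omega\in DC_\infty(\gamma,\tau)$ and $f_i(E,\cdot)\in C^\omega_{r'}(\T^\infty,\R)$ for each fixed $E\in\mathcal I$, Lemma \ref{al-small} produces, for any $\tilde r<r'$, a solution $y_i(E,\cdot)\in C^\omega_{\tilde r}(\T^\infty,\R)$; moreover the quantitative estimate inside the proof of Lemma \ref{al-small} gives $\|y_i\|_{\tilde r,\mathcal I}\le \frac{2\epsilon^{1/2}}{\gamma}\exp\bigl(\frac{\mu}{r'-\tilde r}\ln\frac{\mu}{r'-\tilde r}\bigr)$, which in particular shows $y_i\in\mathscr B_{\tilde r,\mathcal I}$. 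Then setting
$$B(E,\theta):=P(E)\me^{Y(E,\theta)}\begin{pmatrix}\me^{y_1(E,\theta)}&0\\0&\me^{y_2(E,\theta)}\end{pmatrix}\in\mathscr B_{\tilde r,\mathcal I},$$
a direct computation using the diagonalized form and the definition of $y_i$ (the diagonal factor conjugates the diagonal cocycle $\mathrm{diag}(\lambda\me^{f_1},\mu\me^{f_2})$ to the constant $\mathrm{diag}(\lambda\me^{\widehat{f_1}(0)},\mu\me^{\widehat{f_2}(0)})$) yields
$$B(E,\theta+\omega)^{-1}S_{W_1,W_2,V}^{E,g}(\theta)B(E,\theta)=\begin{pmatrix}\lambda(E)\me^{\widehat{f_1(E)}(0)}&0\\0&\mu(E)\me^{\widehat{f_2(E)}(0)}\end{pmatrix}.$$
The estimate $\|Y\|_{\tilde r,\mathcal I}\le\|Y\|_{r',\mathcal I}\le\epsilon^{1/3}\le\frac{\lambda(E)}{4(\lambda(E)+\mu(E))}$ is inherited directly from Proposition \ref{ALKAM} since $\|\cdot\|_{\tilde r,\mathcal I}\le\|\cdot\|_{r',\mathcal I}$; here it is essential, exactly as the Remark after Proposition \ref{criterion} emphasizes, that we impose no control on the $y_i$ — their norms are allowed to blow up as $\tilde r\to r$, which is precisely what makes the result semi-local.

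The only genuinely non-routine points are bookkeeping ones rather than conceptual obstacles. First, one must check that Proposition \ref{ALKAM}, whose proof is written for the abstract Banach space $\mathcal B$ satisfying condition $(H)$, does apply with $\mathcal B=\mathcal B_{r'}$: this requires verifying that $\tilde F_{E,g}(\theta)-\id$ lies in $\mathscr B_{r',\mathcal I}$ and is small in the $|\cdot|_{r'}$-norm, which follows because $W_1,W_2,V\in C^\omega_r\subset C^\omega_{r'}$ and the entries of $\tilde F_{E,g}$ are rational expressions in $W_1,W_2,V$ with denominators bounded away from zero (using $\me^{-g}+W_1>0$, guaranteed by smallness), together with the fact that $C^\omega_{r'}(\T^\infty)$ is closed under such operations; the non-resonance bound \eqref{nonresonant} needed in Lemma \ref{basic} uses only the eigenvalue separation \eqref{eigenvalueseparate}, which is purely about $E\in\mathcal I$ and is unchanged. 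Second, one must confirm that the $E$-dependence is handled uniformly: since all estimates in Proposition \ref{ALKAM} and Lemma \ref{al-small} are uniform in $E\in\mathcal I$ (the Diophantine constants $\gamma,\tau$ and the radii are $E$-independent, and $\lambda(E),\mu(E)$ vary continuously and boundedly on $\mathcal I$), the constructed objects $Y, f_i, y_i$ are continuous in $E$ and belong to the appropriate $\mathscr B_{\cdot,\mathcal I}$ spaces. I expect the main (mild) obstacle to be packaging the analytic-category version of the small-divisor estimate cleanly so that the loss-of-analyticity parameter $r-\tilde r$ is distributed correctly between the two applications (Proposition \ref{ALKAM} costs nothing in radius, so in fact all of the loss can be absorbed into the single application of Lemma \ref{al-small}); once that is organized, the proof is a two-line concatenation of the two cited results followed by the explicit conjugation identity above.
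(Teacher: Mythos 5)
Your proposal is correct and follows exactly the route the paper takes: the paper's proof of this lemma is literally the proof of Lemma \ref{prop:finitediffredu} (diagonalization via Proposition \ref{ALKAM}, then the cohomological equation, then setting $B=P(E)\me^{Y}\mathrm{diag}(\me^{y_1},\me^{y_2})$) with Proposition \ref{prop:lowregu} replaced by Lemma \ref{al-small}, which is precisely your concatenation in the analytic scale $C^\omega_r(\T^\infty,\gl)$. Your additional bookkeeping remarks (condition $(H)$, uniformity in $E$, and that all loss of analyticity is absorbed in the single application of Lemma \ref{al-small}) are consistent with, and slightly more explicit than, the paper's one-line argument.
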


\begin{proof}
The proof is same as Lemma \ref{prop:finitediffredu}, one only needs to replace Proposition \ref{prop:lowregu} by Lemma \ref{al-small}.
\end{proof}

\subsubsection{Continuity argument}
Motivated by Proposition \ref{criterion} and Lemma \ref{prop:finitediffredu}, it suffices to find some $E_0\in\mathcal I$ such that
$ \lambda(E_0)e^{ \widehat {f_{1}(E_0)}(0) } =1,$ which is the content of the following lemma:

\begin{Lemma}\label{contin}
Under the assumption of Proposition \ref{ALKAM}, there exists $E_0\in \mathcal I$, such that
$$ \lambda(E_0)e^{ \widehat {f_{1}(E_0)}(0) } =1.$$
\end{Lemma}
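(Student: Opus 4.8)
The plan is to run an intermediate value argument on the function
$$\Phi(E):=\lambda(E)\,\me^{\widehat{f_1(E)}(0)},\qquad E\in\mathcal I=\Bigl[\,2+\tfrac19\min\{g^2,1\},\ 2(\me^g+\me^{-g})\,\Bigr],$$
showing that $\Phi$ is continuous, is strictly larger than $1$ at the left endpoint of $\mathcal I$, and strictly smaller than $1$ at the right endpoint; Lemma~\ref{contin} then follows at once.

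First I would settle continuity. The map $E\mapsto\lambda(E)=\me^{g}\frac{E-\sqrt{E^2-4}}{2}$ is real-analytic (in fact strictly decreasing) on $\mathcal I$, since every $E\in\mathcal I$ satisfies $E\geq 2+\tfrac19\min\{g^2,1\}>2$. For the second factor, Proposition~\ref{ALKAM} gives $f_1\in\mathscr B_{\mathcal I}$; as all the matrices occurring in the conjugation are real and $\lambda(E)>0$, the function $f_1$ is real-valued, and $f_1\in C(\mathcal I\times\T^d,\R)$ by the definition of $\mathscr B_{\mathcal I}$. Joint continuity on the compact set $\mathcal I\times\T^d$ makes $E\mapsto\widehat{f_1(E)}(0)=\int_{\T^d}f_1(E,\theta)\,d\theta$ continuous, and $|\widehat{f_1(E)}(0)|\leq\|f_1\|_{\mathcal B,\mathcal I}\leq\epsilon^{1/2}$ by \eqref{esf}. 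Hence $\Phi$ is continuous with $\me^{-\epsilon^{1/2}}\lambda(E)\leq\Phi(E)\leq\me^{\epsilon^{1/2}}\lambda(E)$ on $\mathcal I$.

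Next I would estimate $\lambda$ at the two endpoints. At $E_+=2(\me^g+\me^{-g})$ the roots of \eqref{characeq} satisfy $\lambda(E_+)+\mu(E_+)=E_+\me^{g}=2(\me^{2g}+1)$ and $\lambda(E_+)\mu(E_+)=\me^{2g}$, so, with $a=\me^{2g}+1$, one has $\lambda(E_+)=a-\sqrt{a^2-\me^{2g}}=\frac{\me^{2g}}{a+\sqrt{a^2-\me^{2g}}}$; since $a^2-\me^{2g}=\me^{4g}+\me^{2g}+1>(\me^{2g}-1)^2$ this gives $\lambda(E_+)<\frac{\me^{2g}}{2\me^{2g}+1}<\tfrac12$, whence $\Phi(E_+)\leq\me^{\epsilon^{1/2}}\lambda(E_+)<1$ as long as $\epsilon^{1/2}<\ln 2$. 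At $E_-=2+\tfrac19\min\{g^2,1\}$ I would show, directly from the formula for $\lambda$, that $\lambda(E_-)\geq 1+c_1\min\{g,1\}$ for an absolute constant $c_1>0$: for $g\geq 1$ one computes $\lambda(E_-)=\me^{g}\frac{19-\sqrt{37}}{18}\geq\me$, and for $0<g<1$ one uses $E_-^2-4=\tfrac{g^2}{81}(36+g^2)$ together with $\me^{g}\geq 1+g$ to get $\lambda(E_-)\geq(1+g)\bigl(1+\tfrac{g^2}{18}-\tfrac{\sqrt{37}}{18}g\bigr)\geq 1+c_1 g$. Since Proposition~\ref{ALKAM} forces $\epsilon^{1/2}\leq C_0^{1/2}\me^{-6g}g^{3}\leq C_0^{1/2}\min\{g,1\}$, choosing the absolute constant $C_0$ small enough (beyond the constraint already present in Proposition~\ref{ALKAM}) yields $\epsilon^{1/2}<\tfrac{c_1}{2}\min\{g,1\}$, and therefore $\Phi(E_-)\geq\me^{-\epsilon^{1/2}}\lambda(E_-)\geq(1-\epsilon^{1/2})\bigl(1+c_1\min\{g,1\}\bigr)>1$.

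The intermediate value theorem applied to the continuous function $\Phi$ with $\Phi(E_-)>1>\Phi(E_+)$ then furnishes $E_0\in\mathcal I$ with $\Phi(E_0)=1$, i.e.\ $\lambda(E_0)\me^{\widehat{f_1(E_0)}(0)}=1$. I expect the only delicate step to be the left-endpoint bound: one has to extract a lower bound $\lambda(E_-)-1\gtrsim\min\{g,1\}$ that degenerates at precisely the rate needed to dominate $\epsilon^{1/2}$, and then lean on the explicit smallness $\epsilon\lesssim\me^{-12g}g^{6}$ from Proposition~\ref{ALKAM} to absorb the perturbation $\me^{\widehat{f_1}(0)}$ uniformly in $g$; continuity and the right-endpoint bound are routine.
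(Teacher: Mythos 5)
Your proposal is correct and follows essentially the same route as the paper: continuity of $E\mapsto\lambda(E)\me^{\widehat{f_1(E)}(0)}$ from Proposition \ref{ALKAM}, the bound $|\widehat{f_1(E)}(0)|\leq\epsilon^{1/2}$ from \eqref{esf}, strict inequalities at the two endpoints of $\mathcal I$, and the intermediate value theorem. The only blemishes are harmless numerical slips (e.g.\ for $g\geq 1$ one has $\lambda(E_-)=\me^{g}\frac{19-\sqrt{37}}{18}\approx 0.72\,\me^{g}$, not $\geq\me$, though the needed bound $\lambda(E_-)\geq 1+c_1$ still holds), and your explicit endpoint estimates are in fact somewhat more quantitative than the paper's.
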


\begin{proof}
By \eqref{esf}, for any $E\in \mathcal I$, we have
 \begin{equation}\label{diff}
 |\lambda(E) e^{ \widehat {f_{1}(E)}(0) }-\lambda(E)|\leq 2|\lambda(E)|\epsilon^{\frac{1}{2}}.
 \end{equation}

On the other hand,  for $E_1=2+\min\{1,\frac{g^2}{9}\}$, by \eqref{characeq}, the minimum eigenvalue of $A_{g}(E_1)$ is $\lambda(E_1)=\me^{g}\frac{E_1-\sqrt{E_1^2-4}}{2}>1.$
And for $E_2=2\me^g+2\me^{-g}$,
  the minimum eigenvalue  of $A_{g}(E)$ is $\lambda(E_2)=\me^{g}\frac{E_2-\sqrt{E_2^2-4}}{2}<1$.

By \eqref{diff}, one has
 $$|\lambda(E_i)-\lambda(E_i) e^{ \widehat {f_{1}(E_i)}(0) }|\leq 2\epsilon^{\frac{1}{2}}|\lambda(E_i)|\leq \frac{\min\{g^2,1\}}{18}, \ i=1,2.$$
which implies that  $$
\begin{aligned}
\lambda(E_1) e^{ \widehat {f_{1}(E_1)}(0) }&\geq\lambda(E_1)-\lambda(E_1)|1-e^{ \widehat {f_{1}(E_1)}(0) }|
\geq \me^{g}\frac{E_1-\sqrt{E_1^2-4}}{2}-\frac{\min\{g^2,1\}}{18}>1,\\
\lambda(E_2) e^{ \widehat {f_{1}(E_2)}(0) }&\leq \lambda(E_2)+\lambda(E_2)|1-e^{ \widehat {f_{1}(E_2)}(0) }|\leq \me^{g}\frac{E_2-\sqrt{E_2^2-4}}{2}+\frac{\min\{g^2,1\}}{18}<1.
\end{aligned}
$$

By Proposition \ref{ALKAM}, $f_1(E,\cdot)$ is continuous in $E$, which implies that $\lambda(E) e^{ \widehat {f_{1}(E)}(0) }$ in continuous in $E$, and then the result follows by the intermediate value theorem.
\end{proof}

\subsection{Positive almost-periodic function}

As a consequence, we get the positive almost-periodic eigenfunction of the non-self-adjoint Jacobi operator $\mathcal L^g_{W_1, W_2, V}$.

\begin{Corollary}\label{ap-ir}
Let $\omega\in\mathcal P, g\neq 0$, and suppose that $W_1,W_2,V\in C^{1,\gamma}(\T,\R)$ for some $ 0<\gamma<1.$ Then there
exists  an absolute constant $C_0>0$ such that if
$$\|W_1\|_{1,\gamma}+\|W_2\|_{1,\gamma}+\|V\|_{1,\gamma}\leq C_0\me^{-12|g|}g^6,$$
then there exist $E_0\in\R, U\in C(\T,\R)$, with $U>0$ such that $u(n)=U(n\omega)$ is a quasi-periodic solution of
$$\bigr(\me^{-g}+W_1(n\omega)\bigr)u(n+1)+\bigr(\me^{g}+W_2(n\omega)\bigr)u(n-1)+V(n\omega)u(n)=E_0u(n).$$
\end{Corollary}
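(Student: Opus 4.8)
The plan is to combine the reducibility result of Lemma~\ref{prop:finitediffredu}, the continuity argument of Lemma~\ref{contin}, and the general criterion of Proposition~\ref{criterion}. Since $g\neq 0$, after possibly replacing $g$ by $-g$ (which amounts to reflecting $n\mapsto -n$, interchanging the roles of $W_1$ and $W_2$) we may assume $g>0$, so that all the results of this section apply verbatim with $\epsilon:=\|W_1\|_{1,\gamma}+\|W_2\|_{1,\gamma}+\|V\|_{1,\gamma}\leq C_0\me^{-12g}g^6$.

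First I would invoke Lemma~\ref{contin} (whose hypotheses are exactly those of Proposition~\ref{ALKAM}, hence satisfied here) to produce a value $E_0\in\mathcal I$ with $\lambda(E_0)\me^{\widehat{f_1(E_0)}(0)}=1$. Next, I would apply Lemma~\ref{prop:finitediffredu} at this particular energy $E_0$: fixing any $0<\gamma'<\gamma$, it gives $B\in\mathscr B_{0,\gamma',\mathcal I}$, hence in particular $B(E_0,\cdot)\in C(\T,\GL)$, such that
$$B(E_0,\theta+\omega)^{-1}S_{W_1,W_2,V}^{E_0,g}(\theta)B(E_0,\theta)=\begin{pmatrix}\lambda(E_0)\me^{\widehat{f_1(E_0)}(0)} & 0\\ 0 & \mu(E_0)\me^{\widehat{f_2(E_0)}(0)}\end{pmatrix}=\begin{pmatrix}1 & 0\\ 0 & s\end{pmatrix},$$
where $s:=\mu(E_0)\me^{\widehat{f_2(E_0)}(0)}\in\R$ and the top-left entry is $1$ by the choice of $E_0$. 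Moreover Lemma~\ref{prop:finitediffredu} tells us that $B(E_0,\theta)$ has exactly the structured form required by Proposition~\ref{criterion}, namely $B(E_0,\theta)=P(E_0)\me^{Y(E_0,\theta)}\,\mathrm{diag}(\me^{y_1(E_0,\theta)},\me^{y_2(E_0,\theta)})$ with $P(E_0)=\begin{pmatrix}\lambda(E_0)&\mu(E_0)\\ 1&1\end{pmatrix}\in\GL$ and with the crucial smallness $\|Y(E_0,\cdot)\|_0\leq\|Y\|_{0,\gamma,\mathcal I}\leq\epsilon^{1/3}\leq \frac{\lambda(E_0)}{4(\lambda(E_0)+\mu(E_0))}$.

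The only thing left to check before applying Proposition~\ref{criterion} is that its quantitative hypothesis \eqref{eqy}, i.e. $\tilde\epsilon\leq\frac{|P_{11}|}{4(|P_{11}|+|P_{12}|)}$, is met with $P=P(E_0)$: here $P_{11}=\lambda(E_0)>0$ and $P_{12}=\mu(E_0)>0$, so $\frac{|P_{11}|}{4(|P_{11}|+|P_{12}|)}=\frac{\lambda(E_0)}{4(\lambda(E_0)+\mu(E_0))}$, which is precisely the bound already established for $\|Y(E_0,\cdot)\|_0$. Thus Proposition~\ref{criterion} applies with $\theta$-independent data at energy $E=E_0$ and yields $U\in C(\T,\R)$ with $U>0$ such that $u(n)=U(n\omega)$ solves $\bigl(\me^{-g}+W_1(n\omega)\bigr)u(n+1)+\bigl(\me^{g}+W_2(n\omega)\bigr)u(n-1)+V(n\omega)u(n)=E_0 u(n)$, which is the assertion. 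Since $E_0\in\mathcal I\subset\R$, it is real as required.

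I do not anticipate a genuine obstacle here: the corollary is essentially a bookkeeping assembly of the three preceding results. The one point requiring a line of care is the reduction to $g>0$ at the very start — one should note that the symmetry $g\leftrightarrow -g$ together with the reflection $n\leftrightarrow -n$ (swapping $W_1\leftrightarrow W_2$) leaves the class of hypotheses and the conclusion invariant, so no generality is lost; and one must make sure the constant $C_0$ in the statement is taken no larger than the one furnished by Lemma~\ref{prop:finitediffredu}, so that its smallness condition is implied.
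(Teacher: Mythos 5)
Your proposal is correct and follows essentially the same route as the paper: reduce to $g>0$ by the reflection $n\mapsto -n$ (swapping the roles of $W_1,W_2$), use Lemma~\ref{prop:finitediffredu} together with Lemma~\ref{contin} to find $E_0$ with $\lambda(E_0)\me^{\widehat{f_1(E_0)}(0)}=1$, and then verify the quantitative hypothesis \eqref{eqy} so that Proposition~\ref{criterion} applies at $E_0$. The only differences from the paper's proof are cosmetic (the order in which the two lemmas are invoked), so nothing further is needed.
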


\begin{proof}
It suffices to consider the case that $g>0$, since one can transform the operator $$(\mathcal L_{W_1,W_2,V}^gp)(n):=\bigr(\me^{-g}+W_1(n\omega)\bigr)p
(n+1)+\bigr(\me^{g}+W_2(n\omega)\bigr)p(n-1)+V(n\omega)p(n)$$ by
$\mathcal S: \delta_n\to\delta_{-n}$ into $(\mathcal S\mathcal L_{W_1,W_2,V}^g\mathcal S^{-1}p)(n)=$
 $$ (\me^{g}+W_2(-n\omega))p(n+1)+(\me^{-g}+W_1(-n\omega))p(n-1)+V(-n\omega)p(n).$$

In this case, by Lemma \ref{prop:finitediffredu}, there exists a conjugation $B$ such that
$$B(E,\theta+\omega)^{-1}S_{W_1,W_2,V}^{E,g}(\theta)B(E,\theta)=\begin{pmatrix}
 \lambda(E)e^{ \widehat {f_{1}(E)}(0) }& 0\\
0   & \mu(E) e^{\widehat {f_{2}(E)}(0)}
\end{pmatrix}.
$$
Moreover, $B$ takes the form as
$$B(E,\theta):=P(E)\me^{Y(E,\theta)} diag(\me^{y_{1}(E,\theta)},\me^{y_{2}(E,\theta)}).$$
By Lemma \ref{contin}, there exists $E_0\in \mathcal I$, such that
$$ \lambda(E_0)e^{ \widehat {f_{1}(E_0)}(0) } =1 .$$
Thus for this $E_0$, one has
$$B(E_0,\theta+\omega)^{-1}S_{W_1,W_2,V}^{E_0,g}(\theta)B(E_0,\theta)=\begin{pmatrix}
1 & 0\\
0   & \mu'
\end{pmatrix},
$$
where $\mu' \in \R$ is a constant. Moreover, by Lemma \ref{prop:finitediffredu}, one has
$$\|Y(E_0,\cdot) \|_{0}\leq \epsilon^{\frac{1}{3}}\leq \frac{\lambda(E)}{4\lambda(E)+4\mu(E)}.$$
Thus one can apply the criterion (Proposition \ref{criterion}) to get the desired result.
\end{proof}

\begin{Corollary}\label{dense-ir}
Let $\omega\in\R^d\backslash\Q^d$, $d\in\N_+,$ $g\neq 0$, and suppose that $W_1, W_2, V\in C(\T^d,\R)$. Then there exists  an absolute constant $C_0>0$
such that if
$$\|V\|_0+\|W_1\|_0+\|W_2\|_0\leq C_0\me^{-12|g|}g^6,$$
then for any $\varepsilon>0$,  there exist $V'\in C(\T^d,\R)$ with $\|V'-V\|_{0}\leq \varepsilon$, and $E_0\in\R, U\in C(\T^d,\R)$ with $U>0$ such that $u(n)=U(n\omega)$ is a quasi-periodic solution of
$$\bigr(\me^{-g}+W_1(n\omega)\bigr)u(n+1)+\bigr(\me^{g}+W_2(n\omega)\bigr)u(n-1)+V'(n\omega)u(n)=E_0u(n).$$

\end{Corollary}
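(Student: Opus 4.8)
The plan is to read this corollary off from ingredients already in place: the reduction to $g>0$ of \corref{ap-ir}, the conversion lemma \lemref{prop:withoutfre}, the eigenvalue‑location argument of \lemref{contin}, and the criterion \propref{criterion}. First I would reduce to $g>0$. Conjugating $\mathcal L^g_{W_1,W_2,V}$ by the reflection $\mathcal S:\delta_n\mapsto\delta_{-n}$ sends it to $\mathcal L^{-g}_{\widetilde W_1,\widetilde W_2,\widetilde V}$ with $\widetilde W_1(\theta)=W_2(-\theta)$, $\widetilde W_2(\theta)=W_1(-\theta)$, $\widetilde V(\theta)=V(-\theta)$; this leaves the $C^0$ norms and the bound $C_0\me^{-12|g|}g^6$ unchanged, preserves the rational independence of $\omega$, and carries a positive quasi‑periodic eigenfunction to one. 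So from now on $g>0$, and I fix the band $\mathcal I=[2+\frac{1}{9}\min\{g^2,1\},2(\me^g+\me^{-g})]$ used throughout the section; write $\epsilon:=\|W_1\|_0+\|W_2\|_0+\|V\|_0$, so the hypothesis reads $\epsilon\le C_0\me^{-12g}g^6$.

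Next, for the prescribed $\varepsilon>0$ I would apply \lemref{prop:withoutfre} to $(W_1,W_2,V)$; its rational‑independence assumption on $\omega$ is the only hypothesis to watch, and it is what makes the truncated cohomological equations solvable. With $C_0$ as there, the lemma delivers a continuous $V'$ with $\|V'-V\|_0\le\varepsilon$, continuous $f_1,f_2:\mathcal I\times\T^d\to\R$ with $\|f_i\|_{\mathcal I}\le\epsilon^{\frac{1}{2}}$, and a conjugacy of the block form $B(E,\theta)=\begin{pmatrix}\lambda(E)&\mu(E)\\1&1\end{pmatrix}\me^{Y(E,\theta)}\mathrm{diag}(\me^{y_1(E,\theta)},\me^{y_2(E,\theta)})$ with $\|Y\|_{\mathcal I}\le 4\epsilon^{\frac{1}{3}}\le\frac{\lambda(E)}{4(\lambda(E)+\mu(E))}$, such that
$$B(E,\theta+\omega)^{-1}S^{E,g}_{W_1,W_2,V'}(\theta)B(E,\theta)=\begin{pmatrix}\lambda(E)\me^{\widehat{f_1(E)}(0)}&0\\0&\mu(E)\me^{\widehat{f_2(E)}(0)}\end{pmatrix}.$$
The key point is that the top‑left entry $\lambda(E)\me^{\widehat{f_1(E)}(0)}$, the bound on $\|f_i\|_{\mathcal I}$, and the continuity of $E\mapsto\widehat{f_1(E)}(0)$ are exactly the inputs feeding \lemref{contin}; that lemma therefore applies verbatim (its hypotheses being those of \propref{ALKAM}, met here since $g>0$ and $\epsilon\le C_0\me^{-12g}g^6$), giving $E_0\in\mathcal I$ with $\lambda(E_0)\me^{\widehat{f_1(E_0)}(0)}=1$. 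At this $E_0$ the cocycle $S^{E_0,g}_{W_1,W_2,V'}$ is conjugated by $B(E_0,\cdot)$ to $\mathrm{diag}(1,\mu')$, $\mu'=\mu(E_0)\me^{\widehat{f_2(E_0)}(0)}\in\R$.

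Finally I would invoke \propref{criterion} with $P=\begin{pmatrix}\lambda(E_0)&\mu(E_0)\\1&1\end{pmatrix}$ and $\tilde\epsilon=\|Y(E_0,\cdot)\|_0$: since $\lambda(E_0),\mu(E_0)>0$ one has $\frac{|P_{11}|}{4(|P_{11}|+|P_{12}|)}=\frac{\lambda(E_0)}{4(\lambda(E_0)+\mu(E_0))}$, which by the bound above dominates $\|Y(E_0,\cdot)\|_0$, so \eqref{eqy} holds. The criterion then produces $U\in C(\T^d,\R)$, $U>0$, with $u(n)=U(n\omega)$ solving $(\me^{-g}+W_1(n\omega))u(n+1)+(\me^g+W_2(n\omega))u(n-1)+V'(n\omega)u(n)=E_0u(n)$, which is the assertion. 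I do not expect a real obstacle here: the substantive work — converting the perturbed cocycle into an honest Jacobi cocycle in the spirit of Avila--Jitomirskaya via \lemref{Schrodinger}, after truncating $f_i$ to a trigonometric polynomial so that no arithmetic condition on $\omega$ is required, and exploiting the hyperbolicity forced by $g\ne0$ — is already packaged inside \lemref{prop:withoutfre}. What remains is bookkeeping: that the quantity tuned to $1$ is again $\lambda(E)\me^{\widehat{f_1(E)}(0)}$ (so the intermediate‑value argument of \lemref{contin} transfers), that the extra factor $4$ in $\|Y\|_{\mathcal I}\le 4\epsilon^{\frac{1}{3}}$ produced by the truncation step is still dominated by $\frac{\lambda(E)}{4(\lambda(E)+\mu(E))}$ on $\mathcal I$ once $C_0$ is small, and that the reflection step preserves all hypotheses — after which the criterion, which asks nothing of $y_1,y_2$, closes the argument.
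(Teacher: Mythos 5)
Your proposal is correct and follows exactly the paper's route: the paper proves this corollary by repeating the proof of Corollary \ref{ap-ir} (reflection to reduce to $g>0$, then Lemma \ref{contin} and Proposition \ref{criterion}) with Lemma \ref{prop:finitediffredu} replaced by Lemma \ref{prop:withoutfre}, which is precisely what you do, including the check that the factor $4$ in $\|Y\|_{\mathcal I}\le 4\epsilon^{1/3}$ is still dominated by $\frac{\lambda(E)}{4(\lambda(E)+\mu(E))}$.
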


\begin{proof}
The proof is the same as Corollary \ref{ap-ir}, if one only needs to replace Lemma \ref{prop:finitediffredu} by Lemma \ref{prop:withoutfre}.
\end{proof}

\begin{Corollary}\label{Theorem:positive bloch wave}
 Let $\omega\in DC_\infty(\gamma,\tau)$, $0<\gamma<1,\tau>1, g\neq 0, r>0$, and suppose that $W_1,W_2,V\in C^\omega_r(\mathbb T^\infty,\mathbb R)$. Then there exists an absolute constant $C_0>0$ such that if $$|W_1|_r+|W_2|_r+|V|_r\leq C_0\me^{-12|g|}g^6,$$ then there exists $E_0\in\mathbb R, U\in C^\omega_{r'}(\T^\infty,\R)$ for any $0<r'<r$ with $U>0$ such that $u(n)=U(n\omega)$ is an almost periodic solution of
$$\bigr(\me^{-g}+W_1(n\omega)\bigr)p
(n+1)+\bigr(\me^{g}+W_2(n\omega)\bigr)p(n-1)+V(n\omega)p(n)=E_0p(n).$$
\end{Corollary}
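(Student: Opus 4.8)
The plan is to mirror the proof of Corollary~\ref{ap-ir}, replacing the finitely-differentiable reducibility step with its analytic infinite-dimensional counterpart. First I would reduce to the case $g>0$: exactly as in Corollary~\ref{ap-ir}, conjugating $\mathcal L^g_{W_1,W_2,V}$ by the flip $\mathcal S:\delta_n\mapsto\delta_{-n}$ turns it into $\mathcal L^{-g}_{W_2(-\cdot),W_1(-\cdot),V(-\cdot)}$, so it is enough to treat $g>0$; note that since the hull map $\theta\mapsto-\theta$ preserves $C^\omega_r(\T^\infty,\R)$ and the Diophantine condition $DC_\infty(\gamma,\tau)$ is symmetric in $k\mapsto-k$, the hypotheses are unchanged. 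Fix $C_0>0$ as in Lemma~\ref{prop:ALredu} (shrinking if necessary so that $\epsilon:=|W_1|_r+|W_2|_r+|V|_r\le C_0\me^{-12g}g^6$ forces $\epsilon^{1/3}\le\lambda(E)/(4\lambda(E)+4\mu(E))$ on all of $\mathcal I$, which follows from the last line of \eqref{eigenvalueseparate}).

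Next, for an arbitrary $0<r'<r$, pick an intermediate radius $\tilde r$ with $r'<\tilde r<r$ and apply Lemma~\ref{prop:ALredu} at radius $\tilde r$. This produces $B\in\mathscr B_{\tilde r,\mathcal I}$ of the stated block form
$$B(E,\theta)=\begin{pmatrix}\lambda(E)&\mu(E)\\1&1\end{pmatrix}\me^{Y(E,\theta)}\begin{pmatrix}\me^{f_1(E,\theta)}&0\\0&\me^{f_2(E,\theta)}\end{pmatrix}$$
conjugating the Jacobi cocycle $(\omega,S^{E,g}_{W_1,W_2,V})$ to the constant diagonal cocycle $\mathrm{diag}\bigl(\lambda(E)\me^{\widehat{f_1(E)}(0)},\ \mu(E)\me^{\widehat{f_2(E)}(0)}\bigr)$, with $\|Y\|_{\tilde r,\mathcal I}\le\epsilon^{1/3}\le\lambda(E)/(4(\lambda(E)+\mu(E)))$. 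Then invoke Lemma~\ref{contin}: since $f_1(E,\cdot)$ depends continuously on $E\in\mathcal I$ and $\lambda(E)\me^{\widehat{f_1(E)}(0)}$ crosses the value $1$ between $E_1=2+\min\{1,g^2/9\}$ and $E_2=2\me^g+2\me^{-g}$, the intermediate value theorem gives $E_0\in\mathcal I$ with $\lambda(E_0)\me^{\widehat{f_1(E_0)}(0)}=1$. For this $E_0$ the reduced cocycle is $\mathrm{diag}(1,\mu')$ with $\mu'\in\R$, and $B(E_0,\cdot)$ has precisely the form required by Proposition~\ref{criterion}, with the $y_i$ of that proposition taken to be $f_i(E_0,\cdot)$ and the bound \eqref{eqy} supplied by $\|Y(E_0,\cdot)\|_0\le\epsilon^{1/3}$ (using that $P=P(E_0)$ has positive entries so $|P_{11}|/(4(|P_{11}|+|P_{12}|))=\lambda(E_0)/(4(\lambda(E_0)+\mu(E_0)))$). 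Proposition~\ref{criterion} then yields a positive solution $u(n)=U(n\omega)$, $U=B_{11}(E_0,\cdot)$, of $(\me^{-g}+W_1)u(n+1)+(\me^g+W_2)u(n-1)+Vu(n)=E_0u(n)$.

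It remains to record the regularity $U\in C^\omega_{r'}(\T^\infty,\R)$ with $\sum_k|\hat U(k)|\me^{r'|k|_1}<\infty$ for every $r'\in(0,r)$. From the block formula, $B_{11}(E_0,\theta)=\lambda(E_0)\me^{y_1(E_0,\theta)}\bigl((\me^{Y})_{11}(E_0,\theta)+\tfrac{\mu(E_0)}{\lambda(E_0)}(\me^{Y})_{21}(E_0,\theta)\bigr)$, a finite product and sum of exponentials of functions lying in $C^\omega_{\tilde r}(\T^\infty,\cdot)$ (here $y_1(E_0,\cdot)\in C^\omega_{\tilde r}$ by Lemma~\ref{al-small} applied to $f_1(E_0,\cdot)\in C^\omega_{\tilde r'}$ for any $\tilde r<\tilde r'<r$). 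Since $C^\omega_{\tilde r}(\T^\infty,\cdot)$ is a Banach algebra under the weighted $\ell^1$ norm $|\cdot|_{\tilde r}$ (condition $(H)$) and is closed under the entire operation $X\mapsto\me^X$, we get $U\in C^\omega_{\tilde r}(\T^\infty,\R)\subset C^\omega_{r'}(\T^\infty,\R)$; as $r'<r$ was arbitrary this gives the claimed summability. I expect the only delicate point to be the bookkeeping of analyticity radii — one must lose a definite amount of radius at the cohomological-equation step (Lemma~\ref{al-small}) and then note that $U$, being built from finitely many algebra and exponential operations on functions analytic on $\T^\infty_{\tilde r}$, still lies in $C^\omega_{\tilde r}$; everything else is a direct transcription of Corollary~\ref{ap-ir}.
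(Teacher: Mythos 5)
Your proposal is correct and follows essentially the same route as the paper: the paper proves this corollary verbatim as Corollary~\ref{ap-ir} (flip $\mathcal S:\delta_n\mapsto\delta_{-n}$ to reduce to $g>0$, reducibility, Lemma~\ref{contin}, then Proposition~\ref{criterion}) with Lemma~\ref{prop:finitediffredu} replaced by Lemma~\ref{prop:ALredu}, which is exactly what you do. Your extra bookkeeping of the analyticity radii and the unpacking of $B_{11}$ is already contained in the conclusion $B\in\mathscr B_{\tilde r,\mathcal I}$ of Lemma~\ref{prop:ALredu}, so it is harmless but not needed.
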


\begin{proof}
The proof is the same as Corollary \ref{ap-ir}, if one only needs to replace Lemma \ref{prop:finitediffredu} by Lemma \ref{prop:ALredu}.
\end{proof}

\section{Ground state of the non-self-adjoint Jacobi operator}
\label{section4}

Define the ground state energy set of a  linear bounded operator $\mathcal L$
\begin{equation}
E_{\max}(\mathcal L):=\{E\in\Sigma(\mathcal L)| \Re E'\leq\Re E, \text{ for any } E'\in\Sigma(\mathcal L)\},
 \end{equation}
 where $\Sigma(\mathcal L)$ denotes the spectrum of $\mathcal L$.
  Since $\mathcal L$ is a linear bounded operator, it is well known that $\Sigma(\mathcal L)$ is a compact, non-empty set, and for any $E\in \Sigma(\mathcal L)$, $|E|\leq \|\mathcal L\|$, where $\|\cdot\|$ denotes the operator norm. Hence $E_{\max}(\mathcal L)$ is not an empty set.

Last section, we have constructed a positive almost-periodic eigenfunction of $\mathcal L_{W_1,W_2,V}^{g}$ with energy $E_0$. In this section, we will prove actually $E_0\in E_{\max}(\mathcal L_{W_1,W_2,V}^{g})$,   and  further show it is simple. It is direct to see the Jacobi operator  $\mathcal L_{W_1,W_2,V}^{g}$ can be rewritten as
the discrete elliptic operator:
\begin{equation}\label{eq:ellipi section4}
\mathcal Lu=D^*\bigr(a(n)Du\bigr)+b(n)D^*u+c(n)u.
\end{equation}
where
$  \inf\limits_{n\in\Z} a(n)>0,\inf\limits_{n\in\Z} (a(n-1)-b(n))>0. $
As we have explained in the introduction,  the results for \eqref{eq:ellipi section4} will be given first for applications. We should notice the results for \eqref{eq:ellipi section4} still hold for $\mathcal L_{W_1, W_2, V}^{g}$.

\subsection{Ground state of the discrete elliptic operator}

We introduce the adjoint operator of $\mathcal L$ in $\ell^2(\Z)$:
$$
\mathcal L^*u=D^*(a(n)Du)-D(b(n)u)+c(n)u.
$$
Now we will show if there exist $E_0,E_0'\in\R$ and two almost-periodic functions $p, p_*$ with $\inf p,\inf p_*>0$  satisfying
\begin{equation}\label{eq:basassumpground}
\mathcal Lp=E_0p, \ \mathcal L^*p_*=E'_0p_*,
\end{equation}
then $E_0= E_0' \in E_{\max}(\mathcal L)$, and thus $E_0$ is the ground state energy.

The first observation is the following:
\begin{Lemma}\label{unique}
Let $p$ and $p_*$ be two bounded solutions of $\mathcal Lp=E_0p$ and $\mathcal L^*p_*=E'_0p_*$. If $\inf p,\inf p_*>0$, then $E_0=E_0'$.
\end{Lemma}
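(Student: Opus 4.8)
The plan is to use the discrete Lagrange (Green) identity for the pair $(\mathcal L,\mathcal L^*)$. Summing by parts, one checks that for any two bounded sequences $f,g$ on $\Z$ and any integers $M<N$,
$$\sum_{n=M}^{N}\bigl[(\mathcal L f)(n)\,g(n)-f(n)\,(\mathcal L^* g)(n)\bigr]=\mathcal B_{M,N}(f,g),$$
where $\mathcal B_{M,N}(f,g)$ is a boundary expression depending only on the values of $a,b,f,g$ at finitely many sites near $M$ and near $N$. First I would derive this identity: the second-difference part $D^*(aDf)\cdot g$ becomes, after one summation by parts, the bulk term $-\sum_{n}a(n)\,Df(n)\,Dg(n)$ plus boundary terms, and this bulk term is symmetric in $f,g$, so it cancels in the difference; the first-order parts $b\,D^*f\cdot g$ and $f\,D(bg)$ combine, after one more summation by parts, into boundary terms only — this is precisely where the divergence form of the drift in $\mathcal L^*$ is used; and the zeroth-order parts $c\,fg$ cancel identically. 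This bookkeeping is the only computation in the lemma.

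The key point is then the uniform bound $|\mathcal B_{M,N}(f,g)|\le C$, with $C$ depending only on $\|a\|_{\ell^\infty},\|b\|_{\ell^\infty},\|f\|_{\ell^\infty},\|g\|_{\ell^\infty}$ and \emph{not} on $M,N$: each term of $\mathcal B_{M,N}$ is a product of boundedly many bounded factors (a bounded sequence $f$ also has $Df$ and $D^*f$ bounded).

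Next I would apply the identity with $f=p$, $g=p_*$. Using $\mathcal L p=E_0p$ and $\mathcal L^* p_*=E_0'p_*$, the left-hand side equals $(E_0-E_0')\sum_{n=M}^{N}p(n)p_*(n)$, so
$$|E_0-E_0'|\sum_{n=M}^{N}p(n)p_*(n)\le C.$$
Taking $M=-N$ and letting $N\to\infty$, the hypotheses $\inf p>0$, $\inf p_*>0$ give $\sum_{n=-N}^{N}p(n)p_*(n)\ge (2N+1)(\inf p)(\inf p_*)\to\infty$, hence $|E_0-E_0'|\le C\big/\bigl((2N+1)(\inf p)(\inf p_*)\bigr)\to 0$, so $E_0=E_0'$.

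I do not expect a genuine obstacle here: the only care needed is tracking the boundary terms in the summations by parts and checking that the resulting constant $C$ truly does not depend on $N$, which it does not since only boundary data enter $\mathcal B_{M,N}$. The hypothesis $\inf p,\inf p_*>0$ is used precisely to make the weighted counting sum diverge linearly, so that a bounded boundary defect cannot coexist with a nonzero $E_0-E_0'$.
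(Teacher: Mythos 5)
Your proposal is correct and follows essentially the same route as the paper: the paper implements the discrete Green identity by pairing truncations $\chi_k p$, $\chi_k p_*$ under the $\ell^2$ duality $\langle\chi_k\mathcal L(\chi_k p),\chi_k p_*\rangle=\langle\chi_k p,\chi_k\mathcal L^*(\chi_k p_*)\rangle$, which leaves exactly the bounded boundary terms you describe, while the bulk term $(E_0-E_0')\sum_{n=-k}^{k}(pp_*)(n)$ diverges linearly by $\inf p\,\inf p_*>0$. Your summation-by-parts bookkeeping is just another derivation of the same identity, so the argument matches the paper's proof.
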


\begin{proof}
Let $\mathcal \chi_k$ be the characteristic function such that $$\chi_k(n)=\left\{\begin{aligned}
&1 &\text{if }|n|\leq k\\
&0   &\text{otherwise}.
\end{aligned}
\right.$$ Denote $\tilde b(n)=a(n-1)-b(n)$. Since for any $k\in\Z, \chi_kp,\chi_kp_*\in \ell^2(\Z)$, one has
\begin{equation}\label{eq:E_0=E_0'1}
\langle \chi_k\mathcal L(\chi_kp),\chi_kp_*\rangle= \sum\limits_{n=-k}^{k}E_0(pp_*)(n)-(ap_*)(k)p(k+1)-(\tilde bp_*)(-k)p(-k-1),
\end{equation}
and similarly,
\begin{equation}\label{eq:E_0=E_0}
\langle \chi_k p,\chi_k\mathcal L^*(\chi_kp_*)\rangle=\sum\limits_{n=-k}^{k}E'_0(pp_*)(n)-(ap_*)(-k-1)p(-k)-(\tilde bp_*)(k+1)p(k).
\end{equation}
Note that $\langle\chi_k\mathcal L(\chi_kp),\chi_kp_*\rangle=\langle \chi_kp,\chi_k\mathcal L^*(\chi_kp_*)\rangle$. Subtracting \eqref{eq:E_0=E_0} from \eqref{eq:E_0=E_0'1}, one has
$$
\begin{aligned}
\sum\limits_{n=-k}^{k}(E_0-E_0')(pp_*)(n)=&(ap_*)(k)p(k+1)+(\tilde bp_*)(-k)p(-k-1)\\
&\quad\quad-(\tilde bp_*)(k+1)p(k)
-(ap_*)(-k-1)p(-k).
\end{aligned}$$
Since $p,p_*$ are two bounded solutions with positive infimums, the right-hand side is bounded. However, the left-hand side tends to infinite if $E_0\neq
E_0'.$  Then the result follows.
\end{proof}

This lemma also shows such $E_0$, with the property that the corresponding eigenfunction is positive almost periodic, is unique.
Once we have this, we shall show a crucial observation that is needed in the rest of the paper.
\begin{Lemma}\label{reduc}
Let $p,p_*\in\ell^\infty(\Z)$ be two bounded solutions of $\mathcal Lp=E_0p,$ $\mathcal L^*p_*=E_0p_*$. If $\inf p,\inf p_*>0,$ then we have
$$p_*(\mathcal L-E_0)(pu)=D^*\bigr(\tilde a(n)Du\bigr)+\bar c(D^*u+Du),$$
where  $\bar c\equiv \const$ and $\inf\tilde a>|\bar c|$.
\end{Lemma}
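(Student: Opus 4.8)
The plan is to verify the identity by a direct computation, using the two eigenfunction equations to eliminate all the ``bad'' first-order and zeroth-order terms. First I would set $\tilde a(n):=a(n)p(n)p_*(n+1)$ — this is the natural guess, since the conjugation by $p$ on the right and $p_*$ on the left should turn the leading term $D^*(a\,D\cdot)$ into $D^*(\tilde a\,D\cdot)$ up to lower-order corrections. Concretely, expand $p_*(n)\big(\mathcal L-E_0\big)(pu)(n)$ using $\mathcal Lu=D^*(a(n)Du)+b(n)D^*u+c(n)u$; writing everything out in terms of $u(n-1),u(n),u(n+1)$, the coefficient of each shift of $u$ is an explicit expression in $a,b,c,p,p_*$ evaluated at nearby sites. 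Then I would use $\mathcal Lp=E_0p$ and $\mathcal L^*p_*=E_0p_*$ (recall $\mathcal L^*u=D^*(a(n)Du)-D(b(n)u)+c(n)u$) — and Lemma 4.2, which tells us the two energies genuinely coincide so that the subtraction is legitimate — to see that the coefficient of $u(n)$ collapses: the $c(n)$ terms and the $E_0$ terms cancel against exactly the combination produced by the eigenfunction relations, leaving a pure second-difference structure plus a first-order remainder whose coefficient is $n$-independent.

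The key algebraic point I expect to do carefully is the following discrete ``Wronskian'' identity. If $\mathcal Lp=E_0p$ and $\mathcal L^*p_*=E_0p_*$, then the quantity
\[
\bar c := a(n)\big(p(n+1)p_*(n)-p(n)p_*(n+1)\big) + \big(\text{contribution from }b\big)
\]
is independent of $n$; this is the discrete analogue of the constancy of the Wronskian-type current $a(p'p_* - pp_*')$ for the ODE pair in \eqref{eq:Kozlov's assump}. The way I would prove constancy is to show the forward difference $D^*$ of this expression vanishes identically, which is precisely a rearrangement of $p_*\mathcal Lp - p\,\mathcal L^*p_* = E_0 pp_* - E_0 pp_* = 0$ summed (telescoped) at the level of the flux. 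Once $\bar c$ is known to be a constant, matching it against the coefficients of $D^*u$ and $Du$ in the expanded expression gives exactly $\bar c(D^*u+Du)$, and the coefficient of the second difference is $\tilde a(n)=a(n)p(n)p_*(n+1)>0$ (using $\inf a>0$, $\inf p>0$, $\inf p_*>0$).

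The last thing to check is the inequality $\inf\tilde a > |\bar c|$. Here I would argue as follows: $\tilde a(n)=a(n)p(n)p_*(n+1)$ and $\bar c = a(n)\big(p(n+1)p_*(n)-p(n)p_*(n+1)\big)+\dots$; using $\inf\limits_{n}(a(n-1)-b(n))>0$ (which is built into \eqref{eq:ellipi section4}) together with positivity of $p,p_*$, one bounds $|\bar c|$ by a quantity strictly smaller than $\inf_n a(n)p(n)p_*(n+1)$ — morally because the ``other'' coefficient $\tilde b=a(n-1)-b(n)$ appearing in the backward direction is itself positive, so the sum $\tilde a(n)+(\text{backward coefficient})$ is controlled and the antisymmetric combination $\bar c$ cannot dominate the symmetric part. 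I expect this final estimate, rather than the bookkeeping, to be the main obstacle: one must track signs in the flux expression precisely to land the strict inequality, and it is the strict inequality (not just $\inf\tilde a>0$) that will be needed later for the maximum-principle/Harnack arguments in the applications. If the crude bound is not strict, I would refine it by writing $\bar c$ symmetrically as an average of the flux over two adjacent bonds and using that $p,p_*$ are bounded away from $0$ and $\infty$, which forces the relative size of the antisymmetric part to be strictly controlled.
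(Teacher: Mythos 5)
Your route is essentially the paper's: expand $p_*(\mathcal L-E_0)(pu)$ directly, deduce constancy of a discrete Wronskian from $p_*\mathcal Lp-p\,\mathcal L^*p_*=E_0pp_*-E_0pp_*=0$, and get the final inequality from positivity of the two bond coefficients $a(n)$ and $a(n-1)-b(n)$. The one concrete slip is your frozen choice of $\tilde a$. The expansion gives
\begin{equation*}
p_*(n)(\mathcal L-E_0)(pu)(n)=a(n)p(n+1)p_*(n)\,Du(n)-\bigl(a(n-1)-b(n)\bigr)p(n-1)p_*(n)\,D^*u(n),
\end{equation*}
so in the single forward-divergence form the natural second-order coefficient is $a(n)p(n+1)p_*(n)$ (not $a(n)p(n)p_*(n+1)$), and the first-order remainder is $2\bar c\,D^*u$ alone, not $\bar c(D^*u+Du)$. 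If you instead fix $\tilde a(n)=a(n)p(n)p_*(n+1)$ as you propose, the residual first-order coefficients $a(n)\bigl(p(n+1)p_*(n)-p(n)p_*(n+1)\bigr)$ and $\tilde a(n-1)-\bigl(a(n-1)-b(n)\bigr)p(n-1)p_*(n)$ are not constant unless $b\equiv0$ (the constant Wronskian contains the extra $b$-term you yourself wrote down), so the claimed matching would not close as stated. The fix is to let the matching determine $\tilde a$: writing the target as $\tilde a(n)Du(n)-\tilde a(n-1)D^*u(n)+\bar c\bigl(Du(n)+D^*u(n)\bigr)$ and comparing coefficients forces $\tilde a(n)=a(n)p(n+1)p_*(n)-\bar c=\tfrac12\bigl[a(n)p(n+1)p_*(n)+\bigl(a(n)-b(n+1)\bigr)p(n)p_*(n+1)\bigr]$, and the consistency of the two matching equations is precisely the Wronskian constancy. (The paper arrives at the same $\tilde a$ by computing $\mathcal L-E_0$ in both the forward form $D^*(aD\cdot)+bD^*\cdot$ and the backward form $D(\tilde bD^*\cdot)+b(\cdot+1)D\cdot$ and averaging; equivalently one can shift the constant using $\bar c(Du-D^*u)=D^*(\bar c\,Du)$.) With this symmetric $\tilde a$ your closing estimate needs no refinement: $\tilde a(n)+\bar c=a(n)p(n+1)p_*(n)$ and $\tilde a(n)-\bar c=\bigl(a(n)-b(n+1)\bigr)p(n)p_*(n+1)$ are each bounded below by a positive constant (from $\inf a>0$, $\inf\bigl(a(\cdot-1)-b\bigr)>0$, $\inf p,\inf p_*>0$), so $\inf\tilde a-|\bar c|\geq\min$ of those two positive infima, which gives the strict bound $\inf\tilde a>|\bar c|$ directly — indeed more explicitly than the paper, whose proof only records $\inf\tilde a>0$.
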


\begin{proof}
Denote $c'=c-E_0$.
For any $u\in \ell^\infty(\Z),$ since $\mathcal Lp=E_0p$, one has
\begin{equation*}
\begin{aligned}
\tilde{\mathcal L}u&:=p_*(\mathcal L-E_0)(pu)\\
&=p_*(n)D^*\biggr(a(n)D\bigr(p(n)u\bigr)\biggr)+(bp_*)(n)D^*\bigr(p(n)u\bigr)+(p_*c'p)(n)u\\
&=p_*(n)D^*\biggr(a(n)D\bigr(p(n)u\bigr)\biggr)+(bp_*)(n)D^*\bigr(p(n)u\bigr)\\
&\quad\quad\qquad\qquad\qquad-p_*(n)\biggr[\biggr(D^*\bigr(a(n)Dp(n)\bigr)\biggr)+b(n)D^*p(n)\biggr]u\\
&=D^*\bigr(a(n)p_*(n)p(n+1)Du\bigr)+\tilde c(n)D^*u=(I),
\end{aligned}
\end{equation*}
where $\tilde c(n)=(ap_*)(n-1)p(n)-(ap)(n-1)p_*(n)+(bp_*)(n)p(n-1)$.

Meanwhile, let $\tilde b(n)=a(n-1)-b(n)$. One can rewrite that
$$
(\mathcal L-E_0)u=D^*(a(n)Du)+b(n)D^*u+c'(n)u=D(\tilde b(n)D^*u)+b(n+1)Du+c'(n)u.
$$
Then we have,
$$
\begin{aligned}
(\tilde{\mathcal L}u)=&p_*(n)D\biggr(\tilde b(n)D^*\bigr(p(n)u\bigr)\biggr)+b(n+1)p_*(n)D\bigr(p(n)u\bigr)+(c'pp_*)(n)u\\
=&p_*(n)D\biggr(\tilde b(n)D^*\bigr(p(n)u\bigr)\biggr)+b(n+1)p_*(n)D\bigr(p(n)u\bigr)\\
&\quad\quad\qquad\qquad\qquad -p_*(n)\biggr[D\bigr(\tilde b(n)D^*p(n)\bigr)+b(n+1)Dp(n)\biggr]u\\
=&D\bigr(\tilde b(n)p_*(n)p(n-1)D^*u\bigr)+\tilde c(n+1)Du=(II).
\end{aligned}
$$

It is straightforward to check from $\mathcal Lpp_*=\mathcal L^*p_*p=E_0pp_*$ that $\tilde c(n)=\tilde c(n+1).$
Hence \begin{equation}
\tilde c(n)=a(n-1)p_*(n-1)p(n)-\tilde b(n)p(n-1)p_*(n)\equiv const:=2\bar c.
\end{equation}
Adding $(I)$ and $(II)$,  we have
$$
\begin{aligned}
2\tilde {\mathcal L}u&=D\bigr(\tilde b(n)p(n-1)p_*(n)D^*u\bigr)+2\bar cDu+D^*\bigr(a(n)p_*(n)p(n+1)Du\bigr)+2\bar cD^*u,\\
&=2D^*\bigr(\tilde a(n)Du\bigr)+2\bar c\bigr(Du+D^*u\bigr),
\end{aligned}
$$
where $\tilde a(n)=\frac{1}{2}\bigr[(ap)(n)p_*(n+1)-(bp_*)(n+1)p(n)+(ap_*)(n)p(n+1)\bigr].$ One can check that $\inf\tilde a>0$.
Then we finish the proof.
\end{proof}

\begin{Proposition}\label{Th:location of groundstate}
 Let $p,p_*\in\ell^\infty(\Z)$ be two bounded solutions of $\mathcal Lp=E_0p,$ $\mathcal L^*p_*=E_0p_*$. If $\inf p,\inf p_*>0,$ then $E_0\in E_{\max}$.
\end{Proposition}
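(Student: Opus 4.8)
The plan is to conjugate $\mathcal L-E_0$ into a dissipative operator by means of \lemref{reduc}. Recall that \lemref{reduc} rewrites $\widetilde{\mathcal L}u:=p_*(\mathcal L-E_0)(pu)$ as $D^*(\tilde a\,Du)+\bar c(D^*u+Du)$ with $\bar c\in\R$ constant and $\tilde a>0$; equivalently $\widetilde{\mathcal L}=M_{p_*}(\mathcal L-E_0)M_p$, where $M_f$ denotes the operator of multiplication by $f$. Since $p,p_*\in\ell^\infty(\Z)$ and $\inf p,\inf p_*>0$, each of $M_p$, $M_{p_*}$, $M_q$ (with $q:=pp_*$) and $M_q^{\pm1/2}$ is bounded and boundedly invertible on $\ell^2(\Z)$. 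First I would introduce
\[
\mathcal K:=M_{(p_*/p)^{1/2}}\,(\mathcal L-E_0)\,M_{(p/p_*)^{1/2}}=M_q^{-1/2}\,\widetilde{\mathcal L}\,M_q^{-1/2}.
\]
The left-hand expression exhibits $\mathcal K$ as a similarity conjugate of $\mathcal L-E_0$ (conjugation by $M_{(p/p_*)^{1/2}}$), so $\Sigma(\mathcal K)=\Sigma(\mathcal L)-E_0$; the right-hand expression, since $M_q^{-1/2}$ is self-adjoint, gives $\langle\mathcal Ku,u\rangle=\langle\widetilde{\mathcal L}w,w\rangle$ with $w:=M_q^{-1/2}u$.

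Next I would estimate the numerical range of $\mathcal K$. A discrete summation by parts yields, for $w\in\ell^2(\Z)$,
\[
\langle D^*(\tilde a\,Dw),w\rangle=-\sum_{n\in\Z}\tilde a(n)\,|w(n+1)-w(n)|^2,
\]
while $\Re\langle\bar c(D^*w+Dw),w\rangle=\bar c\,\Re\!\sum_{n}\big(w(n+1)-w(n)\big)\big(\overline{w(n)}+\overline{w(n+1)}\big)=\bar c\sum_{n}\big(|w(n+1)|^2-|w(n)|^2\big)=0$, the last sum telescoping to $0$ because $w\in\ell^2(\Z)$ and $\bar c$ is real. Hence $\Re\langle\widetilde{\mathcal L}w,w\rangle\le0$ (only $\tilde a\ge0$ is needed here), and therefore $\Re\langle\mathcal Ku,u\rangle\le0$ for every $u\in\ell^2(\Z)$. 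Since the spectrum of a bounded operator is contained in the closure of its numerical range, $\Sigma(\mathcal K)\subset\{\Re z\le0\}$, i.e. $\Re E'\le E_0$ for every $E'\in\Sigma(\mathcal L)$ (recall $E_0\in\R$, being the quotient $(\mathcal Lp)(n)/p(n)$ of real quantities).

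It remains to verify $E_0\in\Sigma(\mathcal L)$, so that $E_0\in E_{\max}(\mathcal L)$ by the very definition of $E_{\max}$. Using the cutoffs $\chi_k$ of \lemref{unique}, a short computation shows that $(\mathcal L-E_0)(\chi_kp)$ is supported in $\{\pm k,\pm(k+1)\}$ with $\|(\mathcal L-E_0)(\chi_kp)\|_{\ell^2}$ bounded uniformly in $k$, whereas $\|\chi_kp\|_{\ell^2}^2\ge(\inf p)^2(2k+1)\to\infty$; hence $(\mathcal L-E_0)^{-1}$ cannot be bounded on $\ell^2(\Z)$, i.e. $E_0\in\Sigma(\mathcal L)$. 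This completes the argument.

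The only genuinely delicate point is the sign computation $\Re\langle\widetilde{\mathcal L}w,w\rangle\le0$: one must check that the first-order drift term $\bar c(D^*w+Dw)$ makes no contribution to the real part, which is exactly where the reality and constancy of $\bar c$ provided by \lemref{reduc} enter; the rest is soft functional analysis. An alternative that avoids the square roots: $\mathcal L-E_0-\lambda$ is invertible on $\ell^2(\Z)$ iff $\widetilde{\mathcal L}-\lambda M_q$ is, and for $\Re\lambda>0$ the bound $\Re\langle(\widetilde{\mathcal L}-\lambda M_q)u,u\rangle\le-(\Re\lambda)(\inf q)\|u\|^2$, applied also to the adjoint operator, shows $\widetilde{\mathcal L}-\lambda M_q$ is boundedly invertible, giving again $\Sigma(\mathcal L)\subset\{\Re z\le E_0\}$.
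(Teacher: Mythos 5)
Your proof is correct, and it reaches the conclusion by a route that differs from the paper's in its functional-analytic packaging, though both arguments pivot on the same two inputs: \lemref{reduc} and the observation that the constant drift term $\bar c(D^*+D)$ contributes nothing to the real part of the quadratic form (the telescoping identity you verify explicitly; the paper uses the same cancellation implicitly when it "takes the real part"). Where you diverge: the paper works directly with the equation $(E_0-\mathcal L)v=f$ for $\Re E>E_0$, derives the a priori bound $\|v\|_{\ell^2}\le C\|f\|_{\ell^2}/\Re(E-E_0)$ from the form inequality, and invokes Lax--Milgram to invert $\mathcal L-E$; you instead conjugate by $M_{(p/p_*)^{1/2}}$ to a single operator $\mathcal K$ with numerical range in $\{\Re z\le 0\}$ and quote the standard inclusion $\Sigma(\mathcal K)\subset\overline{W(\mathcal K)}$ (your closing remark shows you could equally run the paper's two-sided coercivity argument without square roots). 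Your version is slicker and dispenses with Lax--Milgram, while the paper's version has the side benefit of an explicit resolvent bound in the half-plane. For the membership $E_0\in\Sigma(\mathcal L)$ the routes also differ: the paper argues non-surjectivity by pairing the image with $p_*$ (a pairing that is only conditionally convergent, since $p_*\notin\ell^2$), whereas your cut-off family $\chi_k p$ is a Weyl sequence showing $E_0$ lies in the approximate point spectrum --- a slightly stronger conclusion obtained by a cleaner estimate. Both halves of your argument are sound as written.
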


\begin{proof}
For $\Re(E-E_0)>0$, consider the equation
\begin{equation}\label{eq:resolventeq}
(E_0-\mathcal L)v=f, \ f\in \ell^2(\Z).
\end{equation}
Since $\mathcal Lp=E_0p, \mathcal L^*p_*=E_0p_*$, it follows from Lemma \ref{reduc} that one can transform $\mathcal L-E_0$ into
$$p_*(n)(E_0-\mathcal L)\bigr(p(n)v\bigr)=-D^*(\tilde a(n)Dv)-\bar c(Dv+D^*v),$$
where $\inf\tilde a>0$, $\bar c\in\R$.
 Denote that $v=u/p\in\ell^2(\Z)$. For $\Re(E-E_0)>0$, equation \eqref{eq:resolventeq} becomes
\begin{equation}\label{resolventestimate}
-D^*(\tilde a(n)Dv)-\bar c\bigr(D^*v+Dv\bigr)+(E-E_0)(pp_*)(n)v=(p_*f)(n).
\end{equation}
 Since $pp_*\geq \inf p\inf p_*:=\delta$,  multiplying both sides of \eqref{resolventestimate} by $v$, and taking their sums, one could notice from its real part that for any $\Re(E-E_0)>0$,
$$
\begin{aligned}
\delta\Re(E-E_0)\|v\|_{\ell^2(\Z)}^2&\leq \sum\limits_{n\in\Z}\tilde a(n)|Dv(n)|^2+\Re(E-E_0)\langle pp_*v,v\rangle\\
&\leq \frac{2\|p_*f\|^2_{\ell^2(\Z)}}{\delta\Re(E-E_0)}+\frac{\delta\Re(E-E_0)}{2}\|v\|^2_{\ell^2(\Z)}\\
&\leq \frac{C_1\|f\|^2_{\ell^2(\Z)}}{\Re(E-E_0)}+\frac{\delta\Re(E-E_0)}{2}\|v\|^2_{\ell^2(\Z)},
\end{aligned}
$$
where $C_1$ depends only on $p,p_*$.
 Hence $$\|u\|_{\ell^2(\Z)}\leq C_2\|v\|_{\ell^2(\Z)}\leq C'\frac{\|f\|_{\ell^2(\Z)}}{\Re(E-E_0)},$$
where $C_2$ only depends only on $p,p_*$. Then \eqref{resolventestimate} can be solved by Lax-Milgram theorem, and this yields that $\mathcal L-E$ is invertible for $\Re E>\Re E_0$. Thus $E$ lies in the resolvent set of $\mathcal L$ if $\Re E>\Re E_0.$

Finally, we will check that $E_0$ belongs to the spectrum of $\mathcal L$. In fact, for any $v\in\ell^2(\Z)$,  $$\sum\limits_{n\in\Z}p_*(n)(\mathcal L-E_0)(pv)(n)=\sum\limits_{n\in\Z}D^*\bigr(\tilde a(n)Dv(n)\bigr)+\bar c\sum\limits_{n\in\Z}\bigr(D^*v(n)+Dv(n)\bigr)=0,$$ and this yields that the image of $\mathcal L-E_0$ is contained in the space of
$$\biggr\{u\in \ell^2(\mathbb Z)|\sum\limits_{n\in\Z}u(n)p_*(n)=0\biggr\}\neq\ell^2(\Z).$$ Thus the result follows since $\mathcal L-E_0$ is not surjective.
\end{proof}
In the following, we notice the following
fact that positive almost periodic solution $p$ of $\mathcal Lp=E_0p$ is unique (up to linear dependence) in some sense, and this is an important property of the ground state.

\begin{Lemma}\label{Lemma:simple}
Let $p_1, p_2$ be two almost periodic functions  that satisfy $(\mathcal L-E_0)p_1=(\mathcal L-E_0)p_2=0$. If  $\inf p_1>0$, then we have $p_1=Cp_2$ for some constant $C.$
\end{Lemma}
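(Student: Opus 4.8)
The statement asserts simplicity of the ground state: if $p_1,p_2$ are almost-periodic solutions of $(\mathcal L-E_0)p=0$ and $\inf p_1>0$, then $p_2$ is a constant multiple of $p_1$. The plan is to use the reduction from Lemma \ref{reduc}, together with the existence of a positive almost-periodic solution $p_*$ of the adjoint equation $\mathcal L^*p_*=E_0p_*$ (which comes from applying Corollary \ref{ap-ir} — or the appropriate companion — to the transpose operator, exactly as is done implicitly in Proposition \ref{Th:location of groundstate}; note $\mathcal L^*$ has the same structural form as $\mathcal L$). First I would set $u:=p_2/p_1$, which is a well-defined bounded (indeed almost-periodic) function since $\inf p_1>0$. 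By Lemma \ref{reduc} applied to the pair $(p_1,p_*)$, we have
$$p_*(n)(\mathcal L-E_0)(p_1 u)=D^*\bigl(\tilde a(n)Du\bigr)+\bar c\bigl(Du+D^*u\bigr),$$
with $\inf\tilde a>|\bar c|$ and $\bar c$ constant. Since $\mathcal L(p_1u)=\mathcal Lp_2=E_0p_2=E_0p_1u$, the left-hand side vanishes identically, so $u$ solves the homogeneous first-integral-type equation
$$D^*\bigl(\tilde a(n)Du\bigr)+\bar c\bigl(Du+D^*u\bigr)=0\quad\text{on }\Z.$$

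The next step is to read off a conserved quantity from this equation. Writing $D^*(\tilde a Du)(n)=\tilde a(n)Du(n)-\tilde a(n-1)Du(n-1)$ and $Du+D^*u=u(n+1)-u(n-1)$, one checks that the equation telescopes: the quantity
$$J(n):=\tilde a(n)Du(n)+\bar c\,u(n+1)+\bar c\,u(n)$$
(or a closely related combination — the exact algebra is routine and I would verify it by direct substitution) satisfies $J(n)=J(n-1)$, hence $J\equiv c_0$ is constant. This gives a first-order linear recursion $\tilde a(n)\bigl(u(n+1)-u(n)\bigr)+\bar c\bigl(u(n+1)+u(n)\bigr)=c_0$, i.e. $u(n+1)=\frac{(\tilde a(n)-\bar c)u(n)+c_0}{\tilde a(n)+\bar c}$, whose homogeneous part has multiplier $\rho(n)=\frac{\tilde a(n)-\bar c}{\tilde a(n)+\bar c}$.

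Now I would argue that $u$ must be constant. Since $\inf\tilde a>|\bar c|$, we have $0<\rho(n)<1$ uniformly, so $\prod_{j=0}^{n-1}\rho(j)\to 0$ as $n\to+\infty$ and $\to\infty$ as $n\to-\infty$ (exponentially). If $Du(n_0)\ne 0$ for some $n_0$, then the homogeneous solution component forces $\tilde a(n)Du(n)=\tilde a(n_0)Du(n_0)\prod_{j=n_0}^{n-1}\rho(j)$-type growth in the increments, and summing the increments $u(n)=u(n_0)+\sum Du$ shows $u$ is unbounded in one of the two directions (the geometric-type series has a ratio bounded away from $1$ on the side where $\prod\rho\to\infty$), contradicting boundedness of $u=p_2/p_1$. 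Hence $Du\equiv 0$, i.e. $u$ is constant, and $p_2=Cp_1$. The sign/constant $C$ is then necessarily real, consistent with the definition of simplicity in the introduction.

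The main obstacle I anticipate is purely bookkeeping: identifying the correct telescoped conserved quantity $J(n)$ and then extracting the sharp statement that a nonconstant bounded solution is impossible. The hyperbolicity input — $\inf\tilde a>|\bar c|$, which is exactly what Lemma \ref{reduc} delivers and which ultimately traces back to $g\ne 0$ — is what makes $\rho(n)$ bounded away from $1$ on both sides and hence rules out nonconstant bounded $u$; without it one could have two-parameter families of bounded solutions. I would also need to make sure $u$ is genuinely bounded, which is immediate from $\inf p_1>0$ and boundedness of $p_2$ (both are almost-periodic, hence bounded). No delicate analysis beyond this elementary Gronwall-type argument for the scalar recursion should be required.
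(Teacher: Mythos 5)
Your core computation is sound: with $u=p_2/p_1$ and Lemma \ref{reduc} the equation does telescope, and the conserved quantity you guessed, $J(n)=\tilde a(n)Du(n)+\bar c\bigl(u(n+1)+u(n)\bigr)$, is exactly right, so $J\equiv c_0$ and $u$ satisfies the affine recursion $(\tilde a(n)+\bar c)u(n+1)=(\tilde a(n)-\bar c)u(n)+c_0$; subtracting the constant solution $c_0/(2\bar c)$ and using that the homogeneous multiplier is uniformly bounded away from $1$ on one side, boundedness of $u$ forces $u$ to be constant. Two caveats on the execution: your assertion $0<\rho(n)<1$ holds only when $\bar c>0$; for $\bar c<0$ the multiplier exceeds $1$ and you must run the blow-up argument forward rather than backward, and for $\bar c=0$ (which Lemma \ref{reduc} does not exclude) the hyperbolicity of the recursion disappears and you instead argue directly that $\tilde a(n)Du(n)\equiv c_0$ with $u$ bounded and $\inf\tilde a>0$ forces $c_0=0$, hence $Du\equiv0$. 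These are minor and fixable.

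The genuine gap is that your argument needs a positive bounded solution $p_*$ of the adjoint equation $\mathcal L^*p_*=E_0p_*$ in order to invoke Lemma \ref{reduc}, and this is not among the hypotheses of the lemma: the statement only assumes two almost periodic solutions of $(\mathcal L-E_0)p=0$ with $\inf p_1>0$, for a general almost periodic discrete elliptic operator. Producing such a $p_*$ is precisely the hard content of the paper (it requires the smallness and arithmetic assumptions of Theorems \ref{Main1}--\ref{Main3}), so your proof establishes a weaker statement; it does suffice where the lemma is actually applied in the proofs of the main theorems, since there $p$ and $p_*$ are constructed simultaneously, but it does not prove the lemma in the stated generality. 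The paper's own proof is entirely different and avoids $p_*$: it is a sliding argument. One sets $F(\lambda)=\inf_n(p_1-\lambda p_2)(n)$, finds $\lambda_0$ with $F(\lambda_0)=0$ by the intermediate value theorem, takes a minimizing sequence of translates, passes (by almost periodicity of $a,b,c$) to a limit operator $\mathcal L_*$ and a nonnegative limit eigenfunction $w_*$ vanishing at a point, and then uses the positivity of the off-diagonal coefficients $a_*(0)$ and $a_*(0)-b_*(-1)$ (a strong-maximum-principle step) to conclude $w_*\equiv0$, whence $p_1=\lambda_0p_2$ by returning along the translates. If you want your route to stand on its own, you should either add the hypothesis on $p_*$ explicitly or replace Lemma \ref{reduc} by an argument using only $p_1$.
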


\begin{proof}
 Assume by contradiction that $p_1\neq Cp_2$ for any $C\in\R$. Without loss of generality, we assume that $p_2(n_0)>0$ at some point $n_0$ (Otherwise, we consider $-p_2$ instead).
 Define the function $F(\lambda)=\inf\limits_{n\in\Z}(p_1-\lambda p_2)(n).$ It is clear that $F$ is continuous, since $(p_1-\lambda p_2)(n)$ is continuous in $\lambda\in\R$, uniformly with respect to $n\in\Z$.
  Note since $\inf\limits_{n\in\Z} p_1>0$
  for $\lambda<\frac{\inf p_1}{\sup p_2}$, there holds $F(\lambda)>0.$ For $\lambda$ sufficiently large, there of course holds $F(\lambda)<0$. Hence by the intermediate value theorem , there exists $\lambda_0>0$ such that $F(\lambda_0)=\inf\limits_{n\in\Z} (p_1-\lambda_0 p_2)(n)=0$.

  Let $n_i\in\Z$ be such that $(p_1-\lambda_0 p_2)(n_i)\to 0$ as $i\to\infty.$ By the almost periodicity of $a, b,c,$ the limits $a_*(n)=\lim\limits_{i\to\infty}a(n+n_i),\ b_*(n)=\lim\limits_{i\to\infty}b(n+n_i), c_*(n)=\lim\limits_{i\to\infty}c(n+n_i)$ exist after we pass along a subsequence, and we still denote the subsequence $\{n_i\}$.

 Consider the limit operator $$\mathcal L_*u=D^*(a_*(n)Du)+b_*(n)D^*u+c_*(n)u.$$
 We can check that $w_*(n):=\lim\limits_{i\to\infty}(p_1-\lambda p_2)(n+n_i)\geq 0$ is also the eigenfunction of $\mathcal L_*$ associated with $E_0$.

 Now we claim that $w_*=0.$ From the choice of ${n_i},$ we have $w_*(0)=0.$  Note that $w_*(\pm 1)=0$ immediately implies that $w_*\equiv 0$, hence we only consider the case that $w_*(\pm 1)>0$.
  Since $$\inf\limits_{n\in\Z} a_*(n)\geq \inf\limits_{n\in\Z} a(n)>0, \inf\limits_{n\in\Z} \bigr(a_*(n)-b_*(n-1)\bigr)\geq \inf\limits_{n\in\Z} \bigr(a(n)-b(n-1)\bigr)>0,$$
  it follows that
 $$(\mathcal L_*w_*)(0)=a_*(0)w_*(1)+(a_*(0)-b_*(-1))w_*(-1)>0,$$  which contradicts the fact that $\mathcal L_*w_*(0)=E_0w_*(0)=0$. Then the claim is proved.

 By the almost periodicity of $w$, we have $$0=\lim\limits_{i\to\infty}w_*(n-n_i)=\lim\limits_{j\to\infty}\lim\limits_{i\to\infty}w(n+n_i-n_j)=\lim\limits_{i\to\infty}w(n+n_i-n_i)$$ which yields that $p_1=\lambda p_2$. Then we obtain the result.
\end{proof}

The positivity of the ground state energy is also important in our applications, and we will provide some kind of conditions to guarantee it.

\begin{Proposition}\label{Lemma:groundstateener}
Let $d\in\N_+$, and suppose that $A_1,W\in C(\T^d,\R)$, $A_2\in\R, A_1>A_2$. Let $P\in C(\T^d,\R), P>0$ and $p(n)=P(n\omega)$ be an almost periodic solution of
 $$\mathcal Lp=D^*(A_1(n\omega)Dp)+A_2D^*p+W(n\omega)p=E_0p.$$
If $\hat W(0)\geq 0$ and $W\not\equiv 0$,  then $E_0>0$.
\end{Proposition}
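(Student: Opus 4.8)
The plan is to integrate (sum) the eigenvalue equation against a suitable positive weight in order to isolate the sign of $E_0$. Since $p(n)=P(n\omega)$ and $A_1,W$ are evaluated along the orbit $n\omega$, the natural thing to do is to pass to an integral over $\T^d$ with respect to Haar measure, which is legitimate because the orbit $\{n\omega\}$ is equidistributed ($\omega\in\R^d\setminus\Q^d$) and all functions involved are continuous. First I would rewrite $\mathcal L p = E_0 p$ along the orbit as the functional identity on $\T^d$
\[
\Delta_\omega\bigl(A_1(\theta)\,D_\omega P(\theta)\bigr) + A_2\,\Delta_\omega P(\theta) + W(\theta)P(\theta) = E_0 P(\theta),
\]
where $D_\omega$ and its adjoint denote the shift-difference operators $D_\omega P(\theta)=P(\theta+\omega)-P(\theta)$ etc.; this holds first on the dense orbit and then everywhere by continuity.

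Next I would divide by $P$ (legitimate since $P>0$ on the compact torus, so $\inf P>0$) and integrate over $\T^d$:
\[
E_0 = \int_{\T^d}\frac{\Delta_\omega\bigl(A_1 D_\omega P\bigr)}{P}\,d\theta + A_2\int_{\T^d}\frac{\Delta_\omega P}{P}\,d\theta + \int_{\T^d}W\,d\theta.
\]
The last term is $\widehat W(0)\ge 0$ by hypothesis. The key computation is to show the first two terms together are $\ge 0$, with strict positivity unless $D_\omega P\equiv 0$. For the second term, using invariance of Haar measure under $\theta\mapsto\theta+\omega$ one gets $\int \Delta_\omega P/P = \int (P(\theta+\omega)/P(\theta) - 1)\,d\theta = \int P(\theta+\omega)/P(\theta)\,d\theta - 1 \ge 0$ by the AM–GM / Jensen inequality applied to the positive function $P(\theta+\omega)/P(\theta)$ whose logarithm integrates to zero (again by shift-invariance). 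For the first term I would perform a discrete "integration by parts" (summation by parts transferred to the torus via shift-invariance of Haar measure): writing $\int \Delta_\omega(A_1 D_\omega P)/P$ and moving the adjoint difference onto $1/P$, one obtains $\int A_1(\theta)\, D_\omega P(\theta)\, D_\omega\!\bigl(\tfrac1P\bigr)(\theta)\,d\theta$ up to combining with the drift term; since $D_\omega P\cdot D_\omega(1/P) = (P(\theta+\omega)-P(\theta))\bigl(\tfrac1{P(\theta+\omega)}-\tfrac1{P(\theta)}\bigr) = -\tfrac{(P(\theta+\omega)-P(\theta))^2}{P(\theta)P(\theta+\omega)} \le 0$, and $A_1>0$, I must be careful with signs: the correct grouping, which I expect mirrors the computation behind Lemma~\ref{reduc} (taking $p_*\equiv 1$-type weight adapted to the constant drift), combines diffusion and drift into a manifestly nonnegative quadratic form plus $\widehat W(0)$. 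Concretely, completing the square in the combination of the $A_1$ and $A_2$ terms should yield something like $E_0 = \int_{\T^d}\bigl(A_1(\theta)-A_2\bigr)\frac{(P(\theta+\omega)-P(\theta))^2}{P(\theta)P(\theta+\omega)}\cdot(\text{positive factor})\,d\theta + \widehat W(0)$, using $A_1>A_2$.

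Finally, to get the strict inequality $E_0>0$ I would argue by contradiction: if $E_0=0$ then $\widehat W(0)=0$ and the quadratic form vanishes, forcing $P(\theta+\omega)=P(\theta)$ for all $\theta$, i.e. $P$ is invariant under an irrational rotation, hence constant, say $P\equiv c>0$. Plugging $p\equiv c$ back into $\mathcal L p = 0$ gives $W(n\omega)c = 0$ for all $n$, hence $W\equiv 0$ on the orbit and by continuity $W\equiv 0$ on $\T^d$, contradicting $W\not\equiv 0$. The main obstacle I anticipate is the bookkeeping in the summation-by-parts step: getting the constant-drift term $A_2 D^* p$ to combine cleanly with the diffusion term into a single nonnegative expression with the factor $A_1-A_2$ (rather than producing an indefinite cross term). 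This is exactly the algebraic miracle already exploited in Lemma~\ref{reduc}, so I would model the computation on that proof, essentially reusing the identity $2\tilde{\mathcal L}u = 2D^*(\tilde a(n)Du)+2\bar c(Du+D^*u)$ with the ground state $p$ itself and its adjoint counterpart, then noting that for constant drift the adjoint eigenfunction of the averaged operator can be taken explicitly, which makes the weight in the integral transparent.
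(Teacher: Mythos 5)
Your overall strategy coincides with the paper's: divide the eigenvalue relation by $p$, average along the orbit (the paper uses Birkhoff sums plus the ergodic theorem, you use equidistribution and shift-invariance of Haar measure -- these are equivalent here), extract $\hat W(0)$ from the potential, turn the diffusion term into the nonnegative quadratic form $\int_{\T^d}A_1\frac{(P(\theta+\omega)-P(\theta))^2}{P(\theta)P(\theta+\omega)}\,d\theta$ by summation by parts, handle the drift by a Jensen/AM--GM argument, and get strictness by noting that equality forces $P\equiv\const$ and hence $W\equiv 0$. All of those ingredients are right and match the paper.

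The genuine gap is in the drift term, which is exactly the step you flagged but did not resolve, and your proposed resolution is incorrect as stated. The exact averaged identity is
\begin{equation*}
E_0=\int_{\T^d}A_1\frac{(P(\theta+\omega)-P(\theta))^2}{P(\theta)P(\theta+\omega)}\,d\theta+A_2\int_{\T^d}\Bigl(1-\tfrac{P(\theta-\omega)}{P(\theta)}\Bigr)d\theta+\hat W(0),
\end{equation*}
equivalently (regrouping, which is the discrete analogue of $D^*(A_1Du)+A_2D^*u=D^*\bigl((A_1-A_2)Du\bigr)+A_2Du$)
\begin{equation*}
E_0=\int_{\T^d}(A_1-A_2)\frac{(P(\theta+\omega)-P(\theta))^2}{P(\theta)P(\theta+\omega)}\,d\theta+A_2\int_{\T^d}\Bigl(\tfrac{P(\theta+\omega)}{P(\theta)}-1\Bigr)d\theta+\hat W(0).
\end{equation*}
In either form a drift remainder survives; there is no ``completed square'' giving just a quadratic form with factor $A_1-A_2$ plus $\hat W(0)$, so the identity you conjecture is false. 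Jensen gives $\int P(\theta\pm\omega)/P(\theta)\,d\theta\geq 1$, so the remainder is nonnegative in the first form only when $A_2\leq 0$ and in the second form only when $A_2\geq 0$: your computation treats the drift as a forward difference and concludes ``$\geq 0$'' after multiplying by $A_2$, which is wrong for $A_2<0$ (and the operator in the statement has $D^*$, a backward difference, so as written it is wrong for $A_2>0$). The missing idea is precisely the case distinction on the sign of $A_2$ with the two groupings above, which is how the paper's proof of Proposition \ref{Lemma:groundstateener} proceeds. Your fallback suggestion -- to fix the signs by weighting with an adjoint eigenfunction as in Lemma \ref{reduc} -- is not available here: no adjoint ground state $p_*$ is assumed in this proposition, and even for constant drift it is not explicit, so the argument must (and can) be closed by the elementary two-case bookkeeping instead.
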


\begin{proof}
In the case that $A_2\leq 0$. Dividing $p$ from both sides of $\mathcal Lp=E_0p$ and taking its average, one has
\begin{equation}\label{eq:groundstateenergy}
E_0=\lim\limits_{n\to\infty}\frac{1}{n}\sum\limits_{i=1}^n\biggr(\frac{D^*\bigr(A_1(\omega i)Dp(i)\bigr)}{p(i)}+A_2\frac{D^*p(i)}{p(i)}+W(\omega i)\biggr).
\end{equation}
Note that for any $u(n)=U(n\omega)$ with $U\in C(\T^d,\R)$, by Birkhoff ergodic theorem, there holds
 $$\lim\limits_{n\to\infty}\frac{1}{n}\sum\limits_{i=1}^nu(i)=\int_{\T^d}U(\theta)d\theta.$$
By \eqref{eq:groundstateenergy}, it follows that
\begin{equation*}\label{dominatedtermpo}
\begin{aligned}
\lim\limits_{n\to\infty}\frac{1}{n}\sum\limits_{i=1}^n\frac{D^*\bigr(A_1(\omega i)Dp(i)\bigr)}{p(i)}&=\lim\limits_{n\to\infty}\sum\limits_{i=1}^n\frac{A_1(\omega i)}{n}\biggr(\frac{\bigr(Dp(i)\bigr)^2}{p(i)p(i+1)}\biggr)\\
&=\int_{\T^d}\frac{A_1(\theta)\bigr(P(\theta+\omega)-P(\theta)\bigr)^2}{P(\theta)P(\theta+\omega)}d\theta,
\end{aligned}
\end{equation*}
which yields that $\lim\limits_{n\to\infty}\frac{1}{n}\sum\limits_{i=1}^n\frac{D^*\bigr(A_1(\omega i)Dp(i)\bigr)}{p(i)}\geq 0$, and the equality holds only if $P(\theta)\equiv \const$. Meanwhile, by the rearrangement inequality, one has
$$\frac{1}{n}\sum\limits_{i=1}^n\frac{p(i-1)}{p(i)}\geq \frac{1}{n}\sum\limits_{i=1}^n\frac{p(i)}{p(i)}=1,$$
and then $A_2\lim\limits_{n\to\infty}\frac{1}{n}\sum\limits_{i=1}^n\frac{D^*p(i)}{p(i)}\geq 0$ follows directly. Then we conclude that
$$
\begin{aligned}
E_0&= \lim\limits_{n\to\infty}\frac{1}{n}\sum\limits_{i=1}^n \biggr(\frac{D^*\bigr(A_1(\omega i)Dp(i)\bigr)}{p(i)}+A_2\frac{D^*p(i)}{p(i)}+W(\omega i)\biggr)\\
&\geq \int_{\T}\frac{A_1(\theta)\bigr(P(\theta+\omega)-P(\theta)\bigr)^2}{P(\theta)P(\theta+\omega)}d\theta+\int_{\T}W(\theta)d\theta\geq 0,
\end{aligned}
$$
and the equality holds if and only if $\int_{\T} Wd\theta=0$ and $P\equiv \const,$ and this implies $W\equiv 0$ which contradicts with the assumption.

In the case that $0<A_2<A_1(\theta)$,  we can rewrite that
$$\mathcal Lu=D^*(A_1(n\omega)Du)+A_2D^*u+W(n\omega)u=D^*\biggr(\bigr(A_1(n\omega)-A_2\bigr)Du\biggr)+A_2Du+W(n\omega)u.$$
Similar to the above argument, we still have $A_2\lim\limits_{n\to\infty}\frac{1}{n}\sum\limits_{i=1}^n\frac{Dp(i)}{p(i)}\geq 0$, which also yields $E_0>0$. Then we finish the proof.
\end{proof}

\subsection{From Jacobi to discrete elliptic operator}

 It is straightforward to check that the Jacobi operator $\mathcal L^g_{W_1,W_2,V}$ can be rewritten as
  $$\mathcal Lu=D^*\bigr(a(n)Du\bigr)+b(n)D^*u+c(n)u,$$
where  $a(n)=\me^{-g}+ W_1(n\omega), b(n)=\me^{-g}-\me^g+ W_1(n\omega-\omega)- W_2(n\omega), c(n)=(V-W_1-W_2)(n\omega)-\me^{-g}-\me^g$.
Note that $ \inf\bigr(\me^{-g}+W_1\bigr), \inf\bigr(\me^{g}+W_2\bigr)>0$ provided that $C_0\me^{-12|g|}g^6<\me^{-|g|}$.
Meanwhile, direct computation shows that
 $$\mathcal L^*u=D^*(a(n)Du)-D(b(n)u)+c(n)u=\mathcal L^{-g}_{W_2(\cdot+\omega), W_1(\cdot-\omega),V}u.
$$
Since we have obtained the almost periodic solution of $\mathcal L^g_{W_1, W_2, V}p=E_0p$ in the last section, then the main result can be proved as follows:

\begin{proof}[Proof of Theorem \ref{Main1}]
By Corollary \ref{ap-ir}, there exist $E_0, E_0'\in\R$ and $P,P_*\in C(\T,\R),$ $P,P_*>0$ such that  $p(n)=P(n\omega), p_*(n)=P_*(n\omega)$ satisfy
$$\mathcal L^g_{W_1,W_2,V}p=E_0p, \mathcal L^{-g}_{W_2(\cdot+\omega), W_1(\cdot-\omega),V}p_*=E_0'p_*.$$
 By Lemma \ref{unique}, we have $E_0=E_0'$, and then
 from Proposition \ref{Th:location of groundstate} and Lemma \ref{Lemma:simple}, we have obtained the result.
\end{proof}

\begin{proof}[Proof of Theorem \ref{Main2}]
Replacing Corollary \ref{ap-ir} with Corollary \ref{dense-ir} in the proof of Theorem \ref{Main1}, the result follows directly.
\end{proof}

\begin{proof}[Proof of Theorem \ref{Main3}]
Replacing Corollary \ref{ap-ir} with Corollary \ref{Theorem:positive bloch wave} in the proof of Theorem \ref{Main1}, the result follows directly.
\end{proof}

Conversely, one can rewrite any discrete elliptic operator
\begin{equation*}
\mathcal Lu=D^*\bigr(A_1(n\omega)Du\bigr)+A_2(n\omega)D^*u+W(n\omega)u,
\end{equation*}
as a Jacobi operator. Here we assume $A_1,A_2,W\in C(\T^d,\R), d\in\N_+\cup\{\infty\},$ and $\inf A_1(\theta)>0 ,\inf(A_1(\theta-\omega)-A_2(\theta))>0.$ Indeed, by direct computation, if we denote  $W_1(\theta)=A_1(\theta)-\hat A_1(0), W_2(\theta)=A_1(\theta-\omega)-A_2(\theta)-\hat A_1(0)+\hat A_2(0), V(\theta)=(W -A_1+A_2)(\theta)-A_1(\theta-\omega)$, then one has
\begin{equation}\label{eq:jacobitoellip}
 \frac{1}{h}(\mathcal Lu)(n)=\biggr(\me^{-g}+\frac{W_1}{h}(n\omega)\biggr)u(n+1)+\biggr(\me^{g}+\frac{W_2(n\omega)}{h}\biggr)u(n-1)+\frac{V(n\omega)}{h}u(n),
 \end{equation}
where $\me^{-g}=\sqrt{\frac{\hat A_1(0)}{\hat A_1(0)-\hat A_2(0)}}$, $ h=\sqrt{\bigr(\hat A_1(0)- \hat A_2(0)\bigr)\hat A_1(0)}$. Thus  $\hat A_2(0)\neq 0$ means $g\neq 0$. Consequently, we have the following:

\begin{proof}[Proof of Proposition \ref{Main1ell}]
Note that adding any constant to potential $W$ or multiplying $\mathcal L$ by any constant does not affect the existence of the almost periodic ground state of $\mathcal L$. Hence we only need to consider $\frac{\mathcal L}{h}-\frac{\hat V(0)}{h}.$

Since we have \eqref{eq:jacobitoellip}, applying Theorem \ref{Main3}, the existence of the ground state energy $E_0$ and the almost periodic ground state can be obtained provided that
 $$
 \sum\limits_{k\in\mathbb Z_*^\infty}\frac{1}{h}(|\hat {A_1}(k)|+|\hat {A_2}(k)|+|\hat {W}(k)|) \me^{r|k|_1}-|\hat A_1(0)|-|\hat A_2(0)|-|\hat W(0)|)\leq \frac{3\epsilon_0}{h}\leq C_0\me^{-12|g|}|g|^6,
$$
where $C_0$ is given by Theorem \ref{Main3}, and $g=-\frac{1}{2}\ln\bigr(\frac{\hat A_1(0)}{\hat A_1(0)-\hat A_2(0)}\bigr)$.
Then we finish the proof.
\end{proof}

\begin{Corollary}\label{Main2ell}
Let $A_2\in\R, A_2\neq 0$, $A_1,W\in C^{1,\gamma}(\T,\R)$ for some $ 0<\gamma< 1.$ Suppose that $\hat W(0)\geq 0, W\not\equiv 0$. Then there
exists $\epsilon_0=\epsilon_0(\hat A_1(0),A_2)>0$, such that if
\begin{equation*}
\|A_1-\hat A_1(0)\|_{1,\gamma}+\|W-\hat W(0)\|_{1,\gamma}\leq \epsilon_0,
\end{equation*}
then for any $\omega\in\mathcal P$,  $\mathcal L$
admits a simple ground state energy $E_0>0$ and a quasi-periodic ground state $p(n)=P(n\omega)$ with $P\in C(\T,\R)$ and $P>0$.
\end{Corollary}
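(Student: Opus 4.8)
The plan is to deduce this from Theorem~\ref{Main1} together with Proposition~\ref{Lemma:groundstateener}, exactly the way Proposition~\ref{Main1ell} was deduced from Theorem~\ref{Main3}, but carried out in the finitely differentiable category $C^{1,\gamma}$ rather than the analytic one. So the argument will have three stages: reduce $\mathcal L$ to a small Jacobi operator, quote Theorem~\ref{Main1} for that operator, and then prove $E_0>0$.

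First I would normalize. Adding a constant to the zeroth order coefficient and multiplying $\mathcal L$ by a positive constant change neither the existence nor the simplicity of a positive quasi-periodic eigenfunction and alter the ground state energy only by an affine map of $\R$; hence it suffices to treat $h^{-1}\mathcal L-h^{-1}\hat V(0)$, where $h=\sqrt{(\hat A_1(0)-A_2)\hat A_1(0)}$ and $V$ is the function in \eqref{eq:jacobitoellip}. By \eqref{eq:jacobitoellip} this operator is the Jacobi operator $\mathcal L^{g}_{W_1/h,\,W_2/h,\,(V-\hat V(0))/h}$ with $g=-\tfrac12\ln\bigl(\tfrac{\hat A_1(0)}{\hat A_1(0)-A_2}\bigr)$, where, since $A_2$ is constant, $W_1=A_1-\hat A_1(0)$, $W_2=A_1(\cdot-\omega)-\hat A_1(0)$, and $V-\hat V(0)=(W-\hat W(0))-(A_1-\hat A_1(0))-(A_1(\cdot-\omega)-\hat A_1(0))$. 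Note $A_2\neq0$ gives $g\neq0$; and for $\epsilon_0$ small the ellipticity conditions $\inf A_1>0$, $\inf(A_1(\cdot-\omega)-A_2)>0$ needed in \eqref{eq:jacobitoellip} hold (so that $\hat A_1(0)>\max\{0,A_2\}$ and $g,h$ are real and well defined) and $\me^{-g}+W_1/h>0$, $\me^{g}+W_2/h>0$.

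The key bookkeeping is that translation by $\omega$ is an isometry of $C^{1,\gamma}(\T)$, so $\|W_2\|_{1,\gamma}=\|A_1-\hat A_1(0)\|_{1,\gamma}$, whence
$$\Bigl\|\tfrac{W_1}{h}\Bigr\|_{1,\gamma}+\Bigl\|\tfrac{W_2}{h}\Bigr\|_{1,\gamma}+\Bigl\|\tfrac{V-\hat V(0)}{h}\Bigr\|_{1,\gamma}\le\frac{4}{h}\bigl(\|A_1-\hat A_1(0)\|_{1,\gamma}+\|W-\hat W(0)\|_{1,\gamma}\bigr)\le\frac{4\epsilon_0}{h}.$$
Since $g$ and $h$ depend only on $\hat A_1(0)$ and $A_2$, I would then fix $\epsilon_0=\epsilon_0(\hat A_1(0),A_2)>0$ small enough that $4\epsilon_0/h\le C_0\me^{-12|g|}g^6$, $C_0$ being the absolute constant of Theorem~\ref{Main1}. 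Theorem~\ref{Main1} now applies to this Jacobi operator: for every $\omega\in\mathcal P$ it has a simple ground state energy $E_0'$ and a quasi-periodic ground state $P\in C(\T,\R)$ with $P>0$. Undoing the normalization, $p(n)=P(n\omega)$ solves $\mathcal Lp=E_0p$ with $E_0:=hE_0'+\hat V(0)$; as $\Sigma(\mathcal L)=h\,\Sigma\bigl(\mathcal L^{g}_{W_1/h,W_2/h,(V-\hat V(0))/h}\bigr)+\hat V(0)$, the number $E_0$ is the simple ground state energy of $\mathcal L$ and $p$ its quasi-periodic ground state.

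Finally, for $E_0>0$ I would invoke Proposition~\ref{Lemma:groundstateener} with $d=1$: after possibly shrinking $\epsilon_0$ we have $A_1>A_2$ pointwise, and the remaining hypotheses ($A_2$ constant, $A_1,W$ continuous, $\hat W(0)\ge0$, $W\not\equiv0$, and $p>0$ quasi-periodic solving $\mathcal Lp=E_0p$) are exactly at hand, so the proposition gives $E_0>0$. The only genuinely delicate point is the constant-chasing of the reduction — subtracting off the mean $\hat V(0)$ before quoting Theorem~\ref{Main1}, and verifying that $g$ and $h$ involve nothing but $\hat A_1(0)$ and $A_2$ so that $\epsilon_0$ can legitimately be chosen depending only on these — while everything else is a direct citation.
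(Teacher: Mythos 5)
Your proposal is correct and follows essentially the same route as the paper: rewrite $h^{-1}\mathcal L-h^{-1}\hat V(0)$ as the Jacobi operator of \eqref{eq:jacobitoellip}, apply Theorem~\ref{Main1} (in place of Theorem~\ref{Main3} used for Proposition~\ref{Main1ell}), and deduce $E_0>0$ from Proposition~\ref{Lemma:groundstateener}. The paper states this only as a one-line reduction; your norm bookkeeping and the observation that $g,h$ depend only on $\hat A_1(0),A_2$ are exactly the ``minor modification'' it leaves implicit.
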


\begin{proof}
Replacing Theorem \ref{Main3} with Theorem  \ref{Main1} in the proof of Proposition \ref{Main1ell} with minor modification, one can establish the existence of the quasi-periodic ground state. Then the result follows from Proposition \ref{Lemma:groundstateener}.
\end{proof}

\begin{Corollary}\label{Main3ell}
Let $\omega\in\R^d\backslash\Q^d$, $d\in\N_+,$ and suppose that $A_2\in\R, A_2\neq 0$, $A_1, W'\in C(\T^d,\R)$, $\hat W'(0)\geq 0$.  There
exists $\epsilon_0=\epsilon_0(\hat A_1(0),A_2)>0$ such that if \begin{equation*}
\|A_1-\hat A_1(0)\|_0+\|W-\hat W(0)\|_0\leq \epsilon_0,
\end{equation*}
then for any $\epsilon>0$, there exists $W\in C(\T^d, \R)$ which satisfies $\|W-W'\|_0<\epsilon$ and $\hat W(0)=\hat W'(0)$ such that
   $\mathcal L$
admits a simple ground state energy $E_0>0$ and a quasi-periodic ground state $p(n)=P(n\omega)$ with $P\in C(\T^d,\R)$, $P>0$.
\end{Corollary}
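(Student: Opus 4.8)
The plan is to imitate the proof of \propref{Main1ell} and \corref{Main2ell}, feeding in \thmref{Main2} in place of \thmref{Main3}, together with one extra bookkeeping step that keeps the mean of the potential prescribed. First I would exploit the normalization freedom: adding a real constant to $W$ only translates the spectrum of $\mathcal L$ by that constant, and multiplying $\mathcal L$ by a positive constant only rescales it, so neither operation affects the existence, simplicity, or the sign of the ground state energy; hence we may replace $\mathcal L$ by any such renormalization. Using \eqref{eq:jacobitoellip} with $A_2$ a constant (so $\hat A_2(0)=A_2$), set $g:=-\tfrac{1}{2}\ln\!\big(\hat A_1(0)/(\hat A_1(0)-A_2)\big)\ne 0$, $h:=\sqrt{(\hat A_1(0)-A_2)\hat A_1(0)}$, and $W_1(\theta):=A_1(\theta)-\hat A_1(0)$, $W_2(\theta):=A_1(\theta-\omega)-\hat A_1(0)$, $V(\theta):=W'(\theta)-A_1(\theta)-A_1(\theta-\omega)+A_2$, so that $\tfrac{1}{h}\mathcal L$ is exactly the Jacobi operator $\mathcal L^{g}_{W_1/h,\,W_2/h,\,V/h}$.

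Next I would fix $\epsilon_0=\epsilon_0(\hat A_1(0),A_2)>0$ so small that $\|A_1-\hat A_1(0)\|_0+\|W'-\hat W'(0)\|_0\le\epsilon_0$ forces
$$\|W_1/h\|_0+\|W_2/h\|_0+\|V/h\|_0\le C_0\me^{-12|g|}g^6,$$
where $C_0$ is the absolute constant of \thmref{Main2}; this is possible because the left-hand side is dominated by an $(\hat A_1(0),A_2)$-dependent multiple of $\|A_1-\hat A_1(0)\|_0+\|W'-\hat W'(0)\|_0$, the contribution of $\widehat{(V/h)}(0)$ being irrelevant at this stage. Then, given $\epsilon>0$, apply \thmref{Main2} with perturbation size $\epsilon/(2h)$: there is $\tilde V\in C(\T^d,\R)$ with $\|\tilde V-V/h\|_0\le\epsilon/(2h)$ such that $\mathcal L^{g}_{W_1/h,W_2/h,\tilde V}$ has a simple ground state energy and a quasi-periodic ground state $P(n\omega)$ with $P\in C(\T^d,\R)$, $P>0$.

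The only genuine obstacle is to additionally enforce $\hat W(0)=\hat W'(0)$, on which \thmref{Main2} gives no information. This is resolved precisely by the normalization freedom: replace $\tilde V$ by $\tilde V-\big(\widehat{\tilde V}(0)-\widehat{(V/h)}(0)\big)$, which has the same zeroth Fourier coefficient as $V/h$, still satisfies $\|\tilde V-V/h\|_0\le\epsilon/h$, and alters $\mathcal L^{g}_{W_1/h,W_2/h,\tilde V}$ only by a constant, hence retains the simple ground state energy and the ground state $P(n\omega)$. Now I would undo the reduction by setting $W(\theta):=h\tilde V(\theta)+A_1(\theta)+A_1(\theta-\omega)-A_2$; then $\mathcal L p=D^*(A_1(n\omega)Dp)+A_2D^*p+W(n\omega)p$ equals $h$ times $\mathcal L^{g}_{W_1/h,W_2/h,\tilde V}$, so $p(n)=P(n\omega)$ is a positive quasi-periodic ground state of $\mathcal L$ with simple ground state energy $E_0$, while $\|W-W'\|_0=h\|\tilde V-V/h\|_0\le\epsilon$ and $\hat W(0)=\hat V(0)+2\hat A_1(0)-A_2=\hat W'(0)$ by the definition of $V$.

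Finally, $E_0>0$ follows from \propref{Lemma:groundstateener}: after shrinking $\epsilon_0$ if necessary we have $A_1>A_2$ (since $\hat A_1(0)>A_2$), while $\hat W(0)=\hat W'(0)\ge 0$ and $W\not\equiv 0$ (the latter holding for $\epsilon$ small, as $W'\not\equiv 0$). This completes the proof.
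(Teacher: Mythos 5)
Your overall route is exactly the paper's: rewrite $\mathcal L$ as a Jacobi operator through \eqref{eq:jacobitoellip} with $\hat A_2(0)=A_2$, invoke Theorem~\ref{Main2} in place of Theorem~\ref{Main3}, and deduce $E_0>0$ from Proposition~\ref{Lemma:groundstateener}, just as in Proposition~\ref{Main1ell} and Corollary~\ref{Main2ell}. Your explicit device for enforcing $\hat W(0)=\hat W'(0)$ --- shifting $\tilde V$ by the constant $\widehat{\tilde V}(0)-\widehat{(V/h)}(0)$, which changes the operator only by a multiple of the identity and therefore preserves the simple ground state energy and the positive quasi-periodic eigenfunction --- is precisely the ``minor modification'' the paper leaves implicit, and that part is carried out correctly, including the bookkeeping $\|W-W'\|_0\le\epsilon$ and $\hat W(0)=\hat W'(0)$.

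There is, however, one step that fails as literally written. The hypothesis \eqref{Main2small} of Theorem~\ref{Main2} bounds the full sup norm $\|V\|_0$, and for your $V=W'-A_1-A_1(\cdot-\omega)+A_2$ the mean $\hat V(0)=\hat W'(0)-2\hat A_1(0)+A_2$ is in general of size comparable to $\hat A_1(0)$; it is not controlled by $\|A_1-\hat A_1(0)\|_0+\|W'-\hat W'(0)\|_0$, so no choice of $\epsilon_0$ forces $\|W_1/h\|_0+\|W_2/h\|_0+\|V/h\|_0\le C_0\me^{-12|g|}g^6$, and declaring the contribution of $\widehat{(V/h)}(0)$ ``irrelevant'' does not make the hypothesis of Theorem~\ref{Main2} hold. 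The repair is the normalization used at the start of the proof of Proposition~\ref{Main1ell}: before invoking Theorem~\ref{Main2}, pass to $\tfrac{\mathcal L}{h}-\tfrac{\hat V(0)}{h}$, i.e.\ apply the theorem to the mean-free potential $(V-\hat V(0))/h$, whose norm is at most $2\epsilon_0/h$ together with $\|W_1/h\|_0+\|W_2/h\|_0\le 2\epsilon_0/h$, and restore the constant afterwards --- which is exactly what your final adjustment of $\widehat{\tilde V}(0)$ accomplishes. With the steps reordered in this way your argument is complete, up to the same implicit caveats as the paper's ($\hat A_1(0)>\max\{0,A_2\}$, and $W\not\equiv 0$ so that Proposition~\ref{Lemma:groundstateener} applies; note the statement does not actually assume $W'\not\equiv 0$, which you use at the last step).
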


\begin{proof}
The result can be proved by replacing Theorem \ref{Main3} with Theorem \ref{Main2} in the proof of Proposition \ref{Main1ell} with minor modification.
\end{proof}

\section{Steady state of the discrete Fisher-KPP equation}\label{section5}

As our first application,  we establish the existence of the positive almost periodic steady state of the Fisher-KPP type equation in discrete media
\begin{equation}\label{NP}
u_t=D^*(a(n)Du)+b(n)D^*u+c(n)u-u^2=0, \text{ in }\R\times\Z.
\end{equation}
where $a,b,c$ are almost periodic and $\inf\limits_{n\in\Z} a(n),\inf\limits_{n\in\Z}\bigr(a(n)-b(n-1)\bigr)>0,$ $\langle c\rangle:=\frac{1}{n}\sum\limits_{i=1}^nc(i)\geq 0$ and $c\not= 0$.
Here, we will always assume that the linearized operator
\begin{equation}\label{eq:linearized}
\mathcal Lp=D^*(a(n)Dp)+b(n)D^*p+c(n)p=E_0p
\end{equation}
has a ground state energy $E_0>0$ and a bounded function $p$ with positive infimum.

\begin{Lemma}\label{prop:nonpara}
Let $p$ be a bounded solution of \eqref{eq:linearized}  with positive infimum. Then \eqref{NP} has a unique steady state with positive infimum.
\end{Lemma}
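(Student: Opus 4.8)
The plan is to run the classical sub- and super-solution scheme for KPP-type equations, using the maximum principle \propref{Maximum}, the strong maximum principle \corref{StrongMaximum}, the comparison principle \propref{Strong comparison}, and global well-posedness of the Cauchy problem \thmref{existence and uniqueness}.

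For \emph{existence}, the point is that the eigenvalue hypothesis furnishes ordered stationary sub/supersolutions. Since $\mathcal L p=E_0 p$ with $E_0>0$ and $0<\inf p\le\sup p<\infty$, a direct computation gives $\mathcal L(\varepsilon p)-(\varepsilon p)^2=\varepsilon p\,(E_0-\varepsilon p)\ge0$ whenever $\varepsilon\le E_0/\sup p$, so $\underline u:=\varepsilon p$ is a time-independent subsolution of \eqref{NP}; similarly $\overline u:=Mp$ is a supersolution whenever $M\ge E_0/\inf p$, and for such $\varepsilon,M$ one has $\varepsilon\le M$, hence $\underline u\le\overline u$. I would then let $u(t,n)$ be the solution of \eqref{NP} with $u(0,\cdot)=\underline u$; comparison with the stationary sub/supersolutions gives $\varepsilon p\le u(t,\cdot)\le Mp$ for all $t\ge0$, and comparing $u(t+s,\cdot)$ with $u(t,\cdot)$ shows $t\mapsto u(t,n)$ is nondecreasing. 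Thus $u(t,n)\uparrow u_\infty(n)$ with $\varepsilon p\le u_\infty\le Mp$, so $u_\infty$ is bounded with $\inf u_\infty>0$; and since $\partial_t u(t,n)=\mathcal L u(t,n)-u(t,n)^2$ has a right-hand side (a finite combination of the $u(t,\cdot)$) converging as $t\to\infty$ while $u(t,n)$ is bounded and monotone, one gets $\mathcal L u_\infty-u_\infty^2=0$, i.e. $u_\infty$ is a steady state with positive infimum.

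For \emph{uniqueness} — which uses only boundedness and positive infimum of the steady states, not $p$ — let $u_1,u_2$ be two bounded steady states with $\inf u_i>0$ and set $\gamma^*:=\inf\{\gamma>0:\gamma u_2\ge u_1\text{ on }\Z\}$, which is finite and positive since $u_1$ is bounded and $\inf u_2>0$, and satisfies $\gamma^* u_2\ge u_1$. It suffices to prove $\gamma^*\le1$, for then by symmetry $u_1\ge u_2$ as well, whence $u_1=u_2$. Suppose $\gamma^*>1$ and set $w:=\gamma^* u_2-u_1\ge0$; using $\mathcal L u_i=u_i^2$ and substituting $u_1=\gamma^* u_2-w$ yields
\[
\mathcal L w-2\gamma^* u_2\,w=\gamma^*(1-\gamma^*)u_2^2-w^2<0 ,
\]
so, regarded as a time-independent function, $w\ge0$ solves an inequality of the form treated in \corref{StrongMaximum} (with zeroth-order coefficient $2\gamma^* u_2-c$), hence $w\equiv0$ or $w>0$ on all of $\Z$. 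If $w\equiv0$, then $u_1=\gamma^* u_2$, and inserting into $\mathcal L u_1=u_1^2$ forces $\gamma^* u_2^2=(\gamma^*)^2u_2^2$, i.e. $\gamma^*=1$, a contradiction.

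The main obstacle is the alternative $w>0$ everywhere, which is not contradictory by itself, since the definition of $\gamma^*$ only gives $\inf_\Z w=0$ (for each $\delta>0$ there is $n_\delta$ with $w(n_\delta)<\delta\sup u_2$). To rule it out I would repeat the hull/limit-operator argument of \lemref{Lemma:simple}: choose $n_i$ with $w(n_i)\to0$; by almost periodicity of $a,b,c$ and boundedness of $u_1,u_2$, along a subsequence the translates $a(\cdot+n_i),b(\cdot+n_i),c(\cdot+n_i)$ and $u_j(\cdot+n_i)$ converge pointwise to $a_*,b_*,c_*$ and to bounded steady states $u_{1*},u_{2*}$ of the limit operator $\mathcal L_*$, with $u_{2*}\ge\inf u_2>0$; then $w_*:=\gamma^* u_{2*}-u_{1*}\ge0$, $w_*(0)=0$, and $w_*$ still satisfies $\mathcal L_* w_*-2\gamma^* u_{2*}w_*\le0$. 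Evaluating at $n=0$, where $w_*$ attains its minimum value $0$, and using $\inf_n a(n)>0$ together with the positivity of the relevant off-diagonal coefficient of $\mathcal L$ (exactly as in \lemref{Lemma:simple}), $\mathcal L_* w_*(0)$ is a nonnegative combination of $w_*(\pm1)\ge0$ which is forced to be $\le0$, so $w_*(\pm1)=0$; iterating propagates $w_*\equiv0$, hence $u_{1*}=\gamma^* u_{2*}$, and then $\mathcal L_* u_{1*}=u_{1*}^2$ gives $\gamma^*=1$, contradicting $\gamma^*>1$. This forces $\gamma^*\le1$ and proves uniqueness; almost-periodicity of the unique steady state, if one wants it, then follows from the same hull argument combined with uniqueness.
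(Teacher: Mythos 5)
Your proposal is correct, and while the existence half follows essentially the paper's route, your uniqueness argument is genuinely different. For existence you run the same sub/supersolution and monotone-limit scheme, only with the supersolution $Mp$ (rather than a large constant) and a solution increasing in $t$ from the subsolution $\varepsilon p$, instead of the paper's entire-in-time solution built from initial data $\overline u$ posed at $t=-i$; both are sound and give a steady state pinched between $\varepsilon p$ and a supersolution. For uniqueness the paper does something else: with $\inf(v_0-\epsilon u_0)=0$ it inserts the time-dependent subsolution $w=\epsilon\me^{\delta t}u_0$ (for $\delta$ small) and applies the parabolic comparison principle \propref{Strong comparison} on the finite time interval $\bigl[0,\tfrac{1}{\delta}\ln(\tfrac{1}{2\epsilon}+\tfrac12)\bigr]$, forcing $v_0\ge \epsilon\me^{\delta t}u_0$ and contradicting the zero infimum; this is short, purely comparison-based, and does not use almost periodicity of $a,b,c$. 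You instead take the optimal ratio $\gamma^*$, derive the stationary inequality $\mathcal L w-2\gamma^*u_2\,w\le\gamma^*(1-\gamma^*)(\inf u_2)^2<0$ for $w=\gamma^*u_2-u_1$, and handle the non-attained infimum by the hull/limit-operator argument of \lemref{Lemma:simple} (translate along $n_i$ with $w(n_i)\to0$, pass to $\mathcal L_*$, use $\inf a>0$ and $\inf(a(\cdot-1)-b)>0$ to propagate $w_*\equiv0$, then $u_{1*}=\gamma^*u_{2*}$ forces $\gamma^*=1$); in fact at $n=0$ the strict uniform bound already gives $\mathcal L_*w_*(0)<0$ against $\mathcal L_*w_*(0)\ge0$, so the propagation step and the preliminary strong-maximum-principle dichotomy are dispensable. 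Your route stays entirely at the stationary level (no auxiliary time-dependent subsolution) but needs almost periodicity of the coefficients, which is part of the standing hypotheses of this section, whereas the paper's sliding-in-time trick works for merely bounded coefficients; as you note, almost periodicity of the steady state itself is then recovered exactly as in \propref{Prop:steadystate}.
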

\begin{proof}
 Now we prove the result in the following steps:\\
 Step 1: First we claim there exists a solution $u$ of the equation \eqref{NP}. Moreover, $u(t,n)$ is nondecreasing in $t$.

Since $-c(n)M+M^2>0$ for $M>\sup c$, then $\overline u:=M$ is a supersolution of \eqref{NP}.
Denote $\underline u:=\epsilon p$ with $\epsilon$ to be chosen.
Then for $\epsilon<\frac{\min\{M,E_0\}}{\sup p}$,
$$\underline u_t-\mathcal L\underline u+\underline u^2=-\epsilon(\mathcal Lp)+\epsilon^2p^2=-\epsilon E_0p+\epsilon^2p^2<0.$$
This yields that $\underline u$ is a subsolution of \eqref{NP}. Moreover, $\underline u=\epsilon p<M=\bar u.$

Define the sequence $\{u_{i}\}$ as follows: $u_{i}$ is the solution of \eqref{NP} for $t>-i$ with initial condition $u_{i}(-i,n)=\overline u(-i,n)$, and the existence and uniqueness of $u_{i}$ is due to Theorem \ref{existence and uniqueness}. By Proposition \ref{Strong comparison}, $u_{i}$ satisfies
$$\forall t>-i,\ n\in\mathbb{Z}, \ \underline u(t,n)\leq u_{i}(t,n)\leq \overline u(t,n).$$
Thus, for $i,j\in\mathbb{N}$ with $j<i$ and for any $0<h<1$, using the monotonicity of $\overline u$, we will get
$$u_{j}(-j,n)=\overline u(-j,n)\geq \overline u(-j-h,n)\geq u_{i}(-j-h,n).$$
Note that $u_{i}(\cdot-h,\cdot)$ is also a solution of \eqref{NP}. Proposition \ref{Strong comparison} gives us
$$\forall j<i, \ 0<h<1,t>-j,\ \ u_{j}(t,n)\geq u_{i}(t-h,n).$$
Now by the arguments before, we can prove $\{u_{i}\}_i$ converges locally uniformly to a global function $\underline u\leq u\leq \overline u$ of \eqref{NP}. Then passing to the limit as $i,j\rightarrow \infty$,  $u(t,n)\geq u(t-h,n)$ for all $(t,n)\in\mathbb{R}\times\mathbb{Z}$ and $0<h<1$. This means that $u$ is nondecreasing in $t$.  Then the claim is proved.

Step 2: Then we get the steady state of the equation \eqref{NP}.

Note that $u(t,n)\leq M$ with $M>\sup c$ and $u$ is nondecreasing in $t$. Hence by the monotone convergence theorem, $u_0(n):=\lim\limits_{t\to\infty}u(t,n)\geq\underline u\geq \epsilon \inf p$. Note that $\lim\limits_{t\to\infty}u_t(t,n)=0$. Taking limit in $t\to\infty$ from the both sides of the equation \eqref{NP}, one has
$$\mathcal Lu_0-u_0^2=D^*(a(n)Du_0)+b(n)D^*u_0+c(n)u_0-u_0^2=0.$$
Moreover, $\epsilon p\leq u_0\leq M.$

Step 3: Now we conclude the proof by proving uniqueness.

Assume by contradiction that $v_0$ is a steady state with positive infimum which is different from $u_0$. Without loss of generality we assume that $u_0(0)>v_0(0)$. Note that $\inf v_0>0$ and $u_0,v_0$ are both bounded, then as we have explained in the proof of Lemma \ref{Lemma:simple}, there exists $\epsilon\in (0,1)$
such that $\inf(v_0-\epsilon u_0)=0$. Let $w=\epsilon\me^{\delta t}u_0$ with $\delta$ to be chosen later.
By the direct calculation,
$$
\begin{aligned}
w_t-\mathcal Lw+w^2&\leq \epsilon\me^{\delta t}(\delta u_0-\mathcal Lu_0+\epsilon\me^{\delta t}u_0^2)\leq \epsilon\me^{\delta t}u_0(\delta+\epsilon\me^{\delta t}u_0-u_0).
\end{aligned}
$$
Choose $\delta<\frac{1-\epsilon}{2}\inf\limits_{n\in\Z}u_0(n)$, then for any $t\leq\frac{1}{\delta}\ln \bigr(\frac{1}{2\epsilon}+\frac{1}{2}\bigr)$, we have $\delta+\epsilon\me^{\delta t}u_0-u_0\leq 0.$
This implies that
$$
\left\{
\begin{aligned}
w_t-\mathcal Lw+w^2&\leq 0 & & \text{ in }\biggr[0, \frac{1}{\delta}\ln\bigr(\frac{1}{2\epsilon}+\frac{1}{2}\bigr)\biggr]\times\Z,\\
w(0,n)&\leq v_0(n) & & \text{ in }\Z,
\end{aligned}
\right.
$$
and thus $w$ is a subsolution of $u_t-\mathcal Lu+u^2=0$. By Proposition \ref{Strong comparison}, we have $w\leq v_0$ in $\bigr[0,\frac{1}{\delta}\ln\bigr(\frac{1}{2\epsilon}+\frac{1}{2}\bigr)\bigr]\times\Z$. This contradicts with the assumption $\inf\limits_{n\in\Z} (v_0-\epsilon u_0)(n)=0$. Hence we have proved uniqueness.
The proof is finished.
\end{proof}

Using the sliding method, we can prove the almost periodicity of the steady state:

\begin{Proposition}\label{Prop:steadystate}
Let $u_0$ be the steady state which is obtained in Lemma \ref{prop:nonpara}. Then the steady state $u_0$ of the equation \eqref{NP} is almost periodic. Moreover, if for any sequence $\{n_i\}_i$ such that $a(\cdot+n_i), b(\cdot+n_i), c(\cdot+n_i)$ converge uniformly, so does $u_0(\cdot+n_i)$.
\end{Proposition}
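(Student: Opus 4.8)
The plan is to deduce the almost periodicity of $u_0$ from the \emph{moreover} assertion. By construction in \lemref{prop:nonpara} one has $\epsilon\inf p\le u_0\le M$, so $u_0$ is bounded, and by Bochner's criterion it is almost periodic once every sequence of translates $\{u_0(\cdot+n_i)\}$ admits a uniformly convergent subsequence. Given an arbitrary $\{n_i\}$, relative compactness of the hull $\mathcal H(a,b,c)$ lets us first pass to a subsequence along which $a(\cdot+n_i),b(\cdot+n_i),c(\cdot+n_i)$ converge uniformly, after which the moreover part does the rest. So from now on fix $\{n_i\}$ with $a(\cdot+n_i)\to a_*$, $b(\cdot+n_i)\to b_*$, $c(\cdot+n_i)\to c_*$ uniformly, and set $\mathcal L_*u:=D^*(a_*Du)+b_*D^*u+c_*u$.

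The second step is a limiting-equation argument. The translates $w_i:=u_0(\cdot+n_i)$ are uniformly bounded, so a diagonal extraction gives a subsequence with $w_i\to v$ pointwise on $\Z$. Since $w_i$ solves $D^*\bigl(a(\cdot+n_i)Dw_i\bigr)+b(\cdot+n_i)D^*w_i+c(\cdot+n_i)w_i-w_i^2=0$, a relation involving only finitely many sites at a time, we pass to the limit sitewise and obtain $\mathcal L_*v-v^2=0$ with $\inf v\ge\epsilon\inf p>0$. Extracting further so that $p(\cdot+n_i)\to p_*$ pointwise yields $\mathcal L_*p_*=E_0p_*$ with $\inf p_*\ge\inf p>0$ and the same $E_0>0$; hence $\mathcal L_*$ satisfies the hypotheses of \lemref{prop:nonpara}, so $u_t=\mathcal L_*u-u^2$ has a \emph{unique} steady state $u_{0,*}$ with positive infimum. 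Therefore $v=u_{0,*}$, and as this limit does not depend on the subsequence, the full sequence $u_0(\cdot+n_i)$ converges pointwise to $u_{0,*}$.

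The main obstacle is upgrading pointwise convergence to uniform convergence, and this is where the sliding-type device enters. Suppose $\|u_0(\cdot+n_i)-u_{0,*}\|_{\ell^\infty(\Z)}\not\to0$. Passing to a subsequence there are $\delta>0$ and $m_i\in\Z$ with $|u_0(n_i+m_i)-u_{0,*}(m_i)|\ge\delta$, and necessarily $|m_i|\to\infty$ (a bounded subsequence of the $m_i$ would contradict the pointwise convergence just established). Extract once more so that $a(\cdot+n_i+m_i),b(\cdot+n_i+m_i),c(\cdot+n_i+m_i)$ converge uniformly to some $\tilde a,\tilde b,\tilde c$, and so that $u_0(\cdot+n_i+m_i)\to\tilde v$, $u_{0,*}(\cdot+m_i)\to\tilde u$, $p(\cdot+n_i+m_i)\to\tilde p$ pointwise. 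The key observation is that $\sup_k|a(k+n_i+m_i)-a_*(k+m_i)|\le\sup_j|a(j+n_i)-a_*(j)|\to0$ and similarly for $b,c$, so $(a_*,b_*,c_*)(\cdot+m_i)$ \emph{also} converges uniformly to $(\tilde a,\tilde b,\tilde c)$. Let $\widetilde{\mathcal L}$ be the operator with coefficients $\tilde a,\tilde b,\tilde c$. Passing to the limit in the respective steady-state equations, $\tilde v$ is a positive-infimum steady state of $u_t=\widetilde{\mathcal L}u-u^2$ (limit of translates of $u_0$), so is $\tilde u$ (limit of translates of $u_{0,*}$), while $\tilde p$ is a positive-infimum solution of $\widetilde{\mathcal L}\tilde p=E_0\tilde p$ with $E_0>0$; hence \lemref{prop:nonpara} forces $\tilde u=\tilde v$. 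But $|\tilde v(0)-\tilde u(0)|=\lim_i|u_0(n_i+m_i)-u_{0,*}(m_i)|\ge\delta>0$, a contradiction. This gives the uniform convergence, which proves the moreover statement and, as explained above, the almost periodicity of $u_0$.
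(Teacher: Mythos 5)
Your proposal is correct, and it runs on the same engine as the paper's proof — passing to limit equations along translated sequences and invoking the existence--uniqueness of bounded positive-infimum steady states from Lemma~\ref{prop:nonpara}, with the translated eigenfunctions $p(\cdot+n_i)$ (resp.\ $p(\cdot+n_i+m_i)$) supplying the positive eigenfunction at the same energy $E_0>0$ for the limit operator — but you organize the conclusion differently. The paper establishes almost periodicity directly by verifying Bochner's double-sequence criterion with pointwise (locally uniform on $\Z$) limits: it compares $u_*=\lim u_0(\cdot+n_i+m_i)$ with the iterated limit $\tilde u_*$ and identifies them via the uniqueness of Step~1, so the uniform-convergence (\lq\lq moreover\rq\rq) clause is left implicit rather than argued. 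You instead prove the \lq\lq moreover\rq\rq\ clause first: pointwise convergence of $u_0(\cdot+n_i)$ to $u_{0,*}$ by subsequence-independence, then the upgrade to uniform convergence via the recentering contradiction at sites $n_i+m_i$; your key estimate $\sup_k|a(k+n_i+m_i)-a_*(k+m_i)|\le\sup_j|a(j+n_i)-a_*(j)|\to 0$, which shows $a_*(\cdot+m_i)$ converges to the same limit as $a(\cdot+n_i+m_i)$, is precisely the point the paper compresses into \lq\lq from the almost periodicity of $a,b,c$\rq\rq. Almost periodicity then follows from the compact-hull (precompactness in $\ell^\infty$) form of Bochner's criterion, which matches the paper's definition of almost periodicity. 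The trade-off: the paper's pointwise double-limit route avoids the uniform upgrade altogether, while your route actually delivers the stated uniform convergence explicitly, so it proves the full statement of Proposition~\ref{Prop:steadystate} rather than only its first sentence. One line worth adding to your write-up: the uniform limits $\tilde a,\tilde b,\tilde c$ (and $a_*,b_*,c_*$) inherit $\inf\tilde a>0$ and $\inf\bigl(\tilde a(n)-\tilde b(n-1)\bigr)>0$, so Lemma~\ref{prop:nonpara} is indeed applicable to the limit operators.
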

\begin{proof}
Now we divide the proof in the following steps:

Step 1: First we claim that there exists a unique bounded solution with positive infimum of the following equation
 \begin{equation}\label{eq:limitNP}
 D^*(\tilde a(n)Du)+\tilde b(n)D^*u+\tilde c(n)u-u^2=0.
 \end{equation}
 where $\tilde a(n)=\lim\limits_{i\to\infty} a(n+n_i), \tilde b(n)=\lim\limits_{i\to\infty}b(n+n_i), \tilde c(n)=\lim\limits_{i\to\infty} c(n+n_i)$ for some sequence $\{n_i\}_i$.

   From $\mathcal Lp=E_0p$, we have  $|p(n)|\leq C(n)$, and then $p(\cdot+n_{i})$ converges locally uniformly to a positive bounded solution $\tilde p$ of  \begin{equation*}\label{eq:limitp}
D^*(\tilde a(n)Dp)+\tilde b(n)D^*p+\tilde c(n)p=E_0p.
 \end{equation*}
 Meanwhile, $\inf p(\cdot+n_i)\geq \inf p$ implies that $\inf\tilde p>\inf p.$ Now by the arguments in the proof of Lemma \ref{prop:nonpara}, the claim is proved.

Step 2: Now we prove the steady state is almost periodic.  Let $u_0$ be the steady state of \eqref{NP}.
 It is sufficient to prove for any pair of sequences $\{n_{i}\}_{i\in\mathbb{N}},\ \{m_{i}\}_{i\in\mathbb{N}}$ in $\mathbb{Z}$, we can extract a common subsequences $\{n'_{i}\}_{i\in\mathbb{N}},\ \{m'_{i}\}_{i\in\mathbb{N}}$ such that
    $$\lim\limits_{i\rightarrow\infty}u_0(n+m'_{i}+n'_{i})=\lim\limits_{i\rightarrow \infty}\left(\lim\limits_{j\rightarrow \infty}u_0(n+n'_{i}+m'_{j})\right).$$
    Now consider two sequences $\{n_{i}\}_{i\in\mathbb{N}},\ \{m_{i}\}_{i\in\mathbb{N}}$ in $\mathbb{Z}$ such that $\tilde a_*(n):=\lim\limits_{i\to\infty}a(n+n_i+m_i), \tilde b_*(n):=\lim\limits_{i\to\infty}b(n+n_i+m_i)$, $\tilde c_*(n):=\lim\limits_{i\to\infty}c(n+n_i+m_i)$ exist. Since the limits $$\quad \quad\quad\tilde u_0:=\lim\limits_{i\rightarrow \infty}u_0(\cdot+n_{i}),\ \tilde u_{*}:=\lim\limits_{i\rightarrow \infty}\tilde u_0(\cdot+m_{i}),\ u_{*}:=\lim\limits_{i\rightarrow\infty}u_0(\cdot+n_{i}+m_{i}),$$ exist (up to subsequences) locally uniformly in $n\in\mathbb{Z}$, and from the almost periodicity of $a,b,c$, we have $\tilde u_{*}$ and $u_{*}$ are both solutions of \eqref{eq:limitNP} with the coefficients of $\tilde a_*,\tilde b_*,\tilde c_*$. Moreover, $$\sup \tilde u_*\leq \sup u_0, \sup u_*\leq \sup u_0, \inf u_0\leq \inf\tilde u_*, \inf u_0\leq \inf u_*.$$ By step 1, $u_*=\tilde u_*$, and then the result is proved.
\end{proof}

With these at hand, we will prove the main result in this application as follows.
\begin{proof}[Proof of Proposition \ref{KPPSS}]
Combing Proposition \ref{prop:nonpara} with Lemma \ref{Prop:steadystate}, the result follows.
\end{proof}

Then as the application of Corollaries \ref{Main2ell} and \ref{Main3ell}, we now begin to prove Theorems \ref{APPMain1} and \ref{APPMain2}.

\begin{proof}[Proof of Theorem \ref{APPMain1}]
We will prove the result for more general equation
\begin{equation}\label{eq:APPMAINeq}
u_t=D^*(A_1(n\omega)Du)+  B_0D^*u+ W(n\omega) u-u^2, \text{ in }\R\times\Z,
\end{equation}
 where $\|A_1-\hat A_1(0)\|_{1,\gamma}+|W(0)-\hat W(0)\|_{1,\gamma}\leq\epsilon_0,$ and $\epsilon_0$ depends on $\hat A_1(0),B_0$.
By Corollary \ref{Main2ell}, there exist a ground state energe $E_0>0$ and a quasi-periodic ground state $p(n)=P(n\omega)$ with $P\in C(\T,\R),\ P>0.$ Thus the existence and uniqueness of the positive bounded steady state with positive infimum can be obtained in Proposition \ref{KPPSS}.

Now it remains to  prove $u_0(n)=U_0(n\omega)$ with $U_0\in C(\T,\R)$ and $U_0>0$.
Similar to \eqref{eq:APPMAINeq}, for any fixed $\theta\in\T$, there exists a unique bounded steady state $u_\theta(n)$ with positive infimum of \eqref{eq:APPMAINeq}, where $A_1(\cdot), W(\cdot)$ is replaced by $A_1(\cdot+\theta), W(\cdot+\theta)$.
Redefining function on $\T^d:$ $U_0(\theta):=u_\theta(0).$
It is clear from the uniqueness of the bounded steady state with positive infimum, we have $u_{k\omega+\theta}(0)=u_\theta(k\omega).$
Now we prove that $U_0$ is continuous in $\theta\in\T.$
It is sufficient to prove for any $\theta_k\to\theta$, $U_0(\theta_k)\to U_0(\theta).$

  For any $\theta\in\T$ and any sequence $\{n_i\}_i$ such that $A_1(n_i\omega+\theta+\cdot), W(n_i\omega+\theta+\cdot)$ converge uniformly to $A(\cdot+\theta), W(\cdot+\theta)$ respectively, by Proposition \ref{Prop:steadystate} and the uniqueness of the bounded steady state with positive infimum,  $u_{\theta}(n_i\omega+\cdot)$ converges uniformly to $u_\theta(\cdot)$. Hence this implies that
$$\lim\limits_{i\to\infty}U_0(n_i\omega+\theta)=\lim\limits_{i\to\infty}u_{n_i\omega+\theta}(0)=\lim\limits_{i\to\infty}u_\theta(n_i\omega)=u_\theta(0)=U_0(\theta).$$
Then for any $\theta_k$ such that $\theta_k\to\theta$, since $\{n\omega|n\in\Z\}$ is dense in $\T$, there exists a subsequence $\{n_{i,k}\}_{i}$ such that $n_{i,k}\omega\to\theta_k$ as $i\to\infty$ for any $k\in\Z$.
Hence by diagonal extraction of a subsequence, there holds $$\lim\limits_{k\to\infty}U_0(\theta+\theta_k)=\lim\limits_{k\to\infty}\lim\limits_{i,k\to\infty}U_0(\theta+{n_{i,k}\omega})= U_0(\theta).$$
 Thus we have proved the result.
\end{proof}

\begin{proof}[Proof of Theorem \ref{APPMain2}]
Replacing Corollary \ref{Main2ell} with Corollary \ref{Main3ell} in the proof of Theorem \ref{APPMain1}, the result follows directly.
\end{proof}

\section{Averaging of discrete parabolic equations}\label{section6}
The other application is to investigate the averaging of the discrete parabolic operator of the divergence type with large lower terms. For any $\epsilon>0$, define the following family of operators $\mathcal L_\epsilon$ from $L^2(\epsilon\Z)$ to $L^2(\epsilon\Z)$ that is constructed from the operator $\mathcal L$:
\begin{equation}\label{eq:familyop}
\begin{aligned}
&\mathcal L_\epsilon u_\epsilon:=D_\epsilon^*\biggr(A_1(\frac{\omega x}{\epsilon})D_\epsilon u_\epsilon\biggr)+\frac{1}{\epsilon}A_2\bigr(\frac{\omega x}{\epsilon}\bigr)D^*_\epsilon u_\epsilon+\frac{1}{\epsilon^2}W\bigr(\frac{\omega x}{\epsilon}\bigr)u_\epsilon, \ x\in\epsilon\Z,
\end{aligned}
\end{equation}
where $A_1,A_2,W\in C^\omega_{r_0}(\T^\infty,\R), r_0>0$ (c.f. \eqref{almostinfinite}) with $A_1>0,\ A_1(\theta-\omega)>A_2(\theta)$, $\omega\in \mathrm{DC}_\infty(\gamma,\tau)$, $\gamma\in (0,1),\tau>1$.

For any $\epsilon>0$, we consider the evolution equation defined in $[0,T]\times\epsilon\Z$ with $T>0$:
\begin{equation}\label{eq:initialpara}
\left\{
\begin{aligned}
(u_\epsilon)_t&=\mathcal L_\epsilon u_\epsilon & & \text{ in }[0,T]\times\epsilon\Z,\\
u_\epsilon(0,x)&=\varphi(x) & &\text{ in }\epsilon\Z,
\end{aligned}
\right.
\end{equation}
where $\varphi\in C^2(\R)$ and $\varphi(z)\leq C|z|^{-k}$ for some $C>0, k\geq 2, z\in\R.$ Note that $\varphi$ can be restricted as a function in $\epsilon\Z$.
Motivated by Lemma \ref{reduc},  we will discuss the asymptotic behavior of $u_\epsilon$ of \eqref{eq:initialpara} based on the following crucial assumption:\\
There exist $E_0\in\R$ and positive functions $P, P_*\in C^{\omega}_{r_0}(\T^\infty,\R)$ such that
\begin{equation}\label{averagingtorus}
\tilde D^*\bigr(A_1\tilde DP\bigr)+A_2\tilde D^*P+WP=E_0P,\
\tilde D^*\bigr(A_1\tilde DP_*\bigr)-\tilde D\bigr(A_2P_*\bigr)+WP_*=E_0P_*,
\end{equation}
where $\tilde DP(\theta)=P(\theta+\omega)-P(\theta),\ \tilde D^*P(\theta)=P(\theta)-P(\theta-\omega)$.

Now for any $U\in C^\omega_{r_0}(\T^\infty,\R),$ denoting $\langle U\rangle=\int_{\T^\infty} Ud\theta$,
  without loss of generality, we assume  that
\begin{equation}\label{eq:normalized}
\langle P\rangle=\langle P_*\rangle =1,\ P,P_*\geq \delta>0.
\end{equation}

Consider the operator $\tilde D^*\bigr(A_1\tilde DP\bigr)+A_2\tilde D^*P+WP=E_0P$ restricted to a line $\theta=\omega z$, $z\in\R$,
$$
\begin{aligned}
Lu:=D^*(A_1(\omega z) Du)+A_2(\omega z) D^*u+W(\omega z)u=E_0u,
\end{aligned}
$$
where $D=D_1$, and $D_\epsilon$ defined on $\ell^2(\Z)$ is naturally extended to $L^2(\R)$ as $D_\epsilon^*u(z)=\frac{u(z)-u(z-\epsilon)}{\epsilon}$,  $D_\epsilon u=\frac{u(z+\epsilon)-u(z)}{\epsilon}$, for any $u\in L^2(\R)$ and any $\epsilon>0.$
By the same calculation in Lemma \ref{reduc}, there exists $ \bar c(z)$ such that
$$
P_*(\omega z)(L-E_0)\bigr(P(\omega z) u(z)\bigr)=D^*\bigr(\tilde A_1(\omega z) Du\bigr)+\bar c(\omega z)(D+D^*)u,
$$
where $\tilde A_1(\theta)=\frac{1}{2}\bigr[(A_1P)(\theta)P_*(\theta+\omega)-(A_2P_*)(\theta+\omega)P(\theta)+(A_1P_*)(\theta)P(\theta+\omega)\bigr]$, $\bar c(\theta)=(A_1P_*)(\theta-\omega)P(\theta)-(A_1P)(\theta-\omega)P_*(\theta)+(A_2P_*)(\theta)P(\theta-\omega)$. Notice that the proof of Lemma \ref{reduc} shows $\bar c(\theta)=\bar c(\theta+\omega)$, and thus $\bar c(\theta)$ is a constant function since $\{k\omega|k\in\Z\}$ is dense in $\T^\infty$, and $\tilde A_1>|\bar c(\theta)|$.
Hence for simplicity, we denote $\bar c(\theta)=\bar c\in\R,$ and
\begin{equation}\label{eq:reducinR}
P_*(\omega z)(L-E_0)\bigr(P(\omega z) u(z)\bigr)=D^*\bigr(\tilde A_1(\omega z) Du\bigr)+\bar c(D+D^*)u.
\end{equation}

In this way,  equation \eqref{eq:familyop} and \eqref{eq:initialpara} can be lifted up to the family of equations defined in $[0,T]\times\R$
\begin{equation}\label{eq:familyopnew}
L_\epsilon u_\epsilon:=D_\epsilon^*\biggr(A_1\bigr(\frac{\omega z}{\epsilon}\bigr)D_\epsilon u_\epsilon\biggr)+\frac{1}{\epsilon}A_2\bigr(\frac{\omega z}{\epsilon}\bigr)D^*_\epsilon  u_\epsilon+\frac{1}{\epsilon^2}W\bigr(\frac{\omega z}{\epsilon}\bigr)u_\epsilon, \ z\in\R,
\end{equation}
and
\begin{equation}\label{eq:extendedeq}
\left\{
\begin{aligned}
(u_\epsilon)_t&=L_\epsilon u_\epsilon,\ (t,z)\in [0,T]\times\R\\
 u_\epsilon(0,z)&=\varphi(z), \ z\in\R,
\end{aligned}
\right.
\end{equation}
respectively,
and the equations \eqref{eq:familyop}(\eqref{eq:initialpara}) can be regarded as the equations \eqref{eq:familyopnew}(\eqref{eq:extendedeq}) restricted into $\Z$.
 It is straightforward to check from \eqref{eq:reducinR} that
 \begin{equation}\label{eq:reduclinemulsca}
 P_*(\frac{\omega z}{\epsilon})\biggr(L_\epsilon-\frac{E_0}{\epsilon^2}\biggr) \bigr(P(\frac{\omega z}{\epsilon})u_\epsilon\bigr)=D^*_\epsilon\biggr(\tilde A_1\bigr(\frac{\omega z}{\epsilon}\bigr) D_\epsilon u_\epsilon\biggr)+\frac{\bar c}{\epsilon}(D_\epsilon+D_\epsilon^*)u_\epsilon.  \end{equation}

Denote $Q=PP_*$. Multiplying $P_*(\frac{\omega z}{\epsilon})$ from both sides of \eqref{eq:extendedeq}, and using \eqref{eq:reduclinemulsca}, we can check that  $v_\epsilon(t,z)=\me^{-\frac{E_0t}{\epsilon^2}}\frac{u_\epsilon(t,z-\frac{lt}{\epsilon})}{P(z')}$ with $z'=\omega\bigr(\frac{z}{\epsilon}-\frac{lt}{\epsilon^2}\bigr)$ satisfies the equation defined in $(t,z)\in [0,T]\times\R$,
\begin{equation}\label{eq:transpara}
\left\{
\begin{aligned}
&\mathcal P_\epsilon v_\epsilon:=Q(z') (v_\epsilon)_t-D^*_\epsilon\biggr(\tilde A_1(z')D_\epsilon v_\epsilon\biggr)-\biggr[\frac{\bar c}{\epsilon}(D^*_\epsilon+D_\epsilon)-\frac{l}{\epsilon}Q(z')\partial_z\biggr]v_\epsilon=0,\\
&v_\epsilon(0,z)=\frac{\varphi(z)}{P(\frac{\omega z}{\epsilon})} \text{ in }\R,
\end{aligned}
\right.
\end{equation}
where $\partial_zu(t,z)$ denotes the derivative in the second variable, and $l\in\R$ is to be chosen later. Then we only need to analyse the asymptotic behaviour of the solution $v_\epsilon$ of \eqref{eq:transpara}.

To solve \eqref{eq:transpara}, we derive certain energy estimate. Let us introduce some function spaces:
 $$H^1(\R)=\{u:\R\rightarrow \R:\|u\|^2_{H^1}:=\| u'\|^2_{L^2}+\|u\|^2_{L^2}<\infty\},$$
 and we denote $H^{-1}(\R)$ the dual space of $H^1(\R)$.
Let $\mathscr S:=L^2([0, T];H^1(\R))$ which is endowed with the norm $\|\cdot\|_{\mathscr S}$, and $\mathscr S'$ 
be the dual of $\mathscr S$ which is endowed with the norm  $\|\cdot\|_{\mathscr S'}$.

\begin{Proposition}\label{proposition:energy estimate}
Let $f\in \mathscr S'\cap C^1([0,T]\times\R),\phi\in L^2(\R)\cap C^1(\R)$. Then there exists a unique
solution $v_\epsilon\in \mathscr S\cap C^1([0,T]\times\R)$ of
\begin{equation}\label{eq:generalizedeq}
\left\{
\begin{aligned}
\mathcal P_\epsilon v_\epsilon&=f & & \text{ in } [0,T]\times\R,\\
v_\epsilon(0,z)&=\phi(z)& & \text{ in }\R.
\end{aligned}
\right.
\end{equation}
Moreover, there holds the estimate
\begin{equation}\label{eq:energyestimates}
\sup\limits_{0\leq t\leq T}\|v_\epsilon(t,\cdot)\|^2_{L^2(\R)}\leq C\bigr(\|f\|^2_{\mathscr
S'}+\|\phi\|^2_{L^2(\R)}\bigr),
\end{equation}
where the constant $C$ depends only on $\inf P\inf P_*$, $\inf A_1, \inf \bigr(A_1(\theta-\omega)-A_2(\theta)\bigr)$.
\end{Proposition}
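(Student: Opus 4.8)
The plan is to first reduce \eqref{eq:transpara}, for each fixed $\epsilon>0$, to a genuinely parabolic equation with time‑independent coefficients and no $\epsilon^{-1}$ factors by passing to the fast moving frame, solve it there by the classical Galerkin/Lions machinery, and then obtain \eqref{eq:energyestimates} by a direct weighted $L^2$ energy identity in which the large lower–order terms cancel exactly. Concretely, set $\xi=z/\epsilon-lt/\epsilon^2$ and $\tilde v(t,\xi):=v_\epsilon\bigl(t,\epsilon\xi+lt/\epsilon\bigr)$; then $z'=\omega\xi$ becomes $t$‑independent, the step‑$\epsilon$ difference $D_\epsilon$ in $z$ turns into $\tfrac1\epsilon\tilde D$ with $\tilde D$ the unit forward difference, and the transport term $\tfrac{l}{\epsilon}Q(z')\partial_z v_\epsilon$ is absorbed into $\partial_t\tilde v$. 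After rescaling time $s=t/\epsilon^2$ the equation becomes $Q(\omega\xi)\partial_s\tilde v=\tilde D^{*}\bigl(\tilde A_1(\omega\xi)\tilde D\tilde v\bigr)+\bar c\,(\tilde D+\tilde D^{*})\tilde v+\epsilon^2\tilde f$ on $[0,T/\epsilon^2]\times\R$, $\tilde v(0,\cdot)=\phi$. This sits in a Gelfand triple $V\hookrightarrow H\hookrightarrow V'$ with $H=L^2(\R)$ under the equivalent weighted inner product $\langle u,w\rangle_Q=\int Q(\omega\xi)uw$ and $V=\{u:\ u,\tilde D u\in L^2(\R)\}$: the bilinear form $a(u,w)=\int \tilde A_1\,\tilde D u\,\tilde D w-\bar c\int(\tilde D+\tilde D^{*})u\,w$ is bounded on $V$, and since the $L^2$‑adjoint of $\tilde D$ is $-\tilde D^{*}$ one has $\int(\tilde D+\tilde D^{*})u\,u\,d\xi=0$, so $a(u,u)\ge(\inf\tilde A_1)\|u\|_V^2-(\inf\tilde A_1)\|u\|_H^2$, a Garding‑type inequality. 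The classical theory of abstract linear parabolic equations (J.-L.\ Lions' theorem, equivalently Galerkin approximation together with the energy bound below) then produces a unique $\tilde v$ in the parabolic energy class, hence a unique $v_\epsilon\in\mathscr S$ after undoing the substitution. The $C^1$ regularity follows by bootstrap: the coefficients $\tilde A_1(\omega\cdot),Q(\omega\cdot)$ are real‑analytic (because $\omega\in DC_\infty(\gamma,\tau)$ and $A_1,P,P_*\in C^\omega_{r_0}(\T^\infty,\R)$), so differentiating the equation in $\xi$ yields an equation of the same type for $\partial_\xi\tilde v$ with data controlled by $\partial_\xi\phi,\partial_\xi\tilde f$, and $\partial_t\tilde v$ is then read off from the equation.

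For the estimate itself, multiply $\mathcal P_\epsilon v_\epsilon=f$ by $v_\epsilon$ and integrate over $\R$; there are no boundary contributions at $z=\pm\infty$ because for a.e.\ $t$ one has $v_\epsilon(t)\in H^1(\R)$, which decays at infinity, together with $v_\epsilon(t),D_\epsilon v_\epsilon(t)\in L^2$. Three elementary facts close the computation: (i) $\int v_\epsilon\,D^{*}_\epsilon\bigl(\tilde A_1(z')D_\epsilon v_\epsilon\bigr)\,dz=-\int \tilde A_1(z')\,(D_\epsilon v_\epsilon)^2\,dz$, since $D^{*}_\epsilon$ is the negative $L^2(\R)$‑adjoint of $D_\epsilon$; (ii) $\int v_\epsilon\,(D^{*}_\epsilon+D_\epsilon)v_\epsilon\,dz=0$ for the same reason, which annihilates the $\tfrac{\bar c}{\epsilon}$ drift; and (iii) since $z'=\omega(z/\epsilon-lt/\epsilon^2)$ satisfies $\partial_t z'=-\tfrac{l}{\epsilon}\partial_z z'$, one gets $\partial_t[Q(z')]=-\tfrac{l}{\epsilon}\partial_z[Q(z')]$, hence $\tfrac12\tfrac{d}{dt}\int Q(z')v_\epsilon^2\,dz=\int Q(z')v_\epsilon\,\partial_t v_\epsilon\,dz+\tfrac{l}{\epsilon}\int Q(z')v_\epsilon\,\partial_z v_\epsilon\,dz$, and this last term cancels exactly against the testing of $-\tfrac{l}{\epsilon}Q(z')\partial_z v_\epsilon$. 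Collecting these,
\begin{equation*}
\frac{d}{dt}\int_{\R} Q(z')\,v_\epsilon^2\,dz+2\int_{\R}\tilde A_1(z')\,(D_\epsilon v_\epsilon)^2\,dz=2\langle f,v_\epsilon\rangle .
\end{equation*}
Estimating the right‑hand side by Young's inequality in the $\mathscr S'$–$\mathscr S$ duality (the operative $H^1$‑type norm being, as the dissipative term indicates, the one built from the difference $D_\epsilon$), absorbing the gradient part into $2\int \tilde A_1(D_\epsilon v_\epsilon)^2$ via $\tilde A_1\ge\inf\tilde A_1>0$ and the remainder into $\int Q(z')v_\epsilon^2$ via $Q\ge\inf P\inf P_*>0$, and applying Gronwall's inequality on $[0,T]$ yields \eqref{eq:energyestimates}, with $C$ depending only on $\inf P\inf P_*$, $\inf A_1$ and $\inf\bigl(A_1(\cdot-\omega)-A_2(\cdot)\bigr)$, since the latter two bound $\inf\tilde A_1$ from below. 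Uniqueness is immediate from the same identity applied to the difference of two solutions with $f=0$, $\phi=0$.

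The substantive part of the argument is the exact cancellation of the large lower–order terms: identity (ii) kills the $O(\epsilon^{-1})$ drift $\tfrac{\bar c}{\epsilon}(D_\epsilon+D^{*}_\epsilon)$, and identity (iii) — which is precisely the traveling‑wave structure hard‑wired into the ansatz \eqref{eq:transpara} and the eventual choice of $l$ — disposes simultaneously of the $O(\epsilon^{-2})$ term hidden in $\partial_t[Q(z')]$ and of the explicit $O(\epsilon^{-1})$ transport term. Keeping these bookkeeped so that the constant $C$ is genuinely independent of $\epsilon$, together with the routine but slightly delicate justification that all integrations by parts produce no boundary terms at infinity, is the only non‑formal point; everything else is the standard parabolic toolkit applied after the moving‑frame reduction.
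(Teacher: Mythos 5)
Your proposal is correct, and its core coincides with the paper's proof: the whole content of the estimate is the weighted energy identity $\frac{d}{dt}\int_{\R}Q(z')v_\epsilon^2\,dz+2\int_{\R}\tilde A_1(z')(D_\epsilon v_\epsilon)^2\,dz=2\langle f,v_\epsilon\rangle$, obtained exactly as you do it — the antisymmetry of $D_\epsilon+D_\epsilon^{*}$ kills the $\bar c/\epsilon$ drift, and the relation $\partial_t[Q(z')]=-\tfrac{l}{\epsilon}\partial_z[Q(z')]$ merges the time term with the $\tfrac{l}{\epsilon}Q\partial_z$ transport into the total derivative $\tfrac12\tfrac{d}{dt}\int Qv_\epsilon^2$ — followed by Young/Gronwall and uniqueness from the same identity with $f=0$, $\phi=0$. (Incidentally your bookkeeping fixes a sign slip in the paper's displayed intermediate identity.) The only place you diverge is the existence/regularity step: the paper stays in the original frame with time-dependent coefficients and simply invokes standard semigroup theory (Pazy) together with smoothness of $Q(z')$, $\tilde A_1(z')$, whereas you first pass to the co-moving variable $\xi=z/\epsilon-lt/\epsilon^2$ and rescaled time to get autonomous coefficients and then run Lions/Galerkin in a Gelfand triple; both routes are legitimate (indeed, since $D_\epsilon$ is a bounded operator on $L^2$, either framework applies with no real difficulty), and your version has the small advantage of making the well-posedness constants' independence of the frame transparent, at the cost of an extra change of variables. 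One shared caveat, which you flag yourself and which is equally present in the paper: the dissipation controls only the discrete-difference seminorm $\|D_\epsilon v_\epsilon\|_{L^2}$, so the pairing $\langle f,v_\epsilon\rangle\le\|f\|_{\mathscr S'}\|v_\epsilon\|_{\mathscr S}$ is only closed if the $H^1$-type norm in the duality is understood accordingly; this is not a gap you introduced, and your treatment is no weaker than the paper's on this point.
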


\begin{proof}
Recall that $z'=\omega\bigr(\frac{z}{\epsilon}-\frac{lt}{\epsilon^2}\bigr)$.
Let $\sigma(z)=Q(z'),\tilde \alpha(z)=\frac{1}{2}\bigr[(A_1P)(z')P_*(z'+\omega)-(A_2P_*)(z'+\omega)P(z')+(A_1P_*)(z')P(z'+\omega)\bigr]$. Then it is straightforward to check that $\sigma,\tilde \alpha$ are  bounded analytic, hence smooth.
By the standard semigroup theory \cite{Pazy}, the existence of the solution $v_\epsilon$ of equation \eqref{eq:generalizedeq} can be obtained, and moreover, $v_\epsilon\in C^1([0,T]\times \R)$.
 Multiplying both sides of \eqref{eq:generalizedeq} by $v_\epsilon$ and integrating over $[0,t]\times\R$, we obtain for any $0<t'\leq T,$
$$
\begin{aligned}
&\frac{1}{2}\int_0^{t'}\int_{\R}\sigma(v^2_\epsilon)_tdzdt+\int_0^{t'}\int_{\R}\tilde \alpha (D_\epsilon v_\epsilon)^2dzdt-\frac{l}{2\epsilon}\int_0^{t'}\int_{\R}\sigma \partial_zv_\epsilon^2dzdt=\int_0^{t'}\int_{\R}fv_\epsilon dzdt.
\end{aligned}
$$
Thus we have
$$
\begin{aligned}
&\frac{1}{2}\int_{\R}\sigma v^2_\epsilon(z,t')dz+\int_0^{t'}\int_{\R}\tilde \alpha(D_\epsilon v_\epsilon)^2dzdt=\int_0^{t'}\int_{\R}fv_\epsilon dzdt+\frac{1}{2}\int_{\R}\sigma\phi^2dz.
\end{aligned}
$$
Since $\inf\sigma\geq \inf P\inf P_*>0$ and $\inf\tilde \alpha\geq \inf A_1+\inf \bigr(A_1(\theta-\omega)-A_2(\theta)\bigr)$,  by Cauchy Schwartz inequality, \eqref{eq:energyestimates} is obtained.
Lastly, as we could notice from \eqref{eq:generalizedeq}, $v_\epsilon\in \mathscr S\cap C^1([0,T]\times\R)$, as desired.
\end{proof}

The following lemma is crucial for us to determine the exact value of $l$, and to analyze the asymptotic behavior of the solution $v_\epsilon$ of \eqref{eq:transpara}.

\begin{Lemma}\label{Lemma:solve nondivergenceeq}
Let $r>0, \omega\in \mathrm{DC}_\infty(\gamma,\tau),\gamma\in (0,1),\tau>1, \bar c\in\R.$ Suppose that $A, F\in C^\omega_{r}(\T^\infty,\R)$ with $\langle F\rangle=0$, $A>|\bar c|$. Then for any $0<r'<r$,  there exists $N\in C^\omega_{r'}(\T^\infty,\R)$ such that $N(\omega z)$ satisfies
 \begin{equation}\label{eq:ellip}
 D^*\bigr(A(\omega z)DN\bigr)+\bar c(D+D^*)N=F(\omega z) \text{ in }\R.
 \end{equation}
 \end{Lemma}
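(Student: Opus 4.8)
The plan is to lift the equation to the infinite torus $\T^\infty$, reduce the second-order difference equation to a first-order one, and solve that by a ``straightening'' step (a cohomological equation) followed by a Fourier computation that involves no small divisors.

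Since $\{\omega z:z\in\R\}$ is dense in $\T^\infty$, and writing $\tilde D f(\theta)=f(\theta+\omega)-f(\theta)$, $\tilde D^*f(\theta)=f(\theta)-f(\theta-\omega)$, it suffices to produce $N\in C^\omega_{r'}(\T^\infty,\R)$ with
\[
\tilde D^*\bigl(A\tilde D N\bigr)+\bar c\bigl(\tilde D+\tilde D^*\bigr)N=F\qquad\text{on }\T^\infty ,
\]
and then restrict to the line $\theta=\omega z$. The computational input is the factorization
\[
\tilde D^*\bigl(A\tilde D N\bigr)(\theta)+\bar c\bigl(\tilde D+\tilde D^*\bigr)N(\theta)
=\bigl(A(\theta)+\bar c\bigr)\,\tilde D N(\theta)-\bigl(A(\theta-\omega)-\bar c\bigr)\,\tilde D N(\theta-\omega),
\]
which is checked by expanding both sides (constants are annihilated, as they must be). Because $A>|\bar c|$ on the compact torus, $A\pm\bar c$ are bounded below by a positive constant, hence zero-free with analytic reciprocal on $\T^\infty_{\tilde r}$ for some $\tilde r$ close to $r$; setting $M:=\tilde D N$ and $G:=(A+\bar c)M$ turns the equation into the twisted cohomological equation
\[
G(\theta)-\rho(\theta-\omega)\,G(\theta-\omega)=F(\theta),\qquad \rho:=\frac{A-\bar c}{A+\bar c}\in C^\omega_{\tilde r}(\T^\infty,\R),\ \rho>0 .
\]

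To solve this, distinguish on $\bar c$. If $\bar c=0$ then $\rho\equiv1$ and we are left with the plain cohomological equation $G(\theta)-G(\theta-\omega)=F(\theta)$, which has an analytic solution by \lemref{al-small} since $\langle F\rangle=0$ and $\omega\in DC_\infty(\gamma,\tau)$; $G$ is then determined up to an additive constant. If $\bar c\neq0$, the key observation is that $\alpha:=\langle\log\rho\rangle\neq0$: indeed $\log\rho=\log(A-\bar c)-\log(A+\bar c)$ is analytic on $\T^\infty_{\tilde r}$, and pointwise on the real torus it has constant sign equal to $-\operatorname{sign}(\bar c)$, so its mean cannot vanish. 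Since $\log\rho-\alpha$ has zero mean, \lemref{al-small} yields $\chi\in C^\omega$ with $\chi(\theta+\omega)-\chi(\theta)=\log\rho(\theta)-\alpha$; with $\psi:=\me^{\chi}$ (whose reciprocal $\me^{-\chi}$ is again analytic) this gives $\rho(\theta-\omega)=\me^{\alpha}\psi(\theta)/\psi(\theta-\omega)$, so the substitution $G=\psi\tilde G$ reduces the equation to the constant-coefficient one $\tilde G(\theta)-\me^{\alpha}\tilde G(\theta-\omega)=F(\theta)/\psi(\theta)$. Comparing Fourier coefficients,
\[
\widehat{\tilde G}(k)=\frac{\widehat{(F/\psi)}(k)}{1-\me^{\alpha}\me^{-\mi\langle k,\omega\rangle}},
\]
and since $|1-\me^{\alpha}\me^{-\mi\langle k,\omega\rangle}|\geq|1-\me^{\alpha}|>0$ for every $k\in\Z_*^\infty$ (no small divisors, because $\me^{\alpha}\neq1$), this series converges in $C^\omega$, producing $\tilde G$ and hence $G=\psi\tilde G\in C^\omega(\T^\infty,\R)$; note that $\langle F\rangle=0$ is not used in this step.

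Finally, recover $N$. Put $M:=G/(A+\bar c)\in C^\omega$. Taking means in the twisted cohomological equation and using translation-invariance of $\langle\cdot\rangle$ gives $\langle(1-\rho)G\rangle=\langle F\rangle=0$; since $1-\rho=2\bar c/(A+\bar c)$ this reads $2\bar c\,\langle M\rangle=0$, so $\langle M\rangle=0$ when $\bar c\neq0$ (when $\bar c=0$ one instead uses the free additive constant in $G$ to arrange $\langle G/A\rangle=0$, possible since $\langle1/A\rangle>0$). With $\langle M\rangle=0$, \lemref{al-small} furnishes $N\in C^\omega_{r'}(\T^\infty,\R)$ with $\tilde D N=M$; unwinding the substitutions via the factorization shows $N$ solves the torus equation, hence $N(\omega z)$ solves \eqref{eq:ellip}, and every operation used preserves real-valuedness. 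The step to watch is the analyticity bookkeeping: the logarithm and the reciprocals are analytic only on a strip whose width is governed by the zero-free region of $A\pm\bar c$, so one should first shrink $r$ accordingly — harmless in the averaging application — before invoking \lemref{al-small} with the target radius $r'$. The genuinely new ingredient, and the one that makes everything go through cheaply, is the reduction to first order together with the fact that $\alpha\neq0$, which eliminates all small divisors from the hard part of the problem.
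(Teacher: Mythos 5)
Your proof is correct in substance but follows a genuinely different route from the paper's. The paper substitutes $Y=(A+\bar c)\tilde D N+2\bar c N$, so that the equation collapses to the \emph{plain} cohomological equation $Y(\theta)-Y(\theta-\omega)=F(\theta)$ (this is where $\langle F\rangle=0$, the Diophantine condition and Lemma~\ref{Prop:smalldivsorestimate} enter), and then recovers $N$ from the twisted first-order equation $N(\theta+\omega)-N(\theta)+\tfrac{2\bar c}{A+\bar c}N(\theta)=G$, choosing the free constant $c'$ so that $\langle G\rangle=0$ and solving by a Fourier formula whose denominators $\me^{\mi\langle k,\omega\rangle}-1+\tfrac{2\bar c}{A+\bar c}$ are bounded away from zero. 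You instead factor the operator as $(A(\theta)+\bar c)\tilde D N(\theta)-(A(\theta-\omega)-\bar c)\tilde D N(\theta-\omega)$ (the same identity underlying the paper's rewriting), solve first for the flux $G=(A+\bar c)\tilde D N$ via the twisted equation with weight $\rho=\tfrac{A-\bar c}{A+\bar c}$ — made constant-coefficient by straightening $\log\rho$ with one cohomological equation and using $\alpha=\langle\log\rho\rangle\neq0$ — and only afterwards solve the plain cohomological equation $\tilde D N=M$, whose compatibility $\langle M\rangle=0$ you extract by averaging the equation. So the two proofs swap the order of the ``small-divisor'' step and the ``hyperbolic, no small-divisor'' step; your version of the hyperbolic step is actually cleaner, since the paper's Fourier formula with the non-constant term $\tfrac{2\bar c}{A+\bar c}$ in the denominator is only heuristic as written, whereas your constant twist $\me^{\alpha}$ with $|1-\me^{\alpha}\me^{-\mi\langle k,\omega\rangle}|\geq|1-\me^{\alpha}|$ is rigorous, and your mean argument replaces the paper's choice of $c'$.

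One caveat to record: dividing by $A\pm\bar c$ and taking $\log\rho$ inside the weighted Fourier algebra $C^\omega_{r}(\T^\infty,\R)$ needs a Wiener-lemma/functional-calculus justification and is only available on a strip where $A\pm\bar c$ is zero-free, so as written your argument yields $N\in C^\omega_{r'}$ for all sufficiently small $r'$ rather than for every $r'<r$ as the lemma states. This is a mild shortfall (the averaging application only needs some $r'>0$), and the paper's own proof quietly incurs the same division-by-$(A+\bar c)$ issue without addressing it; still, if you want the full range of $r'$ you must argue that $1/(A\pm\bar c)$ and $\log\rho$ lie in $C^\omega_{r'}$ for every $r'<r$ under the stated hypothesis, or weaken the statement accordingly.
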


\begin{proof}
Recall that $\tilde DP(\theta)=P(\theta+\omega)-P(\theta),\ \tilde D^*P(\theta)=P(\theta)-P(\theta-\omega)$.
To solve equation \eqref{eq:ellip}, it suffices to solve the following equation:
\begin{equation}\label{eq:1:solve nondivergenceeq}
\begin{aligned}
&\tilde D^*\bigr((A(\theta)+\bar c)\tilde DN\bigr)+2\bar c\tilde D^*N=F.
\end{aligned}
\end{equation}

 Changing of variable $Y=(A+\bar c)\tilde D N+2\bar cN$, \eqref{eq:1:solve nondivergenceeq} becomes
$Y(\theta)-Y(\theta-\omega)=F(\theta)$ which can be solved if and only if
$\langle F\rangle=0$, and the solution is not unique.   It can be calculated directly that $Y$ has the following representation $$Y(\theta)= Y_0(\theta)+c'=\sum\limits_{k\in\Z_*^\infty\backslash\{0\}}\frac{\hat { F}(k)}{1-\me^{-\mi\langle k,\omega\rangle}}\me^{\mi \langle k,\theta\rangle}+c',$$ for any $c'\in\R$.
Since $\omega\in \mathrm{DC}_\infty(\gamma,\tau)$, then as the application of Lemma \ref{Prop:smalldivsorestimate},
for any $0<r'_1:=\frac{r+r'}{2}<r$,
$$
\begin{aligned}
|Y_0|_{r'_1}&=\sum\limits_{k\in\Z^\infty_*\backslash\{0\}}\biggr|\frac{\hat { F}(k)}{1-\me^{-\mi\langle k,\omega\rangle}}\biggr|\me^{r|k|_1}\me^{(r'_1-r)|k|_1}\\
&\leq \frac{1}{\gamma}\sup\limits_{k\in\Z_*^\infty}\prod\limits_{i\in\Z}(1+\langle i\rangle^\tau|k_i|^\tau)\me^{(r'_1-r)|k|_1}\sum\limits_{k\in\Z^\infty_*\backslash\{0\}}|\hat { F}(k)|\me^{r|k|_1}\\
&\leq \frac{1}{\gamma}\exp\biggr(\frac{2\mu}{r-r'}\ln\bigr(\frac{2\mu}{r-r'}\bigr)\biggr)|F|_{r},
\end{aligned}
$$
for some $\mu=\mu(\tau).$

Now from \eqref{eq:1:solve nondivergenceeq}, it suffices to prove that there exists $N\in C^\omega_{r'}(\T^\infty,\R)$ such that
\begin{equation}\label{eq:secondineq}
N(\theta+\omega)-N(\theta)+\frac{2\bar c}{ A+\bar c} N(\theta)=\frac{ Y_{0}(\theta)+c'}{A(\theta)+\bar c}:=G.
\end{equation}

Note that $c'$ plays an important roles in solving \eqref{eq:secondineq}. For $c'={-\langle\frac{Y_0}{ A+\bar c}\rangle}/{\langle\frac{1}{A+\bar c}\rangle}$, we have $\langle  G\rangle=0.$ Thus one can check that  \begin{equation}\label{eq:convergent}
U(\theta)=\sum\limits_{k\in\Z^\infty_*\backslash\{0\}}\frac{\hat { G}(k)}{\me^{\mi{\langle k,\omega\rangle}}-1+\frac{2\bar c}{A+\bar c}}\me^{i\langle k,\theta\rangle}.
\end{equation}
Since $\inf\limits_{n\in\Z}|\me^{\mi{\langle k,\omega\rangle}}-1+\frac{2\bar c}{A+\bar c}-n|\geq \inf\limits_{n\in\Z}|\langle k,\omega\rangle-n|$ for any $k\in \Z_*^\infty\backslash\{0\}$,  applying  Proposition \ref{Prop:smalldivsorestimate} once again, we have
$$
\begin{aligned}
|U|_{r'}&\leq \frac{\sup(A+\bar c)}{\gamma\inf(A+\bar c)}\exp\biggr(\frac{2\mu}{r-r'}\ln\bigr(\frac{2\mu}{r-r'}\bigr)\biggr)|Y_0|_{r_1'}\\
&\leq \frac{\sup(A+\bar c)}{\gamma^2\inf(A+\bar c)} \exp\biggr(\frac{4\mu}{r-r'}\ln\bigr(\frac{2\mu}{r-r'}\bigr)\biggr)|F|_{r}.
\end{aligned}
$$
Hence \eqref{eq:convergent} admits a solution $U\in C^\omega_{r'}(\T^\infty,\R)$.
Then we finish the proof.
\end{proof}

Let us return to the averaging problem, first we consider the Cauchy problem with $\ell^2$ initial value without oscillation term:

\begin{Proposition}\label{prop:homononoscillation}
Let $f_\epsilon\in \mathscr S'\cap C([0,T]\times\R)$, and $v_\epsilon\in\mathscr S\cap C([0,T]\times\R)$ be the solution of the Cauchy problem
\begin{equation}\label{boudwithoutosc}
\left\{
\begin{aligned}
\mathcal P_\epsilon v_\epsilon&=f_\epsilon & & \text{ in }[0,T]\times \R,\\
 v_\epsilon(0,z)&=\phi(z) & & \text{ in } \R,
\end{aligned}
\right.
\end{equation}
 with $\|f_\epsilon\|_
{\mathscr S'}\rightarrow 0,\phi\in L^2(\R)\cap C^1(\R)$. Then as $\epsilon\rightarrow 0$, $v_\epsilon\rightarrow u_0$  weakly in $\mathscr S$, and strongly in $L^2([0,T],L^2(\epsilon\Z))$, where $u_0$ is the solution of the
averaged Cauchy problem
\begin{equation}\label{eq:averagedeq}
\left\{
\begin{aligned}
\hat{\mathcal P}u_0:&=\langle Q\rangle(u_0)_t-\bar a \partial_{zz}u_0=0& & \text{ in }[0,T]\times\R,\\
u_0(0,z)&=\phi(z) & & \text{ in }\R,
\end{aligned}
\right.
\end{equation}
with $\bar a\equiv \const>0.$
\end{Proposition}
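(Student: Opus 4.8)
\emph{Proof proposal.} The plan is to homogenize $\mathcal P_\epsilon$ by the two--scale (corrector) method: build an explicit approximate solution from a corrector living on $\T^\infty$, supplied by Lemma \ref{Lemma:solve nondivergenceeq}, and compare it with $v_\epsilon$ through the energy estimate of Proposition \ref{proposition:energy estimate}. First I would record uniform bounds: testing \eqref{boudwithoutosc} against $v_\epsilon$ exactly as in the proof of Proposition \ref{proposition:energy estimate} gives, besides \eqref{eq:energyestimates}, the control $\int_0^T\!\int_\R\tilde\alpha\,(D_\epsilon v_\epsilon)^2\le C(\|f_\epsilon\|_{\mathscr S'}^2+\|\phi\|_{L^2}^2)$, so $\{v_\epsilon\}$ is bounded in the natural energy space and a subsequence converges weakly in $\mathscr S$ to some $v_0$. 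Since \eqref{eq:averagedeq} has a unique solution, it then suffices to prove $v_\epsilon\to u_0$ strongly in $L^2([0,T],L^2(\epsilon\Z))$ and to identify $v_0=u_0$; uniqueness of the weak limit point promotes the latter to weak convergence of the whole family.

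\emph{The corrector and the averaged operator.} The change of variables $v_\epsilon=\me^{-E_0t/\epsilon^2}u_\epsilon(t,\cdot-lt/\epsilon)/P$ has already removed the $E_0$ and $P$ scales, so only the drift--diffusion operator $\mathcal P_\epsilon$ of \eqref{eq:transpara} remains to be homogenized. With $z'=\omega(z/\epsilon-lt/\epsilon^2)$ I would seek an approximate solution
\[
w_\epsilon(t,z)=u_0(t,z)+\epsilon\,N\!\Big(\tfrac{\omega z}{\epsilon}-\tfrac{\omega l t}{\epsilon^2}\Big)\,\partial_z u_0(t,z),
\]
where $u_0$ is the (unique, smooth) solution of \eqref{eq:averagedeq} and $N\in C^\omega_{r'}(\T^\infty,\R)$ is to be chosen; approximating $\phi$ by smooth rapidly decaying functions and using \eqref{eq:energyestimates} for stability, we may assume $u_0$ and its $z$--derivatives lie in $L^2$. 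The structural point is that $Q(z')\big(\partial_t+\tfrac l\epsilon\partial_z\big)$ annihilates all fast--variable time derivatives, because $\partial_t z'+\tfrac l\epsilon\partial_z z'=0$, so inserting $w_\epsilon$ into $\mathcal P_\epsilon$ produces no $\epsilon^{-2}$ term. Expanding $D_\epsilon,D^*_\epsilon$ (discrete Taylor expansion in the slow variable, exact shift $\theta\mapsto\theta\pm\omega$ in the fast one), the $\epsilon^{-1}$ terms cancel through a cell problem of the form $D^*(A(\omega z)DN)+\bar c(D+D^*)N=F(\omega z)$ covered by Lemma \ref{Lemma:solve nondivergenceeq}; the constant $l$ is the unique value making the solvability condition $\langle F\rangle=0$ hold (this is where $\langle Q\rangle=1$ enters), and then Lemma \ref{Lemma:solve nondivergenceeq} — using $\tilde A_1>|\bar c|$ and $\omega\in\mathrm{DC}_\infty(\gamma,\tau)$ — produces an analytic $N\in C^\omega_{r'}(\T^\infty,\R)$. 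Collecting the $O(1)$ terms and averaging over $\T^\infty$ forces $u_0$ to satisfy $\langle Q\rangle\partial_t u_0-\bar a\,\partial_{zz}u_0=0$, with $\bar a$ the average of $\tilde A_1$ corrected by the flux of $N$; $\bar a>0$ follows from $\inf\tilde A_1>0$ and a Cauchy--Schwarz argument ($N$ realizes the minimum of a strictly positive quadratic form), as in \cite{Kozlov1984}. The remaining residual terms are mean--zero fast oscillations times smooth $L^2$ factors built from $u_0$; writing a mean--zero fast function as $\epsilon\,D^*_\epsilon(\text{bounded})$ shows each is $O(\epsilon)$ in $H^{-1}$, hence $\mathcal P_\epsilon w_\epsilon=:g_\epsilon$ has $\|g_\epsilon\|_{\mathscr S'}\to0$. (A second--order corrector may be added to cancel the $O(1)$ residual exactly, but for a convergence statement it is unnecessary, since that residual splits into the averaged operator applied to $u_0$ — which vanishes — plus mean--zero oscillations.)

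\emph{Conclusion.} Set $r_\epsilon:=v_\epsilon-w_\epsilon$. By linearity $\mathcal P_\epsilon r_\epsilon=f_\epsilon-g_\epsilon$ with $\|f_\epsilon-g_\epsilon\|_{\mathscr S'}\to0$, and $r_\epsilon(0,\cdot)=-\epsilon N(\omega\cdot/\epsilon)\,\partial_z\phi$ has $L^2$ norm $O(\epsilon)$. Proposition \ref{proposition:energy estimate} then yields $\sup_{0\le t\le T}\|r_\epsilon(t,\cdot)\|_{L^2(\R)}^2\le C(\|f_\epsilon-g_\epsilon\|_{\mathscr S'}^2+\|r_\epsilon(0,\cdot)\|_{L^2}^2)\to0$; since also $\|\epsilon N(\omega\cdot/\epsilon)\partial_z u_0\|_{\mathscr S}=O(\epsilon)$, we get $v_\epsilon\to u_0$ strongly in $L^2([0,T],L^2(\R))$, hence — translating through the change of variables and restricting to the sublattice $\epsilon\Z$ via a standard sampling inequality — strongly in $L^2([0,T],L^2(\epsilon\Z))$. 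Together with the uniform energy bound this gives $v_\epsilon\rightharpoonup u_0$ weakly in $\mathscr S$.

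\emph{Main obstacle.} The crux is the corrector step. One must identify the right drift speed $l$ from the mean--zero condition and invoke Lemma \ref{Lemma:solve nondivergenceeq}, which is precisely where the Diophantine hypothesis $\omega\in\mathrm{DC}_\infty(\gamma,\tau)$ and the analyticity of $A_1,A_2,W$ are genuinely used — without them the corrector need not exist — and then verify that \emph{every} residual term in $\mathcal P_\epsilon w_\epsilon$ is $o(1)$ in $\mathscr S'$, which requires parabolic regularity and decay of $u_0$ (hence the density reduction on $\phi$) together with the exponential decay of the Fourier coefficients of $N$. The a priori alarming singular time dependence $z'=\omega(z/\epsilon-lt/\epsilon^2)$, which would otherwise make $\partial_t w_\epsilon$ blow up like $\epsilon^{-1}$, is neutralized by the built--in cancellation $\partial_t+\tfrac l\epsilon\partial_z$; spotting and exploiting this is the structural point one must get right. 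Matching the $L^2(\R)$ analysis with the stated $L^2(\epsilon\Z)$ conclusion is a comparatively minor issue.
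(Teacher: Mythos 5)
Your proposal is correct and follows essentially the same route as the paper: the first-order corrector ansatz $u_0+\epsilon N(z')\partial_z u_0$, the cell problem solved by Lemma \ref{Lemma:solve nondivergenceeq} with $l$ fixed by the solvability (mean-zero) condition, positivity of $\bar a$ via the quadratic-form identity, the mean-zero residuals handled by writing them as $\epsilon$ times a derivative of an almost periodic primitive, and the final comparison through Proposition \ref{proposition:energy estimate}. The only slip is the parenthetical claim that $\langle Q\rangle=1$; the normalization in the paper is $\langle P\rangle=\langle P_*\rangle=1$, and the solvability condition simply gives $l=2\bar c/\langle Q\rangle$, which does not affect your argument.
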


\begin{proof}
Now we approximate the solution $v_\epsilon^a$ of \eqref{boudwithoutosc} in the form:
$$v_\epsilon^a(t,z)=u_0(t,z)+\epsilon N(\frac{\omega z}{\epsilon}-\frac{\omega lt}{\epsilon^2})\partial_zu_0(t,z) \text{ in }[0,T]\times \R$$
where $l\in\R$ and $N\in C(\T^\infty,\R)$ will be determined later.

Recall that $z'=\omega\bigr(\frac{z}{\epsilon}-\frac{lt}{\epsilon^2}\bigr)$. Since $u_0\in C^\infty$, we collect up $O(1), O(\frac{1}{\epsilon})$ terms
in the left-hand side of the equation $\mathcal P_\epsilon v^a_\epsilon=
f_\epsilon$,  and present them below:
\begin{enumerate}
\item [$O(1):$] $Q(z')(u_0)_t(t,z)-\bigr[\tilde A_1(z'-\omega)+\bigr(\epsilon\tilde A_1 D_\epsilon N+\epsilon D_\epsilon^*(\tilde A_1 N)
+2\bar cN-lQ N\bigr)(z')\bigr]\partial_{zz}u_0(t,z).$
\item [$O(\frac{1}{\epsilon})$:] $-\bigr[\epsilon D^*_\epsilon
\bigr(\tilde A_1(z')D_\epsilon N(z')\bigr)+\bar c(D^*_\epsilon+D_\epsilon)
 N(z')+D^*_\epsilon \tilde A_1(z')+\frac{2\bar c}{\epsilon}-
 \frac{l}{\epsilon}Q(z')\bigr]\partial_zu_0(t,z),$
\end{enumerate}
where we have used $\partial_zu_0=D_\epsilon u_0+o(1), \partial_zu_0=D^*_\epsilon u_0+o(1), \partial_{zz}u_0=D^*_\epsilon D_\epsilon u_0+o(1).$

To guarantee the convergence of the left hand side of the equation $\mathcal P_\epsilon v^a_\epsilon=f_\epsilon$, $O(\frac{1}{\epsilon})
$ are expected to vanish in $\mathscr S',$
 and thus we only need to consider the equation:
\begin{equation}\label{eq:approximate}
D^*\bigr(\tilde A_1(\omega z)DN(\omega z)\bigr)+\bar c(D^*+D) N(\omega z)+D^* \tilde A_1(\omega z)+2\bar c-lQ(\omega z)=0.
\end{equation}
Since $\tilde A_1(\theta)>|\bar c|$, as the application of Lemma \ref{Lemma:solve nondivergenceeq}, for any $0<r'<r_0$,
equation \eqref{eq:approximate} admits an almost periodic sequence $N(\omega z)$ with $N\in C^\omega_{r'}(\T^\infty,\R)$ provided
$l=\frac{2\bar c}{\langle Q\rangle}$. Meanwhile,
as we could notice from $O(1)$, there holds
\begin{equation}\label{eq:averaged}
\mathcal P_\epsilon v_\epsilon^a=\langle Q\rangle (u_0)_t-\bar a\partial_{zz}u_0+o(1)+f_\epsilon+R_0
(\frac{\omega z}{\epsilon}-\frac{\omega lt}{\epsilon^2})(u_0)_t+R_1(\frac{\omega z}{\epsilon}-\frac{\omega lt}{\epsilon^2})\partial_{zz}u_0,
\end{equation}
where $\bar a=\langle \tilde A_1(1+\tilde DN)\rangle+\langle (2\bar c-lQ) N\rangle$, and
$R_0,R_1\in C^\omega_{r'}(\T^\infty,\R)$ with $\langle R_0\rangle=\langle R_1\rangle=0.$
By \eqref{eq:approximate}, one has
$$
\begin{aligned}
\bar a&=\langle \tilde A_1(1+\tilde DN)\rangle+\langle (2\bar c-lQ)N\rangle\\
&=\langle\tilde A_1(1+\tilde D N)\rangle-\langle \tilde D^*\bigr(\tilde A_1 \tilde D N\bigr)N\rangle+\bar c(\tilde D^*+\tilde D) NN+\tilde D^*\tilde A_1  N\rangle\\
&=\langle\tilde A_1(1+\tilde D N)^2\rangle\geq\inf \tilde A_1\langle (1+\tilde D N)^2\rangle>\inf \tilde A_1>0.
\end{aligned}
$$

Now we turn to analyze \eqref{eq:averaged}. We claim the component of \eqref{eq:averaged} $R_0
(\frac{\omega z}{\epsilon}-\frac{\omega lt}{\epsilon^2})(u_0)_t+R_1(\frac{\omega z}{\epsilon}-\frac{\omega lt}{\epsilon^2})\partial_{zz}u_0$ converges strongly to
0 in $\mathscr S'$. Notice that similar to the proof of Lemma \ref{Lemma:solve nondivergenceeq}, there exists an almost periodic solution $w_i, i=1,2$ of the equation
$$\partial_{z}w_i(z)=R_i(\omega z)  \text{ in }\R.$$
 Then we have $R_i(\frac{\omega z}{\epsilon}-\frac{\omega lt}{\epsilon^2})=\epsilon \partial_zw_i(\frac{z}{\epsilon}-\frac{lt}{\epsilon^2}).$
Therefore,
$$
\|R_0(\frac{\omega z}{\epsilon}-\frac{\omega lt}{\epsilon^2})(u_0)_t\|_{\mathscr S'}=\sup\limits_{\|v\|_{\mathscr S}\leq 1}\int_0^T\int_{\R}|R_0(\frac{\omega z}{\epsilon}-\frac{\omega lt}{\epsilon^2})(u_0)_tv|
\leq C\epsilon,
$$
and similarly, $$\|R_1(\frac{\omega z}{\epsilon}-\frac{\omega lt}{\epsilon^2})\partial_{zz}u_0\|_{\mathscr S'}\leq C\epsilon,$$
where $C$ depends only on $\|w_0(\frac{\omega z}{\epsilon}-\frac{\omega lt}{\epsilon^2})(u_0)_t\|_{L^2([0,T]\times\R)},\ \|w_0(\frac{\omega z}{\epsilon}-\frac{\omega lt}{\epsilon^2})\partial_{z}(u_0)_t\|_{L^2([0,T]\times\R)},$ $\|w_1(\frac{\omega z}{\epsilon}-\frac{\omega lt}{\epsilon^2})\partial_{zz}u_0\|_{L^2([0,T]\times\R)},$ $ \|w_1(\frac{\omega z}{\epsilon}-\frac{\omega lt}{\epsilon^2})\partial_{zzz}u_0\|_{L^2([0,T]\times\R)}$.
Thus we have $o(1)+R_0
(\frac{\omega z}{\epsilon}-\frac{\omega lt}{\epsilon^2})(u_0)_t+R_1(\frac{\omega z}{\epsilon}-\frac{\omega lt}{\epsilon^2})\partial_{zz}u_0\rightarrow 0$ strongly in $\mathscr S'$.

Finally, we obtain
$$
\left\{
\begin{aligned}
&\mathcal P_\epsilon(v_\epsilon-v_\epsilon^a)=o(1)+R_0
(z')(u_0)_t+R_1(z')\partial_{zz}u_0 & &  \text{ in } [0,T]\times\R,\\
 &v_\epsilon-v_\epsilon^a|_{t=0}=-\epsilon N(z')\partial_zu_0 & & \text{ in }\R,
\end{aligned}
\right.
$$
where $o(1)+R_0
(\frac{\omega z}{\epsilon}-\frac{\omega lt}{\epsilon^2})(u_0)_t+R_1(\frac{\omega z}{\epsilon}-\frac{\omega lt}{\epsilon^2})\partial_{zz}u_0\rightarrow 0$ in $\mathscr S'$. It follows from Proposition \ref{proposition:energy estimate} that
$\|v_\epsilon-v_\epsilon^a\|_{\mathscr S}\rightarrow 0$, and then $v_\epsilon\to u_0$ weakly in $\mathscr S$ since $\langle N\rangle=0$. From the form of $v_\epsilon^a$, $v_\epsilon\to u_0$ strongly in $L^2([0,T],L^2(\R))$.
The lemma is proved.
\end{proof}

However, the initial value $v(0,z)=\frac{\varphi(z)}{P(\frac{\omega z}{\epsilon})}$ in \eqref{eq:transpara} is oscillatory decaying, unlike \eqref{boudwithoutosc}. For this kind of initial value, we have the following:

\begin{Lemma}\label{Lemma:homogenizationoscillation}
Suppose that the function $v_\epsilon(t,z)$ satisfies the equation
\begin{equation}\label{eq:oscillationinitial}
\left\{
\begin{aligned}
\mathcal P_\epsilon v_\epsilon&=0 & & \text{ in }[0,T]\times\R,\\
v_\epsilon(0,z)&=\varphi(z)\Phi(\frac{\omega z}{\epsilon}) & & \text{ in }\R,
\end{aligned}
\right.
\end{equation}
where $\varphi\in L^2(\R),\Phi\in C(\T^\infty,\R)$ and $\langle Q\Phi\rangle=0$. Then $v_\epsilon\rightarrow 0$ weakly in $\mathscr S$
as $\epsilon\rightarrow 0.$
\end{Lemma}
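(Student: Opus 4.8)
\textbf{Proof plan for Lemma \ref{Lemma:homogenizationoscillation}.}

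The plan is to reduce the oscillatory–decaying initial datum $\varphi(z)\Phi(\tfrac{\omega z}{\epsilon})$ to the non-oscillatory situation already handled in Proposition \ref{prop:homononoscillation}, by subtracting off a suitable corrector that absorbs the mean-zero oscillation. The key point is the hypothesis $\langle Q\Phi\rangle=0$: since the "weight" in the time-derivative term of $\mathcal P_\epsilon$ is $Q(z')=Q(\omega(\tfrac{z}{\epsilon}-\tfrac{lt}{\epsilon^2}))$, a mean-zero profile $Q\Phi$ is exactly what is needed so that the relevant cohomological/cell equation is solvable. Concretely, I would first solve, via Lemma \ref{Lemma:solve nondivergenceeq} (applied with $A=\tilde A_1$, which satisfies $\tilde A_1>|\bar c|$, and with right-hand side a mean-zero function built from $Q\Phi$), for a function $\Psi\in C^\omega_{r'}(\T^\infty,\R)$ with $\langle\Psi\rangle=0$ such that $\Phi = \tilde D^*\bigl((\tilde A_1+\bar c)\tilde D\Psi\bigr)/Q + \cdots$ — i.e. so that $\Phi(\tfrac{\omega z}{\epsilon})$ equals, up to an $O(\epsilon)$ error, the result of applying the spatial part of $\mathcal P_\epsilon$ (divided by $Q$) to the single-scale function $\epsilon^2\Psi(\tfrac{\omega z}{\epsilon})$ — the powers of $\epsilon$ being arranged so that this corrector is small in $L^2$ but its image under $\mathcal P_\epsilon$ is $O(1)$ and cancels the initial layer.

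Then I would set $w_\epsilon := v_\epsilon - \varphi(z)\,\theta(t)\,\epsilon^{a}\Psi(z')$ for an appropriate smooth cutoff $\theta$ with $\theta(0)=1$ and an appropriate exponent $a$ (dictated by the scaling of $\mathcal P_\epsilon$, where the spatial second-difference carries $\epsilon^{-2}$ and the drift $\epsilon^{-1}$), so that $w_\epsilon$ solves an equation of the form $\mathcal P_\epsilon w_\epsilon = f_\epsilon$ with $\|f_\epsilon\|_{\mathscr S'}\to 0$ — here one uses that the leading oscillatory term has been designed to cancel, that the remaining terms carry an extra factor of $\epsilon$ (using the trick, as in the proof of Proposition \ref{prop:homononoscillation}, of writing a mean-zero almost periodic factor $R(\tfrac{\omega z}{\epsilon})$ as $\epsilon\,\partial_z w(\tfrac{z}{\epsilon})$ for a bounded primitive $w$), and that $\varphi\in L^2(\R)$ controls the relevant norms — and with initial value $w_\epsilon(0,z)=\varphi(z)\bigl(\Phi(\tfrac{\omega z}{\epsilon}) - \epsilon^{a}\Psi(\tfrac{\omega z}{\epsilon})\bigr)$. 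Wait — more precisely, the corrector must be arranged so that $w_\epsilon(0,\cdot)$ converges weakly (indeed strongly in a negative norm) to $0$: since $\varphi\in L^2$ and $\Phi$, $\Psi$ are mean-zero oscillations at scale $\epsilon$, the product oscillates to weak-$L^2$ limit $0$; the $\epsilon^a\Psi$ piece is $O(\epsilon^a)$ in $L^2$. So $w_\epsilon(0,\cdot)\rightharpoonup 0$ and $\|\mathcal P_\epsilon w_\epsilon\|_{\mathscr S'}\to 0$.

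Finally, I would invoke the energy estimate of Proposition \ref{proposition:energy estimate} to get a uniform bound $\sup_t\|w_\epsilon(t,\cdot)\|_{L^2}^2\le C(\|f_\epsilon\|_{\mathscr S'}^2 + \|w_\epsilon(0,\cdot)\|_{L^2}^2)$, which is uniformly bounded, hence (after extracting a subsequence) $w_\epsilon\rightharpoonup w_0$ weakly in $\mathscr S$; applying Proposition \ref{prop:homononoscillation} (or rather its argument, since the initial datum here only converges weakly, not in $C^1$) identifies $w_0$ as the solution of the averaged equation $\hat{\mathcal P}w_0=0$ with $w_0(0,\cdot)=0$, so $w_0\equiv 0$ by uniqueness of the averaged Cauchy problem; and since $\varphi(z)\theta(t)\epsilon^a\Psi(z')\to 0$ strongly in $\mathscr S$, we conclude $v_\epsilon\rightharpoonup 0$ weakly in $\mathscr S$, the limit being independent of the subsequence. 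The main obstacle I anticipate is the bookkeeping of the $\epsilon$-powers and the verification that the corrector's contribution to $\mathcal P_\epsilon w_\epsilon$ genuinely goes to $0$ in $\mathscr S'$ rather than merely staying bounded — i.e. making the "mean-zero $\Rightarrow$ $O(\epsilon)$ primitive" reduction carefully for each term produced by differentiating the product $\varphi(z)\theta(t)\Psi(z')$, especially the cross terms where a $z$-derivative falls on $\varphi$ rather than on the fast variable, and controlling these using only $\varphi\in L^2(\R)$ (which is why one wants $\Psi$ bounded and the cutoff $\theta$ compactly supported in $t$, or else a slightly more careful argument near $t=0$).
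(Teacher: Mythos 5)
There is a genuine gap, and it occurs at the two places where your plan has to use the hypothesis $\langle Q\Phi\rangle=0$ and then pass to the limit. First, your reduction claims that $w_\epsilon(0,\cdot)=\varphi(z)\bigl(\Phi(\tfrac{\omega z}{\epsilon})-\epsilon^a\Psi\bigr)\rightharpoonup 0$ because ``$\Phi$, $\Psi$ are mean-zero oscillations''. But the hypothesis is $\langle Q\Phi\rangle=0$, not $\langle\Phi\rangle=0$; in general $\varphi(z)\Phi(\tfrac{\omega z}{\epsilon})\rightharpoonup\langle\Phi\rangle\varphi\neq 0$. The weight $Q$ can only be brought in through the equation itself, because the time-derivative term of $\mathcal P_\epsilon$ carries the coefficient $Q(z')$: in any weak (in time) formulation the initial datum appears paired as $Q(\tfrac{\omega z}{\epsilon})v_\epsilon(0,z)=Q\Phi(\tfrac{\omega z}{\epsilon})\varphi(z)$, and it is \emph{this} product that converges weakly to $\langle Q\Phi\rangle\varphi=0$. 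Your corrector subtraction never produces this weighted pairing, so the hypothesis is not actually used where you need it. Moreover the corrector $\epsilon^a\Psi$ is $O(\epsilon^a)$ small in $L^2$, so it cannot cancel the $O(1)$ oscillation in the initial layer; after the subtraction the initial datum is essentially unchanged, and nothing has been gained.

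Second, even granting weak convergence of the (corrected) initial data to $0$, the energy estimate of Proposition \ref{proposition:energy estimate} controls $\sup_t\|w_\epsilon(t,\cdot)\|_{L^2}$ by the \emph{strong} $L^2$-norm of the initial datum plus $\|f_\epsilon\|_{\mathscr S'}$; weak convergence of the data gives only uniform boundedness, and the corrector/energy argument of Proposition \ref{prop:homononoscillation} (which needs $\|v_\epsilon-v_\epsilon^a\|_{\mathscr S}\to 0$, hence strongly small initial error) cannot identify the weak limit as the solution of the averaged problem with zero data. The paper resolves exactly this point by duality: for a test function $w$ it solves the adjoint problem $\mathcal P_\epsilon^*w_\epsilon=\hat{\mathcal P}^*w$ with terminal data $w(T,\cdot)$, homogenizes it with its own cell corrector $N_*$ (the adjoint of \eqref{eq:approximate}), proves the nontrivial identity $\bar a_*=\bar a$ so that $w_\epsilon\to w$, and then passes to the limit in the Green identity $0=\int_0^T\!\!\int_{\R}v_\epsilon\,\mathcal P_\epsilon^*w_\epsilon+\bigl[\int_{\R}Q(z')v_\epsilon w_\epsilon\bigr]_{t=0}^{T}$, where the $t=0$ boundary term vanishes in the limit precisely because $Q\Phi(\tfrac{\omega z}{\epsilon})\varphi\rightharpoonup 0$ while $w_\epsilon(0,\cdot)$ converges strongly. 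Some mechanism of this type (pairing the weakly convergent data against a strongly convergent adjoint factor) is indispensable; your outline as it stands does not supply one.
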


\begin{proof}
Since $\mathscr S$ is a reflexive space, by Proposition \ref{proposition:energy estimate}, $v_\epsilon$ is bounded, and thus weakly compact. We may assume that after passing to a subsequence , $v_\epsilon\rightarrow v$ weakly in
$\mathscr S.$ The result is proved if we show that $v=0$.

 For any $w\in C_0^\infty([0,T]\times\R),$
consider the dual averaging problem
\begin{equation}\label{eq:dual averaging problem}
\left\{
\begin{aligned}
&\mathcal P_\epsilon^* w_\epsilon=\hat{\mathcal P}^*w & & \text{ in }[0,T]\times\R, \\
& w_\epsilon(T,z)=w(T,z) & & \text{ in }\epsilon\Z,\\
\end{aligned}
\right.
\end{equation}
where $\mathcal P_\epsilon^*,\hat{\mathcal P^*}$ denotes the adjoint operator of $\mathcal P_\epsilon,\hat{\mathcal P}$ in $\mathscr
S$ respectively, and $\hat{\mathcal P}$ is defined in \eqref{eq:averagedeq}. Then as we could check from the argument of Proposition \ref{proposition:energy estimate}, \eqref{eq:dual averaging problem} admits a unique solution $w_\epsilon$.
Now we prove the result in the following steps:

Step 1: First, we show that $ w_\epsilon\rightarrow w_0$ weakly in $\mathscr S$, where $ w_0$ is the solution of the averaged
equation
\begin{equation}\label{eq:averaged parabolic}
\left\{
\begin{aligned}
&-\langle Q\rangle (w_0)_t-\bar a_*\partial_{zz}w_0=\hat{\mathcal P}^*w= -\langle Q\rangle w_t-\bar a\partial_{zz}w& &\text{ in }[0,T]\times\R,\\
& w_0(T,z)=w(T,z) & & \text{ in }\R,
\end{aligned}
\right.
\end{equation}
where $\bar a$ is defined in Proposition \ref{prop:homononoscillation}, and  $\bar a_*=\const>0$ to be specified.

We look for an approximate solution of \eqref{eq:dual averaging problem} in the form
$$w_\epsilon^a=w_0+\epsilon N_*(\frac{\omega z}{\epsilon}-\frac{\omega lt}{\epsilon^2})\partial_z w_0,$$
where $l\in\R$ and $N_*\in C(\T^\infty,\R)$ to be specified. Applying the argument in the proof of Proposition \ref{prop:homononoscillation}, $N_*$ satisfies the following equation
\begin{equation}\label{eq:dualapproximate}
 D^*\bigr(\tilde A_1(\omega z) DN_*(\omega z)\bigr)-\bar c(D^*+D)N_*(\omega z)+D^*\tilde A_1(\omega z)-\bigr(2\bar c-lQ(\omega z)\bigr)=0,
\end{equation}
 and $
\mathcal P_\epsilon^* w_\epsilon^a=-\langle Q\rangle (w_0)_t-\bar a\partial_{zz}w_0+o(1)+\tilde R_0
(\frac{\omega z}{\epsilon}-\frac{\omega lt}{\epsilon^2})(w_0)_t+\tilde R_1(\frac{\omega z}{\epsilon}-\frac{\omega lt}{\epsilon^2})\partial_{zz}w_0,
$ where  $\bar a_*=\langle  \tilde A_1(1+\tilde DN_*)\rangle-\langle (2\bar c-lQ)N_*\rangle$, and $\tilde R_0, \tilde R_1\in C(\T^\infty,\R)$ with $\langle \tilde R_0\rangle=\langle \tilde R_1\rangle=0$. Moreover, $\tilde R_0(\frac{\omega z}{\epsilon}-\frac{\omega lt}{\epsilon^2})(w_0)_t+\tilde R_1(\frac{\omega z}{\epsilon}-\frac{\omega lt}{\epsilon^2})\partial_{zz}w_0\to 0$ strongly in $\mathscr S'$. Hence we have
\begin{equation*}
\left\{
\begin{aligned}
&\mathcal P_\epsilon^*( w_\epsilon- w_\epsilon^a)=o(1)+\tilde R_0
(\frac{\omega z}{\epsilon}-\frac{\omega lt}{\epsilon^2})(w_0)_t+\tilde R_1(\frac{\omega z}{\epsilon}-\frac{\omega lt}{\epsilon^2})\partial_{zz} w_0 & & \text{ in } [0,T]\times\R, \\
&( w_\epsilon- w_\epsilon^a)(T,z)=-\epsilon N_*(\frac{\omega z}{\epsilon}-\frac{\omega lt}{\epsilon^2})\partial_z w_0(T,z) & & \text{ in }\R.
\end{aligned}
\right.
\end{equation*}
Since $o(1)+\tilde R_0
(\frac{\omega z}{\epsilon}-\frac{\omega lt}{\epsilon^2})(w_0)_t+\tilde R_1(\frac{\omega z}{\epsilon}-\frac{\omega lt}{\epsilon^2})\partial_{zz} w_0\rightarrow 0$ strongly in $\mathscr S'$, by the same argument in the proof of Proposition \ref{proposition:energy estimate}, one has $\| w^a_\epsilon-
w_\epsilon\|_{\mathscr S}\rightarrow 0$, and so $w^a_\epsilon\rightarrow w_0$ weakly in $\mathscr S$ and strongly in $L^2([0,T];L^2(\R))$.

Step 2: Now we claim that $w_0=w$.

From \eqref{eq:averaged parabolic}, it
is sufficient to prove the identity $\bar a_*=\bar a$. Multiplying both sides of \eqref{eq:approximate} by $N_*$ and \eqref{eq:dualapproximate} by $N$ respectively,
we arrive at the identity
$$
\begin{aligned}
-\langle \tilde A_1 \tilde D N\tilde D N_*\rangle+\bar c\langle (\tilde D+\tilde D^*) N N_*\rangle
&=\langle \tilde D^*(\tilde A_1 \tilde D N) N_*\rangle+\bar c\langle (\tilde D+\tilde D^*) N  N_*\rangle\\
&=-\langle \tilde D^*\tilde A_1 N_*\rangle-\langle (2\bar c-lQ) N_*\rangle\\
&=\langle \tilde A_1 \tilde D N_*\rangle-\langle (2\bar c-lQ)  N_*\rangle,
\end{aligned}
$$
and
$$
\begin{aligned}
-\langle \tilde A_1\tilde DN\tilde DN_*\rangle+\bar c\langle (\tilde D+\tilde D^*) N  N_*\rangle&=\langle \tilde D^*(\tilde A_1\tilde DN_*) N\rangle-\bar c\langle (\tilde D+\tilde D^*) N_* N\rangle\\
&=-\langle \tilde D^*\tilde A_1N\rangle+\langle (2\bar c-lQ) N\rangle\\
&=\langle \tilde A_1\tilde DN\rangle+\langle (2\bar c-lQ) N\rangle.
\end{aligned}
$$
Hence $\bar a=\bar a_*.$ Thus from \eqref{eq:averaged parabolic}, one has
$$
\left\{
\begin{aligned}
\hat{\mathcal P}^*(w_0-w)&=0 & &\text{ in }[0,T]\times\R,\\
(w_0-w)(T,z)&=0 & &\text{ in }\R,
\end{aligned}
\right.
$$ and this implies $ w_0=w$ by the uniqueness.

Step 3: Finish the proof by proving $v=0$.

 Multiplying $w_\epsilon$ and integrating from $[0,T]\times\R$ from both sides of $\mathcal P_\epsilon v_\epsilon=0$, one has
\begin{equation}\label{eq:parabolicand dual}
\begin{aligned}
0&=\int_0^T\int_{\R}\mathcal P_\epsilon v_\epsilon w_\epsilon dtdz=\int_0^T\int_{\R}v_\epsilon\mathcal P_\epsilon^*w_\epsilon dtdz+\\
&\qquad\qquad\qquad\int_{\R}\bigr[Q(\frac{\omega z}{\epsilon}-\frac{\omega lT}{\epsilon^2})(v_\epsilon w_\epsilon)(T,z)-Q(\frac{\omega z}{\epsilon})(v_
\epsilon w_\epsilon)(0,z)\bigr]dz.
\end{aligned}
\end{equation}
By Proposition \ref{proposition:energy estimate}, $v_\epsilon$ is continuous in $t\in [0,T]$, and then
 one conclude from Proposition \ref{prop:homononoscillation} that $v_\epsilon(T,z)\rightarrow v(T,z)$ strongly in $L^2(\R)$. We also have $w_\epsilon(0,z)
\rightarrow w(0,z)$ by step 1 and step 2, and meanwhile, by assumption, $Q(\frac{\omega z}{\epsilon}) v_\epsilon(0,z)=Q(\frac{\omega z}{\epsilon})\Phi(\frac{\omega z}{\epsilon})\varphi(z)\rightarrow 0$ weakly in $L^2(\R)$ as
$\epsilon\rightarrow 0$. Passing along to a subsequence in equation \eqref{eq:parabolicand dual} as $\epsilon\to 0$, one has for any function $w\in C_0^\infty([0,T]\times\R)$, there holds
$$\int_0^T\int_{\R} v(t,z)\hat{\mathcal P}^*w(t,z)dtdz+\langle Q\rangle\int_{\R} v(T,z)w(T,z)dz=0,$$
Hence $v$ is a weak solution of the equation $\hat{\mathcal P}v=0$ in $[0,T]\times\R$ and $v(0,z)=0$ in $\R$, and it is of course 0. Thus the lemma
is proved.
\end{proof}

\begin{proof}[Proof of Theorem \ref{APPMain3}]
As we have seen from Proposition
 \ref{Main1ell}, there exist $P, P_*\in C(\T^\infty,\R)$ such that $p(n)=P(n\omega), p_*(n)=P_*(n\omega)$ satisfy
\begin{equation}
\begin{aligned}
&D^*\bigr(A_1(n\omega) Dp\bigr)+A_2(n\omega)D^*p+W(n\omega)p=E_0p,\\
&D^*\bigr(A_1(n\omega)Dp_*\bigr)-D\bigr(A_2(n\omega)p_*\bigr)+W(n\omega)p_*=E_0p_*,
\end{aligned}
\end{equation}
where $\sum\limits_{k\in\mathbb Z_*^\infty}|\hat {P}(k)| \me^{r'|k|_1}<\infty.$ Hence we have $P, P_*\in C^\omega_{r'}(\T^\infty,\R)$ for any $0<r'<r.$

 Note that for any $k\in\Z$, $p_k(n)=P(n\omega+k\omega)$ solves
$$D^*\bigr(A_1(n\omega+k\omega) Dp_k\bigr)+A_2(n\omega+k\omega)D^*p_k+W(n\omega+k\omega)p_k=E_0p_k,$$
and $q_k(n)=P_*(n\omega+k\omega)$ solves
$$D^*\bigr(A_1(n\omega+k\omega)Dq_k\bigr)- D\bigr(A_2(n\omega+k\omega)q_k\bigr)+W(n\omega+k\omega)q_k=E_0q_k.$$
Hence by continuity of $P$ and $P_*$,
 the crucial assumption \eqref{averagingtorus} is satisfied.

Now we first prove that the equation \eqref{eq:averagingpara} has a unique solution $u_\epsilon\in L^2([0,T], L^2(\epsilon\Z)).$
Changing of variable $v_\epsilon(t,x)=\me^{-\frac{E_0t}{\epsilon^2}}\frac{u_\epsilon(t,x)}{P(\frac{\omega x}{\epsilon})}, x\in\epsilon\Z$, equation \eqref{eq:averagingpara}
becomes
\begin{equation}\label{eq:originalredu}
\left\{
\begin{aligned}
Q(\frac{\omega x}{\epsilon})(v_\epsilon)_t&=D_\epsilon^*(\tilde A_1\bigr(\frac{\omega x}{\epsilon})D_\epsilon v_\epsilon\bigr)-\frac{\bar c}{\epsilon}(D_\epsilon^*+D_\epsilon)v_\epsilon & &\text{ in }[0,T]\times \epsilon\Z, \\
v_\epsilon(0,x)&=\frac{\varphi(x)}{P(\frac{\omega x}{\epsilon})} & & \text{ in }\epsilon\Z.
\end{aligned}
\right.
\end{equation}
 Since $\varphi\in C^1(\R)$ and $|\varphi(z)|\leq \frac{C}{|z|^k}$ for some $k\geq 2,$ it follows from Theorem \ref{existenceinl^2}
that $$v_\epsilon(t,x)=\me^{t\frac{1}{Q(x)}\mathcal L_\epsilon} \frac{\varphi(x)}{P(\frac{\omega x}{\epsilon})}\in L^2([0,T], L^2\bigr(\epsilon\Z)\bigr)\cap C^1([0,T], L^2(\epsilon\Z)),$$ and then $u_\epsilon\in L^2\bigr([0,T], L^2(\epsilon\Z)\bigr)\cap C^1([0,T], L^2(\epsilon\Z)).$
Similar to the proof of Proposition \ref{proposition:energy estimate}, one can deduce that
\begin{equation*}
\sup\limits_{0\leq t\leq T}\|v_\epsilon(t,\cdot)\|^2_{L^2(\epsilon\Z)}\leq C\bigr\|\varphi\|^2_{L^2(\epsilon\Z)},
\end{equation*}
where the constant $C$ depends only on $\inf P\inf P_*$, and this implies the uniqueness of $u_\epsilon$.

Now it follows from Proposition \ref{proposition:energy estimate}, there exists $\tilde v_\epsilon(t,z)\in \mathscr S$ which solves
\begin{equation}\label{eq:torusequ}
\left\{
\begin{aligned}
\mathcal P_\epsilon \tilde v_\epsilon&=0 & & \text{ in }[0,T]\times\R,\\
\tilde v_\epsilon(0,z)&=\frac{\varphi(z)}{P(\frac{\omega z}{\epsilon})} & & \text{ in }\R,
\end{aligned}
\right.
\end{equation}
where $z'=\omega\bigr(\frac{z}{\epsilon}-\frac{lt}{\epsilon^2}\bigr)$.
Restricting $\tilde v_\epsilon(t,z+\frac{lt}{\epsilon})$ in $\epsilon\Z$, we can argue similarly as $v_\epsilon$ that $\tilde v_\epsilon(t,\cdot+\frac{lt}{\epsilon})\in L^2\bigr([0,T],L^2(\epsilon\Z)\bigr)\cap C^1([0,T]\times\R)$, and it solves \eqref{eq:originalredu}. By the uniqueness of the solution of \eqref{eq:originalredu}, $\tilde v_\epsilon(t,x+\frac{lt}{\epsilon})=v_\epsilon(t,x).$

Let $\tilde w_\epsilon$ be the solution of the equation
\begin{equation}\label{eq:l^2initial}
\left\{
\begin{aligned}
\mathcal P_\epsilon \tilde w_\epsilon&=0 & & \text{ in } [0,T]\times\R,\\
\tilde w_\epsilon(0,z)&=\varphi/\langle Q\rangle & & \text{ in }\R.
\end{aligned}
\right.
\end{equation}
Since the initial value $\varphi/\langle Q\rangle\in L^2(\R)$, it follows from Proposition \ref{prop:homononoscillation}, $\tilde w_\epsilon\to u_0$ weakly in $\mathscr S$ as $\epsilon\to 0$, where $u_0$ solves the averaged equation
\begin{equation}\label{eq:averagedeq1}
\left\{
\begin{aligned}
\hat{\mathcal P}u_0:&=\langle Q\rangle(u_0)_t-\bar a \partial_{zz}u_0=0& & \text{ in }[0,T]\times\R,\\
u_0(0,z)&=\varphi/\langle Q\rangle & & \text{ in }\R.
\end{aligned}
\right.
\end{equation}
Subtracting \eqref{eq:l^2initial} from \eqref{eq:torusequ},
 one has the difference
$\tilde v_\epsilon-\tilde w_\epsilon$ satisfying the relation:
$$
\left\{
\begin{aligned}
\mathcal P_\epsilon(\tilde v_\epsilon-\tilde w_\epsilon)&=0 & & \text{ in }[0,T]\times\R,\\
(\tilde v_\epsilon-\tilde w_\epsilon)(0,z)&=\varphi(z)\biggr(\frac{1}{P(\frac{\omega z}{\epsilon})}-\frac{1}{\langle Q\rangle}\biggr) & & \text{ in }\R.
\end{aligned}
\right.
$$
 Due to \eqref{eq:normalized}, $\langle\bigr(\frac{1}{P}-\frac{1}{\langle Q\rangle}\bigr)Q\rangle=0$, and then it follows from Lemma \ref{Lemma:homogenizationoscillation}, $\tilde v_\epsilon-\tilde w_\epsilon\to 0$ weakly as $\epsilon\to 0$ which yields that
 $\tilde v_\epsilon\rightarrow u_0$ weakly in $\mathscr S$.
By the Sobolev embedding theorem, for any $t\in (0,T]$, as $\epsilon\to 0$, $\tilde v_\epsilon\to u_0$ uniformly in any compact set in $\R$. Note that by the continuity in $z\in\R$ of $\tilde v_\epsilon(t,z)$, as $\epsilon\to 0$, $v_\epsilon(t,\lfloor \frac{z}{\epsilon}-\frac{lt}{\epsilon^2}\rfloor\epsilon)-\tilde v_\epsilon(t,z)\to 0$ in $\R$.  Hence by diagonal extracting a subsequence, $$ v_\epsilon(t,\lfloor \frac{z}{\epsilon}-\frac{lt}{\epsilon^2}\rfloor\epsilon)=\me^{-\frac{E_0t}{\epsilon^2}}\frac{u_\epsilon(t,\lfloor\frac{z}{\epsilon}-\frac{lt}{\epsilon^2}\rfloor\epsilon)}{P(\omega\lfloor\frac{ z}{\epsilon}-\frac{lt}{\epsilon^2}\rfloor)}=\me^{-\frac{E_0t}{\epsilon^2}}\frac{u_\epsilon(t,\lfloor\frac{z}{\epsilon}-\frac{lt}{\epsilon^2}\rfloor\epsilon)}{p(\lfloor\frac{ z}{\epsilon}-\frac{lt}{\epsilon^2}\rfloor)}\to u_0(t,z) \text{ in } \R,$$
where the convergence is uniform in any compact set in $\R$ for any $t\in (0,T]$.
Then the result follows directly.
\end{proof}

\appendix

\section{Proof of Lemma \ref{basic}}
We need the following quantitative implicit function theorem:
\begin{Theorem}[\cite{Deimling1985,Berti2006}]\label{thm:ift}
Let $X,Y,Z$ be Banach spaces, $U\subset X$ and $V\subset Y$ neighborhoods of $x_0$ and $y_0$ respectively. Fix $s,\delta>0$ and define $B_s(x_0)=\{x\in
X|\|x-x_0\|_X\leq s\}, B_\delta(y_0)=\{y\in Y|\|y-y_0\|_{Y}\leq \delta\}$. Let $\Psi\in C^1(U\times V,Z)$ and $B_s(x_0)\times B_\delta(y_0)\subset U\times
V$. Suppose also that $\Psi(x_0,y_0)=0$, and that $D_y\Psi(x_0, y_0)\in L(Y,Z)$ is invertible. If
\begin{equation}\label{eq:IFT assumption1}
\sup\limits_{\overline{B_s(x_0)}}\|\Psi(x,y_0)\|_Z\leq \frac{\delta}{2\|(D_y\Psi(x_0,y_0))^{-1}\|},
\end{equation}
\begin{equation}\label{eq:IFT assumption2}
\sup\limits_{\overline B_s(x_0)\times\overline B_\delta(y_0)}\|\id_Y-(D_y\Psi(x_0,y_0))^{-1}D_y\Psi(x,y)\|_{L(Y,Y)}\leq \frac{1}{2},
\end{equation}
then there exists a unique $y\in C^1(B_s(x_0),\overline{B_\delta(y_0)})$ such that $\Psi(x,y(x))=0.$
\end{Theorem}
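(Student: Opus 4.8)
The plan is to prove Theorem~\ref{thm:ift} by a parametrized contraction mapping argument, followed by a regularity bootstrap that upgrades continuity of the implicit function to $C^1$. Throughout I would write $L_0 := D_y\Psi(x_0,y_0)\in L(Y,Z)$, which is invertible by hypothesis, and set $M := \|(D_y\Psi(x_0,y_0))^{-1}\|$. For each fixed $x\in\overline{B_s(x_0)}$ I introduce the map $\Phi_x:\overline{B_\delta(y_0)}\to Y$,
\[
\Phi_x(y) := y - L_0^{-1}\Psi(x,y),
\]
and note that, since $L_0^{-1}$ is injective, a point $y$ satisfies $\Psi(x,y)=0$ if and only if $\Phi_x(y)=y$. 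So the theorem reduces to producing, for each $x$, a unique fixed point of $\Phi_x$ in $\overline{B_\delta(y_0)}$ that depends $C^1$-ly on $x$.

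\textbf{Contraction and self-mapping.} Because $\Psi\in C^1$ and closed balls are convex, for $y_1,y_2\in\overline{B_\delta(y_0)}$ the segment $y_2+t(y_1-y_2)$, $t\in[0,1]$, lies in $\overline{B_\delta(y_0)}$, and the fundamental theorem of calculus gives
\[
\Phi_x(y_1)-\Phi_x(y_2) = L_0^{-1}\int_0^1\bigl(L_0 - D_y\Psi(x,\,y_2+t(y_1-y_2))\bigr)(y_1-y_2)\,dt .
\]
Hypothesis \eqref{eq:IFT assumption2} bounds the integrand operator norm by $\tfrac12$, so $\|\Phi_x(y_1)-\Phi_x(y_2)\|\le\tfrac12\|y_1-y_2\|$. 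Taking $y_2=y_0$ and adding $\|\Phi_x(y_0)-y_0\| = \|L_0^{-1}\Psi(x,y_0)\|\le M\cdot\tfrac{\delta}{2M}=\tfrac{\delta}{2}$, which is exactly \eqref{eq:IFT assumption1}, yields $\|\Phi_x(y)-y_0\|\le\tfrac{\delta}{2}+\tfrac{\delta}{2}=\delta$ for all $y\in\overline{B_\delta(y_0)}$. Thus $\Phi_x$ is a $\tfrac12$-contraction of the complete metric space $\overline{B_\delta(y_0)}$ into itself, and the Banach fixed point theorem produces a unique $y(x)\in\overline{B_\delta(y_0)}$ with $\Psi(x,y(x))=0$. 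Uniqueness of this fixed point also gives the uniqueness assertion in the statement.

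\textbf{Lipschitz dependence on $x$.} For $x_1,x_2\in\overline{B_s(x_0)}$, writing $y(x_i)=\Phi_{x_i}(y(x_i))$ and using the contraction estimate,
\[
\|y(x_1)-y(x_2)\| \le \tfrac12\|y(x_1)-y(x_2)\| + M\,\|\Psi(x_1,y(x_2))-\Psi(x_2,y(x_2))\|,
\]
hence $\|y(x_1)-y(x_2)\|\le 2M\,\|\Psi(x_1,y(x_2))-\Psi(x_2,y(x_2))\|$. Continuity of $\Psi$ then gives continuity of $x\mapsto y(x)$, and the local bound on $D_x\Psi$ on the compact set $\overline{B_s(x_0)}\times\overline{B_\delta(y_0)}$ upgrades this to local Lipschitz continuity.

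\textbf{Differentiability (the main obstacle).} First I record that \eqref{eq:IFT assumption2} forces $D_y\Psi(x,y)$ to be invertible throughout $\overline{B_s(x_0)}\times\overline{B_\delta(y_0)}$: since $\|\id_Y - L_0^{-1}D_y\Psi(x,y)\|\le\tfrac12<1$, the Neumann series inverts $L_0^{-1}D_y\Psi(x,y)$, hence $D_y\Psi(x,y)$ is invertible with $\|(D_y\Psi(x,y))^{-1}\|\le 2M$. The candidate derivative is $y'(x) = -\bigl(D_y\Psi(x,y(x))\bigr)^{-1}D_x\Psi(x,y(x))$, and I would verify it by expanding $0=\Psi(x+h,y(x+h))-\Psi(x,y(x))$ to first order about $(x,y(x))$: the remainder is $o(\|h\|+\|y(x+h)-y(x)\|)$, which by the Lipschitz bound of the previous step is $o(\|h\|)$, so solving for $y(x+h)-y(x)$ via the bounded operator $(D_y\Psi(x,y(x)))^{-1}$ identifies the linear part as $y'(x)h$; continuity of $x\mapsto y'(x)$ follows from continuity of $x\mapsto(x,y(x))$, the hypothesis $\Psi\in C^1$, and continuity of operator inversion on the set of invertibles, giving $y\in C^1(B_s(x_0),\overline{B_\delta(y_0)})$. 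The delicate point is exactly here: one must ensure the first-order Taylor expansion of $\Psi$ is uniform on the relevant compact set and that the remainder estimate genuinely closes to yield Fréchet (not merely Gâteaux) differentiability with continuous derivative. Everything else is a direct use of the contraction principle, with the explicit constants ($\tfrac\delta2+\tfrac\delta2=\delta$ for self-mapping, contraction constant $\tfrac12$) matching precisely what the quantitative hypotheses \eqref{eq:IFT assumption1}–\eqref{eq:IFT assumption2} are designed to furnish; since the result is standard and quoted from \cite{Deimling1985,Berti2006}, the write-up can be kept short.
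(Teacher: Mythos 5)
Your proof is correct. There is nothing in the paper to compare it with: Theorem \ref{thm:ift} is quoted from \cite{Deimling1985,Berti2006} and used as a black box in Appendix A, and your parametrized contraction argument is precisely the standard proof behind this quantitative formulation — \eqref{eq:IFT assumption2} makes $\Phi_x(y)=y-L_0^{-1}\Psi(x,y)$ a $\tfrac12$-contraction, \eqref{eq:IFT assumption1} gives $\|\Phi_x(y_0)-y_0\|\le\delta/2$ so that $\Phi_x$ maps $\overline{B_\delta(y_0)}$ into itself, the Neumann series gives invertibility of $D_y\Psi(x,y)$ with $\|(D_y\Psi(x,y))^{-1}\|\le 2M$ on the whole box, and the first-order expansion of $\Psi$ at $(x,y(x))$ identifies $y'(x)=-(D_y\Psi(x,y(x)))^{-1}D_x\Psi(x,y(x))$. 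One small correction to your write-up: $\overline{B_s(x_0)}\times\overline{B_\delta(y_0)}$ is \emph{not} compact when $X,Y$ are infinite dimensional, so you cannot appeal to compactness to bound $D_x\Psi$; but you do not need to. Continuity of $D_x\Psi$ (from $\Psi\in C^1$) bounds it on a neighborhood of each point $(x,y(x))$, and together with your estimate $\|y(x_1)-y(x_2)\|\le 2M\,\|\Psi(x_1,y(x_2))-\Psi(x_2,y(x_2))\|$ this yields the local Lipschitz bound needed to turn the remainder $o(\|h\|+\|y(x+h)-y(x)\|)$ into $o(\|h\|)$. Similarly, no uniform Taylor expansion over a compact set is required in the differentiability step: pointwise Fr\'echet differentiability of $\Psi$ at $(x,y(x))$ suffices, so the point you single out as delicate closes without further work.
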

\begin{proof}[Proof of Lemma \ref{basic}]
Now we construct the nonlinear functional
$$\Psi:\mathcal I\times \mathscr B^{nre}_{\mathcal I}(\eta)\times \mathscr B_{\mathcal I}\rightarrow \mathscr B^{nre}_{\mathcal I}(\eta)$$
by
$$\Psi(\xi,Y,f)=\mathbb P_{nre}\ln(\me^{-A(\xi)^{-1}Y(\xi,\theta+\omega)A(\xi)}\me^{f(\xi,\theta)}\me^{Y(\xi,\theta)}).$$
Let $X=\mathcal I\times \mathscr B_{\mathcal I}, Y=Z=\mathscr B^{nre}_{\mathcal I}(\eta)$. It is clear that $\Psi\in C^1(X\times Y,Z)$.
 Now we check the assumptions of Theorem \ref{thm:ift}. It is clear that for any $\xi\in\mathcal I,$
$$\Psi(\xi,0,0)=0, \|\Psi(\xi,0,f)\|\leq \|f\|_{\mathcal B,\mathcal I}\leq \epsilon.$$

 By the similar calculation in \cite[Lemma 3.1]{Cai2019},
$$
\begin{aligned}
D_Y\Psi(\xi,0,0)(Y')
=-A(\xi)^{-1}Y'(\xi,\theta+\omega)A(\xi)+Y'(\xi,\theta).
\end{aligned}
$$
Thus $$\|D_Y\Psi(\xi,0,0)(Y')\|\geq |A(\xi)^{-1}Y'(\xi,\theta+\omega)A(\xi)-Y'(\xi,\theta)|_{\mathcal B}\geq \eta|Y'|_{\mathcal B}.$$
So we have $\|(D_Y\Psi(\xi,0,0))^{-1}\|\leq \eta^{-1}.$

Fix $\xi_0\in\mathcal I,$
since $A\in C(\mathcal I,\GL)$, there exists $\rho=\rho(\xi_0)>0$ with $|\xi_0-\xi|\leq \rho$ such that $\|A(\xi_0)-A(\xi)\|+\|A(\xi_0)^{-1}-A(\xi)^{-1}\|\leq \epsilon$. Setting $s=\min\{\rho,\epsilon\},\delta=\epsilon^{\frac{1}{2}}$ and $\eta\geq \frac{60(\|A\|^3+1)}{\inf|\det A|}\epsilon^{\frac{1}{2}}$, there holds the estimate
$$
\quad 2\sup\limits_{\overline {B_s((\xi_0,0))}}\|\Psi(\xi,0,f)\|\times\|(D_Y\Psi(\xi_0,0,0))^{-1}\|\leq 2\epsilon\frac{\inf|\det A|}{60(\|A\|^3+1)}\epsilon^{-\frac{1}{2}}\leq \delta,
$$
and then \eqref{eq:IFT assumption1} is satisfied.

On the other hand, since $Y,f$ are small $\mathrm{gl}(2,\R)$ matrices, by Baker-Campbell-Hausdorff formula, one has
$$
\begin{aligned}
&\quad D_Y\Psi(\xi,Y,f)(Y')-D_Y\Psi(\xi_0,0,0)(Y')\\
&=\mathbb P_{nre}((A(\xi_0)^{-1}-A(\xi)^{-1})Y'(\xi,\theta+\omega)A(\xi_0))-A(\xi)^{-1}Y'(\xi,\theta+\omega)(A(\xi)-A(\xi_0))\\
&\quad\quad-\mathbb P_{nre}(O(A(\xi)^{-1}Y(\xi,\theta+\omega)A(\xi))A(\xi)^{-1}Y'(\xi,\theta+\omega)A(\xi)+\frac{1}{2}[Y''',M+H]+\cdots)\\
&\quad\quad +\mathbb P_{nre}(O(Y(\xi,\theta))Y'(\xi,\theta)+\frac{1}{2}[D+G,-Y'']+\cdots).
\end{aligned}
$$
where $Y''(\xi,\theta)=Y'(\xi,\theta)+O(Y(\xi,\theta))Y'(\xi,\theta)$, $Y'''(\xi,\theta)=-A(\xi)^{-1}Y'(\xi,\theta+\omega)A(\xi)-O(A(\xi)^{-1}Y(\xi,\theta+\omega)A(\xi))A(\xi)^{-1}Y'(\xi,\theta+\omega)A(\xi)$, $D(\xi,\theta)=-A(\xi)^{-1}(Y(\xi,\theta+\omega)+Y'(\xi,\theta+\omega))A(\xi)+f(\xi,\theta)+Y(\xi,\theta),$ $M(\xi,\theta)=-A(\xi)^{-1}Y(\xi,\theta+\omega)A(\xi)+f(\xi,\theta)
+Y(\xi,\theta)$ and $G, H$ are higher order terms. 

Therefore, since $\mathcal B$ admits property $(H)$, for $\|Y\|_{\mathcal B,\mathcal I}\leq \delta, \|f\|_{\mathcal B,\mathcal I}\leq s,$
$$
\begin{aligned}
&\quad\|D_Y\Psi(\xi,Y,f)(Y')-D_Y\Psi(\xi_0,0,0)(Y')\|\\
&\leq \frac{6|Y'|_{\mathcal B}(\|A\|^3+1)}{|\det A(\xi_0)|}\bigr(2\|A(\xi)-A(\xi_0)\|+2\|A(\xi_0)^{-1}-A(\xi)^{-1}\|+|Y|_{\mathcal B}+|f|_{\mathcal B}\bigr)\\
&\leq \frac{6(\|A\|^3+1)|Y'|_{\mathcal B}}{\inf|\det A(\xi)|}\bigr(3\epsilon+\epsilon^{\frac{1}{2}}\bigr).
\end{aligned}
$$
which implies
$$\sup\limits_{\overline{B_s((\xi_0,0))}\times\overline{B_\delta(0)}}\|D_Y\Psi(\xi_0,0,0)-D_Y\Psi(\xi,Y,f)\|\leq \frac{6(\|A\|^3+1)(\epsilon^{\frac{1}
{2}}+3\epsilon)}{\inf|\det A(\xi)|}.$$
Thus we have
$$
\begin{aligned}
&\sup\limits_{\overline{B_s((\xi_0,0))}\times\overline{B_\delta(0)}}\|\id_{\mathcal B_{\xi_0}^{nre}(\eta)}-(D_Y\Psi(\xi_0,0,0))^{-1}\times
D_Y\Psi(\xi,Y,f)\|\\
&\quad \leq \sup\limits_{\overline{B_s((\xi_0,0))}\times\overline{B_\delta(0)}}\|D_Y\Psi(\xi_0,0,0)-D_Y\Psi(\xi,Y,f)\|\|(D_Y\Psi(\xi_0,0,0))^{-1}\|\\
&\quad \leq \frac{6(\|A\|^3+1)(\epsilon^{\frac{1}
{2}}+3\epsilon)}{\inf|\det A(\xi)|}\frac{\inf|\det A(\xi)|}{60(\|A\|^3+1)}\epsilon^{-\frac{1}{2}}\leq \frac{1}{2}
\end{aligned}
$$
which implies that \eqref{eq:IFT assumption2} is satisfied. By Theorem \ref{thm:ift}, for $\|f\|_{\mathcal B,\mathcal I}\leq \epsilon$ and $\eta\geq \frac{60(1+\|A\|^3)\epsilon^{\frac{1}{2}}}{\inf|\det A|}$, there
exist $Y=Y(\xi,f),\ F^{re}=F^{re}(\xi,f)\in C(\mathcal I\times\mathscr B_{\mathcal I},\mathscr B^{re}_{\mathcal I})$ with the estimates $\|Y(\xi,f)\|_{\mathcal B,\mathcal I}\leq \epsilon^{\frac{1}{2}}$ such that $$\Psi(A(\xi),Y(\xi,f),f)=0,$$ i.e.,
$$\me^{-A(\xi)^{-1}Y(\xi,f)(\theta+\omega)A(\xi)}\me^{f(\theta)}\me^{Y(\xi,f)(\theta)}=\me^{F^{re}(\xi,f)(\theta)}.$$
It is equivalent to
$$\me^{-Y(\xi,\theta+\omega)}A(\xi)\me^{F(\xi,\theta)}\me^{Y(\xi,\theta)}=A(\xi)\me^{F^{re}(\xi,\theta)}.$$
It is clear to check that $\|F^{re}\|_{\mathcal B,\mathcal I}\leq 2\epsilon$.
The proof is complete.
\end{proof}

\end{document}